\numberwithin{equation}{section}
\newtheorem{theorem}[equation]{Theorem}
\newtheorem{proposition}[equation]{Proposition}
\newtheorem{lemma}[equation]{Lemma}
\newtheorem{corollary}[equation]{Corollary}
\theoremstyle{definition}
\newtheorem{definition}[equation]{Definition}
\newtheorem{remark}[equation]{Remark}
\newtheorem{example}[equation]{Example}
\def\C{\mathbb C}
\def\E{\mathscr E}
\def\F{\mathscr F}
\def\K{\mathscr K}
\def\L{\mathscr L}
\def\N{\mathbb N}
\def\R{\mathbb R}
\def\P{\mathscr P}
\def\S{\mathscr S}
\def\T{\mathscr T}
\def\U{\mathscr U}
\def\g{\mathfrak g}
\def\h{\mathfrak h}
\def\Dom{\mathcal D}
\def\id{\textup{I}}
\def\Germs{{\mathcal P}_{\textup{sa}}}
\def\opm{\textup{op}_{\textit{M}}}
\DeclareMathOperator{\Diff}{Diff}
\DeclareMathOperator{\End}{End}
\DeclareMathOperator{\tr}{tr}
\DeclareMathOperator{\spec}{spec}
\DeclareMathOperator{\res}{res}
\DeclareMathOperator{\ran}{ran}
\DeclareMathOperator{\Span}{span}
\DeclareMathOperator{\sig}{sgn}
\DeclareMathOperator{\SF}{SF}
\DeclareMathOperator{\sym}{ \sigma\!\!\!\sigma}
\def\Hol{\mathfrak{H}}
\def\Mero{\mathfrak{M}}
\begin{document}
\title[Extensions of scaling invariant operators]{Extensions of symmetric operators that are invariant under scaling and applications to indicial operators}

\author{Thomas Krainer}
\address{Penn State Altoona\\ 3000 Ivyside Park \\ Altoona, PA 16601-3760}
\email{krainer@psu.edu}

\begin{abstract}
Indicial operators are model operators associated to an elliptic differential operator near a corner singularity on a stratified manifold. These model operators are defined on generalized tangent cone configurations and exhibit a natural scaling invariance property with respect to dilations of the radial variable. In this paper we discuss extensions of symmetric indicial operators from a functional analytic point of view. In the first, purely abstract part of this paper, we consider a general unbounded symmetric operator that exhibits invariance with respect to an abstract scaling action on a Hilbert space, and we describe its extensions in terms of generalized eigenspaces of the infinitesimal generator of this action. Among others, we obtain a Green formula for the adjoint pairing, an algebraic formula for the signature, and in the semibounded case explicit descriptions of the Friedrichs and Krein extensions. In the second part we consider differential operators of Fuchs type on the half axis with unbounded operator coefficients that are invariant under dilation, and show that under suitable ellipticity assumptions on the indicial family these operators fit into the abstract framework of the first part, which in this case furnishes a description of extensions in terms of polyhomogeneous asymptotic expansions. We also obtain an analytic formula for the signature of the adjoint pairing in terms of the spectral flow of the indicial family for such operators.
\end{abstract}

\subjclass[2020]{Primary: 35J75; Secondary: 35J58, 47F10, 58J32}
\keywords{Manifolds with singularities, elliptic operators, boundary value problems}

\maketitle


\section{Introduction}

\noindent
We begin by describing in informal terms the setting for the problems that this paper addresses. Let $M$ be a singular manifold. A corner singularity is a point $p$ with an open neighborhood $U(p) \subset M$ that is modeled on a neighborhood
$$
U(p_0) \subset \bigl(\overline{\R}_+ \times Y\bigr)/\bigl(\{0\} \times Y\bigr)
$$
of $p_0 = \bigl(\{0\} \times Y\bigr)/\bigl(\{0\} \times Y\bigr)$, where the link manifold $Y$ is generally also a manifold with singularities. If $Y$ is closed and compact this is the setting of a conical singularity. The inferred splitting of variables can be thought of as generalized polar coordinates, where the manifold $Y$ represents the domain for the spherical variables, and the variable $x > 0$ is the radial (distance) variable from the corner point. The space ${\mathbb R}_+ \times Y$ is a generalized tangent cone equipped with a model cone metric $dx^2 + x^2g_Y$ for some metric $g_Y$ on $Y$, and locally the map
$$
U(p_0) \setminus \{p_0\} \to U(p) \setminus \{p\}
$$
may be thought of as a generalized exponential map centered at $p_0$ from the tangent cone to the manifold. In particular, an elliptic differential operator $A$ near $p$ on $M$ can be pulled-back via this map to an elliptic differential operator near $p_0$ on the tangent cone $\R_+\times Y$, also denoted by $A$ for the moment. Write
$$
A = x^{-m}\sum\limits_{j=0}^ma_j(x,y,D_y)(xD_x)^j
$$
near $p_0$, where $m > 0$, and $a_j(x,y,D_y)$ is a differential operator on $Y$ of order $m-j$. We assume that the coefficients $a_j(x,y,D_y)$ depend continuously on $x$ up to $x=0$. The tangent cone admits an $\R_+$-action by scaling the variable $x$, and pull-back of functions with respect to this action gives rise to
\begin{equation}\label{kappaintro}
\bigl(\kappa_{\varrho}u\bigr)(x,y) = u(\varrho x,y), \quad \varrho > 0,
\end{equation}
for functions $u(x,y)$ on $\R_+\times Y$. Define
$$
A_{\wedge}u = \lim\limits_{\varrho \to \infty} \varrho^{-m}\kappa_{\varrho}^{-1}A\kappa_{\varrho}u,
$$
where $u$ has compact support in $x$. In this way we obtain the indicial operator $A_{\wedge}$, the model operator associated with $A$ at the corner singularity. The operator $A_{\wedge}$ is invariant under scaling in the sense that
\begin{equation}\label{Awedgeinvariance}
A_{\wedge} = \varrho^m\kappa_{\varrho}A_{\wedge}\kappa_{\varrho}^{-1}, \quad \varrho > 0,
\end{equation}
by the limit construction; we have
\begin{equation}\label{Awedge}
A_{\wedge} = x^{-m}\sum\limits_{j=0}^m a_j(xD_x)^j,
\end{equation}
where $a_j = a_j(0,y,D_y)$.

Our objective here is to describe $L^2$-extensions of the operator $A_{\wedge}$. The geometric $L^2$-space on $\R_+\times Y$ subject to the model metric $dx^2 + x^2g_Y$ identifies with the weighted space $x^{-\alpha}L^2_b(\R_+;L^2(Y;g_Y))$ for $\alpha = \frac{\dim Y + 1}{2}$, where $L^2_b$ denotes the $L^2$-space on $\R_+$ with respect to Haar measure $\frac{dx}{x}$. Multiplication by $x^{\alpha} : x^{-\alpha}L^2_b \to L^2_b$ is a unitary equivalence, and because the class of operators $A$ considered here and the construction leading to $A_{\wedge}$ are preserved under conjugation by arbitrary powers $x^{\alpha}$ we can base all considerations on the Hilbert space $L^2_b(\R_+;L^2(Y;g_Y))$. Observe that the scaling action \eqref{kappaintro} is unitary on this space.

If $Y$ is a stratified manifold with boundary/singular set $\Sigma \subset Y$, generalized ideal boundary conditions along the lateral boundary $\R_+\times\Sigma$ are required. Abstractly these are vanishing conditions in the form $T_{\wedge}u = 0$, where $T_{\wedge}$ ought to be thought of as the lateral model boundary condition for $A_{\wedge}$ associated to a lateral boundary condition $Tu = 0$ for $A$. The development of a robust analytic theory of elliptic ideal boundary conditions associated with singular sets at the implied level of generality remains the subject of ongoing investigations by the community of researchers working on singular PDEs, and as of yet only exists for certain configurations or certain specific geometric operators and boundary conditions (references are discussed below). We will forego this problem by treating the lateral boundary condition in functional analytic terms and consider the operator \eqref{Awedge} abstractly as an ordinary differential operator of Fuchs type on $\R_+$ as
\begin{equation}\label{Awedgeabstract}
A_{\wedge} = x^{-m}\sum\limits_{j=0}^m a_j(xD_x)^j : C_c^{\infty}(\R_+;E_1) \subset L^2_b(\R_+;E_0) \to L^2_b(\R_+;E_0),
\end{equation}
where $E_0$ and $E_1$ are Hilbert spaces such that $E_1 \hookrightarrow E_0$ is continuous and dense, and the $a_j : E_1 \to E_0$ are bounded. This matches the setting above with $E_0 = L^2(Y;g_Y)$, and $E_1$ the common domain of the indicial family
\begin{equation}\label{IndicialAwedge}
p(\sigma) = \sum\limits_{j=0}^ma_j\sigma^j : E_1 \subset E_0 \to E_0,
\end{equation}
considered here a family of unbounded operators in $E_0$. The space $E_1$ encodes the (lateral) boundary condition\footnote{As we consider the space $E_1$ to be fixed this does not include all relevant boundary conditions. For example, if $Y$ is a smooth, compact manifold with boundary, the general notion of lateral boundary conditions ought to include classical Shapiro-Lopatinsky elliptic boundary conditions on $\R_+\times\partial Y$, but fixing the space $E_1$ in this case means that the lateral model boundary condition $T_{\wedge}u = 0$ cannot differentiate in the $x$-direction.} along $\Sigma \subset Y$.

The closed extensions of the operator \eqref{Awedgeabstract} in $L^2_b(\R_+;E_0)$ are then expected to encode boundary conditions as $x \to 0$ at the corner point $p_0$ that are associated with realizations of the model operator $A_{\wedge}$ on $\R_+\times Y$ subject to previously chosen lateral boundary conditions, and it is those extensions that we are interested in here.

We will assume that \eqref{Awedgeabstract} is symmetric. The general case can often be reduced to this situation by considering instead
$$
{\mathcal A}_{\wedge} = \begin{bmatrix} 0 & A_{\wedge} \\ A_{\wedge}^{\sharp} & 0 \end{bmatrix} :
C_c^{\infty}\left(\R_+;\begin{array}{c} \tilde{E}_1 \\ \oplus \\ E_1 \end{array}\right) \subset
L^2_b\left(\R_+;\begin{array}{c} E_0 \\ \oplus \\ E_0 \end{array}\right) \to
L^2_b\left(\R_+;\begin{array}{c} E_0 \\ \oplus \\ E_0 \end{array}\right),
$$
where
$$
A_{\wedge}^{\sharp} = x^{-m}\sum\limits_{j=0}^m b_j(xD_x)^j : C_c^{\infty}(\R_+;\tilde{E}_1) \subset L^2_b(\R_+;E_0) \to L^2_b(\R_+;E_0)
$$
is the formal adjoint operator to $A_{\wedge}$ with indicial family
$$
p(\sigma^{\star})^* = \sum\limits_{j=0}^m b_j\sigma^j : \tilde{E}_1 \subset E_0 \to E_0,
$$
where $\sigma^{\star} = \overline{\sigma} - im$ is reflection about the line $\Im(\sigma) = -\frac{m}{2}$, and $\tilde{E}_1 \hookrightarrow E_0$ is assumed to be the common domain of the adjoints of \eqref{IndicialAwedge}.

Let $\Dom_{\min}$ be the domain of the closure of \eqref{Awedgeabstract}, and $\Dom_{\max}$ be the domain of the adjoint. The expectation
from known cases (particularly conical singularities) is that each $u \in \Dom_{\max}$ has a finite polyhomogeneous asymptotic expansion of the form
\begin{equation}\label{introasymptotic}
u \sim \sum\limits_{\sigma_0,j} e_{\sigma_0,j}\log^j(x)x^{i\sigma_0} \mod \Dom_{\min} \textup{ as $x \to 0$}
\end{equation}
with certain $e_{\sigma_0,j} \in E_1$ and characteristic values $\sigma_0 \in \spec_b(A_{\wedge}) \subset \C$, the boundary spectrum of $A_{\wedge}$, and the domains of extensions $\Dom_{\min} \subset \Dom \subset \Dom_{\max}$ of $A_{\wedge}$ then correspond to placing vanishing conditions on these asymptotic terms which establishes an explicit correspondence for such operators between their functional analytic extensions on the one hand, and an analytic notion of generalized boundary conditions by prescribing conditions on the asymptotic behavior of functions on the other hand.

The asymptotic expansion \eqref{introasymptotic} is to be understood as
$$
u - \omega  \sum\limits_{\sigma_0,j} e_{\sigma_0,j}\log^j(x)x^{i\sigma_0} \in \Dom_{\min},
$$
where $\omega \in C_c^{\infty}(\overline{\R}_+)$ is a cut-off function with $\omega \equiv 1$ near $x=0$. Observe that the functions that constitute this expansion for each $\sigma_0$ are generalized eigenfunctions of $\g = xD_x$ with eigenvalue $\sigma_0$, the infinitesimal generator of the radial scaling action $\kappa_{\varrho} = \varrho^{i\g}$ from \eqref{kappaintro} in $L^2_b(\R_+;E_0)$.  More precisely, after multiplying by $\omega$, these functions render generalized eigenfunctions of $\g$ modulo $C_c^{\infty}$ in $L^2_b(\R_+;E_0)$.

\medskip

\noindent
In the present paper we investigate indicial operators \eqref{Awedgeabstract} from two points of view. As only model operators are studied, we are going to shorten notation and write $A$ instead of $A_{\wedge}$ in the main body of the paper, but stay with $A_{\wedge}$ for the remainder of the introduction.

In the first, purely abstract part of this paper which spans Sections~\ref{sec-prelim} through \ref{KreinAbstract}, we consider a general unbounded symmetric operator $A_{\wedge}$ that exhibits an invariance property \eqref{Awedgeinvariance} with respect to an abstract unitary $\R_+$-action $\kappa_{\varrho} = \varrho^{i\g}$ on a Hilbert space. Assuming finite deficiency indices, we describe its extensions in terms of generalized eigenspaces of the infinitesimal generator $\g$ of this action (modulo $\Dom_{\min}$), which furnishes a correspondence between closed extensions of $A_{\wedge}$ on the one hand, and invariants of the underlying dynamics of $\kappa_{\varrho}$ in relation to the operator $A_{\wedge}$ on the other hand, which can be seen as a generalization of the notion of boundary conditions via asymptotic expansions \eqref{introasymptotic}. Among others, we obtain a Green formula for the adjoint pairing, an algebraic formula for its signature, and if $A_{\wedge}$ is semibounded we find explicit descriptions of the Friedrichs and Krein extensions. Moreover, for extensions that are invariant under $\kappa_{\varrho}$, we find an equivalent characterization of the order relation for semibounded extensions solely in terms of the boundary condition.
Much of the analysis in the first part occurs on the quotient space $\hat{\E}_{\max} = \Dom_{\max}/\Dom_{\min}$, which is finite-dimensional by assumption. The scaling action $\kappa_{\varrho}$ induces an action $\hat{\kappa}_{\varrho} = \varrho^{i\hat{\g}}$ on $\hat{\E}_{\max}$, and the invariance relation \eqref{Awedgeinvariance} implies that $\hat{\g} + i\frac{m}{2}$ is selfadjoint with respect to the indefinite Hermitian sesquilinear form on $\hat{\E}_{\max}$ that is induced by the adjoint pairing. Linear algebra for such spaces, in particular the Canonical Form Theorem in this setting \cite{GohbergLancasterRodman}, is then applied.

In the second part we consider operators $A_{\wedge}$ that are slight generalizations of the differential operators of Fuchs type \eqref{Awedgeabstract} in $L^2_b(\R_+;E_0)$ in that the power of the singular factor and the order of differentiation are allowed to be decoupled, and the dilation action $\kappa_{\varrho}u(x) = u(\varrho x)$ is considered on $L^2_b(\R_+;E_0)$. The precise assumptions on $A_{\wedge}$ and the indicial family \eqref{IndicialAwedge} are formulated in Section~\ref{IndicialOperators}. Our goal has been to impose minimal ellipticity assumptions. Aside from symmetry, all assumptions on $A_{\wedge}$ are invariant with respect to conjugating the operator by arbitrary powers $x^{\alpha}$. In particular, we are not imposing invertibility assumptions on \eqref{IndicialAwedge} along lines $\Im(\sigma) = \gamma$ for specific values of $\gamma$, but Fredholmness of \eqref{IndicialAwedge} and invertibility for large $|\Re(\sigma)| \gg 0$ with estimates are needed.
The objective is to show that these operators then directly fit into the abstract framework of the first part, which confirms that extensions are characterized by the asymptotic behavior \eqref{introasymptotic} of functions as $x \to 0$. In particular, all results of the first part apply and have a direct equivalent in terms of asymptotic expansions as $x \to 0$. The starting point for the analysis are von Neumann's formulas
$$
\Dom_{\max} = \Dom_{\min} \oplus \ker(A_{\wedge,\max} + i) \oplus \ker(A_{\wedge,\max} - i).
$$
One of the key arguments is proving that functions $u \in \ker(A_{\wedge,\max} \pm i)$ are decreasing rapidly as $x \to \infty$ (intuitively, this implies that describing $\Dom_{\max}/\Dom_{\min}$ can only be about the behavior of functions as $x \to 0$, as is expected). To achieve this, left-parametrices for $A_{\wedge,\max} \pm i$ are needed modulo remainders that produce the required rapid decay as $x \to \infty$, and we will obtain these parametrices by constructing in actuality right-parametrices for $A_{\wedge,\min} \mp i$ and then passing to adjoints. The parametrix construction requires a pseudodifferential calculus which we develop in Appendix~\ref{PseudoCalculus}.
In Section~\ref{MinimalDomain} we lay groundwork for weighted function spaces associated with $A_{\wedge}$, in particular in relation to $\Dom_{\min}$, while the main result for indicial operators that explicitly describes $\Dom_{\max}$ modulo $\Dom_{\min}$ in terms of asymptotic expansions is proved in Section~\ref{MaximalDomain} (see Theorem~\ref{DmaxThm}).

Finally, in Section~\ref{GreenIndicial} we prove that the signature of the adjoint pairing of the indicial operator $A_{\wedge}$ is given by the spectral flow of the indicial family \eqref{IndicialAwedge} along the line $\Im(\sigma) = -\frac{m}{2}$, where $p(\sigma)$ is selfadjoint and Fredholm by assumption. More precisely, each indicial root along $\Im(\sigma) = -\frac{m}{2}$ contributes a piece to the signature that is described algebraically in the first part of the paper in terms of invariants of the canonical form for the generator $\hat{\g}$ of the induced scaling action $\hat{\kappa}_{\varrho}$ on $\hat{\E}_{\max} = \Dom_{\max}/\Dom_{\min}$ and the adjoint pairing, and this piece is shown to coincide with the contribution of the spectral flow of $p(\sigma)$ across that indicial root. The proof is based on local arguments in analytic Fredholm and perturbation theory to bring $p(\sigma)$ into a normal form near an indicial root $\sigma_0$ with $\Im(\sigma_0) = -\frac{m}{2}$ from which these relations follow. The details of these arguments are relegated to Appendix~\ref{ModelPairing} and \ref{AnalyticCrossingsFiniteDimensional}. The normal form for $p(\sigma)$ also implies that semibounded indicial operators $A_{\wedge}$ always satisfy the sign condition. The sign condition is a technical property pertaining to the invariants of the canonical form of $\hat{\g}$ on $\hat{\E}_{\max}$ and the adjoint pairing, and is used in the first part of the paper to obtain complete descriptions of the Friedrichs and Krein extensions.

\medskip

\noindent
This paper addresses problems that have a long history, going back to seminal contributions by Kondrat'ev \cite{Kondratiev}, who investigated Fredholm solvability of classical boundary value problems in domains with isolated conical singularities on the boundary, and Cheeger \cite{Cheeger1979,Cheeger1983} who initiated geometric and global analysis on singular manifolds. Both contributions seeded independent developments (e.g. \cite{Dauge,KozlovMazyaRossmann,NazarovPlamenevskii} are rooted in Kondrat'ev's theory, \cite{AlbinLeichtnamMazzeoPiazzaWitt,AlbinLeichtnamMazzeoPiazzaHodge,BrueningSeeley87} draw their inspiration from Cheeger's works), which have increasingly been merging since the 1990s \cite{AmmannLauterNistor,AmmannGrosseNistor,Bohlen,GilMendoza,KrainerResolventsBVP,KrainerMendozaFirstOrder,Le97,MazzeoEdges,MazzeoVertman,NazaikinskiSavinSchulzeSternin,RBM2,SchroheSchulze1,SchroheSchulze2,SchuNH,SchulzeFields}, influenced by Melrose and Schulze. An analytic theory of solvability, regularity in Sobolev spaces with weights, and asymptotics for differential equations with unbounded operator coefficients with applications to partial differential equations in generalized cones and cylinders is developed in \cite{KozlovMazya}.

This paper is motivated by recent works \cite{AlbinGellRedman,AlbinLeichtnamMazzeoPiazzaHodge,HartmannLeschVertmanDomain,HartmannLeschVertmanAsymptotics,KrainerMendozaFirstOrder,KrainerMendozaFriedrichs,MazzeoVertman} that contribute towards developing elliptic theory for operators on incomplete stratified manifolds. The long-term goal is a robust elliptic theory of ideal boundary conditions associated with the singular strata, and the model operator level is essential for this. Specifically, our goals for this paper align with Lesch's book \cite{Le97}, and with the papers by Gil and Mendoza \cite{GilMendoza} and Coriasco, Schrohe, and Seiler \cite{CoriascoSchroheSeiler}. Both \cite{GilMendoza,Le97} investigate extensions of elliptic operators on compact manifolds with isolated conical singularities, and both consider fully general elliptic operators of any order which distinguish them from other investigations that center on operators of Dirac or Laplace type that near singularities are amenable to separation of variables and special function methods; \cite{CoriascoSchroheSeiler} extends these ideas to conic manifolds with boundary and classical boundary value problems that satisfy the Shapiro-Lopatinsky condition. In \cite{GilMendoza, Le97} methods from functional analysis, operator theory, and microlocal analysis rooted in the Mellin transform are systematically used and developed. One of the main results in \cite{GilMendoza} is an explicit description of the domain of the Friedrichs extension for general semibounded cone operators on compact manifolds, and we obtain an analogous result for the Friedrichs extension for indicial operators here, but with a different approach.

Compactness of the manifold and harnessing the standard local elliptic theory away from the singularities are essential in both \cite{GilMendoza,Le97}; arguments near the singularities are local as $x \to 0$ and generally do not transfer to noncompact tangent cone configurations $\R_+\times Y$ as $x \to \infty$. However, for conical singularities where $Y$ is closed and compact it is easy to see that uniform elliptic estimates on complete manifolds \cite{ShubinNoncompact} are applicable to $A_{\wedge}$ as $x \to \infty$ (and likewise is elliptic theory for scattering manifolds \cite{MelroseScattering,SchroheSG}), and therefore no contributions to the extensions of $A_{\wedge}$ can originate from $x \to \infty$, and the results from \cite{GilMendoza,Le97} apply in this setting. This point of view does not easily generalize to more complicated link manifolds $Y$, and since analyzing the behavior as $x \to \infty$ is essential for understanding general operators we take a different approach to this problem that leads to the pseudodifferential calculus in Appendix~\ref{PseudoCalculus}.

Our approach towards extensions in the first part is rooted in operator theory. Scaling invariance of unbounded operators under sets of unitary transformations has been used for example in singular perturbation theory; we refer to \cite{AlbeverioKurasov,HassiKuzhel,MakarovTsekanovskii} for additional information.

The spectral flow formula for the signature of the adjoint pairing for indicial operators appears to be new also for conical singularities. Generally, the spectral flow is related to the signature of crossing forms, and for real-analytic crossings this relationship has been investigated in \cite{EidamPiccione,FarberLevine,GiamboPiccionePortaluri}. Our proofs in Appendix~\ref{ModelPairing} and \ref{AnalyticCrossingsFiniteDimensional} are not reliant on these references. When combined with elliptic Fredholm theory on compact manifolds with singularities (for cases that have been sufficiently well understood), the spectral flow formula at the model operator level can be used to prove vanishing theorems for spectral flows and the index, generalizing the notion of cobordism invariance for these quantities. This relation will be pursued elsewhere.


\section{Preliminaries}\label{sec-prelim}

\noindent
Let $H$ be a separable complex Hilbert space, and let
$$
A : \Dom_c \subset H \to H
$$
be unbounded, densely defined, and symmetric. Then $A$ is closable with closure $\overline{A} = A^{**}$. Let $\Dom_{\min}$ denote the domain of the closure. Let $A^*$ be the adjoint of $A$, and let $\Dom_{\max}$ be its domain. The closed extensions of $A$ are the closed unbounded operators given by
$$
A^*\Big|_{\Dom} : \Dom \subset H \to H
$$
with domains $\Dom_{\min} \subset \Dom \subset \Dom_{\max}$ that are closed with respect to the graph norm induced by the graph inner product
$$
\langle u,v\rangle_A = \langle u,v\rangle_H + \langle A^*u,A^*v\rangle_H, \quad u,v \in \Dom_{\max}.
$$
In the sequel we will abuse notation and write
\begin{align*}
A_{\max} &= A^* : \Dom_{\max} \subset H \to H, \\
A_{\min} &= \overline{A} : \Dom_{\min} \subset H \to H, \\
A_{\Dom} &= A^*\Big|_{\Dom} : \Dom \subset H \to H
\end{align*}
for these operators.  When the domain is not explicitly specified we will simply write $A$. While this does not align with standard functional analytic conventions, it is consistent with defining extensions of (formally) symmetric differential operators $A$ in $L^2$, initially given on a space of test functions\footnote{From a functional analytic point of view, we could have instead started from a densely defined closed operator $A = A_{\max} : \Dom_{\max} \to H$ such that $A^* \subset A$, and defined $A_{\min} = A^*$. Then $A_{\min}$ is symmetric with $A_{\min}^* = A$, and all intermediate closed operators arise as restrictions of $A$.}.
We will assume throughout that $A$ has finite deficiency indices, which is equivalent to
$$
\dim\Dom_{\max}/\Dom_{\min} < \infty.
$$
In particular $A_{\Dom}$ is closed for every domain $\Dom_{\min} \subset \Dom \subset \Dom_{\max}$.

\bigskip

We furthermore assume that $H$ is equipped with a scaling action, i.e., a strongly continuous and unitary $\R_+$-action $\kappa_{\varrho} : H \to H$, $\varrho > 0$. Specifically:
\begin{enumerate}
\item For every $\varrho > 0$ we have $\kappa_{\varrho} \in \L(H)$ with $\kappa_{\varrho}^{-1} = \kappa_{\varrho}^*$.
\item We have $\kappa_{\varrho\varrho'} = \kappa_{\varrho}\kappa_{\varrho'}$ for all $\varrho,\varrho' > 0$, and $\kappa_1 = \textup{Id}$.
\item We have $\lim\limits_{\varrho \to 1}\kappa_{\varrho}u = u$ for all $u \in H$.
\end{enumerate}
Moreover, $A$ is supposed to be invariant under the scaling action in the following sense:
\begin{enumerate}
\setcounter{enumi}{3}
\item $\kappa_{\varrho} : \Dom_c \to \Dom_c$ for all $\varrho > 0$;
\item there exists $m > 0$ such that
$$
A = \varrho^m\kappa_{\varrho}A\kappa_{\varrho}^{-1} : \Dom_c \to H
$$
for all $\varrho > 0$.
\end{enumerate}

\begin{lemma}\label{Amaxmininv}
The group action $\kappa_{\varrho} : H \to H$ restricts to strongly continuous group actions
$\kappa_{\varrho} : \Dom_{\max} \to \Dom_{\max}$ and $\kappa_{\varrho} : \Dom_{\min} \to \Dom_{\min}$. We have
\begin{equation}\label{Amaxkappa}
A_{\max} = \varrho^m\kappa_{\varrho}A_{\max}\kappa_{\varrho}^{-1} : \Dom_{\max} \to H, \; \varrho > 0,
\end{equation}
and likewise for $A_{\min}$. Moreover, we have
$$
\|\kappa_{\varrho}\|_{\L(\Dom_{\max})} \leq \max\{1,\varrho^m\}, \; \varrho > 0.
$$
\end{lemma}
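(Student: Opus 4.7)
The plan is to work through the three assertions in order: invariance of the two domains together with the commutation formula, then strong continuity in the graph norm, and finally the operator norm bound. Everything should follow from assumption (v) by routine unitarity manipulations; there is no deep obstacle, only a small amount of care needed in shuffling adjoints.

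For the invariance of $\Dom_{\max}$, I would first rewrite (v) as $A\kappa_\varrho=\varrho^m\kappa_\varrho A$ on $\Dom_c$ (legal by (iv)), or equivalently $\kappa_\varrho^{-1}A=\varrho^m A\kappa_\varrho^{-1}$. Then for $u\in\Dom_{\max}$ and $v\in\Dom_c$ I would shift $\kappa_\varrho$ off of $u$ in $\langle\kappa_\varrho u,Av\rangle$ using unitarity, apply the rewritten commutation, and then recognize $A^*u$ on the right:
\begin{equation*}
\langle\kappa_\varrho u,Av\rangle = \langle u,\kappa_\varrho^{-1}Av\rangle = \varrho^m\langle u,A\kappa_\varrho^{-1}v\rangle = \langle \varrho^m\kappa_\varrho A_{\max}u,v\rangle.
\end{equation*}
Since $\Dom_c$ is dense in $H$, this shows $\kappa_\varrho u\in\Dom_{\max}$ with $A_{\max}\kappa_\varrho u=\varrho^m\kappa_\varrho A_{\max}u$, i.e., \eqref{Amaxkappa}.

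For $\Dom_{\min}$ I would argue by approximation. Take $u_n\in\Dom_c$ with $u_n\to u$ and $Au_n\to A_{\min}u$. Then $\kappa_\varrho u_n\in\Dom_c$ by (iv), $\kappa_\varrho u_n\to \kappa_\varrho u$ by boundedness of $\kappa_\varrho$ on $H$, and $A(\kappa_\varrho u_n)=\varrho^m\kappa_\varrho Au_n\to \varrho^m\kappa_\varrho A_{\min}u$, so $\kappa_\varrho u\in\Dom_{\min}$ with the same commutation formula. Strong continuity in the graph norm now reduces to strong continuity in $H$: for fixed $u\in\Dom_{\max}$, apply (iii) to $u$ and to $A_{\max}u\in H$, and combine with $\varrho^m\to 1$ as $\varrho\to 1$ via \eqref{Amaxkappa} to control the second component of the graph norm. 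The group law is inherited from $\kappa$ on $H$.

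The norm bound then falls out by direct computation: by \eqref{Amaxkappa} and the unitarity of $\kappa_\varrho$ on $H$,
\begin{equation*}
\|\kappa_\varrho u\|_A^2 = \|\kappa_\varrho u\|_H^2+\|A_{\max}\kappa_\varrho u\|_H^2 = \|u\|_H^2+\varrho^{2m}\|A_{\max}u\|_H^2 \leq \max\{1,\varrho^{2m}\}\,\|u\|_A^2.
\end{equation*}
The only step where a little attention is needed is the adjoint shuffling in the $\Dom_{\max}$ calculation above; once that is in place, the remainder of the lemma is mechanical.
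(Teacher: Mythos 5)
Your proposal is correct and follows essentially the same route as the paper: the $\Dom_{\max}$ invariance and \eqref{Amaxkappa} via the adjoint pairing with $\Dom_c$ and the commutation relation, strong continuity by applying (iii) to $u$ and to $A_{\max}u$, and the norm bound by unitarity of $\kappa_\varrho$ on $H$. The one small difference is in how you treat $\Dom_{\min}$: you approximate $u\in\Dom_{\min}$ by a sequence in $\Dom_c$ and pass to the limit in the commutation relation, whereas the paper simply observes that $A_{\min}=A_{\max}^*$ and repeats the adjoint argument from the $\Dom_{\max}$ step with $A_{\max}$ in place of $A$; both are sound, and your version is arguably a touch more elementary.
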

\begin{proof}
Let $u \in \Dom_c$ and $w \in \Dom_{\max}$ be arbitrary. Then
$$
\langle Au,\kappa_{\varrho}w \rangle = \langle \kappa^{-1}_{\varrho}Au,w \rangle = \varrho^m \langle A\kappa_{\varrho}^{-1}u,w\rangle = \varrho^m\langle \kappa_{\varrho}^{-1}u,A^*w\rangle = \langle u,\varrho^m\kappa_{\varrho}A^*w\rangle.
$$
This proves that $\kappa_{\varrho}w \in \Dom_{\max}$, and $A^*\kappa_{\varrho}w = \varrho^m\kappa_{\varrho}A^*w$ for $\varrho > 0$. Consequently, if $v \in \Dom_{\max}$, then $w = \kappa_{\varrho}^{-1}v = \kappa_{1/\varrho}v \in \Dom_{\max}$, and
$$
A^*v = A^*\kappa_{\varrho}w = \varrho^m\kappa_{\varrho}A^*w = \varrho^m\kappa_{\varrho}A^*\kappa_{\varrho}^{-1}v.
$$
This proves \eqref{Amaxkappa}. We next prove the strong continuity of $\kappa_{\varrho}$ on $\Dom_{\max}$. To that end, let $v \in \Dom_{\max}$ be arbitrary. Then $\kappa_{\varrho}v \to v$ in $H$ as $\varrho \to 1$ because of the strong continuity of $\kappa_{\varrho}$ on $H$. Likewise,
$$
A_{\max}\kappa_{\varrho}v = \varrho^m\kappa_{\varrho}A_{\max}v \to A_{\max}v
$$
as $\varrho \to 1$ in $H$, again because of the strong continuity of $\kappa_{\varrho}$ on $H$. Both combined now imply that $\kappa_{\varrho}v \to v$ as $\varrho \to 1$ in the graph norm on $\Dom_{\max}$, proving strong continuity of $\kappa_{\varrho}$ on $\Dom_{\max}$. For $u \in \Dom_{\max}$ we have
\begin{align*}
\|\kappa_{\varrho}u\|^2_A &= \|\kappa_{\varrho}u\|_H^2 + \|A_{\max}\kappa_{\varrho}u\|_H^2 
= \|\kappa_{\varrho}u\|_H^2 + \varrho^{2m}\|\kappa_{\varrho}A_{\max}u\|_H^2 \\
&= \|u\|_H^2 + \varrho^{2m}\|A_{\max}u\|_H^2  \leq \max\{1,\varrho^m\}^2\|u\|_A^2
\end{align*}
which implies the asserted norm estimate for $\kappa_{\varrho}$.

Finally, starting from the invariance of $\Dom_{\max}$ under $\kappa_{\varrho}$ and \eqref{Amaxkappa}, we obtain with the same reasoning as in the first part of this proof that the domain of the functional analytic adjoint $A_{\max}^* = A_{\min}$ is invariant under the action of $\kappa_{\varrho}$, and that $A_{\min}$ satisfies the analogue of \eqref{Amaxkappa}.
\end{proof}

\begin{definition}
A closed extension $A_{\Dom} : \Dom \subset H \to H$ with $\Dom_{\min} \subset \Dom \subset \Dom_{\max}$ is called \emph{stationary} or \emph{invariant} (with respect to $\kappa_{\varrho}$) if $\kappa_{\varrho} : \Dom \to \Dom$ for all $\varrho > 0$. In that case
$$
A_{\Dom} = \varrho^m\kappa_{\varrho}A_{\Dom}\kappa_{\varrho}^{-1} : \Dom \to H, \; \varrho > 0.
$$
By Lemma~\ref{Amaxmininv}, both $A_{\max}$ and $A_{\min}$ are invariant.
\end{definition}

\noindent
Let $\g : \Dom(\g) \subset H \to H$ be the infinitesimal generator of the group action $\kappa_{\varrho}$ on $H$. We will make the convention here that
$$
\kappa_{\varrho} = \varrho^{i\g} : H \to H, \; \varrho > 0,
$$
where
\begin{align*}
\Dom(\g) &= \{u \in H;\; {\mathbb R}_+ \ni \varrho \mapsto \kappa_{\varrho}u \in H \textup{ is differentiable}\}, \\
\g(u) &= (\varrho D_{\varrho})\kappa_{\varrho}u\Big|_{\varrho = 1} \textup{ for } u \in \Dom(\g).
\end{align*}
Note that references about $1$-parameter semigroups, including \cite{EngelNagel} referenced below, generally consider additive rather than multiplicative semigroups, and, in our notation, these references generally consider $i\g$ to be the generator rather than $\g$.

Since $\kappa_{\varrho}$ is unitary, $\g^* = \g$ is selfadjoint by Stone's Theorem. In particular, $\spec(\g) \subset \R$. The restrictions
$$
\kappa_{\varrho} : \Dom_{\max} \to \Dom_{\max},  \textup{ and }
\kappa_{\varrho} : \Dom_{\min} \to \Dom_{\min}
$$
are generated by the part of $\g$ in $\Dom_{\max}$ or $\Dom_{\min}$, respectively, i.e., the operators that act like $\g$ with domains
$$
\{u \in \Dom(\g)\cap\Dom_{\max};\; \g u \in \Dom_{\max}\} \textup{ and }
\{u \in \Dom(\g)\cap\Dom_{\min};\; \g u \in \Dom_{\min}\},
$$
see \cite[II.2.3]{EngelNagel}.

The norm estimate for $\kappa_{\varrho}$ in Lemma~\ref{Amaxmininv} (both for $1 \leq \varrho < \infty$ and for $0 < \varrho \leq 1$) in conjunction with the Hille-Yosida Generation Theorem (see \cite[II.3]{EngelNagel}) implies conclusions about the resolvent of the parts of the generator $\g$ in various subspaces of $H$. In particular, we get the following:

\begin{remark}\label{GeneratorResolvent}
\begin{enumerate}
\item For every $\sigma \in \C$ with $\Im(\sigma) \neq 0$ and every $v \in H$ the equation $(\g - \sigma)u = v$ has a unique solution $u \in \Dom(\g)$.
\item For every $\sigma \in \C$ with $\Im(\sigma) \notin [-m,0]$ and every $v \in \Dom_{\max}$ the unique solution $u \in \Dom(\g)$ to the equation $(\g - \sigma)u = v$ belongs to $\Dom_{\max}$. Moreover, if $v \in \Dom_{\min}$ then so is $u$.
\end{enumerate}
\end{remark}

\noindent
Let
$$
\hat{\E}_{\max} = \Dom_{\max}/\Dom_{\min} = \{\hat{u} = u + \Dom_{\min};\; u \in \Dom_{\max}\}
$$
be the quotient space. We will generally utilize hat-notation for objects that are associated with the quotient space. This is a Hilbert space, canonically isometric to the orthogonal complement $\E_{\max}$ of $\Dom_{\min}$ in $\Dom_{\max}$ with respect to the graph inner product induced by $A_{\max}$. The group $\kappa_{\varrho}$ induces an $\R_+$-action on the quotient
\begin{gather*}
\hat{\kappa}_{\varrho} : \hat{\E}_{\max} \to \hat{\E}_{\max}, \; \varrho > 0, \\
\hat{\kappa}_{\varrho}[u + \Dom_{\min}] = \kappa_{\varrho}u + \Dom_{\min}.
\end{gather*}
The action $\hat{\kappa}_{\varrho}$ is strongly continuous on $\hat{\E}_{\max}$ with generator
\begin{align*}
\hat{\g}(u + \Dom_{\min}) &= \g u + \Dom_{\min} \textup{ for } u+\Dom_{\min} \in \Dom(\hat{\g}), \\
\Dom(\hat{\g}) &= \{u + \Dom_{\min};\; u \in \Dom(\g)\cap\Dom_{\max},\; \g u \in \Dom_{\max}\},
\end{align*}
see \cite[II.2.4]{EngelNagel}. Because we assume $\dim\hat{\E}_{\max} < \infty$  we in fact have $\Dom(\hat{\g}) = \hat{\E}_{\max}$, and $\hat{\kappa}_{\varrho} = \varrho^{i\hat{\g}} : \hat{\E}_{\max} \to \hat{\E}_{\max}$ is uniformly continuous.

\begin{proposition}
We have $\spec(\hat{\g}) \subset \{\sigma \in \C;\; -m \leq \Im(\sigma) \leq 0\}$. If $\sigma_0 \in \spec(\hat{\g})$ with $\Im(\sigma_0) = 0$ or $\Im(\sigma_0) = -m$, then $\sigma_0$ is semisimple.

Moreover, if $\kappa_{\varrho} : H \to H$ is such that
$$
\lim\limits_{\varrho \to 0} \langle \kappa_{\varrho}u,v \rangle_H = 0
$$
for all\footnote{It suffices to check that $\lim\limits_{\varrho\to 0}\langle \kappa_{\varrho}u,u \rangle_H = 0$ for $u$ in a dense subspace of $H$, e.g., for $u \in \Dom_c$. To see this note that $T : H \times H \ni (u,v) \mapsto \bigl\{\R_+ \ni \varrho \mapsto \frac{1}{2}\bigl[\langle \kappa_{\varrho}u,v \rangle_H + \langle u,\kappa_{\varrho}v \rangle_H\bigr]\bigr\} \in (C_b(\R_+),\|\cdot\|_{\infty})$ is sesquilinear, Hermitian, and continuous. We have $T(u,u) \in C_0(\R_+)$ for $u \in \Dom_c$ by assumption and the unitarity of $\kappa_{\varrho}$, and by polarization $T(u,v) \in C_0(\R_+)$ for all $u,v \in \Dom_c$. Because $C_0(\R_+) \subset (C_b(\R_+),\|\cdot\|_{\infty})$ is closed we obtain $T(u,v) \in C_0(\R_+)$ for all $u,v \in H$ by density and continuity. The same argument shows that $L(u,v) = \frac{1}{2i}\bigl[\langle \kappa_{\varrho}u,v \rangle_H - \langle u,\kappa_{\varrho}v \rangle_H\bigr] \in C_0(\R_+)$ for all $u,v \in H$, and so $\langle \kappa_{\varrho}u,v \rangle_H = T(u,v) + iL(u,v) \in C_0(\R_+)$ for all $u,v \in H$.} $u,v \in H$, then $\spec(\hat{\g}) \subset \{\sigma \in \C;\; -m < \Im(\sigma) < 0\}$.
\end{proposition}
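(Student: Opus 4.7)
For the first assertion, I would read Remark~\ref{GeneratorResolvent}(2) directly as a surjectivity statement for $\hat{\g} - \sigma$. Given $\sigma\in\C$ with $\Im\sigma\notin[-m,0]$ and $\hat{v}\in\hat{\E}_{\max}$, lift $\hat v$ to a representative $v\in\Dom_{\max}$; the remark supplies $u\in\Dom_{\max}$ with $(\g-\sigma)u=v$, and if $v\in\Dom_{\min}$ then $u\in\Dom_{\min}$, so passing to the quotient yields $(\hat{\g}-\sigma)\hat{u}=\hat{v}$. Surjectivity plus $\dim\hat{\E}_{\max}<\infty$ then give invertibility and hence $\sigma\notin\spec(\hat{\g})$.

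For the semisimplicity statement I would exploit the bound $\|\hat{\kappa}_{\varrho}\|_{\L(\hat{\E}_{\max})}\le\|\kappa_{\varrho}\|_{\L(\Dom_{\max})}\le\max\{1,\varrho^m\}$, which descends to the quotient because $\Dom_{\min}$ is $\kappa_{\varrho}$-invariant (Lemma~\ref{Amaxmininv}). A rank-two Jordan chain $(\hat{\g}-\sigma_0)\hat{u}_2=\hat{u}_1\ne 0$, $(\hat{\g}-\sigma_0)\hat{u}_1=0$, would give $\hat{\kappa}_{\varrho}\hat{u}_2=\varrho^{i\sigma_0}\bigl(\hat{u}_2+i(\log\varrho)\hat{u}_1\bigr)$, whose norm on the chain is of order $\varrho^{-\Im\sigma_0}|\log\varrho|$; letting $\varrho\to 0$ when $\Im\sigma_0=0$, or $\varrho\to\infty$ when $\Im\sigma_0=-m$, violates the bound.

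The third assertion is the main point and my plan has two steps. \emph{Step 1: no atoms for $\g$.} By Stone's theorem $\kappa_{\varrho}=\varrho^{i\g}$ with $\g=\g^*$ on $H$, and the spectral theorem writes $\langle\kappa_{\varrho}u,u\rangle_H$ as the Fourier transform, evaluated at $\log\varrho$, of the positive spectral measure $\mu_u$; the hypothesis, together with the identity $\kappa_{\varrho}^*=\kappa_{1/\varrho}$, says this Fourier transform vanishes as $\log\varrho\to\pm\infty$. Averaging,
\begin{equation*}
\mu_u(\{\lambda\})=\lim_{T\to\infty}\frac{1}{2T}\int_{-T}^{T}\hat{\mu}_u(t)\,e^{-i\lambda t}\,dt=0
\end{equation*}
for every $\lambda\in\R$ by dominated convergence (bound $\|\mu_u\|$), so $E(\{\lambda\})=0$ for the spectral projector of $\g$. \emph{Step 2: a Bochner-averaging contradiction.} Assume $\sigma_0\in\spec(\hat{\g})$ with $\Im\sigma_0\in\{0,-m\}$; by Step~2 of the proposition $\sigma_0$ is semisimple, so lift an eigenvector $\hat{u}\ne 0$ to $u\in\Dom_{\max}\setminus\Dom_{\min}$ and set $w_{\varrho}=\kappa_{\varrho}u-\varrho^{i\sigma_0}u\in\Dom_{\min}$, continuous in $\varrho$ in the graph norm. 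For $\Im\sigma_0=0$ consider
\begin{equation*}
\Phi_T=\frac{1}{T}\int_0^T e^{i\sigma_0 s}w_{e^{-s}}\,ds\in\Dom_{\min},
\end{equation*}
and for $\Im\sigma_0=-m$ its analogue $\Psi_T=\frac{1}{T}\int_0^T e^{-i\sigma_0 s}w_{e^s}\,ds\in\Dom_{\min}$ (the integrands are uniformly bounded in graph norm, so the Bochner integrals lie in the closed subspace $\Dom_{\min}$). The identity $A_{\max}\kappa_{\varrho}=\varrho^{m}\kappa_{\varrho}A_{\max}$ from Lemma~\ref{Amaxmininv} produces an exponentially decaying factor $e^{-ms}$ in exactly one of $\Phi_T+u$ and $A_{\max}(\Phi_T+u)$ (respectively for $\Psi_T$), giving unconditional $H$-convergence to $0$ there; the complementary piece, written by the spectral theorem as $\int\frac{e^{iT(\cdot)}-1}{iT(\cdot)}\,dE(\lambda)(\cdot)$, tends to $E(\{\sigma_0\})(\cdot)=0$ or $E(\{\Re\sigma_0\})(\cdot)=0$ in $H$ by dominated convergence using the no-atom conclusion from Step~1. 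Hence $\Phi_T\to -u$ (resp.\ $\Psi_T\to -u$) in the graph norm, and graph-closedness of $\Dom_{\min}$ forces $u\in\Dom_{\min}$, the contradiction.

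The main obstacle is the mismatch of topologies: the hypothesis gives weak vanishing of $\kappa_{\varrho}u$ in $H$, which a priori says nothing about the orbit of $\hat{u}$ in $\hat{\E}_{\max}$. The averaging trick bridges this gap, but only because the scaling intertwiner for $A_{\max}$ forces the $A_{\max}$-component of the averaged orbit to decay, so that $H$-convergence can be upgraded to graph-norm convergence into the graph-closed subspace $\Dom_{\min}$.
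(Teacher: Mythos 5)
Your argument is correct, and for the third (and main) assertion it is genuinely different from the paper's. For the strip containment you invoke Remark~\ref{GeneratorResolvent}(2) for surjectivity of $\hat{\g}-\sigma$ and then use finite dimensionality; the paper instead reads the containment off the operator norm bound $\|\hat{\kappa}_{\varrho}\|_{\L(\hat{\E}_{\max})}\leq\max\{1,\varrho^m\}$, since a semisimple or non-semisimple eigenvalue $\sigma$ off the strip would make $\|\hat{\kappa}_{\varrho}\|$ grow like $\varrho^{-\Im\sigma}$, violating that bound — the same estimate that also gives semisimplicity on the boundary lines as you argue. Both routes are elementary. Where you diverge substantially is the boundary-eigenvalue exclusion. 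The paper stays entirely in the graph inner product: it computes
$\langle\kappa_{\varrho}u,v\rangle_A = \langle\kappa_{\varrho}u,v\rangle_H + \varrho^m\langle\kappa_{\varrho}A_{\max}u,A_{\max}v\rangle_H$
and observes that both terms tend to $0$ as $\varrho\to 0$ (and, with the $\varrho^{-m}$ normalization, as $\varrho\to\infty$), descends this to $\hat{\E}_{\max}$ through the orthogonal projection $\pi_{\max}$, and contradicts $|\langle\hat{\kappa}_{\varrho}\hat{u},\hat{u}\rangle_{\hat{\E}_{\max}}| = \|\hat{u}\|^2$ for a boundary eigenvector. Your route instead descends to $H$: it first extracts from the hypothesis (via Wiener averaging of $\hat{\mu}_u$) that the spectral measure of $\g$ has no atoms, then uses Bochner averaging of the $\Dom_{\min}$-valued orbit $w_\varrho = \kappa_\varrho u - \varrho^{i\sigma_0}u$, where the intertwining $A_{\max}\kappa_\varrho = \varrho^m\kappa_\varrho A_{\max}$ forces exponential decay in one component and the no-atoms conclusion forces Ces\`aro decay in the other, upgrading $H$-convergence to graph-norm convergence and forcing $u\in\Dom_{\min}$ by closedness. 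Both are correct; the paper's proof is shorter because it never needs the spectral measure of $\g$ or any averaging — the $\varrho^m$ weight alone closes the topology gap between $H$ and the graph norm that you rightly identify as the central obstacle. Your approach pays for working in $H$ but buys a stronger intermediate conclusion ($E(\{\lambda\})=0$ for all real $\lambda$) that might be of independent use. One small inaccuracy: the limit $\lim_{T\to\infty}\frac{1}{2T}\int_{-T}^T e^{-i\lambda t}\hat{\mu}_u(t)\,dt = 0$ is a Ces\`aro argument (split $[-T,T]$ into $[-T_0,T_0]$ and its complement), not dominated convergence as stated; the conclusion is unaffected.
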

\begin{proof}
Let $\pi_{\min} \in \L(\Dom_{\max})$ be the $A$-orthogonal projection onto $\Dom_{\min}$, and let $\pi_{\max} = 1 - \pi_{\min}$ be the $A$-orthogonal projection onto $\E_{\max} = \Dom_{\min}^{\perp}$. For $\hat{u} = u + \Dom_{\min} \in \hat{\E}_{\max}$ we have
$$
\|\hat{\kappa}_{\varrho}\hat{u}\|_{\hat{\E}_{\max}} = \|\pi_{\max}\kappa_{\varrho}\pi_{\max}u\|_A \leq \|\kappa_{\varrho}\|_{\L(\Dom_{\max})}\|\pi_{\max} u\|_{A} \leq \max\{1,\varrho^m\}\|\hat{u}\|_{\hat{\E}_{\max}}
$$
by Lemma~\ref{Amaxmininv}. Thus $\|\hat{\kappa}_{\varrho}\| \leq \max\{1,\varrho^m\}$, which implies the first claim regarding the eigenvalues of $\hat{\g}$.

We next prove the second claim regarding the absence of eigenvalues of $\hat{\g}$ on the lines $\Im(\sigma) = 0$ and $\Im(\sigma) = -m$ under the stated additional assumption on $\kappa_{\varrho}$. To this end, observe that the unitarity of $\kappa_{\varrho}$ implies that in fact both
$$
\lim\limits_{\varrho \to 0} \langle \kappa_{\varrho}u,v \rangle_H = \lim\limits_{\varrho \to \infty} \langle \kappa_{\varrho}u,v \rangle_H = 0
$$
for all $u,v \in H$. Moreover, for $u,v \in \Dom_{\max}$ we have
\begin{align*}
\lim\limits_{\varrho \to 0} \langle \kappa_{\varrho}u,v \rangle_A &= \lim\limits_{\varrho \to 0} \Bigl[\langle \kappa_{\varrho}u,v \rangle_H + \langle A_{\max}\kappa_{\varrho}u,A_{\max}v \rangle_H\Bigr] \\
&= \lim\limits_{\varrho \to 0} \Bigl[\langle \kappa_{\varrho}u,v \rangle_H + \varrho^m\langle \kappa_{\varrho}A_{\max}u,A_{\max}v \rangle_H\Bigr] = 0, \\
\lim\limits_{\varrho \to \infty} \varrho^{-m}\langle \kappa_{\varrho}u,v \rangle_A &= \lim\limits_{\varrho \to \infty} \Bigl[\varrho^{-m}\langle \kappa_{\varrho}u,v \rangle_H + \langle \kappa_{\varrho}A_{\max}u,A_{\max}v \rangle_H\Bigr] = 0.
\end{align*}
For $\hat{u} = u + \Dom_{\min}$ and $\hat{v} = v + \Dom_{\min}$ in $\hat{\E}_{\max}$ we have
$$
\langle \hat{\kappa}_{\varrho}\hat{u},\hat{v} \rangle_{\hat{\E}_{\max}} = \langle \pi_{\max}\kappa_{\varrho}\pi_{\max}u,\pi_{\max} v \rangle_A = \langle \kappa_{\varrho}\pi_{\max}u,\pi_{\max}v \rangle_A,
$$
and thus
$$
\lim\limits_{\varrho \to 0} \langle \hat{\kappa}_{\varrho}\hat{u},\hat{v} \rangle_{\hat{\E}_{\max}} = 0 \textup{ and }
\lim\limits_{\varrho \to \infty}\varrho^{-m}\langle \hat{\kappa}_{\varrho}\hat{u},\hat{v} \rangle_{\hat{\E}_{\max}} = 0.
$$
Consequently, if $\hat{\g}\hat{u} = \sigma\hat{u}$ with $\Im(\sigma) = 0$, then $\hat{\kappa}_{\varrho}\hat{u} = \varrho^{i\sigma}\hat{u}$ and thus
$$
\|\hat{u}\|_{\hat{\E}_{\max}}^2 = \bigl|\langle \hat{\kappa}_{\varrho}\hat{u},\hat{u} \rangle_{\hat{\E}_{\max}}\bigr| \to 0 \textup{ as } \varrho \to 0.
$$
Likewise, if $\hat{\g}\hat{u} = \sigma\hat{u}$ with $\Im(\sigma) = -m$, then $\hat{\kappa}_{\varrho}\hat{u} = \varrho^{i\sigma}\hat{u}$ and
$$
\|\hat{u}\|_{\hat{\E}_{\max}}^2 = \bigl|\varrho^{-m}\langle \hat{\kappa}_{\varrho}\hat{u},\hat{u} \rangle_{\hat{\E}_{\max}}\bigr| \to 0 \textup{ as } \varrho \to \infty.
$$
\end{proof}


\section{A Green formula for the adjoint pairing}\label{GreenAbstract}

\noindent
We next consider the adjoint pairing
\begin{align*}
[\cdot,\cdot]_A &: \Dom_{\max} \times \Dom_{\max} \to \C, \\
[u,v]_A &= \frac{1}{i}\Bigl[\langle A_{\max}u,v \rangle_H - \langle u,A_{\max}v \rangle_H\Bigr].
\end{align*}
This is a Hermitian sesquilinear form (the extra $\frac{1}{i}$-term renders it Hermitian rather than skew-Hermitian), and $[u,v]_A = 0$ for all $v \in \Dom_{\max}$ if and only if $u \in \Dom_{\min}$. Thus $[\cdot,\cdot]_A$ induces a nondegenerate Hermitian sesquilinear form
\begin{gather*}
[\cdot,\cdot]_{\hat{\E}_{\max}} : \hat{\E}_{\max} \times \hat{\E}_{\max} \to \C, \\
[\hat{u},\hat{v}]_{\hat{\E}_{\max}} = [u,v]_A
\end{gather*}
for $\hat{u} = u + \Dom_{\min}$ and $\hat{v} = v + \Dom_{\min}$.
Every domain $\Dom_{\min} \subset \Dom \subset \Dom_{\max}$ is determined by its projection
\begin{equation}\label{DProjection}
\hat{\E}_{\Dom} = \{\hat{u} = u + \Dom_{\min};\; u \in \Dom\} \subset \hat{\E}_{\max}.
\end{equation}
We have $\bigl(A_{\Dom}\bigr)^* = A_{\Dom^{[\perp]}}$, where
$$
\hat{\E}_{\Dom^{[\perp]}} = \hat{\E}_{\Dom}^{[\perp]} = \{\hat{v} \in \hat{\E}_{\max};\; [\hat{u},\hat{v}]_{\hat{\E}_{\max}} = 0 \;\forall \hat{u} \in \hat{\E}_{\Dom}\}.
$$
In particular, $A_{\Dom} = A_{\Dom}^*$ is selfadjoint if and only if
$$
\hat{\E}_{\Dom}^{[\perp]} = \hat{\E}_{\Dom}  \subset \bigl(\hat{\E}_{\max},[\cdot,\cdot]_{\hat{\E}_{\max}}\bigr)
$$
is a Lagrangian subspace.

\begin{remark}
We will use several results about finite-dimensional complex vector spaces $(V,[\cdot,\cdot])$ equipped with \emph{indefinite} Hermitian sesquilinear forms $[\cdot,\cdot] : V \times V \to \C$ below. We refer to \cite{Bognar,EverittMarkus,GohbergLancasterRodman} as general references. The invariants of $(V,[\cdot,\cdot])$ are $(m_0,m_+,m_-) \in \N_0^3$, where
\begin{align*}
m_0 &= \dim \{v \in V;\; [v,w]=0 \; \forall w \in V\}, \\
m_+ &= \max\bigl[\{\dim U_+;\; U_+ \subset V,\; [u,u] > 0 \; \forall 0 \neq u \in U_+\}\cup\{0\}\bigr], \\
m_- &= \max\bigl[\{\dim U_-;\; U_- \subset V,\; [u,u] < 0 \; \forall 0 \neq u \in U_-\}\cup\{0\}\bigr].
\end{align*}
We have $m_0 + m_+ + m_- = \dim V$. The \emph{signature} of $(V,[\cdot,\cdot])$ is
$$
\sig(V,[\cdot,\cdot]) = m_+ - m_-.
$$
For any basis $\{v_1,\ldots,v_n\}$ of $V$ consider the matrix $G = \bigl([v_i,v_j]\bigr)_{i,j=1}^n \in M_n(\C)$. Then $G^* = G$, and $m_0$ is the multiplicity of the eigenvalue $0$ of $G$, while $m_{\pm}$ is the total number (including multiplicities) of the positive and negative eigenvalues of $G$, respectively. $(V,[\cdot,\cdot])$ is nondegenerate if $m_0 = 0$.

A subspace $U \subset (V,[\cdot,\cdot])$ is isotropic if $U \subset U^{[\perp]}$, where
$$
U^{[\perp]} = \{v \in V;\; [v,u] = 0 \; \forall u \in U\},
$$
and Lagrangian if $U = U^{[\perp]}$.
If $(V,[\cdot,\cdot])$ is nondegenerate then Lagrangian subspaces $U$ exist if and only if $\sig(V,[\cdot,\cdot]) = 0$. In particular, $V$ must then be even-dimensional and $\dim U = \frac{\dim V}{2}$.
\end{remark}

\begin{lemma}\label{GroupInvariance}
We have
$$
[\hat{\kappa}_{\varrho}\hat{u},\hat{\kappa}_{\varrho}\hat{v}]_{\hat{\E}_{\max}} = \varrho^{m}[\hat{u},\hat{v}]_{\hat{\E}_{\max}}, \; \varrho > 0,
$$
and
$$
[(\hat{\g}+i\tfrac{m}{2})\hat{u},\hat{v}]_{\hat{\E}_{\max}} = [\hat{u},(\hat{\g}+i\tfrac{m}{2})\hat{v}]_{\hat{\E}_{\max}}.
$$
\end{lemma}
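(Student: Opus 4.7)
The two claims are tightly linked: the second is the infinitesimal version of the first, so my plan is to prove the first by direct computation and then differentiate.

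For the first identity, I would pick representatives $u,v \in \Dom_{\max}$ of $\hat u, \hat v$ and work with $[\kappa_\varrho u,\kappa_\varrho v]_A$. Since $\kappa_\varrho$ preserves $\Dom_{\min}$ and $\Dom_{\max}$ (Lemma~\ref{Amaxmininv}), this descends to the quotient and is well-defined. Rearranging \eqref{Amaxkappa} gives $A_{\max}\kappa_\varrho = \varrho^m \kappa_\varrho A_{\max}$, so
\begin{align*}
[\kappa_\varrho u,\kappa_\varrho v]_A &= \tfrac{1}{i}\bigl[\langle A_{\max}\kappa_\varrho u,\kappa_\varrho v\rangle_H - \langle \kappa_\varrho u,A_{\max}\kappa_\varrho v\rangle_H\bigr] \\
&= \tfrac{\varrho^m}{i}\bigl[\langle \kappa_\varrho A_{\max} u,\kappa_\varrho v\rangle_H - \langle \kappa_\varrho u,\kappa_\varrho A_{\max}v\rangle_H\bigr],
\end{align*}
and unitarity of $\kappa_\varrho$ on $H$ collapses both inner products to $\langle A_{\max}u,v\rangle_H$ and $\langle u,A_{\max}v\rangle_H$, yielding $\varrho^m[u,v]_A$.

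For the second identity, I would differentiate the first at $\varrho=1$. Since $\dim\hat\E_{\max}<\infty$, the action $\hat\kappa_\varrho = \varrho^{i\hat\g}$ is uniformly continuous and smooth in $\varrho$, and the sesquilinear form $[\cdot,\cdot]_{\hat\E_{\max}}$ is continuous. Writing $\varrho = e^s$ and differentiating $[\hat\kappa_{e^s}\hat u,\hat\kappa_{e^s}\hat v]_{\hat\E_{\max}} = e^{ms}[\hat u,\hat v]_{\hat\E_{\max}}$ at $s=0$, sesquilinearity (conjugate-linear in the second slot) gives
\[
i[\hat\g\hat u,\hat v]_{\hat\E_{\max}} - i[\hat u,\hat\g\hat v]_{\hat\E_{\max}} = m[\hat u,\hat v]_{\hat\E_{\max}}.
\]
Dividing by $i$ and rewriting $m[\hat u,\hat v] = i\tfrac{m}{2}[\hat u,\hat v] - (-i\tfrac{m}{2})[\hat u,\hat v] = [i\tfrac{m}{2}\hat u,\hat v] - [\hat u,i\tfrac{m}{2}\hat v]$ regroups the identity into the asserted symmetry of $\hat\g + i\tfrac{m}{2}$.

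The only thing that needs care is the sign bookkeeping introduced by the conjugate-linearity of the second slot of $[\cdot,\cdot]$; it is precisely this sign flip that produces the shift $+i\tfrac{m}{2}$ rather than $-i\tfrac{m}{2}$, and it explains why the multiplicative weight $\varrho^m$ in the first identity becomes an additive imaginary shift in the second. There is no real analytic obstacle, since everything happens in the finite-dimensional quotient where the one-parameter group $\hat\kappa_\varrho$ is smooth and the pairing is continuous.
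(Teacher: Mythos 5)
Your approach is essentially the same as the paper's: the first identity by direct computation using $A_{\max}\kappa_\varrho = \varrho^m\kappa_\varrho A_{\max}$ and unitarity, the second by differentiating the first at $\varrho = 1$. The paper packages the derivative step slightly differently (it rewrites $[\hat u,\hat v]=[\varrho^{-m/2}\hat\kappa_\varrho\hat u,\varrho^{-m/2}\hat\kappa_\varrho\hat v]$ and applies $\varrho D_\varrho$, so $\hat\g+i\tfrac{m}{2}$ emerges directly from the product rule), but the substance is the same.

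One small slip in your final rearrangement: you write $m[\hat u,\hat v]=i\tfrac{m}{2}[\hat u,\hat v]-(-i\tfrac{m}{2})[\hat u,\hat v]$, but the right-hand side equals $im[\hat u,\hat v]$, not $m[\hat u,\hat v]$. Starting from your correct display
\[
i[\hat\g\hat u,\hat v]_{\hat\E_{\max}} - i[\hat u,\hat\g\hat v]_{\hat\E_{\max}} = m[\hat u,\hat v]_{\hat\E_{\max}},
\]
multiply by $-i$ to get $[\hat\g\hat u,\hat v] - [\hat u,\hat\g\hat v] = -im[\hat u,\hat v]$; since $[i\tfrac{m}{2}\hat u,\hat v] - [\hat u,i\tfrac{m}{2}\hat v] = im[\hat u,\hat v]$ by conjugate-linearity in the second slot, moving this to the left gives $[(\hat\g+i\tfrac{m}{2})\hat u,\hat v] = [\hat u,(\hat\g+i\tfrac{m}{2})\hat v]$. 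So the conclusion is right, but the intermediate identity as written is off by a factor of $i$.
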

\begin{proof}
We have
\begin{align*}
i[\hat{\kappa}_{\varrho}\hat{u},\hat{\kappa}_{\varrho}\hat{v}]_{\hat{\E}_{\max}} &=
\langle A_{\max}\kappa_{\varrho}u,\kappa_{\varrho}v\rangle - \langle \kappa_{\varrho}u, A_{\max}\kappa_{\varrho}v\rangle \\
&= \varrho^m\bigl[
\langle \kappa_{\varrho} A_{\max}u,\kappa_{\varrho}v\rangle - \langle \kappa_{\varrho}u, \kappa_{\varrho}A_{\max}v\rangle\bigr] \\
&= \varrho^m\bigl[\langle A_{\max}u,v\rangle - \langle u, A_{\max}v\rangle\bigr] \\
&= i\varrho^m[\hat{u},\hat{v}]_{\hat{\E}_{\max}},
\end{align*}
proving the first claim. Consequently,
$$
[\hat{u},\hat{v}]_{\hat{\E}_{\max}} = [\varrho^{-m/2}\hat{\kappa}_{\varrho}\hat{u},\varrho^{-m/2}\hat{\kappa}_{\varrho}\hat{v}]_{\hat{\E}_{\max}}, \; \varrho > 0,
$$
and thus
\begin{align*}
0 &= (\varrho D_{\varrho})[\hat{u},\hat{v}]_{\hat{\E}_{\max}} \\
&= [(\hat{\g}+i\tfrac{m}{2})\varrho^{-m/2}\hat{\kappa}_{\varrho}\hat{u},\varrho^{-m/2}\hat{\kappa}_{\varrho}\hat{v}]_{\hat{\E}_{\max}} -
[\varrho^{-m/2}\hat{\kappa}_{\varrho}\hat{u},(\hat{\g}+i\tfrac{m}{2})\varrho^{-m/2}\hat{\kappa}_{\varrho}\hat{v}]_{\hat{\E}_{\max}}.
\end{align*}
Evaluation at $\varrho=1$ proves the second claim.
\end{proof}

\noindent
By Lemma~\ref{GroupInvariance}, 
$$
\hat{\h} = \hat{\g}+i\tfrac{m}{2} : \hat{\E}_{\max} \to \hat{\E}_{\max}
$$
is selfadjoint with respect to the (indefinite) inner product $[\cdot,\cdot]_{\hat{\E}_{\max}}$ on $\hat{\E}_{\max}$, and consequently the triple $\bigl(\hat{\E}_{\max},[\cdot,\cdot]_{\hat{\E}_{\max}},\hat{\h}\bigr)$ has a canonical form (see \cite[Theorem~5.1.1]{GohbergLancasterRodman}) that furnishes a complete description of the adjoint pairing in terms of the generalized eigenspaces of $\hat{\g}$ as stated below. We will refer to these statements as the \emph{Canonical Form Theorem} in the sequel, and it plays the role of an abstract Green formula for the adjoint pairing:

\begin{enumerate}
\item The eigenvalues of $\hat{\g}$ are symmetric about the line $\Im(\sigma)=-\frac{m}{2}$. Let
\begin{equation}\label{Reflection}
\C \ni \sigma \mapsto \sigma^{\star} = \overline{\sigma} - im \in \C
\end{equation}
denote the reflection about the line $\Im(\sigma)=-\frac{m}{2}$. Then
$$
\spec(\hat{\g}) \ni \sigma \mapsto \sigma^{\star} \in \spec(\hat{\g}),
$$
see Figure~\ref{CriticalStrip}.

\begin{figure}
\begin{center}
\begin{tikzpicture}\scriptsize
\fill[color=lightgray,opacity=0.5] (-3,0.5) -- (3,0.5) -- (3,2.5) -- (-3,2.5) -- (-3,0.5);
\draw[->] (0,0) -- (0,3.5) node[right] {$i\R$};
\draw[thick,->] (-3,2.5) -- (3,2.5) node[right] {$\R$};
\draw[thick] (-3,0.5) -- (3,0.5) node[right] {$\Im(\sigma) = -m$};
\draw[dashed] (-3,1.5) -- (3,1.5) node[right] {$\Im(\sigma) = -\frac{m}{2}$};
\fill (-2,2.1) circle (1.5pt) node[right] {$\sigma^\star$};
\fill (-2,0.9) circle (1.5pt) node[right] {$\sigma$};
\draw[dotted] (-2,0.9) -- (-2,2.1);
\fill (-1,1.8) circle (1.5pt);
\fill (-1,1.2) circle (1.5pt);
\fill (-.5,1.9) circle (1.5pt);
\fill (-.5,1.1) circle (1.5pt);
\fill (.5,1.9) circle (1.5pt);
\fill (.5,1.1) circle (1.5pt);
\fill (.5,2.2) circle (1.5pt);
\fill (.5,0.8) circle (1.5pt);
\fill (1.2,2.3) circle (1.5pt);
\fill (1.2,0.7) circle (1.5pt);
\fill (2.1,1.7) circle (1.5pt);
\fill (2.1,1.3) circle (1.5pt);
\fill (0.8,1.5) circle (1.5pt);
\fill (1.6,1.5) circle (1.5pt);
\end{tikzpicture}
\end{center}
\caption{Eigenvalues of $\hat{\g}$}\label{CriticalStrip}
\end{figure}
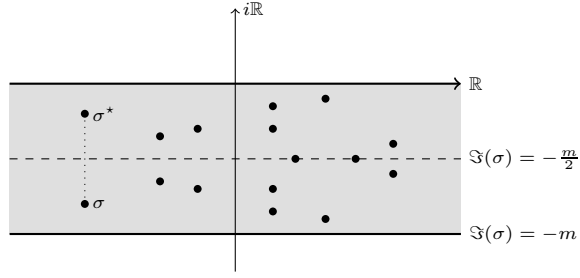

\item For $\sigma \in \spec(\hat{\g})$ let $\hat{\E}_{\sigma}$ be the generalized eigenspace for the eigenvalue $\sigma$. Then
$$
\hat{\E}_{\max} = \bigoplus\limits_{\substack{\sigma\in\spec(\hat{\g}) \\ \Im(\sigma)=-\frac{m}{2}}}\hat{\E}_{\sigma} \; \oplus 
\bigoplus\limits_{\substack{\sigma\in\spec(\hat{\g}) \\ \Im(\sigma)<-\frac{m}{2}}}\bigl[\hat{\E}_{\sigma}\oplus\hat{\E}_{\sigma^\star}\bigr],
$$
and this direct sum is orthogonal with respect to $[\cdot,\cdot]_{\hat{\E}_{\max}}$. More precisely, if $\hat{u} \in \hat{\E}_{\sigma_0}$ and $\hat{v} \in \hat{\E}_{\sigma_1}$ with $\sigma_1 \neq \sigma_0^{\star}$ then $[\hat{u},\hat{v}]_{\hat{\E}_{\max}} = 0$, and
$$
[\cdot,\cdot]_{\hat{\E}_{\max}} : \hat{\E}_{\sigma_0} \times \hat{\E}_{\sigma_0^{\star}} \to \C
$$
is nondegenerate.
\item For every $\sigma \in \spec(\hat{\g})$ with $\Im(\sigma) < -\frac{m}{2}$ there exist decompositions
$$
\hat{\E}_{\sigma} = \bigoplus\limits_{j=1}^{m_{\sigma}}\hat{\E}_{\sigma,j} \textup{ and }
\hat{\E}_{\sigma^\star} = \bigoplus\limits_{j=1}^{m_{\sigma}}\hat{\E}_{\sigma^\star,j}
$$
such that for every $\hat{u}_j \in \hat{\E}_{\sigma,j}$ and $\hat{v}_k \in \hat{\E}_{\sigma^{\star},k}$ we have $[\hat{u}_j,\hat{v}_k]_{\hat{\E}_{\max}} = 0$ whenever $j \neq k$.
Furthermore, for each $j \in \{1,\ldots,m_{\sigma}\}$, both spaces $\hat{\E}_{\sigma,j}$ and $\hat{\E}_{\sigma^{\star},j}$ are invariant under $\hat{\g}$, and there exist ordered bases $\{\hat{u}_1,\ldots,\hat{u}_{n_j}\}$ of $\hat{\E}_{\sigma,j}$ and $\{\hat{v}_1,\ldots,\hat{v}_{n_j}\}$ of $\hat{\E}_{\sigma^{\star},j}$ such that the matrix representation of $\hat{\g}$ in these bases is given by a single Jordan block of size $n_j\times n_j$ with eigenvalue $\sigma$ and $\sigma^{\star}$, respectively, and we have\footnote{The matrix $\begin{pmatrix} \delta_{\mu,N+1-\nu} \end{pmatrix}_{\mu,\nu=1}^N$ is called SIP matrix in \cite{GohbergLancasterRodman}, where the acronym stands for standard involutary permutation.}
$$
[\hat{u}_{\mu},\hat{v}_{\nu}]_{\hat{\E}_{\max}} = \delta_{\mu,n_j+1-\nu}
= \begin{cases}
1 & \mu+\nu=n_j+1, \\
0 & \textup{otherwise}.
\end{cases}
$$
\item For every $\sigma \in \spec(\hat{\g})$ with $\Im(\sigma) = -\frac{m}{2}$ there exists a decomposition
$$
\hat{\E}_{\sigma} = \bigoplus\limits_{j=1}^{m_{\sigma}}\hat{\E}_{\sigma,j}
$$
such that for every $\hat{u}_j \in \hat{\E}_{\sigma,j}$ and $\hat{v}_k \in \hat{\E}_{\sigma,k}$ we have $[\hat{u}_j,\hat{v}_k]_{\hat{\E}_{\max}} = 0$ whenever $j \neq k$.
Furthermore, for each $j \in \{1,\ldots,m_{\sigma}\}$, $\hat{\E}_{\sigma,j}$ is invariant under $\hat{\g}$, and there exists an ordered basis $\{\hat{u}_1,\ldots,\hat{u}_{n_j}\}$ of $\hat{\E}_{\sigma,j}$ such that the matrix representation of $\hat{\g}$ is given by a single Jordan block of size $n_j\times n_j$ with eigenvalue $\sigma$, and we either have
\begin{equation}\label{PlusEins}
[\hat{u}_{\mu},\hat{u}_{\nu}]_{\hat{\E}_{\max}} = \delta_{\mu,n_j+1-\nu}
= \begin{cases}
1 & \mu+\nu=n_j+1, \\
0 & \textup{otherwise},
\end{cases}
\end{equation}
or
\begin{equation}\label{MinusEins}
[\hat{u}_{\mu},\hat{u}_{\nu}]_{\hat{\E}_{\max}} = -\delta_{\mu,n_j+1-\nu}
= \begin{cases}
-1 & \mu+\nu=n_j+1, \\
0 & \textup{otherwise}.
\end{cases} \\
\end{equation}
The resulting collection of signs $+1$, associated to the canonical form \eqref{PlusEins}, or $-1$, associated to the canonical form \eqref{MinusEins}, for $\hat{\E}_{\sigma}$ is unique in the sense that different decompositions of $\hat{\E}_{\sigma}$ into subspaces $\hat{\E}_{\sigma,j}$ as above that result in Jordan blocks for $\hat{\g}$, are orthogonal with respect to $[\cdot,\cdot]_{\hat{\E}_{\max}}$, and furnish the canonical forms \eqref{PlusEins} or \eqref{MinusEins} for $[\cdot,\cdot]_{\hat{\E}_{\max}}$ associated with each Jordan block, yield the same total collection of plus and minus signs for Jordan blocks of equal sizes. Following \cite{GohbergLancasterRodman}, this collection of signs is referred to as the \emph{sign characteristic} of $\hat{\g}$ with respect to $[\cdot,\cdot]_{\hat{\E}_{\max}}$. Note that the sign characteristic is associated to eigenvalues with $\Im(\sigma) = -\frac{m}{2}$ only.
\end{enumerate}

\noindent
We further elaborate on the sign characteristic for eigenvalues located on the line $\Im(\sigma)=-\frac{m}{2}$ and proceed to describe it invariantly: Let $n_{\sigma} \in \N$ be such that
$$
\hat{\E}_{\sigma} = \ker\bigl(\hat{\g}-\sigma\bigr)^{n_{\sigma}} \supsetneq \ker\bigl(\hat{\g}-\sigma\bigr)^{n_{\sigma}-1} \supsetneq \ldots \supsetneq \ker\bigl(\hat{\g}-\sigma\bigr) \supsetneq \{0\}.
$$
For each $\ell \in \{1,\ldots,n_{\sigma}\}$ consider
\begin{equation}\label{EllForm}
\begin{gathered}{}
[\cdot,\cdot]_{\ell} : \ker\bigl(\hat{\g}-\sigma\bigr)^{\ell}\times\ker\bigl(\hat{\g}-\sigma\bigr)^{\ell} \to \C, \\
[\hat{u},\hat{v}]_{\ell} = [(\hat{\g}-\sigma)^{\ell-1}\hat{u},\hat{v}]_{\hat{\E}_{\max}}.
\end{gathered}
\end{equation}
This is a Hermitian sesquilinear form. Let $(m_0,m_+,m_-)$ be its invariants.

\begin{proposition}\label{SignCharacteristic}
We have
$$
m_0 = \dim\ker\bigl(\hat{\g}-\sigma\bigr)^{\ell+1} - \dim\ker\bigl(\hat{\g}-\sigma\bigr)^{\ell} + \dim\ker\bigl(\hat{\g}-\sigma\bigr)^{\ell-1},
$$
and
$$
m_+ + m_- = \textup{Number of Jordan blocks of size $\ell\times\ell$ for the eigenvalue $\sigma$.}
$$
The sign characteristic associated with the eigenvalue $\sigma$ and Jordan blocks of size $\ell\times\ell$ is then given by $(m_+,m_-)$, where $m_+$ is the number of $+1$ signs, and $m_-$ is the number of $-1$ signs.
\end{proposition}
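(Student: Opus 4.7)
The plan is to reduce everything to a single-Jordan-block computation by exploiting the orthogonal decomposition of $\hat{\E}_\sigma$ furnished by item (4) of the Canonical Form Theorem preceding the proposition. First I would verify that $[\cdot,\cdot]_\ell$ is genuinely Hermitian. Since $\Im(\sigma)=-\frac{m}{2}$, the scalar $\sigma+i\frac{m}{2}$ is real, so $\hat{\g}-\sigma = \hat{\h}-(\sigma+i\frac{m}{2})$ is selfadjoint with respect to $[\cdot,\cdot]_{\hat{\E}_{\max}}$ by Lemma~\ref{GroupInvariance}, and hence so is the power $(\hat{\g}-\sigma)^{\ell-1}$ appearing in \eqref{EllForm}. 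The same observation shows that the decomposition $\hat{\E}_\sigma = \bigoplus_j \hat{\E}_{\sigma,j}$, which is $[\cdot,\cdot]_{\hat{\E}_{\max}}$-orthogonal, remains orthogonal for $[\cdot,\cdot]_\ell$ after intersecting with $\ker(\hat{\g}-\sigma)^\ell$. The invariants $(m_0, m_+, m_-)$ of $[\cdot,\cdot]_\ell$ are therefore additive over the blocks.

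The core step is the contribution from a single block $\hat{\E}_{\sigma,j}$ of size $n_j$ carrying sign $\epsilon_j\in\{\pm 1\}$, computed from the canonical basis $\{\hat{u}_1,\ldots,\hat{u}_{n_j}\}$ and the SIP pairing $[\hat{u}_\mu,\hat{u}_\nu]_{\hat{\E}_{\max}} = \epsilon_j\delta_{\mu, n_j+1-\nu}$ together with the Jordan relation for $\hat{\g}-\sigma$. Three cases arise. If $n_j < \ell$, then $(\hat{\g}-\sigma)^{\ell-1}$ annihilates the whole block, so $[\cdot,\cdot]_\ell$ vanishes and the contribution to $(m_0, m_+, m_-)$ is $(n_j, 0, 0)$. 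If $n_j > \ell$, then on $\ker(\hat{\g}-\sigma)^\ell\cap\hat{\E}_{\sigma,j}$ the image of $(\hat{\g}-\sigma)^{\ell-1}$ is spanned by a single eigenvector which is SIP-paired only against the top of the chain, outside $\ker(\hat{\g}-\sigma)^\ell$, so $[\cdot,\cdot]_\ell \equiv 0$ with contribution $(\ell, 0, 0)$. If $n_j = \ell$, a direct inspection yields a single nonzero Gram entry equal to $\epsilon_j$, producing contribution $(\ell-1, 1, 0)$ or $(\ell-1, 0, 1)$ depending on the sign.

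Summing over $j$ and letting $N_k$ denote the number of Jordan blocks of size $k$ gives $m_+ + m_- = N_\ell$, which is the second claim, and simultaneously reveals $(m_+, m_-)$ as the count of $+1$ and $-1$ signs attached to blocks of size $\ell$, which is the third claim (the uniqueness clause of item (4) guarantees these counts are true invariants). For the formula for $m_0$, the block tallies above combine to $m_0 = d_\ell - N_\ell$, where $d_\ell = \dim\ker(\hat{\g}-\sigma)^\ell = \sum_k \min(k,\ell)N_k$, and the elementary telescoping identity $d_\ell - d_{\ell-1} = \sum_{k\geq\ell} N_k$ rearranges this to $m_0 = d_{\ell+1} - d_\ell + d_{\ell-1}$, which is the first claim. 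The main point throughout, rather than any serious obstacle, is the single-block calculation with the SIP pairing; once that is carried out, the rest is additive bookkeeping.
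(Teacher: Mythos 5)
Your proof is correct, but it takes a genuinely different route from the paper. The paper proves the $m_0$ formula by a basis-free computation: it characterizes the radical of $[\cdot,\cdot]_\ell$ intrinsically, showing that $\hat v \in \ker(\hat\g-\sigma)^\ell$ lies in the radical if and only if $\hat v \in \ker(\hat\g-\sigma)^{\ell-1}+\ran(\hat\g-\sigma)\cap\ker(\hat\g-\sigma)^\ell$, and then counts dimensions. The $m_++m_-$ count then drops out of $m_0$ by subtraction, and only the final sign-characteristic claim invokes the Canonical Form Theorem and explicit Jordan bases. You instead front-load the Canonical Form Theorem: you decompose $\hat{\E}_\sigma$ into mutually orthogonal blocks, observe (correctly) that $(\hat\g-\sigma)$-invariance of each block propagates the orthogonality to $[\cdot,\cdot]_\ell$, reduce to a single-block SIP computation in the three cases $n_j<\ell$, $n_j=\ell$, $n_j>\ell$, and recover the $m_0$ identity at the end from the telescoping $d_{\ell+1}-d_\ell=\sum_{k>\ell}N_k$. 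The computations and final tallies all check out. What the paper's route buys is that the $m_0$ and $m_++m_-$ formulas are obtained \emph{without} canonical forms, which makes them manifestly independent of any block decomposition; what your route buys is a concrete hands-on verification in which all three claims follow from one uniform block calculation and simple bookkeeping. You also explicitly verify that $[\cdot,\cdot]_\ell$ is Hermitian (via selfadjointness of $\hat\g-\sigma$ when $\Im(\sigma)=-m/2$), which the paper asserts without proof. Both approaches are sound; the paper even remarks that the proposition could be deduced directly from the relevant refinement of the Canonical Form Theorem in Gohberg--Lancaster--Rodman, which is essentially the spirit of your argument.
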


\begin{proof}
The proposition can be deduced from \cite[Theorem~5.8.1]{GohbergLancasterRodman}, but we choose to give a direct proof here.

We first show the claim regarding $m_0$. To that end, we are going to prove that if $\hat{v} \in \ker\bigl(\hat{\g}-\sigma\bigr)^{\ell}$ such that $[\hat{u},\hat{v}]_{\ell}=0$ for all $\hat{u} \in \ker\bigl(\hat{\g}-\sigma\bigr)^{\ell}$ then $\hat{v} \in \ker\bigl(\hat{\g}-\sigma\bigr)^{\ell-1} + \ran\bigl(\hat{\g}-\sigma\bigr)\cap\ker\bigl(\hat{\g}-\sigma\bigr)^{\ell}$, and vice versa. Indeed, if
$$
[\hat{u},\hat{v}]_{\ell} = [(\hat{\g}-\sigma)^{\ell-1}\hat{u},\hat{v}]_{\hat{\E}_{\max}} = [\hat{u},(\hat{\g}-\sigma)^{\ell-1}\hat{v}]_{\hat{\E}_{\max}} = 0
$$
for all $\hat{u} \in \ker\bigl(\hat{\g}-\sigma\bigr)^{\ell}$, then $(\hat{\g}-\sigma)^{\ell-1}\hat{v} \in \bigl[\ker\bigl(\hat{\g}-\sigma\bigr)^{\ell}\bigr]^{[\perp]} = \ran\bigl(\hat{\g}-\sigma\bigr)^{\ell}$. Consequently, there exists $\hat{w} \in \hat{\E}_{\max}$ such that $(\hat{\g}-\sigma)^{\ell-1}\hat{v} = (\hat{\g}-\sigma)^{\ell}\hat{w}$, so $\hat{v} - (\hat{\g}-\sigma)\hat{w} = \hat{u}_0 \in \ker\bigl(\hat{\g}-\sigma\bigr)^{\ell-1}$. Thus
$$
\hat{v} = \hat{u}_0 + (\hat{\g}-\sigma)\hat{w} \in \ker\bigl(\hat{\g}-\sigma\bigr)^{\ell-1} + \ran\bigl(\hat{\g}-\sigma\bigr)\cap\ker\bigl(\hat{\g}-\sigma\bigr)^{\ell}.
$$
Conversely, if
$$
\hat{v} = \hat{u}_0 + (\hat{\g}-\sigma)\hat{w} \in \ker\bigl(\hat{\g}-\sigma\bigr)^{\ell-1} + \ran\bigl(\hat{\g}-\sigma\bigr)\cap\ker\bigl(\hat{\g}-\sigma\bigr)^{\ell},
$$
then $(\hat{\g}-\sigma)^{\ell-1}\hat{v} = (\hat{\g}-\sigma)^{\ell}\hat{w}$ and so
$$
[\hat{u},\hat{v}]_{\ell} = [\hat{u},(\hat{\g}-\sigma)^{\ell-1}\hat{v}]_{\hat{\E}_{\max}}
= [\hat{u},(\hat{\g}-\sigma)^{\ell}\hat{w}]_{\hat{\E}_{\max}} = [(\hat{\g}-\sigma)^{\ell}\hat{u},\hat{w}]_{\hat{\E}_{\max}} = 0
$$
for all $\hat{u} \in \ker\bigl(\hat{\g}-\sigma\bigr)^{\ell}$. Now
$$
\dim\bigl[\ran\bigl(\hat{\g}-\sigma\bigr)\cap\ker\bigl(\hat{\g}-\sigma\bigr)^{\ell}\bigr] = \dim\ker\bigl(\hat{\g}-\sigma\bigr)^{\ell+1} - \dim\ker(\hat{\g}-\sigma)
$$
as $\hat{\g}-\sigma : \ker\bigl(\hat{\g}-\sigma\bigr)^{\ell+1} \to \ran\bigl(\hat{\g}-\sigma\bigr)\cap\ker\bigl(\hat{\g}-\sigma\bigr)^{\ell}$ is surjective. So
\begin{align*}
m_0 &= \dim\bigl[\ker\bigl(\hat{\g}-\sigma\bigr)^{\ell-1} + \ran\bigl(\hat{\g}-\sigma\bigr)\cap\ker\bigl(\hat{\g}-\sigma\bigr)^{\ell}\bigr] \\
&= \dim\ker\bigl(\hat{\g}-\sigma\bigr)^{\ell-1} + \dim\bigl[\ran\bigl(\hat{\g}-\sigma\bigr)\cap\ker\bigl(\hat{\g}-\sigma\bigr)^{\ell}\bigr] \\
&\qquad\qquad - \dim\bigl[\ran\bigl(\hat{\g}-\sigma\bigr)\cap\ker\bigl(\hat{\g}-\sigma\bigr)^{\ell-1}\bigr] \\
&= \dim\ker\bigl(\hat{\g}-\sigma\bigr)^{\ell+1} - \dim\ker\bigl(\hat{\g}-\sigma\bigr)^{\ell} + \dim\ker\bigl(\hat{\g}-\sigma\bigr)^{\ell-1}.
\end{align*}
Furthermore,
\begin{align*}
m_+ + m_- &= \dim\ker\bigl(\hat{\g}-\sigma\bigr)^{\ell} - m_0 \\
&= \bigl[\dim\ker\bigl(\hat{\g}-\sigma\bigr)^{\ell} - \dim\ker\bigl(\hat{\g}-\sigma\bigr)^{\ell-1}\bigr] \\
&\qquad\qquad - \bigl[\dim\ker\bigl(\hat{\g}-\sigma\bigr)^{\ell+1} - \dim\ker\bigl(\hat{\g}-\sigma\bigr)^{\ell}\bigr] \\
&= \textup{Number of Jordan blocks of size $\ell\times\ell$ for the eigenvalue $\sigma$.}
\end{align*}
Now apply the Canonical Form Theorem from \cite[Theorem~5.1.1]{GohbergLancasterRodman} as previously described, and write
$$
\bigoplus\limits_{j=1}^{m_+ + m_-}\hat{\E}^{\ell}_{\sigma,j} \subset \ker\bigl(\hat{\g}-\sigma\bigr)^{\ell},
$$
where this direct sum is orthogonal with respect to $[\cdot,\cdot]_{\hat{\E}_{\max}}$, and the $\hat{\E}^{\ell}_{\sigma,j}$ are associated to $\ell\times\ell$ Jordan blocks for $\hat{\g}$ with eigenvalue $\sigma$. Moreover, let $\hat{u}_{j,1},\ldots,\hat{u}_{j,\ell}$ be a Jordan basis of $\hat{\E}^{\ell}_{\sigma,j}$ for each $j=1,\ldots,(m_+ + m_-)$ as described in the canonical form, such that for each $j$ we either have $[\hat{u}_{j,\ell},\hat{u}_{j,\ell}]_{\ell} = 1$ or $[\hat{u}_{j,\ell},\hat{u}_{j,\ell}]_{\ell} = -1$. Let
\begin{align*}
U_+ &= \Span\{\hat{u}_{j,\ell};\; [\hat{u}_{j,\ell},\hat{u}_{j,\ell}]_{\ell} = 1\}, \\
U_- &= \Span\{\hat{u}_{j,\ell};\; [\hat{u}_{j,\ell},\hat{u}_{j,\ell}]_{\ell} = -1\}.
\end{align*}
Because $[\hat{u}_{j,\ell},\hat{u}_{k,\ell}]_{\ell} = 0$ for $j \neq k$ we then have $[\hat{u},\hat{u}]_{\ell} > 0$ for all $0 \neq \hat{u} \in U_+$, and $[\hat{u},\hat{u}]_{\ell} < 0$ for all $0 \neq \hat{u} \in U_-$. Thus $m_{\pm} \geq \dim U_{\pm}$, but because $m_+ + m_- = \dim U_{+} + \dim U_{-}$ we have $m_{\pm} = \dim U_{\pm}$, proving the claim about the sign characteristic associated to Jordan blocks of size $\ell\times\ell$ for the eigenvalue $\sigma$.
\end{proof}

\noindent
In the sequel we will also write $m_{0}(\sigma,\ell)$ and $m_{\pm}(\sigma,\ell)$ for the invariants of the Hermitian sesquilinear form \eqref{EllForm} when the context warrants specific reference to the eigenvalue $\sigma$ and the size $\ell$. The Canonical Form Theorem implies the following algebraic signature formula and results about the existence of selfadjoint extensions of the operator $A$  (see also \cite[Corollary~5.2.1]{GohbergLancasterRodman}).

\begin{theorem}\label{SelfadjointExtensions}
\begin{enumerate}
\item The signature of $\bigl(\hat{\E}_{\max},[\cdot,\cdot]_{\hat{\E}_{\max}}\bigr)$ is given by
$$
\sig\bigl(\hat{\E}_{\max},[\cdot,\cdot]_{\hat{\E}_{\max}}\bigr) = \sum\limits_{\substack{\sigma \in \spec(\hat{\g}) \\ \Im(\sigma) = -\frac{m}{2}}}\sum\limits_{\ell \; \textup{odd}}\bigl(m_+(\sigma,\ell) - m_-(\sigma,\ell)\bigr).
$$
In particular, the operator $A$ admits selfadjoint extensions $A_{\Dom} = A_{\Dom}^*$ if and only if
$$
\sum\limits_{\substack{\sigma \in \spec(\hat{\g}) \\ \Im(\sigma) = -\frac{m}{2}}}\sum\limits_{\ell \; \textup{odd}}\bigl(m_+(\sigma,\ell) - m_-(\sigma,\ell)\bigr) = 0.
$$
\item The operator $A$ admits an invariant selfadjoint extensions if and only if
$$
\sum\limits_{\ell \; \textup{odd}}\bigl(m_+(\sigma,\ell) - m_-(\sigma,\ell)\bigr) = 0
$$
for every $\sigma \in \spec(\hat{\g})$ with $\Im(\sigma) = -\frac{m}{2}$.
\end{enumerate}
\end{theorem}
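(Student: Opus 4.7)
The plan is to invoke the Canonical Form Theorem block by block, reducing both assertions to explicit signature calculations on the pieces appearing in the decomposition of $\hat{\E}_{\max}$. Two general facts underlie the argument: in a finite-dimensional nondegenerate Hermitian space a Lagrangian subspace exists if and only if the signature vanishes, so the existence of a selfadjoint extension of $A$ is equivalent to $\sig(\hat{\E}_{\max},[\cdot,\cdot]_{\hat{\E}_{\max}})=0$; and invariance of $\Dom$ under $\hat{\kappa}_{\varrho}$ for every $\varrho>0$ is, in the finite-dimensional quotient, equivalent to $\hat{\g}$-invariance of $\hat{\E}_{\Dom}$, obtained by differentiating in $\varrho$ at $\varrho=1$.

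For part (1), the orthogonality statements in the Canonical Form Theorem make the signature additive across the off-line pieces $\hat{\E}_\sigma\oplus\hat{\E}_{\sigma^\star}$ and the on-line pieces $\hat{\E}_\sigma$. On each matched Jordan pair in an off-line piece, the Gram matrix in the canonical basis has the block form $\bigl(\begin{smallmatrix}0&J\\J&0\end{smallmatrix}\bigr)$ with $J$ the SIP matrix; since $J^2=\textup{I}$, the characteristic polynomial is $(\lambda^2-1)^{n_j}$ and the signature is zero. On each on-line Jordan block of size $\ell$ carrying sign $\epsilon\in\{\pm 1\}$, the Gram matrix is $\epsilon J$; again $J^2=\textup{I}$ forces eigenvalues in $\{\pm 1\}$, and $\tr(J)$ equals $1$ for odd $\ell$ and $0$ for even $\ell$, so the signature of $\epsilon J$ equals $\epsilon$ for odd $\ell$ and $0$ for even $\ell$. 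Summing over $\sigma$ and $\ell$ and identifying the counts with $m_\pm(\sigma,\ell)$ via Proposition~\ref{SignCharacteristic} yields the asserted formula, and the Lagrangian criterion then gives the selfadjointness statement.

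For part (2), every $\hat{\g}$-invariant subspace of $\hat{\E}_{\max}$ splits as a direct sum of subspaces of the generalized eigenspaces $\hat{\E}_\sigma$. Off-line pieces impose no constraint: by item~2 of the Canonical Form Theorem the pairing $[\cdot,\cdot]_{\hat{\E}_{\max}}$ vanishes on $\hat{\E}_\sigma$ when $\sigma\neq\sigma^\star$, so $\hat{\E}_\sigma$ is $\hat{\g}$-invariant, totally isotropic, and of exactly half the dimension of $\hat{\E}_\sigma\oplus\hat{\E}_{\sigma^\star}$, hence a $\hat{\g}$-invariant Lagrangian of that piece. The problem therefore reduces to producing a $\hat{\g}$-invariant Lagrangian in each on-line $\hat{\E}_\sigma$. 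Necessity of the stated signature vanishing is immediate from part (1) applied to $\hat{\E}_\sigma$ alone. Sufficiency I would handle by explicit construction from the canonical form: in each even-sized Jordan block the first-half chain $\Span\{\hat{u}_1,\ldots,\hat{u}_{\ell/2}\}$ is already a $\hat{\g}$-invariant Lagrangian of that block; odd-sized blocks, which the signature-zero assumption lets us group into opposite-sign pairs, are then handled jointly by diagonal combinations along Jordan chains, such as $\Span\{\hat{u}_k+\hat{v}_k\}$ for equal-sized pairs carrying opposite signs.

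The main obstacle I foresee is completing the construction in part (2) when opposite-sign odd blocks within a single $\hat{\E}_\sigma$ have unequal lengths, so that the naive diagonal construction no longer yields a $\hat{\g}$-invariant isotropic subspace of the correct dimension. The remedy is to form a hybrid chain, pairing initial segments of both blocks and introducing a carefully chosen central vector of the form $\hat{u}_{p+1}+c\,\hat{v}_{q+1}$ with $|c|$ tuned so that the opposing contributions to $[\cdot,\cdot]_{\hat{\E}_{\max}}$ cancel; $\hat{\g}$-invariance is then verified by tracking how $\hat{\g}-\sigma$ propagates along each chain. This is the block-level linear-algebraic content behind the sign characteristic, after which both assertions of the theorem follow directly from the signature formula of part (1).
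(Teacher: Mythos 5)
Your proposal is correct and takes essentially the same route as the paper: additivity of the signature over the Canonical Form blocks for part (1), with the on-line odd block contributing $\pm 1$ and all other pieces contributing $0$, followed by a block-level construction of a $\hat{\g}$-invariant Lagrangian for part (2). The obstacle you flag---opposite-sign odd blocks of unequal sizes---is handled precisely by your proposed remedy, which is the paper's construction: take the first-half chains of each block together with the sum of the two middle vectors, and since the canonical normalization makes each middle vector's self-pairing exactly $\pm 1$, the constant $c$ can simply be taken to be $1$.
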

\begin{proof}
Let $(0,M_+,M_-)$ be the invariants of the form $[\cdot,\cdot]_{\hat{\E}_{\max}}$. We have
$$
\hat{\E}_{\max} = \bigoplus\limits_{\substack{\sigma\in\spec(\hat{\g}) \\ \Im(\sigma)=-\frac{m}{2}}}\hat{\E}_{\sigma} \; \oplus 
\bigoplus\limits_{\substack{\sigma\in\spec(\hat{\g}) \\ \Im(\sigma)<-\frac{m}{2}}}\bigl[\hat{\E}_{\sigma}\oplus\hat{\E}_{\sigma^\star}\bigr],
$$
where this direct sum is orthogonal with respect to $[\cdot,\cdot]_{\hat{\E}_{\max}}$, and the restriction of $[\cdot,\cdot]_{\hat{\E}_{\max}}$ to each of the direct summands is nondegenerate by the Canonical Form Theorem \cite[Theorem~5.1.1]{GohbergLancasterRodman}. Consequently,
$$
M_+ - M_- = \sum\limits_{\substack{\sigma \in \spec(\hat{\g}) \\ \Im(\sigma) = -\frac{m}{2}}}\bigl(M_+(\hat{\E}_{\sigma}) -
M_-(\hat{\E}_{\sigma})\bigr) +
\sum\limits_{\substack{\sigma \in \spec(\hat{\g}) \\ \Im(\sigma) < -\frac{m}{2}}}\bigl(M_+(\hat{\E}_{\sigma}\oplus\hat{\E}_{\sigma^\star}) - M_-(\hat{\E}_{\sigma}\oplus\hat{\E}_{\sigma^\star})\bigr),
$$
where $M_{\pm}(\hat{U})$ are the invariants of the restriction $[\cdot,\cdot]_{\hat{\E}_{\max}} : \hat{U} \times \hat{U} \to \C$ to the indicated subspace $\hat{U}$. For each $\sigma \in \spec(\hat{\g})$ with $\Im(\sigma) < -\frac{m}{2}$ the Canonical Form Theorem implies that $\hat{\E}_{\sigma} \subset \bigl(\hat{\E}_{\sigma}\oplus\hat{\E}_{\sigma^\star},[\cdot,\cdot]_{\hat{\E}_{\max}}\bigr)$ is Lagrangian, and thus
$$
M_+(\hat{\E}_{\sigma}\oplus\hat{\E}_{\sigma^\star}) - M_-(\hat{\E}_{\sigma}\oplus\hat{\E}_{\sigma^\star}) = 0.
$$
For $\sigma \in \spec(\hat{\g})$ with $\Im(\sigma) = -\frac{m}{2}$ we prove next that
$$
M_+(\hat{\E}_{\sigma}) - M_-(\hat{\E}_{\sigma}) = \sum\limits_{\ell \; \textup{odd}}\bigl(m_+(\sigma,\ell) - m_-(\sigma,\ell)\bigr).
$$
By the Canonical Form Theorem we have
$$
\hat{\E}_{\sigma} = \bigoplus\limits_{j=1}^{m_{\sigma}}\hat{\E}_{\sigma,j}
$$
which is orthogonal with respect to $[\cdot,\cdot]_{\hat{\E}_{\max}}$, the restriction of $[\cdot,\cdot]_{\hat{\E}_{\max}}$ to each of the direct summands is nondegenerate, and each $\hat{\E}_{\sigma,j}$ is associated to a Jordan block that either contributes a $+1$ or a $-1$ to the sign characteristic. We have
$$
M_+(\hat{\E}_{\sigma}) - M_-(\hat{\E}_{\sigma}) = \sum\limits_{j=1}^{m_{\sigma}}\bigl(M_+(\hat{\E}_{\sigma,j}) - M_-(\hat{\E}_{\sigma,j})\bigr).
$$
If $\dim\hat{\E}_{\sigma,j}$ is even and spanned by $\{\hat{u}_1,\ldots,\hat{u}_{2k}\}$ with either \eqref{PlusEins} or \eqref{MinusEins}, then
$$
\hat{U} = \Span\{\hat{u}_1,\ldots,\hat{u}_{k}\} \subset \bigl(\hat{\E}_{\sigma,j},[\cdot,\cdot]_{\hat{\E}_{\max}}\bigr)
$$
is Lagrangian and consequently $M_+(\hat{\E}_{\sigma,j}) - M_-(\hat{\E}_{\sigma,j}) = 0$. If $\dim\hat{\E}_{\sigma,j}$ is odd and spanned by $\{\hat{u}_1,\ldots,\hat{u}_{2k+1}\}$ with either \eqref{PlusEins} or \eqref{MinusEins}, then
$$
M_+(\hat{\E}_{\sigma,j}) - M_-(\hat{\E}_{\sigma,j}) = \begin{cases}
1  & \textup{in case of \eqref{PlusEins}}, \\
-1 & \textup{in case of \eqref{MinusEins}}.
\end{cases}
$$
This is because the matrix $\bigl(\delta_{\mu,2k+2-\nu}\bigr)_{\mu,\nu=1}^{2k+1}$ has $(k+1)$-times the eigenvalue $+1$ and $k$-times the eigenvalue $-1$. Consequently,
$$
M_+(\hat{\E}_{\sigma}) - M_-(\hat{\E}_{\sigma}) = \sum\limits_{\ell \; \textup{odd}}\bigl(m_+(\sigma,\ell) - m_-(\sigma,\ell)\bigr)
$$
as desired, proving the first claim.

We next prove the second claim regarding the existence of invariant selfadjoint extensions. Note that $A_{\Dom} = A_{\Dom}^*$ is an invariant selfadjoint extension if and only if $\hat{U} = \hat{\E}_{\Dom} \subset \bigl(\hat{\E}_{\max},[\cdot,\cdot]_{\hat{\E}_{\max}}\bigr)$ from \eqref{DProjection} is Lagrangian and invariant under $\hat{\g}$. We thus have to show that invariant Lagrangian subspaces exist if and only if
$$
\sum\limits_{\ell \; \textup{odd}}\bigl(m_+(\sigma,\ell) - m_-(\sigma,\ell)\bigr) = 0
$$
holds for every $\sigma \in \spec(\hat{\g})$ with $\Im(\sigma) = -\frac{m}{2}$.

Suppose first that $\hat{U} \subset \hat{\E}_{\max}$ is Lagrangian and invariant under $\hat{\g}$. Then
\begin{align*}
\hat{U} &= \bigoplus\limits_{\sigma\in\spec(\hat{\g})}\bigl[\hat{\E}_{\sigma}\cap \hat{U}\bigr] \\
&= \bigoplus\limits_{\substack{\sigma\in\spec(\hat{\g}) \\ \Im(\sigma)=-\frac{m}{2}}}\bigl[\hat{\E}_{\sigma}\cap\hat{U}\bigr] \; \oplus 
\bigoplus\limits_{\substack{\sigma\in\spec(\hat{\g}) \\ \Im(\sigma)<-\frac{m}{2}}}\bigl(\bigl[\hat{\E}_{\sigma}\cap\hat{U}\bigr]\oplus\bigl[\hat{\E}_{\sigma^\star}\cap\hat{U}\bigr]\bigr).
\end{align*}
Every subspace
\begin{align*}
\bigl[\hat{\E}_{\sigma}\cap \hat{U}\bigr] &\subset \bigl(\hat{\E}_{\sigma},[\cdot,\cdot]_{\hat{\E}_{\max}}\bigr), \quad \Im(\sigma) = -\frac{m}{2}, \\
\bigl[\hat{\E}_{\sigma}\cap\hat{U}\bigr]\oplus\bigl[\hat{\E}_{\sigma^\star}\cap\hat{U}\bigr] &\subset
\bigl(\hat{\E}_{\sigma}\oplus\hat{\E}_{\sigma^\star},[\cdot,\cdot]_{\hat{\E}_{\max}}\bigr), \quad \Im(\sigma) < -\frac{m}{2},
\end{align*}
is isotropic, and for dimensional reasons must be Lagrangian. In particular, for every $\sigma \in \spec(\hat{\g})$ with $\Im(\sigma) = -\frac{m}{2}$ we must have
$$
M_+(\hat{\E}_{\sigma}) - M_-(\hat{\E}_{\sigma}) = \sum\limits_{\ell \; \textup{odd}}\bigl(m_+(\sigma,\ell) - m_-(\sigma,\ell)\bigr) = 0.
$$
Conversely, assume that $M_+(\hat{\E}_{\sigma}) - M_-(\hat{\E}_{\sigma}) = 0$ for every $\sigma \in \spec(\hat{\g})$ with $\Im(\sigma) = -\frac{m}{2}$. We proceed to prove that each space admits a Lagrangian subspace $\hat{U}_{\sigma} \subset \hat{\E}_{\sigma}$ that is invariant under $\hat{\g}$. Then
$$
\hat{U} = \bigoplus\limits_{\substack{\sigma\in\spec(\hat{\g}) \\ \Im(\sigma)=-\frac{m}{2}}} \hat{U}_{\sigma} \; \oplus
\bigoplus\limits_{\substack{\sigma\in\spec(\hat{\g}) \\ \Im(\sigma)<-\frac{m}{2}}}\hat{\E}_{\sigma}
$$
is a Lagrangian subspace of $\bigl(\hat{\E}_{\max},[\cdot,\cdot]_{\hat{\E}_{\max}}\bigr)$ that is invariant under $\hat{\g}$ as desired. Write
$$
\hat{\E}_{\sigma} = \bigoplus\limits_{j=1}^{m_{\sigma}}\hat{\E}_{\sigma,j}
$$
according to the Canonical Form Theorem with mutually $[\cdot,\cdot]_{\hat{\E}_{\max}}$-orthogonal direct summands, where each $\hat{\E}_{\sigma,j}$ is associated to a single Jordan block, and pick bases for each $\hat{\E}_{\sigma,j}$ with either \eqref{PlusEins} or \eqref{MinusEins}. By assumption, the odd-dimensional blocks (if any) equally distribute among the signs $+1$ and $-1$. Now divide the odd-dimensional spaces $\hat{\E}_{\sigma,j}$ up in pairs of the form $(\hat{V},\hat{W})$, where $\hat{V}$ contributes $+1$ and $\hat{W}$ contributes $-1$, and set
$$
\hat{U}_{(\hat{V},\hat{W})} = \Span\Bigl\{\hat{v}_{\frac{\dim\hat{V}+1}{2}} + \hat{w}_{\frac{\dim\hat{W}+1}{2}}\Bigr\},
$$
where the two vectors $\hat{v}_{\frac{\dim\hat{V}+1}{2}}$ and $\hat{w}_{\frac{\dim\hat{W}+1}{2}}$ are the middle vectors in the Jordan bases for $\hat{V}$ and $\hat{W}$, respectively, and define
$$
\hat{U}_{\sigma} = \bigoplus\limits_{j=1}^{m_{\sigma}}\Bigl[\hat{\E}_{\sigma,j}\cap\ker\bigl(\hat{\g}-\sigma\bigr)^{\bigl\lfloor \frac{\dim \hat{\E}_{\sigma,j}}{2}\bigr\rfloor} \Bigr] \; \oplus \bigoplus\limits_{(\hat{V},\hat{W})}\hat{U}_{(\hat{V},\hat{W})}.
$$
Then $\hat{U}_{\sigma}$ has the desired properties, and the proof is complete.
\end{proof}


\section{Semibounded operators and the Friedrichs extension}\label{FriedrichsAbstract}

\noindent
Suppose that $A_{\min}$ is semibounded from below in the sense that there exists a constant $c \in \R$ such that
$$
\langle Au,u \rangle \geq c\langle u,u \rangle
$$
holds for all $u \in \Dom_{\min}$. By Lemma~\ref{Amaxmininv} and Lemma~\ref{BoundStationary} below we in fact must have $A_{\min} \geq 0$ which we are going to assume for the remainder of this section. Since we assume $A_{\min}$ to have finite deficiency indices, we note that every selfadjoint extension $A_{\Dom} = A_{\Dom}^*$ is semibounded from below, see \cite[Theorem~9.3.7]{BirmanSolomjak}.

\begin{lemma}\label{BoundStationary}
Let $A_{\Dom} : \Dom \subset H \to H$ be invariant and bounded from below, i.e.,
$$
\langle Au,u \rangle \geq c\langle u,u \rangle
$$
for all $u \in \Dom$ for some $c \in {\mathbb R}$. Then
$$
\inf\limits_{0 \neq u \in \Dom} \frac{\langle Au,u \rangle}{\langle u,u \rangle} = 0,
$$
i.e., $A_{\Dom}$ has lower bound $0$.
\end{lemma}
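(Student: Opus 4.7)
The whole argument is a one-line exploitation of the scaling homogeneity \eqref{Amaxkappa}. For any $u \in \Dom$ and $\varrho>0$, invariance of $\Dom$ under $\kappa_{\varrho}$ together with $A_{\Dom} = \varrho^m\kappa_{\varrho}A_{\Dom}\kappa_{\varrho}^{-1}$ gives $A(\kappa_{\varrho}u) = \varrho^m\kappa_{\varrho}Au$, and the unitarity of $\kappa_{\varrho}$ then yields the identities
$$
\langle A(\kappa_{\varrho}u),\kappa_{\varrho}u\rangle = \varrho^m\langle Au,u\rangle, \qquad \|\kappa_{\varrho}u\|^2 = \|u\|^2.
$$
Hence the Rayleigh quotient transforms as $R(\kappa_{\varrho}u) = \varrho^m R(u)$ with $R(u) = \langle Au,u\rangle/\langle u,u\rangle$.

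Now I would split the argument in two directions. First, applying the assumed lower bound $\langle Au,u\rangle \geq c\|u\|^2$ to $\kappa_{\varrho}u$ in place of $u$ gives $\varrho^m\langle Au,u\rangle \geq c\|u\|^2$, i.e.\ $\langle Au,u\rangle \geq c\varrho^{-m}\|u\|^2$ for every $\varrho>0$. Letting $\varrho \to \infty$ makes the right-hand side tend to $0$, so $\langle Au,u\rangle \geq 0$ for all $u \in \Dom$; in particular the infimum of $R$ is $\geq 0$. This is the step that already upgrades any lower bound $c$ to the nonnegativity claim.

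Second, picking any $u \neq 0$ in $\Dom$ (which exists since $\Dom_{\min} \subset \Dom$ is dense in $H$) and letting $\varrho \to 0$, the scaling identity shows $R(\kappa_{\varrho}u) = \varrho^m R(u) \to 0$ because $R(u)$ is a finite nonnegative number. Therefore the infimum of $R$ over $\Dom\setminus\{0\}$ is $\leq 0$, and combined with the previous bound it equals $0$.

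The only possible obstacle is a degenerate case in which one would want to deform $u$ to reduce the Rayleigh quotient but cannot stay inside $\Dom$; the scaling invariance of $\Dom$ built into the definition of \emph{invariant extension} removes precisely this difficulty, so no further work is needed.
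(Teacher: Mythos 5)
Your proof is correct and rests on the same scaling homogeneity $R(\kappa_{\varrho}u)=\varrho^{m}R(u)$ that drives the paper's argument; the paper simply takes the infimum on both sides to obtain $L=\varrho^{m}L$ for all $\varrho>0$, hence $L\in\{0,-\infty\}$ and $L=0$ by the assumed lower bound, whereas you split this into the two limits $\varrho\to\infty$ (yielding $L\geq 0$) and $\varrho\to 0$ (yielding $L\leq 0$). The two presentations are interchangeable.
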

\begin{proof}
Let
$$
L = \inf\limits_{0 \neq u \in \Dom} \frac{\langle Au,u \rangle}{\langle u,u \rangle}.
$$
For $0 \neq u \in \Dom$ we have
$$
\langle Au,u \rangle = \varrho^m\langle \kappa_{\varrho}A_{\Dom}\kappa_{\varrho}^{-1}u,u\rangle = \varrho^m\langle A_{\Dom}\kappa_{\varrho}^{-1}u,\kappa_{\varrho}^{-1}u\rangle,
$$
and thus
$$
\frac{\langle A_{\Dom}u,u \rangle}{\langle u,u \rangle} = \varrho^m\frac{\langle A_{\Dom}\kappa_{\varrho}^{-1}u,\kappa_{\varrho}^{-1}u\rangle}{\langle \kappa_{\varrho}^{-1}u,\kappa_{\varrho}^{-1}u\rangle},
$$
using the fact that $\kappa_{\varrho}$ is unitary. Consequently, passing to the infimum over all $0 \neq u \in \Dom$ on both sides, we get $L = \varrho^m L$ for all $\varrho > 0$. This leaves only $L=0$ or $L=-\infty$, and since $L > -\infty$ by assumption we must have $L=0$.
\end{proof}

\begin{proposition}\label{FriedrichsStationary}
Let ${\mathcal H}_F \hookrightarrow H$ be the completion of $\Dom_{\min}$ with respect to the norm
$$
|u|_F^2 = \langle u,u \rangle + \langle Au,u \rangle, \; u \in \Dom_{\min}.
$$
Then $\kappa_{\varrho}$ restricts to a strongly continuous group action $\kappa_{\varrho} \in \L({\mathcal H}_F)$ with
$$
\|\kappa_{\varrho}\|_{\L({\mathcal H}_F)} \leq \max\{1,\varrho^{m/2}\}, \; \varrho > 0.
$$
In particular, the Friedrichs extension
\begin{align*}
A_F &= A^*\Big|_{\Dom_F} : \Dom_F \subset H \to H, \\
\Dom_F &= \Dom_{\max} \cap {\mathcal H}_F,
\end{align*}
is invariant.
\end{proposition}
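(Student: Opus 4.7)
The plan is to verify the norm bound on the dense subspace $\Dom_{\min} \subset {\mathcal H}_F$, then extend by continuity; strong continuity will be reduced to the dense subspace via uniform boundedness on compact neighborhoods of $\varrho=1$. The only algebraic input beyond definitions is the scaling identity $A\kappa_{\varrho} = \varrho^m\kappa_{\varrho}A$ on $\Dom_{\min}$ supplied by Lemma~\ref{Amaxmininv}, together with $A_{\min} \geq 0$.

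For $u \in \Dom_{\min}$, invariance of $\Dom_{\min}$ under $\kappa_{\varrho}$ and unitarity of $\kappa_{\varrho}$ on $H$ give
\begin{align*}
|\kappa_{\varrho}u|_F^2 &= \|\kappa_{\varrho}u\|_H^2 + \langle A\kappa_{\varrho}u,\kappa_{\varrho}u\rangle \\
&= \|u\|_H^2 + \varrho^m\langle \kappa_{\varrho}Au,\kappa_{\varrho}u\rangle = \|u\|_H^2 + \varrho^m\langle Au,u\rangle.
\end{align*}
Since $\langle Au,u\rangle \geq 0$, comparing with $|u|_F^2 = \|u\|_H^2 + \langle Au,u\rangle$ separately in the regimes $\varrho \geq 1$ and $\varrho \leq 1$ yields $|\kappa_{\varrho}u|_F^2 \leq \max\{1,\varrho^m\}|u|_F^2$, which is the claimed operator bound on $\Dom_{\min}$. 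Density of $\Dom_{\min}$ in ${\mathcal H}_F$ extends $\kappa_{\varrho}$ uniquely to a bounded operator on ${\mathcal H}_F$ satisfying the same bound; since ${\mathcal H}_F \hookrightarrow H$ continuously, this extension is the restriction to ${\mathcal H}_F$ of the $H$-action. The group law and $\kappa_{\varrho}^{-1} = \kappa_{1/\varrho}$ transfer automatically.

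For strong continuity as $\varrho \to 1$ the uniform bound $\|\kappa_{\varrho}\|_{\L({\mathcal H}_F)} \leq \max\{1,\varrho^{m/2}\}$ on any compact neighborhood of $1$ reduces matters to showing $|\kappa_{\varrho}u - u|_F \to 0$ for $u$ in the dense subspace $\Dom_{\min}$. For such $u$, strong continuity of $\kappa_{\varrho}$ on $H$ gives $\|\kappa_{\varrho}u - u\|_H \to 0$; applied to $Au \in H$ together with $A\kappa_{\varrho}u = \varrho^m\kappa_{\varrho}Au$ it also gives $\|A(\kappa_{\varrho}u - u)\|_H \to 0$. Cauchy--Schwarz then yields $\langle A(\kappa_{\varrho}u - u),\kappa_{\varrho}u - u\rangle \to 0$, which combined with $\|\kappa_{\varrho}u - u\|_H^2 \to 0$ is $|\kappa_{\varrho}u - u|_F \to 0$. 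Invariance of $\Dom_F = \Dom_{\max}\cap{\mathcal H}_F$ under $\kappa_{\varrho}$ is immediate as an intersection of invariant subspaces (Lemma~\ref{Amaxmininv} for $\Dom_{\max}$, the above for ${\mathcal H}_F$), and the scaling identity for $A_F$ on $\Dom_F$ is inherited from $A_{\max}$.

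No step presents a genuine obstacle: the entire argument is driven by the single computation $\langle A\kappa_{\varrho}u,\kappa_{\varrho}u\rangle = \varrho^m\langle Au,u\rangle$, which turns the scaling covariance of $A$ into a clean multiplicative bound on the form norm, with positivity $A_{\min} \geq 0$ ensuring no sign conflict arises between the two summands of $|\cdot|_F^2$ under dilation. The only part that demands minor bookkeeping is identifying the abstractly extended action on the completion ${\mathcal H}_F$ with the restriction of $\kappa_{\varrho}$ on $H$, which is immediate from the continuous embedding ${\mathcal H}_F \hookrightarrow H$.
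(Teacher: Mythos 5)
Your proof is correct and follows essentially the same route as the paper's: the same computation $|\kappa_{\varrho}u|_F^2 = \|u\|_H^2 + \varrho^m\langle Au,u\rangle$ on $\Dom_{\min}$ giving the bound (with $A_{\min}\geq 0$ used exactly where you invoke it), extension by density and identification via the embedding ${\mathcal H}_F\hookrightarrow H$, and strong continuity reduced to the dense subspace $\Dom_{\min}$ using $A(\kappa_{\varrho}u-u)=\varrho^m\kappa_{\varrho}Au-Au\to 0$. Your explicit Cauchy--Schwarz step is a small clarification of what the paper leaves implicit, but the argument is the same.
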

\begin{proof}
We show that
$$
\kappa_{\varrho} : \bigl(\Dom_{\min},|\cdot|_F\bigr) \to \bigl(\Dom_{\min},|\cdot|_F\bigr)
$$
is continuous, and consequently extends by continuity to a bounded operator in $\L({\mathcal H}_F)$. Because ${\mathcal H}_F \hookrightarrow H$ and $\kappa_{\varrho} \in \L(H)$ this bounded extension is necessarily the restriction of $\kappa_{\varrho}$ to ${\mathcal H}_F$. Indeed, for $u \in \Dom_{\min}$ we have
\begin{align*}
|\kappa_{\varrho}u|_F^2 &= \langle \kappa_{\varrho}u,\kappa_{\varrho}u \rangle + \langle A\kappa_{\varrho}u,\kappa_{\varrho}u \rangle = \langle u,u \rangle + \varrho^m\langle\kappa_{\varrho}Au,\kappa_{\varrho}u\rangle \\
&= \langle u,u \rangle + \varrho^m\langle Au,u\rangle \leq \max\{1,\varrho^m\}\cdot |u|_F^2,
\end{align*}
proving the desired continuity as well as the asserted norm estimate. This implies that $A_F$ is invariant.

It remains to show the strong continuity of $\kappa_{\varrho}$ on the Hilbert space ${\mathcal H}_F$. Because $\kappa_{\varrho}$ is a group and $\{\kappa_{\varrho};\; \frac{1}{2} \leq \varrho \leq 2\} \subset \L({\mathcal H}_F)$ is bounded, it suffices to show that $\kappa_{\varrho}u \to u$ with respect to $|\cdot|_F$ as $\varrho \to 1$ just for $u \in \Dom_{\min}$ (see \cite[I.5.3]{EngelNagel}). Because $\kappa_{\varrho}u \to u$ with respect to $\|\cdot\|_H$ we thus have to prove that
$$
\langle A(\kappa_{\varrho}u - u),\kappa_{\varrho}u - u \rangle_H \to 0 \textup{ as } \varrho \to 1.
$$
But this follows because $A(\kappa_{\varrho}u - u) = \varrho^m\kappa_{\varrho}Au - Au \to 0$ as $\varrho \to 1$ with respect to $\|\cdot\|_H$.
\end{proof}

\noindent
The restriction $\kappa_{\varrho} : {\mathcal H}_F \to {\mathcal H}_F$ of the group action is generated by the part of the generator $\g$ in ${\mathcal H}_F$, i.e., the operator that acts like $\g$ with domain
$$
\{u \in \Dom(\g)\cap {\mathcal H}_F;\; \g u \in {\mathcal H}_F\},
$$
see \cite[II.2.3]{EngelNagel}. The norm estimate for $\kappa_{\varrho}$ in Proposition~\ref{FriedrichsStationary}, both for $1 \leq \varrho < \infty$ and for $0 < \varrho \leq 1$, in conjunction with the Hille-Yosida Generation Theorem \cite[II.3]{EngelNagel} now implies that for every $\sigma \in \C$ with $\Im(\sigma) \notin [-m/2,0]$ and every $v \in {\mathcal H}_F$ the unique solution $u \in \Dom(\g)$ to the equation $(\g - \sigma)u = v$ belongs to ${\mathcal H}_F$. This is in addition to the properties for $\g$ previously discussed in Remark~\ref{GeneratorResolvent}.

Our next goal is to give an explicit description of the domain of the Friedrichs extension $A_F$ in terms of the generalized eigenspaces of $\hat{\g}$. In this context, consider the following definition derived from \cite{GohbergLancasterRodman}.

\begin{definition}\label{SignCondition}
We say that $A$ satisfies the \emph{sign condition} if for every $\sigma \in \spec(\hat{\g})$ with $\Im(\sigma) = -\frac{m}{2}$ the following two conditions hold:
\begin{itemize}
\item $m_+(\sigma,\ell) = m_-(\sigma,\ell) = 0$ for all odd $\ell \in \N$. Thus $\hat{\g}$ does not have odd-sized Jordan blocks associated with the eigenvalue $\sigma$.
\item \emph{Either} $m_+(\sigma,\ell) = 0$ \emph{or} $m_-(\sigma,\ell) = 0$ for all even $\ell \in \N$. Thus the sign characteristic associated with the eigenvalue $\sigma$ is entirely negative or positive for all Jordan blocks of $\hat{\g}$.
\end{itemize}
Here $m_{\pm}(\sigma,\ell)$ are the invariants of the Hermitian sesquilinear form \eqref{EllForm}, see Proposition~\ref{SignCharacteristic}. Note that the sign condition is trivially fulfilled if $\spec(\hat{\g})\cap\{\sigma \in \C;\; \Im(\sigma)=-\frac{m}{2}\} = \emptyset$.
\end{definition}

We note that the sign condition holds for semibounded indicial operators considered in the later sections, see Theorem~\ref{SignConditionIndicial}.

\bigskip

\noindent
If the sign condition holds, \cite[Theorem~5.12.4]{GohbergLancasterRodman} implies that for each $\sigma \in \spec(\hat{\g})$ with $\Im(\sigma)=-\frac{m}{2}$ there exists a unique Langrangian subspace $\hat{\E}_{\sigma,\frac{1}{2}} \subset \bigl(\hat{\E}_{\sigma},[\cdot,\cdot]_{\hat{\E}_{\max}}\bigr)$ that is invariant under $\hat{\g}$.
More precisely, write
$$
\hat{\E}_{\sigma} = \bigoplus\limits_{j=1}^{m_{\sigma}}\hat{\E}_{\sigma,j}
$$
according to the Canonical Form Theorem with mutually $[\cdot,\cdot]_{\hat{\E}_{\max}}$-orthogonal direct summands, where each $\hat{\E}_{\sigma,j}$ is associated to a single Jordan block of size $(2n_j)\times(2n_j)$. Then
$$
\hat{\E}_{\sigma,\frac{1}{2}} = \bigoplus\limits_{j=1}^{m_{\sigma}}\bigl[\hat{\E}_{\sigma,j} \cap \ker(\hat{\g}-\sigma)^{n_j} \bigr],
$$
so $\hat{\E}_{\sigma,\frac{1}{2}}$ is spanned by the first halves of the Jordan basis elements associated to each Jordan block in the canonical form for the triple $\bigl(\hat{\E}_{\sigma},[\cdot,\cdot]_{\hat{\E}_{\max}},\hat{\g} - \sigma\bigr)$.

\begin{theorem}\label{FriedrichsLowerHalf}
Let $\hat{\E}_F = \{\hat{u} \in \hat{\E}_{\max}; \; \hat{u} = u + \Dom_{\min},\; u \in \Dom_F\}$. Then
$$
\hat{\E}_F = \bigoplus\limits_{\substack{\sigma\in\spec(\hat{\g}) \\ \Im(\sigma) = -\frac{m}{2}}}\bigl[\hat{\E}_{\sigma}\cap \hat{\E}_F\bigr] \oplus
\bigoplus\limits_{\substack{\sigma\in\spec(\hat{\g}) \\ \Im(\sigma) < -\frac{m}{2}}}\hat{\E}_{\sigma}.
$$
If $A$ satisfies the sign condition, then $\hat{\E}_{\sigma}\cap \hat{\E}_F = \hat{\E}_{\sigma,\frac{1}{2}}$ for all $\sigma \in \spec(\hat{\g})$ with $\Im(\sigma)=-\frac{m}{2}$.
\end{theorem}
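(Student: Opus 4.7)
The plan is to exploit three structural facts, in sequence: $\hat{\E}_F$ is invariant under $\hat{\g}$ (from the invariance of $A_F$, Proposition~\ref{FriedrichsStationary}); the generator $\g$ admits a resolvent that preserves $\mathcal{H}_F$ for $\Im(\sigma)\notin[-\frac{m}{2},0]$ (from Hille-Yosida, as noted after Proposition~\ref{FriedrichsStationary}); and $\hat{\E}_F$ is Lagrangian with respect to $[\cdot,\cdot]_{\hat{\E}_{\max}}$ (since $A_F=A_F^*$). The first fact immediately gives the generalized eigenspace decomposition $\hat{\E}_F = \bigoplus_{\sigma \in \spec(\hat{\g})}[\hat{\E}_F \cap \hat{\E}_\sigma]$, so the remaining work is to identify each intersection.

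For $\sigma \in \spec(\hat{\g})$ with $\Im(\sigma)<-\frac{m}{2}$ I would show $\hat{\E}_\sigma \subset \hat{\E}_F$ by lifting Jordan chains. Given an eigenvector $\hat{u}_1 \in \ker(\hat{\g}-\sigma)$, pick a representative $u_1 \in \Dom(\g)\cap\Dom_{\max}$ with $\g u_1 \in \Dom_{\max}$, possible since $\Dom(\hat{\g})=\hat{\E}_{\max}$. Then $(\g-\sigma)u_1 \in \Dom_{\min}\subset\mathcal{H}_F$, and the Hille-Yosida consequence on $\mathcal{H}_F$ produces some $\tilde{u}_1 \in \Dom(\g)\cap\mathcal{H}_F$ with $(\g-\sigma)\tilde{u}_1=(\g-\sigma)u_1$; since $\sigma\notin\R=\spec(\g)$, the $H$-uniqueness in Remark~\ref{GeneratorResolvent}(1) forces $u_1=\tilde{u}_1\in\mathcal{H}_F$, so $u_1\in\Dom_F$. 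The inductive step is identical: if $(\hat{\g}-\sigma)\hat{u}_{k+1}=\hat{u}_k$ with $\hat{u}_k$ already lifted to some $u_k\in\Dom_F$, then any representative $u_{k+1}$ satisfies $(\g-\sigma)u_{k+1}=u_k+\delta$ with $\delta\in\Dom_{\min}$, so the right-hand side lies in $\mathcal{H}_F$ and the same resolvent/uniqueness combination places $u_{k+1}$ in $\Dom_F$. For $\sigma$ with $\Im(\sigma)>-\frac{m}{2}$ the reflected eigenvalue $\sigma^{\star}$ satisfies $\Im(\sigma^{\star})<-\frac{m}{2}$, so $\hat{\E}_{\sigma^{\star}}\subset\hat{\E}_F$ by the previous case; if $\hat{u}\in\hat{\E}_F\cap\hat{\E}_\sigma$ then $[\hat{u},\hat{v}]_{\hat{\E}_{\max}}=0$ for all $\hat{v}\in\hat{\E}_{\sigma^{\star}}$ by the Lagrangian property, and the nondegeneracy of the pairing $[\cdot,\cdot]_{\hat{\E}_{\max}}:\hat{\E}_\sigma\times\hat{\E}_{\sigma^{\star}}\to\C$ from the Canonical Form Theorem forces $\hat{u}=0$. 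This completes the first assertion.

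For the second assertion, assume the sign condition and take $\sigma\in\spec(\hat{\g})$ with $\Im(\sigma)=-\frac{m}{2}$. The intersection $\hat{\E}_F\cap\hat{\E}_\sigma$ is $\hat{\g}$-invariant and, as a subspace of a Lagrangian, isotropic in $(\hat{\E}_\sigma,[\cdot,\cdot]_{\hat{\E}_{\max}})$. Under the sign condition every Jordan block of $\hat{\g}$ in $\hat{\E}_\sigma$ is even-sized, and inspection of the canonical forms \eqref{PlusEins}--\eqref{MinusEins} shows that each such block contributes equal positive and negative parts to the form; consequently the restricted form on $\hat{\E}_\sigma$ has signature zero and Witt index $\frac{1}{2}\dim\hat{\E}_\sigma$, which forces $\dim(\hat{\E}_F\cap\hat{\E}_\sigma)\leq\frac{1}{2}\dim\hat{\E}_\sigma$. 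The Lagrangian identity $\dim\hat{\E}_F=\frac{1}{2}\dim\hat{\E}_{\max}$ combined with the first assertion yields $\sum_{\Im(\sigma)=-m/2}\dim(\hat{\E}_F\cap\hat{\E}_\sigma)=\frac{1}{2}\sum_{\Im(\sigma)=-m/2}\dim\hat{\E}_\sigma$, so every isotropy bound is saturated and each $\hat{\E}_F\cap\hat{\E}_\sigma$ is a $\hat{\g}$-invariant Lagrangian subspace of $\hat{\E}_\sigma$. The uniqueness clause in \cite[Theorem~5.12.4]{GohbergLancasterRodman} (which is precisely the conclusion that the sign condition is designed to deliver) then identifies this subspace with $\hat{\E}_{\sigma,\frac{1}{2}}$.

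The main obstacle is the Jordan-chain lift, which carries all the analytic content: $\Dom(\g)\cap\Dom_{\max}$ is generally not preserved by $\g$, so iterating $\g$ on a single representative of $\hat{v}\in\hat{\E}_\sigma$ is not available. The workaround is to re-solve $(\g-\sigma)u=v$ at each Jordan step against the stronger $\mathcal{H}_F$-resolvent (which exists exactly because $\Im(\sigma)<-\frac{m}{2}$) and identify the outcome with the given representative via $H$-uniqueness, thereby upgrading the lift one step at a time without ever iterating $\g$ itself.
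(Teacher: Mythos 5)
Your proposal is correct and follows essentially the same route as the paper: invariance of $A_F$ gives the generalized eigenspace decomposition of $\hat{\E}_F$, the $\mathcal{H}_F$-preserving resolvent of $\g$ for $\Im(\sigma)<-\tfrac{m}{2}$ (combined with $H$-uniqueness) lifts the inclusion $\hat{\E}_\sigma\subset\hat{\E}_F$ along Jordan chains, the Lagrangian property and nondegenerate pairing between $\hat{\E}_\sigma$ and $\hat{\E}_{\sigma^\star}$ eliminate the eigenvalues with $\Im(\sigma)>-\tfrac{m}{2}$, and the uniqueness of invariant Lagrangians under the sign condition identifies the critical-line pieces. The only substantive difference is that you spell out the dimension count showing each $\hat{\E}_\sigma\cap\hat{\E}_F$ is Lagrangian for $\Im(\sigma)=-\tfrac{m}{2}$, which the paper leaves implicit.
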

\begin{proof}
By Proposition~\ref{FriedrichsStationary} the Friedrichs extension is invariant. This implies that $\hat{\E}_F$ is invariant under $\hat{\kappa}_{\varrho}$, and consequently $\hat{\E}_F$ is also invariant under the generator $\hat{\g}$. Hence
$$
\hat{\E}_F = \bigoplus\limits_{\sigma\in\spec(\hat{\g})}\bigl[\hat{\E}_{\sigma}\cap \hat{\E}_F\bigr].
$$
We will next prove that if $\Im(\sigma) < -\frac{m}{2}$, and $\hat{u} \in \hat{\E}_{\max}$ with $\bigl(\hat{\g} - \sigma\bigr)\hat{u} \in \hat{\E}_F$, then $\hat{u} \in \hat{\E}_F$. Indeed, by assumption there exists $u \in \Dom_{\max}\cap\Dom(\g)$ with $\hat{u} = u + \Dom_{\min}$, and $v \in \Dom_F$ such that $\bigl(\g - \sigma\bigr)u = v$. Because $v \in {\mathcal H}_F$ we have $u \in {\mathcal H}_F$, and thus $u \in {\mathcal H}_F\cap\Dom_{\max} = \Dom_F$, so $\hat{u} \in \hat{\E}_F$ as stated. Consequently, if $\bigl(\hat{\g}-\sigma\bigr)^k\hat{u} = 0$ for some $k \in \N$, we have $\hat{u} \in \hat{\E}_F$, showing that $\hat{\E}_{\sigma} \subset \hat{\E}_F$ for $\Im(\sigma) < -\frac{m}{2}$. Because the adjoint pairing
$$
[\cdot,\cdot]_{\hat{\E}_{\max}} : \hat{\E}_{\sigma}\times\hat{\E}_{\sigma^{\star}} \to \C
$$
is nondegenerate and $\hat{\E}_F$ is Lagrangian we must necessarily have $\hat{\E}_{\sigma^{\star}}\cap\hat{\E}_F = \{0\}$ for $\Im(\sigma) < -\frac{m}{2}$.

Finally, note that $\hat{\E}_{\sigma}\cap\hat{\E}_F \subset \bigl(\hat{\E}_{\sigma},[\cdot,\cdot]_{\hat{\E}_{\max}}\bigr)$ is Lagrangian and invariant under $\hat{\g}$ for $\Im(\sigma)=-\frac{m}{2}$. Consequently, if $A$ satisfies the sign condition, we must necessarily have $\hat{\E}_{\sigma}\cap \hat{\E}_F = \hat{\E}_{\sigma,\frac{1}{2}}$. The theorem is proved.
\end{proof}


\section{The Krein extension and the order relation for invariant selfadjoint extensions}\label{KreinAbstract}

\noindent
We continue to assume that $A_{\min} \geq 0$. The Friedrichs extension $A_F$ and the Krein extension $A_K$ are distinguished in the sense that $A_K \leq A_{\Dom} \leq A_F$ for all nonnegative selfadjoint extensions $A_{\Dom} = A_{\Dom}^* \geq 0$. Recall that for two selfadjoint operators $T_j = T_j^* \geq 0$ the order relation $T_1 \leq T_2$ holds if and only if $\Dom(T_2^{\frac{1}{2}}) \subseteq \Dom(T_1^{\frac{1}{2}})$ and $\|T_1^{\frac{1}{2}}u\|_H \leq \|T_2^{\frac{1}{2}}u\|_H$ for $u \in \Dom(T_2^{\frac{1}{2}})$. The latter is an inequality for the quadratic forms associated with the $T_j$, while the domain of the nonnegative square root in each case coincides with the domain of the quadratic form.
We refer to \cite{AlonsoSimon,Ashbaughetal,AndoNishio} for information on the Krein-von Neumann extension (but note that $A_{\min}$ is not strictly positive by Lemma~\ref{BoundStationary}, which is compensated for by the invariance of $A$ under scaling in our investigation).

Our goal in this section is to prove the following two theorems.

\begin{theorem}\label{KreinExtension}
Let $\hat{\E}_K = \{\hat{u} \in \hat{\E}_{\max}; \; \hat{u} = u + \Dom_{\min},\; u \in \Dom_K = \Dom(A_K)\}$. Then
$$
\hat{\E}_K = \bigoplus\limits_{\substack{\sigma\in\spec(\hat{\g}) \\ \Im(\sigma) = -\frac{m}{2}}}\bigl[\hat{\E}_{\sigma}\cap \hat{\E}_K\bigr] \oplus
\bigoplus\limits_{\substack{\sigma\in\spec(\hat{\g}) \\ \Im(\sigma) > -\frac{m}{2}}}\hat{\E}_{\sigma}.
$$
If $A$ satisfies the sign condition, then $\hat{\E}_{\sigma}\cap \hat{\E}_K = \hat{\E}_{\sigma,\frac{1}{2}}$ for all $\sigma \in \spec(\hat{\g})$ with $\Im(\sigma)=-\frac{m}{2}$.
\end{theorem}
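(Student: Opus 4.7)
The plan is to mirror Theorem~\ref{FriedrichsLowerHalf}, with the strips $\{\Im(\sigma)<-\tfrac{m}{2}\}$ and $\{\Im(\sigma)>-\tfrac{m}{2}\}$ interchanged throughout. The first step is to show $A_K$ is invariant under $\kappa_{\varrho}$. Since $\kappa_{\varrho}$ is unitary and $A_{\min}$ is invariant, the transformation $B\mapsto\varrho^m\kappa_{\varrho}B\kappa_{\varrho}^{-1}$ preserves the family of nonnegative selfadjoint extensions of $A_{\min}$ and is an order isomorphism on it. The Krein extension, being the unique minimum, is fixed. Thus $\hat{\E}_K$ is $\hat{\kappa}_{\varrho}$- and hence $\hat{\g}$-invariant, yielding the decomposition
$$
\hat{\E}_K=\bigoplus_{\sigma\in\spec(\hat{\g})}\bigl[\hat{\E}_{\sigma}\cap\hat{\E}_K\bigr],
$$
and $\hat{\E}_K$ is Lagrangian in $\bigl(\hat{\E}_{\max},[\cdot,\cdot]_{\hat{\E}_{\max}}\bigr)$ because $A_K$ is selfadjoint.

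The core step is to prove $\hat{\E}_{\sigma}\subseteq\hat{\E}_K$ whenever $\Im(\sigma)>-\tfrac{m}{2}$. I would proceed in parallel with Proposition~\ref{FriedrichsStationary} and Theorem~\ref{FriedrichsLowerHalf}: construct an auxiliary Hilbert space $\mathcal{K}\hookrightarrow H$ satisfying (i) $\Dom_K=\mathcal{K}\cap\Dom_{\max}$, and (ii) $\kappa_{\varrho}$ acts strongly continuously on $\mathcal{K}$ with estimates $\|\kappa_{\varrho}\|_{\mathcal{K}}\leq C\max\{1,\varrho^{m}\}$ for $\varrho\geq 1$ and $\|\kappa_{\varrho}\|_{\mathcal{K}}\leq C\max\{1,\varrho^{m/2}\}$ for $\varrho\leq 1$, so that by the Hille-Yosida theorem (as in Remark~\ref{GeneratorResolvent} and the paragraph following Proposition~\ref{FriedrichsStationary}) the resolvent $(\g-\sigma)^{-1}$ preserves $\mathcal{K}$ for $\Im(\sigma)\notin[-m,-\tfrac{m}{2}]$. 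Given $\hat{u}\in\hat{\E}_{\max}$ with representative $u\in\Dom(\g)\cap\Dom_{\max}$ and $(\hat{\g}-\sigma)\hat{u}=\hat{v}$ for some $\hat{v}\in\hat{\E}_K$ with $\Im(\sigma)>-\tfrac{m}{2}$, the argument of Theorem~\ref{FriedrichsLowerHalf} then yields $u\in\mathcal{K}\cap\Dom_{\max}=\Dom_K$. Iteration in $(\hat{\g}-\sigma)^k$ gives $\hat{\E}_{\sigma}\subseteq\hat{\E}_K$.

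The middle strip then follows exactly as in Theorem~\ref{FriedrichsLowerHalf}: for $\Im(\sigma)=-\tfrac{m}{2}$ the intersection $\hat{\E}_{\sigma}\cap\hat{\E}_K$ is a $\hat{\g}$-invariant Lagrangian of $\bigl(\hat{\E}_{\sigma},[\cdot,\cdot]_{\hat{\E}_{\max}}\bigr)$, and under the sign condition \cite[Theorem~5.12.4]{GohbergLancasterRodman} identifies it uniquely as $\hat{\E}_{\sigma,\frac{1}{2}}$.

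The main obstacle is the identification of the correct space $\mathcal{K}$ and the verification of both the characterization $\Dom_K=\mathcal{K}\cap\Dom_{\max}$ and the $\varrho\leq 1$ scaling estimate. The naive choice $\mathcal{K}=\mathcal{Q}(A_K)$ with its form norm fails on both counts: $\Dom_K$ is a proper subset of $\mathcal{Q}(A_K)\cap\Dom_{\max}$ (which contains the domains of every nonnegative selfadjoint extension), and a direct computation using $q_K\kappa_{\varrho}=\varrho^m\kappa_{\varrho}q_K$ yields the same bounds as the Friedrichs case, not the dual ones needed here. Candidates worth exploring include the range of $(A_K+\lambda)^{-1/2}$ for fixed $\lambda>0$, or a completion of $\Dom_{\min}+\ker A_{\max}$ in a norm tailored to $A_K$. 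Lemma~\ref{BoundStationary} (which forces $\inf\spec(A_F)=0$) adds a further wrinkle: $\ker A_{\max}\cap\mathcal{H}_F=\ker A_F$ may be nontrivial, so the Alonso-Simon decomposition $\mathcal{Q}(A_K)=\mathcal{H}_F+\ker A_{\max}$ is not automatically direct, and this must be tracked carefully when propagating scaling estimates from $\mathcal{H}_F$ and $\ker A_{\max}$ to $\mathcal{K}$.
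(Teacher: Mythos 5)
Your argument that $A_K$ is invariant is correct and, incidentally, a clean alternative to the paper's route: the paper verifies invariance directly from the Ando--Nishio description of $\Dom_K$ (Lemma~\ref{KreinInvariant}), whereas you observe that $B\mapsto\varrho^m\kappa_{\varrho}B\kappa_{\varrho}^{-1}$ is an order automorphism of the set of nonnegative selfadjoint extensions of $A_{\min}$, so it fixes the unique minimum. Both are valid. The decomposition of $\hat{\E}_K$ into eigenspaces and the identification of $\hat{\E}_\sigma\cap\hat{\E}_K$ on the critical line via the sign condition and \cite[Theorem~5.12.4]{GohbergLancasterRodman} is also exactly right and parallel to Theorem~\ref{FriedrichsLowerHalf}.

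The gap is the one you have already flagged, and it is genuine: there is no workable analog of the space ${\mathcal H}_F$ for the Krein extension within this scheme. You correctly computed that ${\mathcal H}_K = {\mathcal Q}(A_K)$ fails both requirements: on the one hand ${\mathcal H}_K\cap\Dom_{\max}=\Dom_{\max}$ once the middle spectrum is empty (this is precisely Lemma~\ref{HKModHF}), not $\Dom_K$; on the other hand the form norm satisfies $|\kappa_\varrho u|_K^2 = \|u\|^2 + \varrho^m\langle A_K u,u\rangle$, which gives the same $\max\{1,\varrho^{m/2}\}$ bound as ${\mathcal H}_F$ and thus the same forbidden strip $[-m/2,0]$ rather than the needed $[-m,-m/2]$. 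The alternatives you list fare no better: the graph norm on $\Dom_K$ gives the full $\max\{1,\varrho^m\}$ bound and forbidden strip $[-m,0]$, again not the dual one; and $\ran\bigl((A_K+\lambda)^{-1/2}\bigr)$ is still ${\mathcal Q}(A_K)$. The structural reason the direct mirroring fails is that both Friedrichs and Krein are invariant under the same scaling $\varrho^m\kappa_\varrho(\cdot)\kappa_\varrho^{-1}$, so any ``natural'' $\kappa$-homogeneous norm built from $A_K$ produces the same pair of Hille--Yosida exponents as the corresponding construction for $A_F$; the asymmetry between the two extensions is not visible at the level of a single auxiliary Hilbert space and its group estimates.

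The paper's proof avoids the need for such a $\mathcal{K}$ altogether and instead derives the Krein structure as a consequence of the already-established Friedrichs result. Under the temporary assumption $\spec(\hat{\g})\cap\{\Im(\sigma)=-\tfrac{m}{2}\}=\emptyset$, Lemma~\ref{HKModHF} (which combines the containments $\Dom_F\subset{\mathcal H}_K$ and $\Dom\subset{\mathcal H}_\Dom\subset{\mathcal H}_K$ for the specific invariant nonnegative extension $A_\Dom$ with $\hat{\E}_\Dom$ the upper-half eigenspaces, plus Lemma~\ref{HDRepresentation} and a dimension count) yields the direct sum $\hat{\E}_{\max}=\hat{\E}_K\oplus\hat{\E}_F$. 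Given $\hat{u}$ in the span of the upper-half eigenspaces, write $\hat{u}=\hat{u}_K+\hat{u}_F$; by Theorem~\ref{FriedrichsLowerHalf} the component $\hat{u}_F$ lies in the lower-half eigenspaces, so $\hat{u}_K=\hat{u}-\hat{u}_F$ splits into upper and lower pieces, and since $\hat{\E}_K$ is $\hat{\g}$-invariant and hence eigenspace-decomposable, each piece lies in $\hat{\E}_K$. In particular $-\hat{u}_F\in\hat{\E}_K\cap\hat{\E}_F=\{0\}$, giving $\hat{u}\in\hat{\E}_K$, and then the dimension count $\dim\hat{\E}_K=\dim\hat{\E}_{\max}-\dim\hat{\E}_F$ forces equality. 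The general case is reduced to this one by first enlarging $A_{\min}$ to $A_{\Dom_0}$ with $\hat{\E}_{\Dom_0}=\bigoplus_{\Im(\sigma)=-m/2}\bigl[\hat{\E}_\sigma\cap\hat{\E}_K\bigr]$; both $A_{\min}$ and $A_{\Dom_0}$ then have the same Krein extension, and the middle strip has been absorbed. If you want to complete your proof, this reduction to the Friedrichs theorem and the direct sum from Lemma~\ref{HKModHF} is the missing ingredient, replacing the attempted $\mathcal{K}$-construction.
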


\begin{theorem}\label{InvariantOrder}
Suppose $A$ satisfies the sign condition, and let
$$
\hat{\E}_- = \bigoplus\limits_{\substack{\sigma\in\spec(\hat{\g}) \\ \Im(\sigma) < -\frac{m}{2}}}\hat{\E}_{\sigma}.
$$
\begin{enumerate}
\item For every subspace $\hat{U} \subset \hat{\E}_-$ that is invariant under $\hat{\kappa}_{\varrho}$ there exists a unique selfadjoint invariant extension $A_{\Dom} : \Dom \subset H \to H$ of $A$ such that $\hat{\E}_{\Dom}\cap\hat{\E}_- = \hat{U}$.
\item Let $A_{\Dom_j} = A_{\Dom_j}^*$, $j = 1,2$, be invariant selfadjoint extensions. Then $A_{\Dom_1} \leq A_{\Dom_2}$ if and only if $\hat{\E}_{\Dom_1}\cap\hat{\E}_- \subseteq \hat{\E}_{\Dom_2}\cap\hat{\E}_-$, or equivalently if and only if $\Dom_1 \cap \Dom_F \subseteq \Dom_2 \cap \Dom_F$.
\end{enumerate}
\end{theorem}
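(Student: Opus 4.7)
The plan is to reduce the classification of invariant selfadjoint extensions to a single algebraic datum---a $\hat{\g}$-invariant subspace $\hat{U}\subseteq\hat{\E}_-$---and then to identify the operator order with inclusion of such subspaces.

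\textbf{Part (1).} For any invariant selfadjoint extension $A_\Dom$, the subspace $\hat{\E}_\Dom\subseteq\hat{\E}_{\max}$ is $\hat{\g}$-invariant (by invariance) and Lagrangian with respect to $[\cdot,\cdot]_{\hat{\E}_{\max}}$ (by selfadjointness), so it decomposes along the generalized eigenspaces of $\hat{\g}$. Under the sign condition, each central eigenspace $\hat{\E}_\sigma$ with $\Im(\sigma)=-\frac{m}{2}$ admits a unique $\hat{\g}$-invariant Lagrangian subspace $\hat{\E}_{\sigma,\frac{1}{2}}$ (the discussion preceding Theorem~\ref{FriedrichsLowerHalf}), forcing $\hat{\E}_\Dom\cap\hat{\E}_\sigma=\hat{\E}_{\sigma,\frac{1}{2}}$. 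For each off-central pair $(\hat{\E}_\sigma,\hat{\E}_{\sigma^{\star}})$ with $\Im(\sigma)<-\frac{m}{2}$, the nondegenerate pairing $\hat{\E}_\sigma\times\hat{\E}_{\sigma^{\star}}\to\C$ combined with $\hat{\g}$-invariance and the Lagrangian condition forces $\hat{\E}_\Dom\cap\hat{\E}_{\sigma^{\star}}=(\hat{\E}_\Dom\cap\hat{\E}_\sigma)^{[\perp]}\cap\hat{\E}_{\sigma^{\star}}$ by a dimension count. Hence $\hat{\E}_\Dom$ is uniquely determined by $\hat{U}=\hat{\E}_\Dom\cap\hat{\E}_-$, and conversely, for any $\hat{\g}$-invariant $\hat{U}\subseteq\hat{\E}_-$ the formula
$$
\hat{\E}_\Dom := \hat{U}\oplus\bigl(\hat{U}^{[\perp]}\cap\hat{\E}_+\bigr)\oplus\bigoplus_{\Im(\sigma)=-\frac{m}{2}}\hat{\E}_{\sigma,\frac{1}{2}},\qquad \hat{\E}_+=\bigoplus_{\Im(\sigma)>-\frac{m}{2}}\hat{\E}_\sigma,
$$
defines a $\hat{\g}$-invariant Lagrangian subspace, corresponding to the unique invariant selfadjoint extension with datum $\hat{U}$.

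\textbf{Part (2), equivalence of subspace and domain inclusions.} The image of $\Dom_\Dom\cap\Dom_F$ under the quotient map $u\mapsto u+\Dom_{\min}$ is $\hat{\E}_\Dom\cap\hat{\E}_F$, a direct check using $\Dom_{\min}\subseteq\Dom_\Dom\cap\Dom_F$. By Theorem~\ref{FriedrichsLowerHalf} under the sign condition, $\hat{\E}_F=\hat{\E}_-\oplus\bigoplus_{\Im(\sigma)=-\frac{m}{2}}\hat{\E}_{\sigma,\frac{1}{2}}$, and combined with Part (1),
$$
\hat{\E}_\Dom\cap\hat{\E}_F=\bigl(\hat{\E}_\Dom\cap\hat{\E}_-\bigr)\oplus\bigoplus_{\Im(\sigma)=-\frac{m}{2}}\hat{\E}_{\sigma,\frac{1}{2}}.
$$
Since the central summand is shared by every invariant selfadjoint extension, the two inclusions are equivalent.

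\textbf{Part (2), equivalence to the operator order, and the main obstacle.} By Part (1) and Theorems~\ref{FriedrichsLowerHalf} and \ref{KreinExtension}, the endpoints $\hat{U}=\{0\}$ and $\hat{U}=\hat{\E}_-$ recover $A_K$ and $A_F$ respectively, so every invariant selfadjoint extension lies in $[A_K,A_F]$ and is nonnegative with a well-defined closed form $t_\Dom$. The plan is to establish a form-theoretic version of Part (1) of the schematic shape
$$
\Dom(t_\Dom)=\Dom(t_F)\dotplus V\bigl(\hat{U}^{[\perp]}\cap\hat{\E}_+\bigr),\qquad t_\Dom(u+v)=t_F(u),
$$
where $V(\hat{W})$ is a canonical lift of $\hat{W}\subseteq\hat{\E}_+$ into $\Dom(t_K)$ complementary to $\Dom(t_F)$, so that $\hat{U}=\{0\}$ recovers $\Dom(t_K)=\Dom(t_F)\dotplus V(\hat{\E}_+)$ in accordance with Theorem~\ref{KreinExtension}. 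Granted this description, $\hat{U}_1\subseteq\hat{U}_2$ implies $\hat{U}_2^{[\perp]}\cap\hat{\E}_+\subseteq\hat{U}_1^{[\perp]}\cap\hat{\E}_+$ and hence $\Dom(t_{\Dom_2})\subseteq\Dom(t_{\Dom_1})$ with $t_{\Dom_1}=t_{\Dom_2}$ on the common form domain, which is precisely $A_{\Dom_1}\leq A_{\Dom_2}$; the converse is obtained by reading this backward, using that $V(\cdot)$ meets $\Dom(t_F)$ trivially to recover the subspace inclusion from the form-domain inclusion. The hard part---and the main obstacle---is establishing this explicit form description, the form-level analogue of Theorems~\ref{FriedrichsLowerHalf} and \ref{KreinExtension}; I would approach it by constructing, for each $A_\Dom$, a scaling-invariant Hilbert space completion $\mathcal H_\Dom$ of $\Dom_\Dom$ in the form norm in analogy with Proposition~\ref{FriedrichsStationary}, verifying the Hille--Yosida estimate $\|\kappa_\varrho\|_{\mathcal H_\Dom}\leq\max\{1,\varrho^{m/2}\}$ from the scaling $t_\Dom(\kappa_\varrho u)=\varrho^m t_\Dom(u)$, and then using the resolvent control of $\g$ on $\mathcal H_\Dom$ to identify the generalized eigenspaces of $\hat{\g}$ lifting into the form domain in terms of $\hat{U}$.
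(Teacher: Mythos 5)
Your Part~(1) and the verification that $\hat{\E}_{\Dom_1}\cap\hat{\E}_- \subseteq \hat{\E}_{\Dom_2}\cap\hat{\E}_-$ is equivalent to $\Dom_1\cap\Dom_F \subseteq \Dom_2\cap\Dom_F$ follow the same route as the paper (both ultimately invoke the Canonical Form Theorem together with Theorems~\ref{FriedrichsLowerHalf} and~\ref{KreinExtension}, and your explicit formula for $\hat{\E}_\Dom$ matches the paper's).

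The gap is in the step you yourself flag as the main obstacle: you want an explicit form-level decomposition $\Dom(t_\Dom)=\Dom(t_F)\dotplus V\bigl(\hat{U}^{[\perp]}\cap\hat{\E}_+\bigr)$ with $t_\Dom(u+v)=t_F(u)$, and you propose to build it via a Hille--Yosida analysis of the scaling action on $\mathcal H_\Dom$. This is more machinery than the theorem needs, and no such explicit lift $V(\cdot)$ is ever constructed in the paper. The paper's argument is lighter and sidesteps the obstacle. First, Lemma~\ref{HDRepresentation} gives $\mathcal H_\Dom = \Dom + \mathcal H_F$, direct modulo $\Dom\cap\Dom_F$, by a pure finite-dimensionality/density argument (no scaling, no resolvents); combining this with invariance of $\Dom$ and Theorems~\ref{FriedrichsLowerHalf} and~\ref{KreinExtension} upgrades it to $\mathcal H_\Dom = \Dom\cap\Dom_K + \mathcal H_F$, direct modulo $\Dom_{\min}$, with $\Dom\cap\Dom_K$ dense in $\mathcal H_\Dom$. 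Second, and this is the observation you are missing, the quadratic forms of any two selfadjoint extensions automatically coincide on $\Dom_1\cap\Dom_2$, because both are restrictions of $A_{\max}$: for $u\in\Dom_1\cap\Dom_2$ one has $\|A_{\Dom_1}^{1/2}u\|_H^2 = \langle A_{\max}u,u\rangle_H = \|A_{\Dom_2}^{1/2}u\|_H^2$. When $\mathcal H_{\Dom_2}\subseteq\mathcal H_{\Dom_1}$, the dense subspace $\Dom_2\cap\Dom_K\subseteq\Dom_1\cap\Dom_2$ of $\mathcal H_{\Dom_2}$ carries the identity $\|A_{\Dom_1}^{1/2}u\|_H = \|A_{\Dom_2}^{1/2}u\|_H$, which then extends by continuity (Closed Graph Theorem for the embedding) to all of $\mathcal H_{\Dom_2}$. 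Thus the form inequality in the definition of $A_{\Dom_1}\leq A_{\Dom_2}$ degenerates to an \emph{equality}, and the order relation reduces to the single condition $\mathcal H_{\Dom_2}\subseteq\mathcal H_{\Dom_1}$, which is then directly linked to $\Dom_2\cap\Dom_K\subseteq\Dom_1\cap\Dom_K$ (hence to $\Dom_1\cap\Dom_F\subseteq\Dom_2\cap\Dom_F$ by the Lagrangian structure). In short: you don't need to compute the form on the extra dimensions; you only need to know that on the overlap the two forms agree, and density does the rest.
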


We note that the first part of Theorem~\ref{InvariantOrder} is just \cite[Theorem~5.12.4]{GohbergLancasterRodman} when applied to the present context.

Given $A_{\Dom} = A_{\Dom}^* \geq 0$ we let ${\mathcal H}_{\Dom} \hookrightarrow H$ be the domain of the quadratic form associated with $A_{\Dom}$. We have ${\mathcal H}_{\Dom} = \Dom(A_{\Dom}^{\frac{1}{2}})$, which equivalently can be described as the completion of $\Dom$ with respect to the norm
$$
|u|_{\Dom}^2 = \langle u,u \rangle_H + \langle A_{\Dom}u, u\rangle_H, \; u \in \Dom.
$$
Because $|u|_{\Dom} = |u|_F$ for $u \in \Dom_{\min}$ the embedding $\bigl({\mathcal H}_F,|\cdot|_F\bigr) \hookrightarrow \bigl({\mathcal H}_{\Dom},|\cdot|_{\Dom}\bigr)$ is an isometry. Moreover, the codimension of ${\mathcal H}_F$ in ${\mathcal H}_{\Dom}$ is finite, where more precisely $\dim{\mathcal H}_{\Dom}/{\mathcal H}_F \leq \frac{1}{2}\dim\hat{\E}_{\max}$, see \cite[Theorem~10.3.7]{BirmanSolomjak}. We write ${\mathcal H}_K$ for the domain of the quadratic form associated with the Krein extension $A_K = A_K^*$.

\begin{lemma}\label{HDRepresentation}
Let $A_{\Dom} = A_{\Dom}^* \geq 0$. Then ${\mathcal H}_{\Dom} = \Dom + {\mathcal H}_F$, and this sum is direct modulo $\Dom\cap \Dom_F$.
\end{lemma}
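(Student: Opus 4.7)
The plan is to split the assertion into three pieces: the containment $\Dom + {\mathcal H}_F \subseteq {\mathcal H}_{\Dom}$, the reverse containment ${\mathcal H}_{\Dom} \subseteq \Dom + {\mathcal H}_F$, and the directness of the sum modulo $\Dom \cap \Dom_F$. The main driving observation is that the quotient ${\mathcal H}_{\Dom}/{\mathcal H}_F$ is finite-dimensional, as recalled in the paragraph preceding the lemma via \cite[Theorem~10.3.7]{BirmanSolomjak}.

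The first containment is immediate: $\Dom$ is identified with a dense subspace of its completion ${\mathcal H}_{\Dom}$ with respect to $|\cdot|_{\Dom}$, and ${\mathcal H}_F \hookrightarrow {\mathcal H}_{\Dom}$ is the isometric embedding induced by the equality $|u|_F = |u|_{\Dom}$ on the common subspace $\Dom_{\min}$ noted just above the lemma statement. Hence both summands sit inside ${\mathcal H}_{\Dom}$.

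For the reverse containment I would pass to the quotient $\pi : {\mathcal H}_{\Dom} \to {\mathcal H}_{\Dom}/{\mathcal H}_F$. Because $\Dom$ is dense in ${\mathcal H}_{\Dom}$ and $\pi$ is continuous, the image $\pi(\Dom)$ is dense in ${\mathcal H}_{\Dom}/{\mathcal H}_F$. On the other hand, $\pi(\Dom)$ is a linear subspace of a \emph{finite-dimensional} normed space, hence automatically closed. Being both dense and closed, $\pi(\Dom)$ equals all of ${\mathcal H}_{\Dom}/{\mathcal H}_F$, which translates to a decomposition $u = v + w$ with $v \in \Dom$ and $w \in {\mathcal H}_F$ for every $u \in {\mathcal H}_{\Dom}$.

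Finally, the directness of the sum modulo $\Dom\cap\Dom_F$ reduces to identifying $\Dom \cap {\mathcal H}_F = \Dom \cap \Dom_F$: if $u = v_1 + w_1 = v_2 + w_2$ with $v_i \in \Dom$ and $w_i \in {\mathcal H}_F$, then $v_1 - v_2 = w_2 - w_1 \in \Dom \cap {\mathcal H}_F$, and since $\Dom \subseteq \Dom_{\max}$ this intersection equals $\Dom \cap (\Dom_{\max} \cap {\mathcal H}_F) = \Dom \cap \Dom_F$ by the very definition $\Dom_F = \Dom_{\max}\cap{\mathcal H}_F$. There is no genuine obstacle: the whole argument hinges on the finite-codimension input from Birman–Solomjak, after which the ``dense subspace of a finite-dimensional quotient is the full quotient'' principle does all the work.
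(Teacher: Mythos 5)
Your argument is correct and follows the same route as the paper's proof: density of $\Dom$ in ${\mathcal H}_{\Dom}$, continuity of the quotient map onto the finite-dimensional space ${\mathcal H}_{\Dom}/{\mathcal H}_F$ forcing $\pi(\Dom)$ to be everything, and the identity $\Dom_{\max}\cap{\mathcal H}_F = \Dom_F$ for the directness. Your write-up only spells out the middle step (a dense subspace of a finite-dimensional space is the whole space) and the intersection computation a bit more explicitly.
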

\begin{proof}
Because $\Dom \subset \bigl({\mathcal H}_{\Dom},|\cdot|_{\Dom}\bigr)$ is dense we have that $\bigl(\Dom + {\mathcal H}_F\bigr)/{\mathcal H}_F \subset {\mathcal H}_{\Dom}/{\mathcal H}_F$ is dense, and by finite dimensionality we must have $\bigl(\Dom + {\mathcal H}_F\bigr)/{\mathcal H}_F = {\mathcal H}_{\Dom}/{\mathcal H}_F$. This shows that ${\mathcal H}_{\Dom} = \Dom + {\mathcal H}_F$, and because $\Dom_{\max}\cap{\mathcal H}_F = \Dom_F$ the sum is direct modulo $\Dom\cap\Dom_F$.
\end{proof}

\begin{lemma}\label{KreinInvariant}
The Krein extension $A_K : \Dom_K \subset H \to H$ is invariant.
\end{lemma}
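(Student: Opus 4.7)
The proof will exploit the uniqueness of the Krein extension as the smallest nonnegative selfadjoint extension of $A_{\min}$: $A_K \leq A_{\Dom}$ for every $A_{\Dom} = A_{\Dom}^* \geq 0$ that extends $A_{\min}$. The strategy is to produce, for each $\varrho > 0$, a second operator $B_\varrho$ that also qualifies as the smallest nonnegative selfadjoint extension; uniqueness will then force $B_\varrho = A_K$, which is precisely the invariance statement.

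\textbf{Step 1: construction of $B_\varrho$.} For fixed $\varrho > 0$, define
$$
B_\varrho = A_{\max}\big|_{\kappa_\varrho^{-1}\Dom_K}.
$$
Using the invariance identity $A_{\max}\kappa_\varrho^{-1} = \varrho^{-m}\kappa_\varrho^{-1}A_{\max}$ from Lemma~\ref{Amaxmininv}, one verifies that $B_\varrho = \varrho^{-m}\kappa_\varrho^{-1}A_K\kappa_\varrho$, so $B_\varrho$ is unitarily equivalent (via $\kappa_\varrho$) to the positive scalar multiple $\varrho^{-m}A_K$. Hence $B_\varrho$ is selfadjoint and nonnegative. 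Since $\kappa_\varrho^{-1}\Dom_{\min} = \Dom_{\min} \subset \kappa_\varrho^{-1}\Dom_K$, $B_\varrho$ is an extension of $A_{\min}$.

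\textbf{Step 2: minimality of $B_\varrho$.} Let $A_{\Dom} = A_{\Dom}^* \geq 0$ be an arbitrary nonnegative selfadjoint extension of $A_{\min}$. Applying the construction in Step 1 in reverse (with $\varrho$ replaced by $\varrho^{-1}$), the operator $A_{\Dom}' = A_{\max}|_{\kappa_\varrho\Dom} = \varrho^m\kappa_\varrho A_{\Dom}\kappa_\varrho^{-1}$ is again a nonnegative selfadjoint extension of $A_{\min}$. By Krein minimality, $A_K \leq A_{\Dom}'$, i.e., $\Dom\bigl((A_{\Dom}')^{1/2}\bigr) \subseteq \Dom(A_K^{1/2})$ and $\|A_K^{1/2}w\|_H \leq \|(A_{\Dom}')^{1/2}w\|_H$ for every $w$ in the form domain of $A_{\Dom}'$. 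Since the square roots satisfy $(A_{\Dom}')^{1/2} = \varrho^{m/2}\kappa_\varrho A_{\Dom}^{1/2}\kappa_\varrho^{-1}$ and $B_\varrho^{1/2} = \varrho^{-m/2}\kappa_\varrho^{-1}A_K^{1/2}\kappa_\varrho$, we substitute $w = \kappa_\varrho v$ with $v \in \Dom(A_{\Dom}^{1/2})$, use unitarity of $\kappa_\varrho$ and multiply by $\varrho^{-m}$ to obtain
$$
\|B_\varrho^{1/2}v\|_H^2 = \varrho^{-m}\|A_K^{1/2}\kappa_\varrho v\|_H^2 \leq \varrho^{-m}\|(A_{\Dom}')^{1/2}\kappa_\varrho v\|_H^2 = \|A_{\Dom}^{1/2}v\|_H^2,
$$
with form-domain inclusion $\Dom(A_{\Dom}^{1/2}) \subseteq \kappa_\varrho^{-1}\Dom(A_K^{1/2}) = \Dom(B_\varrho^{1/2})$. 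Hence $B_\varrho \leq A_{\Dom}$.

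\textbf{Step 3: conclusion.} Because $B_\varrho$ is nonnegative, selfadjoint, extends $A_{\min}$, and satisfies $B_\varrho \leq A_{\Dom}$ for every such extension, the uniqueness of the Krein extension forces $B_\varrho = A_K$. Equivalently, $\kappa_\varrho^{-1}\Dom_K = \Dom_K$ and $\varrho^{-m}\kappa_\varrho^{-1}A_K\kappa_\varrho = A_K$ for all $\varrho > 0$, which is the invariance of $A_K$.

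The main delicate point is Step 2: transferring the abstract quadratic-form inequality $A_K \leq A_{\Dom}'$ into the inequality $B_\varrho \leq A_{\Dom}$ requires carefully tracking how the form domains transform under the unitary $\kappa_\varrho$ and how the positive scalar $\varrho^{-m}$ is absorbed. Both are clean once the scaling identity for $A_{\max}^{1/2}$ on each nonnegative selfadjoint extension is written in the form above, which itself follows from $A_{\Dom}' = \varrho^m\kappa_\varrho A_{\Dom}\kappa_\varrho^{-1}$ via functional calculus for the unitarily equivalent operators.
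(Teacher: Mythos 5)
Your proof is correct, but it takes a genuinely different route from the paper. The paper invokes the Ando-Nishio description of the Krein domain,
$$
\Dom_K = \{u \in \Dom_{\max};\; \exists u_k \in \Dom_{\min} : (A_F^{1/2}u_k)_k \textup{ Cauchy},\; A_{\min}u_k \to A_{\max}u\},
$$
and then verifies directly that $\kappa_\varrho u_k$ is an admissible approximating sequence for $\kappa_\varrho u$, using the scaling identities for $A_{\min}$ and the computation $\|A_F^{1/2}\kappa_\varrho(u_k-u_l)\|_H^2 = \varrho^m\|A_F^{1/2}(u_k-u_l)\|_H^2$. Your approach instead relies purely on the extremal (universal) characterization of $A_K$ as the smallest nonnegative selfadjoint extension: you show that the conjugated operator $B_\varrho = \varrho^{-m}\kappa_\varrho^{-1}A_K\kappa_\varrho$ is again a nonnegative selfadjoint extension of $A_{\min}$ enjoying the same minimality property, and uniqueness does the rest. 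What the paper's argument buys is an explicit, sequence-level verification that works without any appeal to the order structure; what yours buys is conceptual economy — invariance of $A_K$ is seen to follow from the invariance of the extremal characterization under the $\kappa_\varrho$-action, and the only nontrivial ingredient is tracking form domains and square roots under unitary conjugation, which is clean by functional calculus. Both are valid; yours is arguably closer in spirit to the abstract result of Makarov-Tsekanovskii that the paper cites as an alternative before giving its own proof.
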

\begin{proof}
This follows from a more general result by Makarov and Tsekanovskii \cite{MakarovTsekanovskii}, but we give an independent proof here. We have $A_K = A^*\Big|_{\Dom_K}$, where the domain of the Krein extension is given by
$$
\Dom_K = \{u \in \Dom_{\max};\; \exists u_k \in \Dom_{\min} : (A_F^{\frac{1}{2}}u_k)_k \subset H \textup{ is Cauchy},\; \lim\limits_{k\to\infty}A_{\min}u_k = A_{\max}u\},
$$
see Ando and Nishio \cite{AndoNishio}. We need to show that if $u \in \Dom_K$, then also $\kappa_{\varrho}u \in \Dom_K$. Let $u_k \in \Dom_{\min}$ such that $\lim\limits_{k \to \infty}A_{\min}u_k = A_{\max}u$, and such that $(A_F^{\frac{1}{2}}u_k)_k$ is a Cauchy sequence in $H$. Now $\kappa_{\varrho}u_k \in \Dom_{\min}$, and we have
$$
\lim\limits_{k\to\infty}A_{\min}\kappa_{\varrho}u_k = \lim\limits_{k\to\infty}\varrho^m\kappa_{\varrho}A_{\min}u_k = \varrho^m\kappa_{\varrho}A_{\max}u = A_{\max}\kappa_{\varrho}u.
$$
Moreover, we have
\begin{align*}
\|A_F^{\frac{1}{2}}\kappa_{\varrho}u_k - A_F^{\frac{1}{2}}\kappa_{\varrho}u_l\|_H^2 &= \langle A_F^{\frac{1}{2}}\kappa_{\varrho}(u_k-u_l),A_F^{\frac{1}{2}}\kappa_{\varrho}(u_k-u_l) \rangle_H \\
&= \langle A_{\min}\kappa_{\varrho}(u_k-u_l),\kappa_{\varrho}(u_k-u_l)\rangle_H \\
&= \langle \varrho^m\kappa_{\varrho}A_{\min}(u_k-u_l),\kappa_{\varrho}(u_k-u_l)\rangle_H \\
&= \varrho^m \langle A_{\min}(u_k-u_l),u_k-u_l \rangle_H \\
&= \varrho^m\|A_F^{\frac{1}{2}}u_k - A_F^{\frac{1}{2}}u_l\|_H^2,
\end{align*}
and thus $(A_F^{\frac{1}{2}}\kappa_{\varrho}u_k) \subset H$ is Cauchy. This shows that $\kappa_{\varrho}u \in \Dom_K$ and finishes the proof of the lemma.
\end{proof}

\begin{lemma}\label{HKModHF}
Suppose $\spec(\hat{\g})\cap\{\sigma\in\C;\;\Im(\sigma)=-\frac{m}{2}\}=\emptyset$. Then the inclusion map $\Dom_{\max} \hookrightarrow {\mathcal H}_K$ is well-defined and continuous. We have $\Dom_{\max} = \Dom_K + \Dom_F$, and this sum is direct modulo $\Dom_{\min}$.
\end{lemma}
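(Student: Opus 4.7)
The plan is to reduce the lemma to the identification $\hat{\E}_K = \hat{\E}_+$, where I set $\hat{\E}_{\pm} = \bigoplus_{\pm(\Im(\sigma)+m/2)>0}\hat{\E}_\sigma$. Under the assumption of the lemma, $\hat{\E}_{\max} = \hat{\E}_+ \oplus \hat{\E}_-$, the sign condition of Definition~\ref{SignCondition} is trivially satisfied, and Theorem~\ref{FriedrichsLowerHalf} yields $\hat{\E}_F = \hat{\E}_-$. Once $\hat{\E}_K = \hat{\E}_+$ is in hand, any $u \in \Dom_{\max}$ can be split by projecting $\hat{u}$ to $\hat{u}_+ + \hat{u}_- \in \hat{\E}_K \oplus \hat{\E}_F$ and lifting these components into $\Dom_K$ and $\Dom_F$ respectively; directness modulo $\Dom_{\min}$ reduces to $\hat{\E}_K \cap \hat{\E}_F = \{0\}$, and continuity of $\Dom_{\max} \hookrightarrow {\mathcal H}_K$ is a Closed Graph Theorem argument since both embed continuously in $H$.

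To prove $\hat{\E}_K = \hat{\E}_+$, I introduce an auxiliary extension $A_{\Dom'}$ by declaring $\hat{\E}_{\Dom'} = \hat{\E}_+$. The subspace $\hat{\E}_+$ is invariant under $\hat{\g}$ as a sum of generalized eigenspaces, and Lagrangian with respect to $[\cdot,\cdot]_{\hat{\E}_{\max}}$ since the adjoint pairing is nondegenerate only between $\hat{\E}_\sigma$ and $\hat{\E}_{\sigma^\star}$ and $\hat{\E}_+$ contains exactly one member of each such pair. Thus $A_{\Dom'} = A_{\Dom'}^*$ is an invariant selfadjoint extension of $A$. By \cite[Theorem~9.3.7]{BirmanSolomjak} it is bounded below, and Lemma~\ref{BoundStationary} then pins the lower bound at $0$, so $A_{\Dom'} \geq 0$.

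Since $A_K$ is the smallest nonnegative selfadjoint extension, $A_K \leq A_{\Dom'}$, equivalently ${\mathcal H}_{\Dom'} \subseteq {\mathcal H}_K$. Applying Lemma~\ref{HDRepresentation} to $A_K$, I write ${\mathcal H}_K = \Dom_K + {\mathcal H}_F$, so every $u \in \Dom_{\Dom'}$ decomposes as $u = v + w$ with $v \in \Dom_K$ and $w \in {\mathcal H}_F$; using $u, v \in \Dom_{\max}$ forces $w \in \Dom_{\max}\cap{\mathcal H}_F = \Dom_F$. Projecting to $\hat{\E}_{\max}$ gives $\hat{\E}_+ = \hat{\E}_{\Dom'} \subseteq \hat{\E}_K + \hat{\E}_F$, and combining with the trivial $\hat{\E}_- \subseteq \hat{\E}_K + \hat{\E}_-$ forces $\hat{\E}_K + \hat{\E}_- = \hat{\E}_{\max}$; a dimension count (both $\hat{\E}_K$ and $\hat{\E}_F$ are Lagrangian, of dimension $\tfrac{1}{2}\dim\hat{\E}_{\max}$) then yields $\hat{\E}_K \cap \hat{\E}_- = \{0\}$. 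By Lemma~\ref{KreinInvariant}, $\hat{\E}_K$ is $\hat{\g}$-invariant, hence a direct sum of its intersections with the generalized eigenspaces, so the vanishing of $\hat{\E}_K \cap \hat{\E}_-$ gives $\hat{\E}_K \subseteq \hat{\E}_+$, with equality by dimension. The main obstacle lies precisely in this third step: converting the abstract form inequality $A_K \leq A_{\Dom'}$ into eigenspace-level information. Lemma~\ref{HDRepresentation} is indispensable, since it rewrites ${\mathcal H}_K$ concretely as $\Dom_K + {\mathcal H}_F$ and thereby allows the form inclusion to be tested, after projection, against the canonical eigenspace decomposition of $\hat{\g}$.
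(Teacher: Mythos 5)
Your proof is correct, and the core mechanism is the same as the paper's: introduce the auxiliary invariant selfadjoint extension $A_{\Dom'}$ with $\hat{\E}_{\Dom'} = \hat{\E}_+$, show $A_{\Dom'}\geq 0$ via Lemma~\ref{BoundStationary}, invoke $A_K\leq A_{\Dom'}$, and apply Lemma~\ref{HDRepresentation} to $A_K$ to split elements as $\Dom_K + \Dom_F$. The difference is in organization: the paper stops short of identifying $\hat{\E}_K$; it observes directly that $\Dom_F\subset{\mathcal H}_F\subset{\mathcal H}_K$ and $\Dom'\subset{\mathcal H}_{\Dom'}\subset{\mathcal H}_K$, so $\Dom_{\max}=\Dom_F+\Dom'\subset{\mathcal H}_K$, then uses Lemma~\ref{HDRepresentation} on all of $\Dom_{\max}$ and a dimension count on Lagrangians to get directness. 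You instead push further to prove the full identification $\hat{\E}_K=\hat{\E}_+$ (via $\hat{\E}_K\cap\hat{\E}_-=\{0\}$ and $\hat{\g}$-invariance of $\hat{\E}_K$), which is actually the content of the first case of Theorem~\ref{KreinExtension}, and then read off the lemma as a corollary. This is more work than the lemma requires; your argument effectively fuses the lemma and that case of the theorem. One small economy you could have exploited: once step~6 gives $\Dom'\subset\Dom_K+\Dom_F$, you already have $\Dom_{\max}=\Dom'+\Dom_F=\Dom_K+\Dom_F$, so the subsequent eigenspace analysis is not needed for the lemma itself, only for the stronger claim about $\hat{\E}_K$.
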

\begin{proof}
We have $\Dom_F \subset {\mathcal H}_F \subset {\mathcal H}_K$. Moreover, let $\Dom_{\min} \subset \Dom \subset \Dom_{\max}$ be such that
$$
\hat{\E}_{\Dom} = \bigoplus\limits_{\substack{\sigma\in\spec(\hat{\g}) \\ \Im(\sigma) > -\frac{m}{2}}}\hat{\E}_{\sigma}.
$$
Then $A_{\Dom} = A_{\Dom}^*$ is selfadjoint by the Canonical Form Theorem and our present assumption that $\hat{\g}$ does not have eigenvalues on $\Im(\sigma)=-\frac{m}{2}$. Moreover, $A_{\Dom}$ is also invariant, and therefore $A_{\Dom} \geq 0$ by Lemma~\ref{BoundStationary}. Thus $A_K \leq A_{\Dom}$ by the fundamental property of the Krein extension which shows that $\Dom \subset {\mathcal H}_{\Dom} \subset {\mathcal H}_K$. Consequently, $\Dom_F + \Dom = \Dom_{\max} \subset {\mathcal H}_K$. Continuity of the embedding $\Dom_{\max} \hookrightarrow {\mathcal H}_K$ follows from the Closed Graph Theorem.

By Lemma~\ref{HDRepresentation}, every $u \in \Dom_{\max}$ now has a representation $u = u_K + \tilde{u}$ with $u_K \in \Dom_K$ and $\tilde{u} \in {\mathcal H}_F$. But $\tilde{u} = u - u_K \in \Dom_{\max}\cap{\mathcal H}_F = \Dom_F$, which shows that $\Dom_{\max} = \Dom_K + \Dom_F$. We thus have $\hat{\E}_{\max} = \hat{\E}_K + \hat{\E}_F$, and for dimensional reasons this sum must be direct. The lemma is proved.
\end{proof}

\begin{proof}[Proof of Theorem~\ref{KreinExtension}]
We first consider the case that
$$
\spec(\hat{\g})\cap\{\sigma\in\C;\;\Im(\sigma)=-\tfrac{m}{2}\}=\emptyset.
$$
By Lemma~\ref{HKModHF} we have a direct sum $\hat{\E}_{\max} = \hat{\E}_K \oplus \hat{\E}_F$. Now let
$$
\hat{u} \in \bigoplus\limits_{\substack{\sigma\in\spec(\hat{\g}) \\ \Im(\sigma) > -\frac{m}{2}}}\hat{\E}_{\sigma} \subset \hat{\E}_{\max}
$$
be arbitrary, and write $\hat{u} = \hat{u}_K + \hat{u}_F$ with $\hat{u}_K \in \hat{\E}_K$ and $\hat{u}_F \in \hat{\E}_F$. Then
$$
\hat{u}_K = \hat{u} + (-\hat{u}_F) \in \Biggl[\bigoplus\limits_{\substack{\sigma\in\spec(\hat{\g}) \\ \Im(\sigma) > -\frac{m}{2}}}\hat{\E}_{\sigma}\Biggr] \oplus
\Biggl[\bigoplus\limits_{\substack{\sigma\in\spec(\hat{\g}) \\ \Im(\sigma) < -\frac{m}{2}}}\hat{\E}_{\sigma}\Biggr],
$$
see Theorem~\ref{FriedrichsLowerHalf}, but because $\hat{\E}_K$ is invariant under $\hat{\kappa}_{\varrho}$ by Lemma~\ref{KreinInvariant} we have
$$
\hat{\E}_K = 
\Biggl[\bigoplus\limits_{\substack{\sigma\in\spec(\hat{\g}) \\ \Im(\sigma) > -\frac{m}{2}}}\bigl[\hat{\E}_{\sigma}\cap\hat{\E}_K\bigr]\Biggr] \oplus
\Biggl[\bigoplus\limits_{\substack{\sigma\in\spec(\hat{\g}) \\ \Im(\sigma) < -\frac{m}{2}}}\bigl[\hat{\E}_{\sigma}\cap\hat{\E}_K\bigr]\Biggr].
$$
Consequently both $\hat{u},\,\hat{u}_F \in \hat{\E}_K$, and so necessarily $\hat{u}_F = 0$ and $\hat{u} = \hat{u}_K \in \hat{\E}_K$. Thus
$$
\bigoplus\limits_{\substack{\sigma\in\spec(\hat{\g}) \\ \Im(\sigma) > -\frac{m}{2}}}\hat{\E}_{\sigma} \subset \hat{\E}_K,
$$
and for dimensional reasons these two spaces must be equal which proves the theorem in the present case.

We next consider the general case. Let $\Dom_{\min} \subset \Dom_0 \subset \Dom_{\max}$ be such that
$$
\hat{\E}_{\Dom_0} = \bigoplus\limits_{\substack{\sigma\in\spec(\hat{\g}) \\ \Im(\sigma) = -\frac{m}{2}}}\bigl[\hat{\E}_{\sigma}\cap\hat{\E}_K\bigr],
$$
and consider the operator $B_{\min} = A_{\Dom_0} : \Dom_0 \subset H \to H$. Note that $\Dom_0 = \Dom(B_{\min})$ is invariant under $\kappa_{\varrho}$, $B_{\min}$ is symmetric, and satisfies all the general assumptions imposed on $A_{\min}$. We have $B_{\max} = A_{\Dom_0}^*$, where
$$
\Dom(A_{\Dom_0}^*)/\Dom_{\min}(A) = \Biggl[\bigoplus\limits_{\substack{\sigma\in\spec(\hat{\g}) \\ \Im(\sigma) > -\frac{m}{2}}}\hat{\E}_{\sigma}\Biggr] \oplus
\Biggl[\bigoplus\limits_{\substack{\sigma\in\spec(\hat{\g}) \\ \Im(\sigma) < -\frac{m}{2}}}\hat{\E}_{\sigma}\Biggr] \oplus \hat{\E}_{\Dom_0}
$$
by the Canonical Form Theorem, and consequently
$$
\Dom(B_{\max})/\Dom(B_{\min}) \cong \Biggl[\bigoplus\limits_{\substack{\sigma\in\spec(\hat{\g}) \\ \Im(\sigma) > -\frac{m}{2}}}\hat{\E}_{\sigma}\Biggr] \oplus
\Biggl[\bigoplus\limits_{\substack{\sigma\in\spec(\hat{\g}) \\ \Im(\sigma) < -\frac{m}{2}}}\hat{\E}_{\sigma}\Biggr]
$$
is the spectral decomposition of the quotient $\Dom(B_{\max})/\Dom(B_{\min})$ associated with the generator of the induced group action. By what we have shown above, the Krein extension $B_K$ of $B_{\min}$ has domain given by
$$
\Dom(B_K)/\Dom(B_{\min}) \cong \bigoplus\limits_{\substack{\sigma\in\spec(\hat{\g}) \\ \Im(\sigma) > -\frac{m}{2}}}\hat{\E}_{\sigma},
$$
and we have $A_K = B_K$ as both extend $A_{\min}$ and $B_{\min}$, and are each minimal among the nonnegative selfadjoint extensions of these operators. In conclusion,
$$
\hat{\E}_K = \bigoplus\limits_{\substack{\sigma\in\spec(\hat{\g}) \\ \Im(\sigma) = -\frac{m}{2}}}\bigl[\hat{\E}_{\sigma}\cap \hat{\E}_K\bigr] \oplus
\bigoplus\limits_{\substack{\sigma\in\spec(\hat{\g}) \\ \Im(\sigma) > -\frac{m}{2}}}\hat{\E}_{\sigma}
$$
as desired. Lastly, if $A$ satisfies the sign condition, we necessarily must have $\hat{\E}_{\sigma}\cap \hat{\E}_K = \hat{\E}_{\sigma,\frac{1}{2}}$ for $\Im(\sigma)=-\frac{m}{2}$ because $\hat{\E}_{\sigma,\frac{1}{2}} \subset \bigl(\hat{\E}_{\sigma},[\cdot,\cdot]_{\hat{\E}_{\max}}\bigr)$ is the unique Lagrangian subspace that is invariant under $\hat{\kappa}_{\varrho}$.
\end{proof}

\begin{proof}[Proof of Theorem~\ref{InvariantOrder}]
For every invariant extension $A_{\Dom} = A_{\Dom}^*$ we have
$$
\hat{\E}_{\Dom} = \Biggl[\bigoplus\limits_{\substack{\sigma\in\spec(\hat{\g}) \\ \Im(\sigma) > -\frac{m}{2}}}\bigl[\hat{\E}_{\sigma}\cap\hat{\E}_{\Dom}\bigr]\Biggr] \oplus
\Biggl[\bigoplus\limits_{\substack{\sigma\in\spec(\hat{\g}) \\ \Im(\sigma) < -\frac{m}{2}}}\bigl[\hat{\E}_{\sigma}\cap\hat{\E}_{\Dom}\bigr]\Biggr] 
\oplus
\Biggl[\bigoplus\limits_{\substack{\sigma\in\spec(\hat{\g}) \\ \Im(\sigma) = -\frac{m}{2}}}\bigl[\hat{\E}_{\sigma}\cap\hat{\E}_{\Dom}\bigr]\Biggr],
$$
and because $A$ satisfies the sign condition we have $\hat{\E}_{\sigma}\cap\hat{\E}_{\Dom} = \hat{\E}_{\sigma,\frac{1}{2}}$ for every $\sigma \in \spec(\hat{\g})$ with $\Im(\sigma)=-\frac{m}{2}$. Thus $\bigoplus\limits_{\substack{\sigma\in\spec(\hat{\g}) \\ \Im(\sigma) = -\frac{m}{2}}}\hat{\E}_{\sigma,\frac{1}{2}}$ is part of every invariant selfadjoint extension. Consequently, by replacing the minimal extension $A_{\min}$ with $A_{\Dom_0}$ as in the proof of Theorem~\ref{KreinExtension} if necessary, where $\hat{\E}_{\Dom_0} = \bigoplus\limits_{\substack{\sigma\in\spec(\hat{\g}) \\ \Im(\sigma) = -\frac{m}{2}}}\hat{\E}_{\sigma,\frac{1}{2}}$, we may without loss of generality assume that
$$
\spec(\hat{\g})\cap\{\sigma\in\C;\;\Im(\sigma)=-\tfrac{m}{2}\}=\emptyset.
$$
As previously mentioned, the first part of Theorem~\ref{InvariantOrder} follows from \cite[Theorem~5.12.4]{GohbergLancasterRodman}. It is in fact an immediate consequence of the Canonical Form Theorem. Given an invariant subspace $\hat{U} \subset \hat{\E}_-$ as stated in the assumptions, the domain $\Dom$ of the unique selfadjoint extension with $\hat{\E}_{\Dom}\cap\hat{\E}_- = \hat{U}$ must necessarily be
$$
\hat{\E}_{\Dom} = \Biggl[\bigoplus\limits_{\substack{\sigma\in\spec(\hat{\g}) \\ \Im(\sigma) > -\frac{m}{2}}}\bigl[\hat{\E}_{\sigma}\cap\hat{U}^{[\perp]}\bigr]\Biggr] \oplus
\Biggl[\bigoplus\limits_{\substack{\sigma\in\spec(\hat{\g}) \\ \Im(\sigma) < -\frac{m}{2}}}\bigl[\hat{\E}_{\sigma}\cap\hat{U}\bigr]\Biggr].
$$

We now proceed to prove the second part of the theorem. Let $A_{\Dom} = A_{\Dom}^*$ be invariant. According to Lemma~\ref{HDRepresentation} we have ${\mathcal H}_{\Dom} = \Dom + {\mathcal H}_F$, and because $\Dom$ is invariant we conclude from Theorem~\ref{FriedrichsLowerHalf} and Theorem~\ref{KreinExtension} that ${\mathcal H}_{\Dom} = \Dom\cap\Dom_K + {\mathcal H}_F$, and this sum is direct modulo $\Dom_{\min}$. We also note that $\Dom\cap\Dom_K \subset \bigl({\mathcal H}_{\Dom},|\cdot|_{\Dom}\bigr)$ is dense. Consequently, for invariant extensions $A_{\Dom_j} = A_{\Dom_j}^*$, $j=1,2$, we have ${\mathcal H}_{\Dom_2} \subset {\mathcal H}_{\Dom_1}$ if and only if $\Dom_2\cap\Dom_K \subset \Dom_1\cap\Dom_K$, which by the Canonical Form Theorem is equivalent to $\Dom_1\cap\Dom_F \subset \Dom_2\cap\Dom_F$. Moreover, whenever ${\mathcal H}_{\Dom_2} \subset {\mathcal H}_{\Dom_1}$ the embedding $\bigl({\mathcal H}_{\Dom_2},|\cdot|_{\Dom_2}\bigr) \hookrightarrow \bigl({\mathcal H}_{\Dom_1},|\cdot|_{\Dom_1}\bigr)$ is continuous by the Closed Graph Theorem, and $\|A_{\Dom_1}^{\frac{1}{2}}u\|_H = \|A_{\Dom_2}^{\frac{1}{2}}u\|_H$ for $u \in {\mathcal H}_{\Dom_2}$. To see the latter norm identity note that it is true for $u \in \Dom_1\cap\Dom_2 \supset \Dom_2\cap\Dom_K$, which is dense in ${\mathcal H}_{\Dom_2}$, and so it extends by continuity to all of ${\mathcal H}_{\Dom_2}$. Consequently, $A_{\Dom_1} \leq A_{\Dom_2}$ if and only if ${\mathcal H}_{\Dom_2} \subset {\mathcal H}_{\Dom_1}$ if and only if $\Dom_1\cap\Dom_F \subset \Dom_2\cap\Dom_F$. The theorem is proved.
\end{proof}


\section{Indicial operators}\label{IndicialOperators}

\noindent
In the remaining sections we will discuss applications of the previous results to indicial operators. As discussed in the introduction, indicial operators arise as model operators associated with singularities of corner type. Taking an operator theoretical point of view, we consider indicial operators of the form
\begin{equation}\label{Aindicial}
A = x^{-m}\sum\limits_{j=0}^{\mu} a_j(xD_x)^j : C_c^{\infty}(\R_+;E_1) \subset L^2_b(\R_+;E_0) \to L^2_b(\R_+;E_0),
\end{equation}
where $m,\mu \in \N$ and $E_0$ and $E_1$ are Hilbert spaces such that $E_1 \hookrightarrow E_0$ is continuous and dense, and the operators $a_j : E_1 \to E_0$ are continuous for $j=0,\ldots,\mu$. The (vector-valued) space $L^2_b$ is the $L^2$-space with respect to the Haar measure $\frac{dx}{x}$ on the half-line. The \emph{indicial family} of $A$
$$
p(\sigma) = \sum\limits_{j=0}^{\mu} a_j \sigma^j : E_1 \to E_0, \quad \sigma \in \C
$$
is a holomorphic family of Fredholm operators in $\L(E_1,E_0)$ and satisfies suitable ellipticity assumptions, detailed below, and when considered an unbounded operator acting in $E_0$ with domain $E_1$ we require
$$
p(\sigma^{\star})^* = p(\sigma) : E_1 \subset E_0 \to E_0
$$
to hold for the Hilbert space adjoints for each $\sigma \in \C$, where $\sigma^{\star} = \overline{\sigma} - im$ denotes reflection about the line $\Im(\sigma)=-\frac{m}{2}$, see also \eqref{Reflection}. The latter assumption ensures that the operator $A$ as given in \eqref{Aindicial} is symmetric in $H = L^2_b(\R_+;E_0)$ with dense domain $\Dom_c = C_c^{\infty}(\R_+;E_1)$. On $H = L^2_b(\R_+;E_0)$ we consider the $\R_+$-action given by
\begin{equation}\label{dilation}
\kappa_{\varrho}u(x) = u(\varrho x), \; \varrho > 0, \; u \in L^2_b.
\end{equation}
This is a strongly continuous and unitary group action on $L^2_b$ that leaves $C_c^{\infty}$ invariant, and we have $A = \varrho^m\kappa_{\varrho}A\kappa_{\varrho}^{-1} : \Dom_c \to H$ for all $\varrho > 0$. The infinitesimal generator of this group action is $\g = xD_x$.

For indicial operators associated with cone or corner singularities we typically have $\mu = m$ in \eqref{Aindicial}, but we allow these parameters to be decoupled here to include more general singular behavior; this requires anisotropic estimates. We fix the following notation:
\begin{equation}\label{ellchoices}
\begin{gathered}
\vec{\ell} = (\ell_1,\ell_2) = (\mu,m) \\
\langle \lambda,\sigma \rangle_{\vec{\ell}} = \bigl(1 + \lambda^{2\ell_2} + \sigma^{2\ell_1}\bigr)^{\frac{1}{2\ell_1\ell_2}} = \bigl(1 + \lambda^{2m} + \sigma^{2\mu}\bigr)^{\frac{1}{2m\mu}}
\end{gathered}
\end{equation}
for $(\lambda,\sigma) \in \R^2$, see also Appendix~\ref{PseudoCalculus}.

The following standing assumptions are imposed on the indicial family $p(\sigma)$:
\begin{enumerate}
\renewcommand{\theenumi}{A-\arabic{enumi}}
\item $p(\sigma) : E_1 \subset E_0 \to E_0$ is closed, densely defined, and Fredholm for $\sigma \in \C$, and the map $\C \ni \sigma \mapsto p(\sigma) \in \L(E_1,E_0)$ is holomorphic. \label{Fredholm}
\item We have $p(\sigma^{\star})^* = p(\sigma) : E_1 \subset E_0 \to E_0$ as unbounded operators in $E_0$, where $\sigma^{\star} = \overline{\sigma} - im$. \label{Symmetry}
\item For $(\lambda,\sigma) \in \R^2$ and $|\lambda,\sigma| \geq R \gg 0$ sufficiently large $p(\sigma) \pm i\lambda^m : E_1 \to E_0$ is invertible, and the inverse satisfies \label{InverseBounds}
$$
\sup\limits_{|\lambda,\sigma| \geq R}\Bigl\{\langle \lambda,\sigma \rangle_{\vec{\ell}}^{m\mu}\bigl\|\bigl(p(\sigma) \pm i\lambda^m\bigr)^{-1}\bigr\|_{\L(E_0)} + \bigl\|\bigl(p(\sigma) \pm i\lambda^m\bigr)^{-1}\bigr\|_{\L(E_0,E_1)}\Bigr\} < \infty.
$$
\item For $(\lambda,\sigma) \in \R^2$ and $R \gg 0$ as in \eqref{InverseBounds} and every $k \in \{1,\ldots,\mu\}$ we have \label{InverseDeriv}
$$
\sup\limits_{|\lambda,\sigma| \geq R}\langle \lambda,\sigma \rangle_{\vec{\ell}}^{mk}\bigl\|\bigl[\partial_{\sigma}^kp(\sigma)\bigr]\bigl(p(\sigma) \pm i\lambda^m\bigr)^{-1}\bigr\|_{\L(E_0)} < \infty.
$$
\end{enumerate}
Note that we are not imposing assumptions regarding the deficiency indices of the operator \eqref{Aindicial}, and neither do we place any assumptions on the invertibility of $p(\sigma)$ along a line $\Im(\sigma) = \gamma$ for any specific value of $\gamma \in \R$, nor on the embedding of the spaces $E_1 \hookrightarrow E_0$ other than continuity and density.

Assumptions \eqref{InverseBounds} and \eqref{InverseDeriv} are ellipticity conditions on $p(\sigma)$. To illustrate this we briefly discuss conical singularities in Example~\ref{conicexample} below.

\begin{example}\label{conicexample}
Let $Y$ be a closed compact Riemannian manifold and consider an indicial family of the form
$$
p(\sigma) = \sum\limits_{j=0}^{\mu}a_j(y,D_y)\sigma^j : C^{\infty}(Y) \to C^{\infty}(Y),
$$
where $a_j(y,D_y) \in \Diff^{\mu-j}(Y)$ for $j=0,\ldots,\mu$. We assume that the parameter-dependent principal symbol
$$
\sym(p)(y,\eta;\sigma) = \sum\limits_{j=0}^{\mu}\sym(a_j)(y,\eta)\sigma^j
$$
is invertible for $(y,\eta;\sigma) \in \bigl(T^*Y\times\R\bigr) \setminus 0$, and that $\sym(p)(y,\eta;\sigma) = \sym(p)(y,\eta;\sigma)^*$. Then the family $p(\sigma)$ satisfies both assumptions \eqref{InverseBounds} and \eqref{InverseDeriv} with $E_0 = L^2(Y)$ and $E_1 = H^{\mu}(Y)$.

To see this note that $a(y,\eta;\lambda,\sigma):= \sym(p)(y,\eta;\sigma) \pm i\lambda^m$ satisfies
$$
a(y,\varrho^{m}\eta;\varrho^{\mu}\lambda,\varrho^{m}\sigma) = \varrho^{m\mu}a(y,\eta;\lambda,\sigma), \quad \varrho > 0,
$$
and is invertible for $(y,\eta;\lambda,\sigma) \in \bigl(T^*Y\times\R^2\bigr)\setminus 0$ by the symmetry and invertibility assumptions on $\sym(p)(y,\eta;\sigma)$. Thus $p(\sigma) \pm i\lambda^m$ is an elliptic family of order $m\mu$ that depends anisotropically on the parameters $(\lambda,\sigma) \in \R^2$, and the standard parametrix construction with anisotropic parameters furnishes a parametrix $b(\lambda,\sigma)$ of order $-m\mu$ such that
\begin{equation}\label{ppmlparam}
\bigl(p(\sigma)\pm i\lambda^m\bigr)b(\lambda,\sigma) - 1,\, b(\lambda,\sigma)\bigl(p(\sigma)\pm i\lambda^m\bigr) - 1 \in \S(\R^2;\Psi^{-\infty}(Y)).
\end{equation}
Locally, $b(\lambda,\sigma)$ is quantized from symbols $b(y,\eta;\lambda,\sigma)$ that satisfy anisotropic symbol estimates of order $t = -m\mu$ of the form
\begin{equation}\label{symbestimateslocal}
|D_y^{\alpha}\partial_{(\eta,\lambda,\sigma)}^{\beta}b(y,\eta;\lambda,\sigma)| \lesssim \bigl[(1 + |\eta|^{2\mu} + \lambda^{2m} + \sigma^{2\mu})^{\frac{1}{2m\mu}}\bigr]^{t -m|\beta_1| - \mu\beta_2 -m\beta_3}
\end{equation}
for $\alpha \in \N_0^{\dim Y}$ and $\beta = (\beta_1,\beta_2,\beta_3) \in \N_0^{\dim Y}\times\N_0\times\N_0$, while $p(\sigma)\pm i\lambda^m$ is based on such symbols with $t = m\mu$ and principal symbol $a(y,\eta;\lambda,\sigma)$ above.

The symbol estimates \eqref{symbestimateslocal} for $b(y,\eta;\lambda,\sigma)$ with $t = -m\mu$ imply
\begin{align*}
\langle \eta \rangle^{|\beta_1|}|D_y^{\alpha}\partial_{\eta}^{\beta_1}b(y,\eta;\lambda,\sigma)| &\lesssim \langle \lambda,\sigma \rangle_{\vec{\ell}}^{-m\mu}, \\
\langle \eta \rangle^{\mu+|\beta_1|}|D_y^{\alpha}\partial_{\eta}^{\beta_1}b(y,\eta;\lambda,\sigma)| &\lesssim 1,
\end{align*}
and so 
$$
\sup\Bigl\{\langle \lambda,\sigma \rangle_{\vec{\ell}}^{m\mu}\bigl\|b(\lambda,\sigma)\bigr\|_{\L(H^s(Y))} + \bigl\|b(\lambda,\sigma)\bigr\|_{\L(H^s(Y),H^{s+\mu}(Y))}\Bigr\} < \infty
$$
for every $s \in \R$, in particular for $s = 0$, and because of \eqref{ppmlparam} we can replace $b(\lambda,\sigma)$ by $\bigl(p(\sigma) \pm i\lambda^m\bigr)^{-1}$ for $|\lambda,\sigma| \gg 0$ large enough in these estimates, showing \eqref{InverseBounds}.

The composition
$$
\bigl[\partial_{\lambda}^l\partial_{\sigma}^k \bigl(p(\sigma)\pm i\lambda^m\bigr)\bigr]\bigl(p(\sigma)\pm i\lambda^m\bigr)^{-1} : C^{\infty}(Y) \to C^{\infty}(Y)
$$
is an anisotropic family of pseudodifferential operators depending on $(\lambda,\sigma)$ of order $t = (m\mu-\mu l- mk) + (-m\mu) = -\mu l - mk$ by the composition theorem, and so locally is based on symbols that satisfy the estimates \eqref{symbestimateslocal} with this value of $t$. Consequently,
$$
\sup\limits_{|\lambda,\sigma| \geq R \gg 0}\Bigl\{\langle \lambda,\sigma \rangle_{\vec{\ell}}^{\mu l + mk}\bigl\|\bigl[\partial_{\lambda}^l\partial_{\sigma}^k \bigl(p(\sigma)\pm i\lambda^m\bigr)\bigr]\bigl(p(\sigma)\pm i\lambda^m\bigr)^{-1}\bigr\|_{\L(H^s(Y))}\Bigr\} < \infty
$$
for every $s \in \R$, showing \eqref{InverseDeriv}.
\end{example}

We proceed to further discuss the assumptions in the abstract setting:

First observe that \eqref{InverseBounds} and \eqref{InverseDeriv} imply that
$$
\sup\limits_{|\lambda,\sigma| \geq R \gg 0}\Bigl\{\langle \lambda,\sigma \rangle_{\vec{\ell}}^{\mu l + mk}\bigl\|\bigl[\partial_{\lambda}^l\partial_{\sigma}^k \bigl(p(\sigma)\pm i\lambda^m\bigr)\bigr]\bigl(p(\sigma)\pm i\lambda^m\bigr)^{-1}\bigr\|_{\L(E_0)}\Bigr\} < \infty
$$
holds for all $k,l \in \N_0$, which at first sight may appear stronger than \eqref{InverseDeriv}. In the abstract setting, assumption \eqref{InverseDeriv} serves a dual purpose as an ellipticity assumption and as a replacement for the notion of order for the coefficients $a_j$ in \eqref{Aindicial}.

Next note that for every $\gamma \in \R$ we can conclude that $p(\sigma + i\gamma) \pm i\lambda^m : E_1 \to E_0$ is invertible for large enough $|\lambda,\sigma| \gg 0$, $(\lambda,\sigma) \in \R^2$, and the estimates stated for $p(\sigma)$ in \eqref{InverseBounds} and \eqref{InverseDeriv} are equally valid for $p(\sigma+i\gamma)$. This means that the ellipticity conditions \eqref{InverseBounds} and \eqref{InverseDeriv} are invariant with respect to shifting $p(\sigma)$ in the complex plane, or, on the operator level, that these conditions are invariant with respect to conjugating the operator $A$ in \eqref{Aindicial} by weights $x^{\gamma}Ax^{-\gamma}$ for any $\gamma \in \R$. Only the symmetry assumption \eqref{Symmetry} depends on weights. Moreover, the lower bound $R \gg 0$ for the invertibility, and the estimates in \eqref{InverseBounds} and \eqref{InverseDeriv} for $p(\sigma+i\gamma)$ in place of $p(\sigma)$, are uniform as $\gamma \in \R$ varies in compact intervals\footnote{These statements are not immediately obvious. For a proof we refer to Lemma~\ref{SymbolInverseProps} in Appendix~\ref{PseudoCalculus}. In the terminology of this appendix, assumptions \eqref{InverseBounds} and \eqref{InverseDeriv} imply that $p(\sigma) \pm ix^m \in S^{m\mu;\vec{\ell}}_O(\R\times\C;\L(E_1,E_0))$ is right-hypoelliptic of order $(m\mu,0)$ in the sense of Definition~\ref{hypoellipticity}. The symbolic parametrix $q(x,\sigma)$ defined there via the kernel cut-off construction belongs to $S^{0;\vec{\ell}}_O(\R\times\C;\L(E_0,E_1))\cap S^{-m\mu;\vec{\ell}}_O(\R\times\C;\L(E_0))$. We have $(p(\sigma) \pm ix^m)q(x,\sigma) - 1 \in S^{-\infty}_O(\R\times\C;\L(E_0))$ and $q(x,\sigma)(p(\sigma) \pm ix^m) - 1 \in S^{-\infty}_O(\R\times\C;\L(E_1))$, and combined with the estimates for the derivatives in Part (b) of Lemma~\ref{SymbolInverseProps} we obtain all stated properties.}.

In particular, for $\lambda = 0$, we see that the indicial family $p(\sigma) : E_1 \to E_0$ is invertible for sufficiently large $|\Re(\sigma)|$, and the lower bounds for invertibility can be chosen locally uniformly with respect to $\Im(\sigma)$. Let as usual
$$
\spec_b(A) = \{\sigma \in \C;\; p(\sigma): E_1 \to E_0 \textup{ is not invertible}\}
$$
be the set of indicial roots of $A$. Each strip in the complex plane of the form $-T \leq \Im(\sigma) \leq T$ then contains only finitely many indicial roots by \eqref{Fredholm} and analytic Fredholm theory. We also note that $\spec_b(A)$ is symmetrical about the line $\Im(\sigma) = -\frac{m}{2}$ by \eqref{Symmetry}.

In order to describe the extensions of $A$  we will use von Neumann's formulas
\begin{equation}\label{vonNeumannFormula}
\Dom_{\max} = \Dom_{\min} \oplus \ker(A_{\max} + i) \oplus \ker(A_{\max}-i)
\end{equation}
as a starting point. One of the main ingredients needed for proving the Classification Theorem ~\ref{DmaxThm} for extensions is showing that the functions in $\ker(A_{\max} \pm i)$ decay rapidly as $x \to \infty$, which will be accomplished via the construction of parametrices for both $A \pm i$ and is largely relegated to Appendix~\ref{PseudoCalculus}. In particular, we will need to use Mellin pseudodifferential operators. To fix notation we are going to write
\begin{align*}
\bigr(M u\bigl)(\sigma) &= \int_0^{\infty}x^{-i\sigma}u(x)\,\frac{dx}{x}, \quad \sigma \in \R, \\
\bigr(M^{-1}v\bigl)(x) &= \frac{1}{2\pi}\int_{\R}x^{i\sigma}v(\sigma)\,d\sigma, \quad x > 0,
\end{align*}
for the Mellin transform and its inverse. Mellin pseudodifferential operators, in most general form, are written as
$$
\bigl[\opm(a)u\bigr](x) = \frac{1}{2\pi}\int_{\R}\int_0^{\infty}\Bigl(\frac{x}{x'}\Bigr)^{i\sigma}a(x,x',\sigma)u(x')\,\frac{dx'}{x'}\,d\sigma
$$
for a compound symbol $a(x,x',\sigma)$. 


\section{The space ${\mathscr H}(\R_+;E_1)$ and the minimal domain}\label{MinimalDomain}

\begin{lemma}\label{BasicHolom}
\begin{enumerate}[(a)]
\item Along every line $\Im(\sigma)=\gamma$ in the complex plane the operator $p(\sigma+i\gamma) : E_1 \to E_0$, $\sigma \in \R$, is invertible for $|\sigma| \gg 0$ sufficiently large, and the corresponding lower bounds for invertibility can be chosen locally uniformly with respect to $\gamma \in \R$. We have
$$
p(\sigma+i\gamma)^{-1} \in S^{-\mu}(\R_{\sigma};\L(E_0))\cap S^0(\R_{\sigma};\L(E_0,E_1))
$$
for $|\sigma| \gg 0$ with locally uniform symbol estimates with respect to $\gamma \in \R$.
\item For $\gamma_j \in \R$, $j=1,2$, we have
$$
\bigl(\partial_{\sigma}^kp\bigr)(\sigma+i\gamma_2)p^{-1}(\sigma+i\gamma_1) \in S^{-k}(\R_{\sigma};\L(E_0))
$$
 for $|\sigma| \gg 0$, $k \in \N_0$.
\item There is a holomorphic family $q(\sigma) : \C \to \L(E_0,E_1)$ such that
$$
q(\sigma+i\gamma) \in S^{-\mu}(\R_{\sigma};\L(E_0))\cap S^0(\R_{\sigma};\L(E_0,E_1))
$$
with locally uniform symbol estimates with respect to $\gamma \in \R$ such that
$$
r(\sigma) = p(\sigma)^{-1} - q(\sigma) : \C \to \L(E_0,E_1)
$$
is meromorphic on $\C$ with at most finitely many poles in each strip of the form $\{\sigma \in \C;\; -T \leq \Im(\sigma) \leq T\}$ for every $T \in \N$, and if $\chi \in C^{\infty}(\C)$ is an excision function for the poles we have $(\chi r)(\sigma+i\gamma) \in \S(\R_{\sigma};\L(E_0,E_1))$ with locally uniform estimates with respect to $\gamma \in \R$.
\end{enumerate}
\end{lemma}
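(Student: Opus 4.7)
My approach leverages the ellipticity assumptions \eqref{InverseBounds}-\eqref{InverseDeriv} specialized to $\lambda=0$, together with the shift-invariance in $\gamma$ established in Lemma~\ref{SymbolInverseProps}. For (a), setting $\lambda=0$ in these hypotheses yields invertibility of $p(\sigma)$ for $|\sigma|\geq R$, the base estimates $\|p(\sigma)^{-1}\|_{\L(E_0)} \lesssim \langle\sigma\rangle^{-\mu}$, $\|p(\sigma)^{-1}\|_{\L(E_0,E_1)} \lesssim 1$, and $\|(\partial_\sigma^k p)(\sigma)\,p(\sigma)^{-1}\|_{\L(E_0)} \lesssim \langle\sigma\rangle^{-k}$ for $k=1,\dots,\mu$. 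I would iterate the identity $\partial_\sigma(p^{-1}) = -p^{-1}(\partial_\sigma p)p^{-1}$ via Leibniz's rule to express each $\partial_\sigma^N(p^{-1})$ as a finite sum of products of the form $p^{-1}(A_1 p^{-1})(A_2 p^{-1})\cdots(A_s p^{-1})$ with $A_l=\partial_\sigma^{j_l} p$ and $\sum_l j_l=N$. Each block $A_l p^{-1}$ gains $\langle\sigma\rangle^{-j_l}$ in $\L(E_0)$ from the base estimate, while the outer $p^{-1}$ contributes either $\langle\sigma\rangle^{-\mu}$ as an $\L(E_0)$-operator or $1$ as an $\L(E_0,E_1)$-operator, yielding the membership in $S^{-\mu}(\R;\L(E_0)) \cap S^0(\R;\L(E_0,E_1))$. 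Extension to shifted lines follows from the shift-invariance of the hypotheses.

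For (b), since $p$ is a polynomial of degree $\mu$ in $\sigma$, the Taylor expansion around $\sigma+i\gamma_1$ terminates and gives
\[
p(\sigma+i\gamma_2)\,p(\sigma+i\gamma_1)^{-1} = I + \sum_{j=1}^{\mu}\frac{(i\gamma_2-i\gamma_1)^j}{j!}(\partial_\sigma^j p)(\sigma+i\gamma_1)\,p(\sigma+i\gamma_1)^{-1},
\]
which lies in $S^0(\R_\sigma;\L(E_0))$ by part (a). The factorization
\[
(\partial_\sigma^k p)(\sigma+i\gamma_2)\,p(\sigma+i\gamma_1)^{-1} = \bigl[(\partial_\sigma^k p)(\sigma+i\gamma_2)\,p(\sigma+i\gamma_2)^{-1}\bigr]\bigl[p(\sigma+i\gamma_2)\,p(\sigma+i\gamma_1)^{-1}\bigr]
\]
then exhibits the left side as the composition of an $S^{-k}$-family (by part (a) at shift $\gamma_2$) with an $S^0$-family, hence in $S^{-k}(\R_\sigma;\L(E_0))$.

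Part (c) is the main obstacle. By part (a) and analytic Fredholm theory \eqref{Fredholm}, $p(\sigma)^{-1}$ is meromorphic on $\C$ with poles exactly at $\spec_b(A)$, and each strip $|\Im(\sigma)|\leq T$ contains only finitely many of them (confined to a bounded region, since part (a) forces $p(\sigma)$ invertible for $|\Re(\sigma)|\gg 0$ uniformly in $|\Im(\sigma)|\leq T$). Fixing an excision function $\chi\in C^\infty(\C)$ for $\spec_b(A)$, the family $\chi(\sigma)p(\sigma)^{-1}$ restricts on each line $\Im(\sigma)=\gamma$ to a symbol of the claimed orders with locally uniform bounds in $\gamma$. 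I would then produce $q(\sigma)$ by applying the Mellin kernel cut-off developed in Appendix~\ref{PseudoCalculus} --- passing to the Mellin-dual side, multiplying by a cut-off $\omega$ of compact support equal to $1$ near the identity of $\R_+$, and transforming back. Compact support of $\omega$ forces $q$ to extend to an entire $\L(E_0,E_1)$-valued function of $\sigma$ with the claimed anisotropic operator-valued symbol estimates on every horizontal line, uniformly in $\gamma$ on compacta, while the standard property of the kernel cut-off yields $\chi(\sigma)p(\sigma)^{-1}-q(\sigma) \in \S(\R_\sigma;\L(E_0,E_1))$ on each line with locally uniform estimates. Since $(1-\chi)q$ has compact support on each line (as $1-\chi$ is compactly supported in each horizontal strip), the decomposition $\chi r = (\chi p^{-1} - q) + (1-\chi)q$ shows $\chi r$ to be Schwartz on every line, locally uniformly. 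The genuine technical work --- holomorphy of $q$ and the operator-valued anisotropic symbol estimates for the kernel cut-off --- is precisely what Appendix~\ref{PseudoCalculus} is designed to deliver.
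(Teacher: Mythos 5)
Your arguments for parts (a) and (b) are correct, though they take a route different from the paper. You run a one-variable iterated-Leibniz argument directly on $p(\sigma)^{-1}$, using \eqref{InverseBounds}--\eqref{InverseDeriv} at $\lambda=0$ together with the observation that $\partial_\sigma^k p = 0$ for $k > \mu$ to close the induction. The paper instead treats the two-variable anisotropic symbol $a(x,\sigma)=p(\sigma)\pm ix^m$, invokes right-hypoellipticity and Lemma~\ref{SymbolInverseProps}, and reads off (a), (b), and (c) simultaneously by restricting $q(x,\sigma)$ to $x=0$. Your approach is more elementary and self-contained for (a)--(b); the paper's buys (c) essentially for free.

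In part (c) there is a genuine gap. You apply the kernel cut-off to the one-line restriction $a_0 := \chi p^{-1}\big|_{\Im\sigma = 0}$ to get a holomorphic $q$, and then assert that ``the standard property of the kernel cut-off yields $\chi(\sigma)p(\sigma)^{-1}-q(\sigma) \in \S(\R_{\sigma})$ on each line.'' That is not the standard property. As set up in Appendix~\ref{PseudoCalculus} (cf.\ \eqref{kernelcutoff} and the discussion that follows it), the kernel cut-off gives $q - a_0 \in S^{-\infty}$ only on the base line; what it does give on other lines is that $q(\cdot+i\gamma)$ satisfies the same symbol estimates locally uniformly in $\gamma$. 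Since $\chi p^{-1}$ is not holomorphic (the factor $\chi$ breaks holomorphy), the Taylor-in-$\gamma$ propagation used in \eqref{Taylorlines} does not apply directly to the difference $\chi p^{-1} - q$, and in general an estimate of the type ``$\S$ on one line and bounded on nearby lines'' does not upgrade to ``$\S$ on nearby lines.''

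The repair is not hard, but it must be made: pass through the parametrix relation rather than the raw difference. The function $f(\sigma):= p(\sigma)q(\sigma) - 1$ is holomorphic and lies in $S^0_O$ (order $0$, $\L(E_0)$-valued, with locally uniform estimates on all lines), and on the base line $f = (\chi-1) + p(q-\chi p^{-1}) \in \S(\R)$ since $1-\chi$ is compactly supported and $q - \chi p^{-1} \in \S(\R)$ from the kernel cut-off. By the Taylor-remainder argument used to justify $S^{-R;\vec{\ell}}(\R^2)\cap S^{\mu;\vec{\ell}}_O = S^{-R;\vec{\ell}}_O$ in the appendix, a holomorphic symbol that is $S^0$ on all lines (locally uniformly) and $\S$ on one line is $\S$ on all lines locally uniformly. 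Then $\chi r = \chi p^{-1}\cdot(-f)$ is Schwartz on each line with locally uniform estimates, which is what part (c) requires. The paper sidesteps all of this by constructing a holomorphic two-variable parametrix $q(x,\sigma)$ for which $a(x,\sigma)q(x,\sigma)-1 \in S^{-\infty}_O(\R\times\C)$ holds by construction, so the all-lines rapid decay is built in before restricting to $x=0$.
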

\begin{proof}
This follows from Lemma~\ref{SymbolInverseProps} in Appendix~\ref{PseudoCalculus}. To apply that lemma observe that
$$
a(x,\sigma) = p(\sigma) \pm ix^m \in S^{m\mu;\vec{\ell}}_O(\R\times\C;\L(E_1,E_0))
$$
is right-hypoelliptic of order $(m\mu,0)$ by our standing assumptions, and
$$
q(x,\sigma) \in S^{0;\vec{\ell}}_O(\R\times\C;\L(E_0,E_1))\cap S^{-m\mu;\vec{\ell}}_O(\R\times\C;\L(E_0)).
$$
All assertions follow when restricting to $x=0$.
\end{proof}

\begin{definition}
Fix $\gamma_0 \in \R$ such that $p(\sigma + i\gamma_0) : E_1 \to E_0$ is invertible for all $\sigma \in \R$ and define
$$
{\mathscr H}(\R_+;E_1) = \textup{Range of } \opm(p(\sigma+i\gamma_0)^{-1}) : L_b^2(\R_+;E_0) \to L_b^2(\R_+;E_1).
$$
\end{definition}

Note that $p(\sigma+i\gamma_0)^{-1} \in S^0(\R_{\sigma};\L(E_0,E_1))$, so
$$
\opm(p(\sigma+i\gamma_0)^{-1}) : L_b^2(\R_+;E_0) \to L_b^2(\R_+;E_1)
$$
is bounded. This operator is also injective, and consequently we can describe ${\mathscr H}(\R_+;E_1)$ equivalently as
$$
u \in {\mathscr H}(\R_+;E_1) \Longleftrightarrow u \in L^2_b(\R_+;E_1) \textup{ and } \opm(p(\sigma+i\gamma_0))u \in L^2_b(\R_+;E_0).
$$
Note that $\opm(p(\sigma+i\gamma_0)) : H^{-\infty}_b(\R_+;E_1) \to H^{-\infty}_b(\R_+;E_0)$ is bijective with inverse $\opm(p(\sigma+i\gamma_0)^{-1})$.

\begin{proposition}\label{BaseSpaceProps}
\begin{enumerate}[(a)]
\item ${\mathscr H}(\R_+;E_1)$ is a Hilbert space with respect to the inner product
$$
\langle u,v \rangle_{{\mathscr H}} = \langle \opm(p(\sigma+i\gamma_0))u,\opm(p(\sigma+i\gamma_0))v \rangle_{L^2_b(\R_+;E_0)}.
$$
\item Let $\gamma \in \R$ be any other value such that $p(\sigma+i\gamma) : E_1 \to E_0$ is invertible for all $\sigma \in \R$. Then ${\mathscr H}(\R_+;E_1)$ can equivalently be defined as
$$
{\mathscr H}(\R_+;E_1) = \textup{Range of } \opm(p(\sigma + i\gamma)^{-1}) : L_b^2(\R_+;E_0) \to L_b^2(\R_+;E_1),
$$
and the inner product
$$
\langle u,v \rangle'_{{\mathscr H}} = \langle \opm(p(\sigma+i\gamma))u,\opm(p(\sigma+i\gamma))v \rangle_{L^2_b(\R_+;E_0)}
$$
induces an equivalent Hilbert space structure on ${\mathscr H}(\R_+;E_1)$ (i.e. the norms $\|\cdot\|_{{\mathscr H}}$ and $\|\cdot\|'_{{\mathscr H}}$ are equivalent).
\item $C_c^{\infty}(\R_+;E_1) \subset {\mathscr H}(\R_+;E_1)$ is dense and continuously embedded.
\item $H^{\mu}_b(\R_+;E_1) \hookrightarrow {\mathscr H}(\R_+;E_1) \hookrightarrow H^{\mu}_b(\R_+;E_0)\cap L^2_b(\R_+;E_1)$, and
$$
{\mathscr H}(\R_+;E_1) = H^{\mu}_b(\R_+;E_0)\cap L^2_b(\R_+;E_1)
$$
if and only if $\opm(p(\sigma+i\gamma_0)) : H^{\mu}_b(\R_+;E_0)\cap L^2_b(\R_+;E_1) \to L^2_b(\R_+;E_0)$.
\end{enumerate}
\end{proposition}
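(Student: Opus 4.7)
For part (a), the plan is to invoke Lemma~\ref{BasicHolom}(a) to conclude $p(\sigma+i\gamma_0)^{-1}\in S^0(\R;\L(E_0,E_1))$, so that $\opm(p(\sigma+i\gamma_0)^{-1}):L^2_b(\R_+;E_0)\to L^2_b(\R_+;E_1)$ is bounded; it is injective because the Mellin transform is a unitary isomorphism. The range therefore inherits a Hilbert space structure by pulling back the $L^2_b(\R_+;E_0)$-norm. For (b), one compares $\opm(p(\sigma+i\gamma)^{-1})$ and $\opm(p(\sigma+i\gamma_0)^{-1})$ through the factorization $p(\sigma+i\gamma)^{-1}=p(\sigma+i\gamma_0)^{-1}\bigl[p(\sigma+i\gamma_0)p(\sigma+i\gamma)^{-1}\bigr]$; by Lemma~\ref{BasicHolom}(b) with $k=0$ the bracketed factor lies in $S^0(\R;\L(E_0))$, whence its Mellin operator is bounded on $L^2_b(\R_+;E_0)$, and the symmetric argument yields the reverse bound and equivalence of the two candidate inner products.

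For (c), the continuous inclusion is immediate: expanding $p(\sigma+i\gamma_0)=\sum_{k=0}^{\mu}b_k\sigma^k$ in powers of $\sigma$ with bounded coefficients $b_k:E_1\to E_0$ identifies $\opm(p(\sigma+i\gamma_0))$ with the honest differential operator $\sum_kb_k(xD_x)^k$, mapping $C_c^{\infty}(\R_+;E_1)$ continuously into $C_c^{\infty}(\R_+;E_0)\subset L^2_b(\R_+;E_0)$. The main obstacle is density, and the plan is a two-step Mellin-smoothing and cutoff argument for $u\in{\mathscr H}$. First, smooth by Mellin convolution $u\mapsto\psi_{\varepsilon}*u$ with $\psi_{\varepsilon}\in C_c^{\infty}(\R_+)$ a multiplicative approximate identity at $x=1$ (so $\int_0^{\infty}\psi_{\varepsilon}(x)\,\tfrac{dx}{x}=1$ with support shrinking to $\{1\}$); since Mellin convolution commutes with every Mellin multiplier and its Mellin symbol $\hat\psi_{\varepsilon}$ is bounded by $1$ and converges pointwise to $1$, dominated convergence gives $\psi_{\varepsilon}*u\to u$ in ${\mathscr H}$, and the mollified function lies in $C^{\infty}(\R_+;E_1)$. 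Second, truncate by cutoffs $\chi_R(x)=\omega(\log x/\log R)$ with $\omega\in C_c^{\infty}(\R)$ equal to $1$ near $0$, so that $(xD_x)^j\chi_R=O((\log R)^{-j})$ uniformly for $j\geq 1$. Writing $\opm(p(\sigma+i\gamma_0))(\chi_Rv)=\chi_R\opm(p(\sigma+i\gamma_0))v+[\opm(p(\sigma+i\gamma_0)),\chi_R]v$, the first term converges to $\opm(p(\sigma+i\gamma_0))v$ by dominated convergence, and the commutator decomposes as a linear combination of operators $((xD_x)^{k-j}\chi_R)b_k(xD_x)^jv$ with $k>j$ which tends to zero in $L^2_b(\R_+;E_0)$ because $(xD_x)^jv\in L^2_b(\R_+;E_1)$ for the smoothed $v=\psi_{\varepsilon}*u$ while $(xD_x)^{k-j}\chi_R$ is uniformly $O((\log R)^{j-k})$. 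Then $\chi_R(\psi_{\varepsilon}*u)\in C_c^{\infty}(\R_+;E_1)$ approximates $u$ in ${\mathscr H}$.

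For (d), the inclusion $H^{\mu}_b(\R_+;E_1)\hookrightarrow{\mathscr H}$ follows directly from the differential-operator formula above: for $u\in H^{\mu}_b(\R_+;E_1)$ each $(xD_x)^ku\in L^2_b(\R_+;E_1)$, hence $b_k(xD_x)^ku\in L^2_b(\R_+;E_0)$. For the reverse inclusion ${\mathscr H}\hookrightarrow H^{\mu}_b(\R_+;E_0)\cap L^2_b(\R_+;E_1)$, the plan is to write $u=\opm(p(\sigma+i\gamma_0)^{-1})v$ with $v\in L^2_b(\R_+;E_0)$ so that $(xD_x)^ku=\opm\bigl(\sigma^kp(\sigma+i\gamma_0)^{-1}\bigr)v$; by Lemma~\ref{BasicHolom}(a) the symbol $\sigma^kp(\sigma+i\gamma_0)^{-1}$ lies in $S^{k-\mu}(\R;\L(E_0))$, which has order $\leq 0$ for $k\leq\mu$, making the operator bounded on $L^2_b(\R_+;E_0)$. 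The final equivalence is then immediate from the equivalent characterization of ${\mathscr H}$ noted just before the statement: $\opm(p(\sigma+i\gamma_0))$ maps ${\mathscr H}$ into $L^2_b(\R_+;E_0)$ by construction, and conversely any $u\in H^{\mu}_b(\R_+;E_0)\cap L^2_b(\R_+;E_1)$ with $\opm(p(\sigma+i\gamma_0))u\in L^2_b(\R_+;E_0)$ lies in ${\mathscr H}$ by that characterization.
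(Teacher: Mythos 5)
Your proof of parts (a), (b), and (d) follows the paper's argument closely: boundedness and injectivity of the Mellin operator for (a); the factorization through the $S^0$ ratio symbols and Lemma~\ref{BasicHolom}(b) for (b) (the paper writes the same identity with the factors in the other order, which is harmless since the symbols are $x$-independent); and the two mapping properties from Lemma~\ref{BasicHolom}(a) for (d).

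For part (c) you take a genuinely different route. The paper works through the space $\T(\R_+;E_1)$ of functions $u$ with $u(e^t)\in\S(\R)$: it observes that $\opm(p(\sigma+i\gamma_0)) : \T(\R_+;E_1) \to \T(\R_+;E_0)$ is a topological isomorphism with inverse $\opm(p(\sigma+i\gamma_0)^{-1})$, so density of $\T(\R_+;E_0)$ in $L^2_b(\R_+;E_0)$ transports at once to density of $\T(\R_+;E_1)$ in ${\mathscr H}(\R_+;E_1)$, and then $C_c^{\infty}\hookrightarrow\T$ densely finishes it. You instead build the approximating sequence by hand: a Mellin approximate identity $\psi_{\varepsilon}$ to smooth, followed by a logarithmic cutoff $\chi_R$ to compactify the support, with the commutator estimate $(xD_x)^j\chi_R=O((\log R)^{-j})$ killing the error terms. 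This works, and the two points that must be checked carefully are both covered by your setup: first, because $\psi_{\varepsilon}\in C_c^{\infty}(\R_+)$ the Mellin transform $\hat\psi_{\varepsilon}$ decays rapidly, so $(xD_x)^j(\psi_{\varepsilon}*u)\in L^2_b(\R_+;E_1)$ for all $j$ — this is exactly what the commutator argument requires, since the unsmoothed $u\in{\mathscr H}$ only gives $u\in L^2_b(\R_+;E_1)$, not $(xD_x)^ju\in L^2_b(\R_+;E_1)$; second, the double limit is a routine diagonal argument ($\|u-\chi_R(\psi_{\varepsilon}*u)\|_{\mathscr H}\le\|u-\psi_{\varepsilon}*u\|_{\mathscr H}+\|\psi_{\varepsilon}*u-\chi_R(\psi_{\varepsilon}*u)\|_{\mathscr H}$, with the first term small in $\varepsilon$ and then the second small in $R$ for fixed $\varepsilon$). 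Your approach is longer and more hands-on but more elementary, avoiding the intermediate test-function space; the paper's is shorter and packages the density into a single isomorphism. Both are valid.
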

\begin{proof}
Assertion (a) follows at once from the boundedness and injectivity of
$$
\opm(p(\sigma+i\gamma_0)^{-1}) : L_b^2(\R_+;E_0) \to L_b^2(\R_+;E_1) \hookrightarrow L_b^2(\R_+;E_0).
$$
As both
$$
p(\sigma + i\gamma)p(\sigma+i\gamma_0)^{-1},\; p(\sigma + i\gamma_0)p(\sigma+i\gamma)^{-1} \in S^0(\R_{\sigma};\L(E_0))
$$
by Lemma~\ref{BasicHolom} we have that
$$
\opm(p(\sigma + i\gamma)p(\sigma+i\gamma_0)^{-1}) : L^2_b(\R_+;E_0) \to L^2_b(\R_+;E_0)
$$
is bounded and invertible with inverse $\opm(p(\sigma+i\gamma_0)p(\sigma + i\gamma)^{-1})$. Consequently, the operators
$\opm(p(\sigma + i\gamma)^{-1}) : L^2_b(\R_+;E_0) \to L^2_b(\R_+;E_1)$ and
\begin{align*}
\opm(p(\sigma+i\gamma_0)^{-1}) &=
\opm(p(\sigma + i\gamma)^{-1}) \opm(p(\sigma + i\gamma)p(\sigma+i\gamma_0)^{-1}) \\
&: L^2_b(\R_+;E_0) \to L^2_b(\R_+;E_1)
\end{align*}
have the same range, proving (b).

For (c) note that
$$
\opm(p(\sigma+i\gamma_0)) : \T(\R_+;E_1) \to \T(\R_+;E_0)
$$
is continuous and invertible with inverse $\opm(p(\sigma+i\gamma_0)^{-1})$.  Here $\T$ denotes the space of functions $u(x)$ on $\R_+$ such that $u(e^t)$ is a Schwartz function on $\R$. Thus $\T(\R_+;E_1) \hookrightarrow {\mathscr H}(\R_+;E_1)$, and the density of $\T(\R_+;E_0) \subset L^2_b(\R_+;E_0)$ implies the density of $\T(\R_+;E_1)$ in ${\mathscr H}(\R_+;E_1)$. Because $C_c^{\infty}(\R_+;E_1) \hookrightarrow \T(\R_+;E_1)$ is dense (c) follows.

Lastly, (d) follows because
\begin{gather*}
\opm(p(\sigma+i\gamma_0)) : H^{\mu}_b(\R_+;E_1) \to L^2_b(\R_+;E_0), \\
\opm(p(\sigma+i\gamma_0)^{-1}) : L^2_b(\R_+;E_0) \to H^{\mu}_b(\R_+;E_0)\cap L^2_b(\R_+;E_1)
\end{gather*}
are bounded by Lemma~\ref{BasicHolom}.
\end{proof}

\begin{remark}
A typical situation in applications is that
\begin{equation}\label{standardspace}
{\mathscr H}(\R_+;E_1) = H^{\mu}_b(\R_+;E_0)\cap L^2_b(\R_+;E_1).
\end{equation}
Let $\Lambda : E_1 \subset E_0 \to E_0$ be selfadjoint and invertible\footnote{For example, we may choose $\sigma_0 \in \C$ with $\Im(\sigma_0) = -\frac{m}{2}$ such that $\Lambda = p(\sigma_0) : E_1 \to E_0$ is invertible and selfadjoint by \eqref{Symmetry}.}, and define for $\sigma \in \R$
$$
{\mathfrak r}(\sigma) = \langle \sigma \rangle^{\mu} + i\Lambda : E_1 \to E_0.
$$
Then ${\mathfrak r}(\sigma) \in S^{\mu}(\R;\L(E_1,E_0))$ is invertible for all $\sigma \in \R$, and by the Spectral Theorem we have that ${\mathfrak r}(\sigma)^{-1} \in S^{-\mu}(\R;\L(E_0))\cap S^0(\R;\L(E_0,E_1))$. Thus
$$
\opm({\mathfrak r}) : H^{\mu}_b(\R_+;E_0)\cap L^2_b(\R_+;E_1) \to L^2_b(\R_+;E_0)
$$
is an isomorphism with inverse $\opm({\mathfrak r}^{-1})$. Our assumptions on $p(\sigma)$ and Lemma~\ref{BasicHolom} imply that
$$
{\mathfrak r}(\sigma)p(\sigma+i\gamma_0)^{-1} = \langle\sigma\rangle^{\mu}p(\sigma+i\gamma_0)^{-1} + i\Lambda p(\sigma+i\gamma_0)^{-1} \in S^0(\R;\L(E_0)).
$$
If the latter happens to be an elliptic operator valued symbol then $p(\sigma+i\gamma_0){\mathfrak r}(\sigma)^{-1} \in S^0(\R;\L(E_0))$, and therefore the identity \eqref{standardspace} holds in this case.
\end{remark}

The space ${\mathscr H}(\R_+;E_1)$ naturally arises from an unbounded operator perspective.

\begin{proposition}\label{BaseSpaceUnbdd}
For every $\gamma \in \R$ the operator
$$
\opm(p(\sigma+i\gamma)) : C_c^{\infty}(\R_+;E_1) \subset L^2_b(\R_+;E_0) \to L^2_b(\R_+;E_0)
$$
is closable, and the closure is $\opm(p(\sigma+i\gamma)) : {\mathscr H}(\R_+;E_1) \to L^2_b(\R_+;E_0)$. Moreover, the adjoint of this operator as an unbounded operator acting in $L^2_b(\R_+;E_0)$ is $\opm(p(\sigma-i(\gamma+m))) : {\mathscr H}(\R_+;E_1) \to L^2_b(\R_+;E_0)$.
\end{proposition}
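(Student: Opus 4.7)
The plan is to treat the two assertions in parallel. Boundedness of $T_\gamma := \opm(p(\sigma+i\gamma))$ on $\mathscr{H}(\R_+;E_1)$ comes from the factorization
$T_\gamma = \opm(p(\sigma+i\gamma)\,p(\sigma+i\gamma_0)^{-1})\,\opm(p(\sigma+i\gamma_0))$;
the holomorphic parametrix $q$ from Lemma~\ref{BasicHolom}(c) handles closedness; and the adjoint is read off from the symmetry $p(\sigma+i\gamma)^* = p(\sigma-i(\gamma+m))$ given by assumption~\eqref{Symmetry}, together with Parseval's identity for the Mellin transform. The technical heart of both parts is an elliptic regularity statement for $T_\gamma$ obtained from the parametrix.

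\emph{Boundedness and closure.} Lemma~\ref{BasicHolom}(b) gives $p(\sigma+i\gamma)p(\sigma+i\gamma_0)^{-1}\in S^0(\R;\L(E_0))$ at infinity; it is also smooth and bounded on bounded $\sigma$-intervals, so the first factor above is a bounded Mellin operator on $L^2_b(E_0)$, while the second is an isometric isomorphism $\mathscr{H}\to L^2_b(E_0)$ by Proposition~\ref{BaseSpaceProps}(a). Combined with density of $C_c^\infty(\R_+;E_1)$ in $\mathscr{H}$ (Proposition~\ref{BaseSpaceProps}(c)), this realizes $T_\gamma|_{\mathscr{H}}$ as the unique bounded $\mathscr{H}\to L^2_b$ extension of $T_\gamma|_{C_c^\infty}$. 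To promote this to graph-closedness in $L^2_b(E_0)$, I take $u_n\in\mathscr{H}$ with $u_n\to u$ and $T_\gamma u_n\to v$ in $L^2_b(E_0)$ and set $\tilde u := \opm(q(\sigma+i\gamma))\,v$. Since $q(\cdot+i\gamma)\in S^0(\R;\L(E_0,E_1))$ and the composite $p(\sigma+i\gamma_0)q(\sigma+i\gamma)$ is holomorphic in $\sigma$ with $S^0(\R;\L(E_0))$ estimates, $\tilde u$ lies in $\mathscr{H}$. The right-parametrix identity
$T_\gamma\,\opm(q(\sigma+i\gamma)) = \id - \opm(p(\sigma+i\gamma)\,r(\sigma+i\gamma))$
on $L^2_b(E_0)$ holds because $pr = 1 - pq$ is entire (the meromorphic poles of $p^{-1}$ cancel) and Schwartz at infinity, so $p(\cdot+i\gamma)\,r(\cdot+i\gamma)\in\S(\R;\L(E_0))$. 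Pairing the equation $T_\gamma u_n = v_n$ against test functions and using the finite-dimensional Parseval-symmetry computation from the next paragraph then passes to the limit and yields $T_\gamma u = v$ distributionally, so $T_\gamma(u-\tilde u) = \opm(pr)(v)$, the image of a smoothing Mellin operator. Elliptic regularity for $T_\gamma$ then forces $u-\tilde u$, and hence $u$, into $\mathscr{H}$.

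\emph{The adjoint.} For $u,w\in\mathscr{H}$ Parseval rewrites $\langle T_\gamma u,w\rangle_{L^2_b(E_0)}$ as $\frac{1}{2\pi}\int_\R\langle p(\sigma+i\gamma)Mu(\sigma),Mw(\sigma)\rangle_{E_0}\,d\sigma$. By Proposition~\ref{BaseSpaceProps}(d), $Mu(\sigma),Mw(\sigma)\in E_1$ almost everywhere, so assumption~\eqref{Symmetry} applies pointwise and lets me move $p(\sigma+i\gamma)^* = p(\sigma-i(\gamma+m))$ onto the second factor, giving $\langle T_\gamma u,w\rangle_{L^2_b(E_0)} = \langle u,T_{-\gamma-m}w\rangle_{L^2_b(E_0)}$ and hence the inclusion $T_{-\gamma-m}|_{\mathscr{H}}\subseteq T_\gamma^*$. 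Conversely, any $w\in\mathrm{Dom}(T_\gamma^*)$ with $w^\ast := T_\gamma^*w\in L^2_b(E_0)$ satisfies $\langle T_\gamma\phi,w\rangle = \langle\phi,w^\ast\rangle$ for all $\phi\in C_c^\infty(\R_+;E_1)$, which reads as the distributional identity $T_{-\gamma-m}w = w^\ast\in L^2_b(E_0)$. Since the structural assumptions on the indicial family are invariant under $\gamma\mapsto-\gamma-m$, the elliptic regularity argument applied to $T_{-\gamma-m}$ forces $w\in\mathscr{H}$.

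The expected main obstacle is the elliptic regularity step common to both directions: proving that $u\in L^2_b(E_0)$ with $T_\gamma u\in L^2_b(E_0)$ distributionally forces $u\in\mathscr{H}$. The difficulty is that $p(\sigma+i\gamma)$ may fail to be invertible at finitely many real $\sigma$, so the naive pointwise formula $Mu = p(\sigma+i\gamma)^{-1}Mv$ is valid only outside a bounded interval. The holomorphic parametrix $q$ of Lemma~\ref{BasicHolom}(c) provides an entire $S^0$-replacement for $p(\sigma+i\gamma)^{-1}$ whose meromorphic defect $r$ satisfies $pr\in\S(\R;\L(E_0))$ and $rp\in\S(\R;\L(E_1))$; the regularity statement is then obtained by writing $u = \opm(q(\cdot+i\gamma))\,v + (\text{smoothing correction})$ and using that smoothing Mellin operators in this calculus map $L^2_b$-distributions into~$\mathscr{H}$.
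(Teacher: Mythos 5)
Your easy directions are sound: the factorization through $\opm(p(\sigma+i\gamma_0))$ gives boundedness of $T_\gamma$ on $\mathscr{H}$, and the Parseval/symmetry computation gives $T_{-\gamma-m}|_{\mathscr{H}} \subseteq T_\gamma^\ast$. The problem is that you funnel both the closure and the reverse adjoint inclusion through a single unproved regularity statement --- that $u \in L^2_b(E_0)$ with $T_\gamma u \in L^2_b(E_0)$ distributionally forces $u \in \mathscr{H}$ --- which you explicitly call the main obstacle but then only sketch. The sketch has a genuine gap: the left parametrix identity $q(\sigma)p(\sigma) = 1 - r(\sigma)p(\sigma)$ has $rp \in \S(\R;\L(E_1))$, so $\opm(rp)$ acts on $L^2_b(E_1)$-valued functions; you cannot write $u = \opm(q)v + \opm(rp)u$ when $u$ is a priori only in $L^2_b(E_0)$, since that is exactly what you are trying to establish. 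If instead you use the right parametrix $pq = 1 - pr$ (with $pr \in \S(\R;\L(E_0))$), you only get $T_\gamma\bigl(u - \opm(q)v\bigr) = \opm(pr)v$, which reduces the problem to the identical regularity question for $u - \opm(q)v$ and does not close the loop. So the proposal is circular at its core step.

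The paper sidesteps this circularity in two places. For the closure, it does not go through distributional regularity at all: it proves the two-sided norm estimate $\|u\|_H + \|\opm(p(\sigma+i\gamma))u\|_H \asymp \|u\|_{\mathscr{H}}$ directly for $u \in \mathscr{H}$ --- here the left parametrix identity is applied only to $u$ already known to lie in $\mathscr{H} \subset L^2_b(E_1)$, so the domain mismatch never arises --- and then concludes by density of $C_c^\infty(\R_+;E_1)$ in $\mathscr{H}$ from Proposition~\ref{BaseSpaceProps}(c). For the adjoint, rather than proving maximal-domain regularity directly, it forms the doubled symbol $\mathscr{P}_\gamma(\sigma) = \begin{bmatrix} 0 & p(\sigma+i\gamma) \\ p(\sigma-i(\gamma+m)) & 0 \end{bmatrix}$, notes that $\mathscr{P}_\gamma(\overline\sigma)^\ast = \mathscr{P}_\gamma(\sigma)$ makes $\opm(\mathscr{P}_\gamma)$ with domain $\mathscr{H}\oplus\mathscr{H}$ symmetric and closed, and then proves it is selfadjoint by exhibiting the resolvents $\opm\bigl([\mathscr{P}_\gamma(\sigma)\pm i]^{-1}\bigr)$ explicitly: the Spectral Theorem gives $\|[\mathscr{P}_\gamma(\sigma)\pm i]^{-1}\|_{\L(E_0^2)}\le 1$ pointwise, and Lemma~\ref{BasicHolom} upgrades this to the needed $S^0$-type symbol estimates. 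Selfadjointness of the doubled operator yields both adjoint inclusions simultaneously, with no need to discuss what $Mu(\sigma)$ looks like near the finitely many $\sigma$ where $p(\sigma+i\gamma)$ fails to be invertible. You should either adopt this resolvent construction, or give a complete, domain-aware proof of the regularity step you are relying on.
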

\begin{proof}
We note first that $\opm(p(\sigma+i\gamma)) : {\mathscr H}(\R_+;E_1) \to L^2_b(\R_+;E_0)$ is well-defined and continuous because $p(\sigma+i\gamma)p(\sigma+i\gamma_0)^{-1} \in S^0(\R;\L(E_0))$ by Lemma~\ref{BasicHolom}. Writing $H = L^2_b(\R_+;E_0)$ we have
$$
\|u\|_H + \|\opm(p(\sigma+i\gamma))u\|_H \lesssim \|\opm(p(\sigma+i\gamma_0))u\|_H = \|u\|_{{\mathscr H}}
$$
for $u \in {\mathscr H}(\R_+;E_1)$ in view of $u = \opm(p(\sigma+i\gamma_0)^{-1})\opm(p(\sigma+i\gamma_0))u$. On the other hand, with $q(\sigma)$ from Lemma~\ref{BasicHolom} we have
$$
p(\sigma+i\gamma_0) = p(\sigma+i\gamma_0)q(\sigma+i\gamma)p(\sigma+i\gamma) - p(\sigma+i\gamma_0)l(\sigma+i\gamma),
$$
where $p(\sigma+i\gamma_0)q(\sigma+i\gamma) \in S^0(\R;\L(E_0))$ and $p(\sigma+i\gamma_0)l(\sigma+i\gamma) \in \S(\R;\L(E_0))$, and therefore
$$
\|u\|_{{\mathscr H}} = \|\opm(p(\sigma+i\gamma_0))u\|_H \lesssim \|\opm(p(\sigma+i\gamma))u\|_H + \|u\|_H
$$
for $u \in {\mathscr H}(\R_+;E_1)$. This proves the first part of the proposition.

To prove the claim about the adjoint we consider
$$
{\mathscr P}_{\gamma}(\sigma) = \begin{bmatrix} 0 & p(\sigma+i\gamma) \\ p(\sigma-i(\gamma+m)) & 0 \end{bmatrix} :
\begin{array}{c} E_1 \\ \oplus \\ E_1 \end{array} \subset \begin{array}{c} E_0 \\ \oplus \\ E_0 \end{array} \to \begin{array}{c} E_0 \\ \oplus \\ E_0 \end{array}.
$$
Then ${\mathscr P}_{\gamma}(\sigma) = {\mathscr P}_{\gamma}(\overline{\sigma})^{\star}$ by \eqref{Symmetry}, and
$$
\opm({\mathscr P}_{\gamma}) : \begin{array}{c} {\mathscr H}(\R_+;E_1) \\ \oplus \\ {\mathscr H}(\R_+;E_1) \end{array} \subset
L^2_b\biggl(\R_+;\begin{array}{c} E_0 \\ \oplus \\ E_0 \end{array}\biggr) \to L^2_b\biggl(\R_+;\begin{array}{c} E_0 \\ \oplus \\ E_0 \end{array}\biggr)
$$
is symmetric and closed. We need to show that this operator is selfadjoint, which follows if we show that both
$$
\opm({\mathscr P}_{\gamma}) \pm i : \begin{array}{c} {\mathscr H}(\R_+;E_1) \\ \oplus \\ {\mathscr H}(\R_+;E_1) \end{array} \to L^2_b\biggl(\R_+;\begin{array}{c} E_0 \\ \oplus \\ E_0 \end{array}\biggr)
$$
are invertible. In case $\gamma = \gamma_0$ the operator $\opm({\mathscr P}_{\gamma_0})$ is invertible by choice of $\gamma_0$ and \eqref{Fredholm} with inverse $\opm({\mathscr P}_{\gamma_0}^{-1})$, and so $\opm({\mathscr P}_{\gamma_0})$ is selfadjoint. For general $\gamma$ the operators ${\mathscr P}_{\gamma}(\sigma) \pm i : E_1^2 \to E_0^2$ are invertible for all $\sigma \in \R$, and $\bigl\|\bigl[{\mathscr P}_{\gamma}(\sigma) \pm i\bigr]^{-1}\bigr\|_{\L(E_0^2)} \leq 1$ by the Spectral Theorem. Writing
$$
\bigl[{\mathscr P}_{\gamma}(\sigma) \pm i\bigr]^{-1} = {\mathscr P}_{\gamma}(\sigma)^{-1}\Bigl(1 \mp i\bigl[{\mathscr P}_{\gamma}(\sigma) \pm i\bigr]^{-1}\Bigr)
$$
for large $|\sigma| \gg 0$ and Lemma~\ref{BasicHolom} show that
\begin{gather*}
\bigl[{\mathscr P}_{\gamma}(\sigma) \pm i\bigr]^{-1} \in S^0(\R;\L(E_0^2,E_1^2))\cap S^{-\mu}(\R;\L(E_0^2)), \\
{\mathscr P}_{\gamma_0}(\sigma)\bigl[{\mathscr P}_{\gamma}(\sigma) \pm i\bigr]^{-1} \in S^0(\R;\L(E_0^2)).
\end{gather*}
Thus
$$
\opm(\bigl[{\mathscr P}_{\gamma}(\sigma) \pm i\bigr]^{-1}) : L^2_b(\R_+;E_0^2) \to {\mathscr H}(\R_+;E_1^2)
$$
inverts $\opm({\mathscr P}_{\gamma}) \pm i$ and the proposition is proved.
\end{proof}

We next prove that the space ${\mathscr H}(\R_+;E_1)$ allows localization and microlocalization and has good approximation properties. The latter is based on the following familiar continuity criterion for the strong operator topology in $L^2_b$-spaces:

Let $S^0(\R_+\times\R;\L(F,G))$ denote the space of global operator-valued Mellin symbols of order $0$, where $F$ and $G$ and Hilbert spaces, i.e., $a(x,\sigma) \in C^{\infty}(\R_+\times\R;\L(F,G))$ belongs to $S^0(\R_+\times\R;\L(F,G))$ if and only if for all $\alpha,\beta \in \N_0$ there exists a constant $C_{\alpha,\beta} \geq 0$ independent of $(x,\sigma)$ such that
$$
\|(xD_x)^{\alpha}\partial_{\sigma}^{\beta}a(x,\sigma)\|_{\L(F,G)} \leq C_{\alpha,\beta} \langle \sigma \rangle^{-\beta}.
$$
The best constants in these estimates give rise to the seminorms for the symbol topology on $S^0(\R_+\times\R;\L(F,G))$.

If $\{a_j(x,\sigma)\}_{j=1}^{\infty} \subset S^0(\R_+\times\R;\L(F,G))$ is a sequence that is bounded in the symbol topology and converges pointwise on $\R_+\times\R$ to $a(x,\sigma) \in S^0(\R_+\times\R;\L(F,G))$, then $\opm(a_j) \to \opm(a)$ strongly as $j \to \infty$, i.e., $\opm(a_j)u \to \opm(a)u$ in $L^2_b(\R_+;G)$ as $j \to \infty$ for every $u \in L^2_b(\R_+;F)$.

We quickly review the steps of the proof of this criterion (without loss of generality assume that $a(x,\sigma) \equiv 0$):
\begin{itemize}
\item Let $u \in \T(\R_+;F)$ (recall that this means that $u(e^t) \in \S(\R;F)$):
\begin{itemize}
\item From the boundedness of the $\{a_j(x,\sigma)\}$ and dominated convergence we first get that $[\opm(a_j)u](x) \to 0$ in $G$ pointwise on $\R_+$.
\item As $S^0(\R_+\times\R;\L(F,G)) \ni a \mapsto \opm(a)u \in \T(\R_+;G)$ is continuous we can bound any continuous seminorm of $\opm(a_j)u$ on $\T(\R_+;G)$ by a constant independent of $j$. This observation gives rise to an integrable majorant of the form $F(x) = K\langle \log x \rangle^{-2} \in L^1(\R_+;\frac{dx}{x})$ such that $\|\opm(a_j)u(x)\|^2_G \leq F(x)$, and in conclusion $\opm(a_j)u \to 0$ in $L^2_b(\R_+;G)$ as $j \to \infty$ by dominated convergence.
\end{itemize}
\item Continuity of the map
$$
S^0(\R_+\times\R;\L(F,G)) \ni a \mapsto \opm(a) \in \L(L^2_b(\R_+;F),L^2_b(\R_+;G))
$$
shows that $\{\opm(a_j)\}_{j=1}^{\infty}$ is bounded in $\L(L^2_b(\R_+;F),L^2_b(\R_+;G))$, and $\opm(a_j)u \to 0$ as $j \to \infty$ for $u$ in the dense subspace $\T(\R_+;F)$. Thus $\opm(a_j) \to 0$ strongly as $j \to \infty$.
\end{itemize}

The relevant application of this criterion for us are mollifiers in the case $F=G$ and scalar symbols $a_j(x,\sigma)$, where we specifically choose
\begin{equation}\label{basicmollifier}
a_j(x,\sigma) = \phi(\sqrt[j]{x})\exp\bigl(-\tfrac{\sigma^2}{2j^2}\bigr)
\end{equation}
with $\phi \in C_c^{\infty}(\R_+)$ such that $\phi \equiv 1$ near $x = 1$. Note that $a_j(x,\sigma)$ is holomorphic in $\sigma$, $\{a_j(x,\sigma+i\gamma);\; j \in \N,\; |\gamma| \leq K\} \subset S^0(\R_+\times\R_{\sigma})$ is bounded for every $K > 0$, and $a_j(x,\sigma) \to 1$ as $j \to \infty$ pointwise on $\R_+\times\C$.
An application of the above continuity criterion combined with the analyticity and estimates on the symbols with respect to $\sigma$ yield that for every $\alpha \in \R$ and every $u \in x^{\alpha}L^2_b(\R;F)$ we have $\opm(a_j)u \to u$ as $j \to \infty$ in $x^{\alpha}L^2_b(\R;F)$. Note that the extensions by continuity of $\opm(a_j)$ to weighted $L^2_b$-spaces for different values of $\alpha, \alpha' \in \R$ are consistent on the intersection $x^{\alpha}L^2_b\cap x^{\alpha'}L^2_b$, and thus $\opm(a_j)$ is unambiguously defined. Observe also that $\opm(a_j)u \in C_c^{\infty}(\R_+;F)$ for every $j \in \N$.

\begin{proposition}\label{MicrolocalCutoff}
Let $a(x,\sigma) \in S^0(\R_+\times\R)$ be a global scalar Mellin symbol. Then
$$
\opm(a) : {\mathscr H}(\R_+;E_1) \to {\mathscr H}(\R_+;E_1)
$$
is continuous. Moreover, if $\{a_j(x,\sigma)\} \subset S^0(\R_+\times\R)$ is a sequence of such symbols that is bounded in the global symbol topology, and such that $a_j(x,\sigma)$ converges pointwise to the global Mellin symbol $a(x,\sigma) \in S^0(\R_+\times\R)$, then $\opm(a_j) \to \opm(a)$ strongly in $\L({\mathscr H}(\R_+;E_1))$, i.e., $\opm(a_j)u \to \opm(a)u$ in ${\mathscr H}(\R_+;E_1)$ for each $u \in {\mathscr H}(\R_+;E_1)$.
\end{proposition}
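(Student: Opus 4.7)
The plan is to transport both claims from $\mathscr{H}(\R_+;E_1)$ to $L^2_b(\R_+;E_0)$ via the isometric isomorphism $\opm(p(\sigma+i\gamma_0)) : \mathscr{H}(\R_+;E_1) \to L^2_b(\R_+;E_0)$ built into the very definition of the $\mathscr{H}$-norm. Writing $p=p(\sigma+i\gamma_0)$ for brevity and setting
\[
T := \opm(p)\,\opm(a)\,\opm(p^{-1}),\qquad T_j := \opm(p)\,\opm(a_j)\,\opm(p^{-1})
\]
as operators on $L^2_b(\R_+;E_0)$, continuity of $\opm(a):\mathscr{H}(\R_+;E_1)\to\mathscr{H}(\R_+;E_1)$ becomes equivalent to boundedness of $T$ on $L^2_b(\R_+;E_0)$ (with the same operator norm), and strong convergence $\opm(a_j)\to\opm(a)$ in $\L(\mathscr{H}(\R_+;E_1))$ becomes equivalent to $T_j v \to Tv$ in $L^2_b(\R_+;E_0)$ for every $v\in L^2_b(\R_+;E_0)$.

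For continuity I would compute the left symbol $b$ of $T=\opm(b)$ using the Mellin composition calculus from Appendix~\ref{PseudoCalculus}. Because $p$ depends only on $\sigma$ and $a$ is scalar, the usual asymptotic expansion collapses to
\[
b(x,\sigma) \sim a(x,\sigma)\,\id_{E_0} + \sum_{\alpha\geq 1}\tfrac{1}{\alpha!}\,(xD_x)^\alpha a(x,\sigma)\cdot\partial_\sigma^\alpha p(\sigma)\,p(\sigma)^{-1},
\]
and Lemma~\ref{BasicHolom}(b), applied with $\gamma_1=\gamma_2=\gamma_0$, gives $\partial_\sigma^\alpha p(\sigma)\,p(\sigma)^{-1}\in S^{-\alpha}(\R;\L(E_0))$, so the $\alpha$-th summand lies in $S^{-\alpha}(\R_+\times\R;\L(E_0))$. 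Hence $b\in S^0(\R_+\times\R;\L(E_0))$, and the standard $L^2_b$-boundedness of order-zero Mellin pseudodifferential operators with operator-valued symbols yields $T\in\L(L^2_b(\R_+;E_0))$.

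For the strong convergence I would apply the continuity criterion stated just above the proposition to the left symbols $b_j$ of $T_j=\opm(b_j)$. Two things need verification: (i) $\{b_j\}$ is bounded in the symbol topology of $S^0(\R_+\times\R;\L(E_0))$, and (ii) $b_j(x,\sigma)\to b(x,\sigma)$ pointwise on $\R_+\times\R$. Claim (i) is immediate from continuity of the triple composition map $a\mapsto p\#a\#p^{-1}$ in the symbol topology applied to the bounded family $\{a_j\}\subset S^0(\R_+\times\R)$. For (ii), uniform $S^0$-bounds on $\{a_j\}$ combined with pointwise convergence $a_j\to a$ force, via an Arzel\`{a}--Ascoli argument on compact subsets of $\R_+\times\R$, uniform convergence on compact sets of every derivative $(xD_x)^\alpha\partial_\sigma^\beta a_j\to(xD_x)^\alpha\partial_\sigma^\beta a$; feeding this into the exact oscillatory integral representation of the composition $b_j=p\#a_j\#p^{-1}$ and invoking dominated convergence then gives pointwise convergence $b_j\to b$, and the criterion supplies the desired strong convergence of $T_j$ to $T$ on $L^2_b(\R_+;E_0)$.

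The main obstacle will be (ii): the expansion for $b_j$ displayed above is only formal, so one cannot pass to the limit termwise in it. The strategy is to keep finitely many terms of the expansion (for which termwise convergence follows from the uniform convergence of finitely many derivatives of $a_j$ on compact sets) and to dominate the tail uniformly in $j$ using the $S^0$-bounds on $\{a_j\}$ together with the integration-by-parts regularization built into the symbol calculus from Appendix~\ref{PseudoCalculus}.
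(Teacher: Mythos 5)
Your overall strategy coincides with the paper's: conjugate by $\opm(p(\sigma+i\gamma_0))$ to transport the problem to $L^2_b(\R_+;E_0)$, identify the conjugated operator as a Mellin pseudodifferential operator via the composition calculus, and then apply the strong-convergence criterion to the resulting symbols. However, there is a concrete gap that you yourself flag as your ``main obstacle,'' and it is not really an obstacle at all: you have missed that the Leibniz composition formula for $T$ is \emph{exact and finite}, not merely a formal asymptotic expansion.

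The reason is twofold. First, $p(\sigma+i\gamma_0) = \sum_{j=0}^\mu a_j(\sigma+i\gamma_0)^j$ is a \emph{polynomial} in $\sigma$ of degree $\mu$, so $\partial_\sigma^k p = 0$ for $k > \mu$. Second, $p(\sigma+i\gamma_0)^{-1}$ has no $x$-dependence. Together these force both compositions $a\# p^{-1} = a\,p^{-1}$ (since $(xD_x)^k p^{-1}=0$ for $k\ge1$) and $p\#(a\,p^{-1})$ to close up as genuine finite sums with zero remainder; this is the content of the paper's identity \eqref{MollAux}. Consequently the symbol of $T_j$ is literally
$$
b_j(x,\sigma) = \sum_{k=0}^{\mu}\frac{1}{k!}\bigl([\partial_{\sigma}^kp(\sigma+i\gamma_0)]p(\sigma+i\gamma_0)^{-1}\bigr)\bigl((xD_x)^ka_j(x,\sigma)\bigr),
$$
and the same for $b$, with no tail to control. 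Boundedness of $\{b_j\}$ in $S^0(\R_+\times\R;\L(E_0))$ is then immediate from Lemma~\ref{BasicHolom}(b) and boundedness of $\{a_j\}$, and pointwise convergence $b_j \to b$ follows termwise from the $C^\infty(\R_+\times\R)$-convergence $a_j\to a$ (which, as you observe, one gets from the Montel/Arzel\`a--Ascoli property plus the uniform $S^0$-bounds). This is exactly what the paper does.

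Your proposed workaround --- keep finitely many terms and dominate the tail uniformly in $j$ --- is unnecessary given the above, and as sketched it does not clearly close: the remainder $r_{N,j}\in S^{-N}$ has seminorms bounded uniformly in $j$ but not uniformly in $N$, so smallness of the tail \emph{pointwise on all of} $\R_+\times\R$ (in particular for bounded $\sigma$) as $N\to\infty$ does not follow from symbol estimates alone. Recognizing the exactness of the finite Leibniz formula removes the problem entirely and is the key step you are missing.
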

\begin{proof}
To prove the first claim we need to show by definition of ${\mathscr H}(\R_+;E_1)$ that
$$
\opm(p(\sigma+i\gamma_0)\opm(a)\opm(p(\sigma+i\gamma_0)^{-1}) : L_b^2(\R_+;E_0) \to L_b^2(\R_+;E_0)
$$
is bounded. Now
$$
\opm(p(\sigma+i\gamma_0)\opm(a)\opm(p(\sigma+i\gamma_0)^{-1}) = \opm([p(\sigma+i\gamma_0]\# [ap(\sigma+i\gamma_0)^{-1}]),
$$
where
\begin{equation}\label{MollAux}
\begin{aligned}
\relax[p(\sigma+i\gamma_0)] &\# [ap(\sigma+i\gamma_0)^{-1}] \\
&= \sum\limits_{k=0}^{\mu}\frac{1}{k!}\bigl(\partial_{\sigma}^kp(\sigma+i\gamma_0\bigr)\bigl([(xD_x)^ka]p(\sigma+i\gamma_0)^{-1}\bigr) \\
&= \sum\limits_{k=0}^{\mu}\frac{1}{k!}\bigl([\partial_{\sigma}^kp(\sigma+i\gamma_0)]p(\sigma+i\gamma_0)^{-1}\bigr)\bigl((xD_x)^ka(x,\sigma)\bigr).
\end{aligned}
\end{equation}
Lemma~\ref{BasicHolom} now implies that
$$
[p(\sigma+i\gamma_0)]\# [ap(\sigma+i\gamma_0)^{-1}] \in S^0(\R_+\times\R;\L(E_0)),
$$
and consequently the Mellin pseudodifferential operator
$$
\opm\bigl([p(\sigma+i\gamma_0)]\# [ap(\sigma+i\gamma_0)^{-1}]\bigr) : L_b^2(\R_+;E_0) \to L_b^2(\R_+;E_0)
$$
is bounded.

To prove the convergence statement note that boundedness of the $a_j(x,\sigma)$ in the symbol topology implies, in particular, that the $\{a_j\}$ form a bounded sequence with respect to the standard Fr{\'e}chet topology of $C^{\infty}(\R_+\times\R)$. The Montel property of $C^{\infty}(\R_+\times\R)$ together with the pointwise convergence of the $a_j$ then implies that $a_j \to a$ in $C^{\infty}(\R_+\times\R)$. Then
$$
\{[p(\sigma+i\gamma_0)]\# [a_jp(\sigma+i\gamma_0)^{-1}]\}_{j=1}^{\infty} \subset S^0(\R_+\times\R;\L(E_0))
$$
is bounded and
$$
[p(\sigma+i\gamma_0)]\# [a_jp(\sigma+i\gamma_0)^{-1}] \to [p(\sigma+i\gamma_0)]\# [a p(\sigma+i\gamma_0)^{-1}]
$$
pointwise by \eqref{MollAux}, and so
$$
\opm\bigl([p(\sigma+i\gamma_0)]\# [a_jp(\sigma+i\gamma_0)^{-1}]\bigr)\to \opm\bigl([p(\sigma+i\gamma_0)]\# [a p(\sigma+i\gamma_0)^{-1}]\bigr)
$$
strongly in $\L(L^2_b(\R_+;E_0))$, thus proving that $\opm(a_j) \to \opm(a)$ strongly in $\L({\mathscr H}(\R_+;E_1))$.
\end{proof}

\begin{corollary}\label{densityintersectionspaces}
$C_c^{\infty}(\R_+;E_1)$ is dense in $x^{\gamma}{\mathscr H}(\R_+;E_1) \cap x^{\alpha}L^2_b(\R_+;E_0)$ for all $\alpha,\gamma \in \R$.
\end{corollary}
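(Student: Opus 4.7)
The natural strategy is to apply the explicit mollifiers $\opm(a_j)$ built from the symbols $a_j(x,\sigma)=\phi(\sqrt[j]{x})\exp(-\sigma^2/(2j^2))$ of \eqref{basicmollifier}, and to show that for every $u$ in the intersection one has $\opm(a_j)u\in C_c^{\infty}(\R_+;E_1)$ together with $\opm(a_j)u\to u$ in both norms defining the intersection.

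The membership $\opm(a_j)u\in C_c^{\infty}(\R_+;E_1)$ follows from the factorization $\opm(a_j)=\phi(\sqrt[j]{x})\cdot\opm(e^{-\sigma^2/(2j^2)})$: the multiplier $\phi(\sqrt[j]{x})$ cuts off to a compact subset of $\R_+$, the Gaussian Mellin multiplier smooths any $L^2_b$-type input (its symbol is Schwartz in $\sigma$), and since $a_j$ is scalar the operator preserves $E_1$-values. Note that $u\in x^{\gamma}{\mathscr H}(\R_+;E_1)\subset x^{\gamma}L^2_b(\R_+;E_1)$ by Proposition~\ref{BaseSpaceProps}(d), so $u$ indeed takes values in $E_1$ on any compact subset of $\R_+$.

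Convergence in $x^{\alpha}L^2_b(\R_+;E_0)$ is exactly the statement already recorded in the discussion preceding Proposition~\ref{MicrolocalCutoff} (for the weighted version one uses the holomorphicity of $a_j$ in $\sigma$ to shift contours, combined with the standard $L^2_b$ statement). For convergence in $x^{\gamma}{\mathscr H}(\R_+;E_1)$ the key computation is the intertwining identity
$$
x^{-\gamma}\opm(a_j)x^{\gamma}=\opm\bigl(a_j(x,\sigma-i\gamma)\bigr),
$$
valid because $a_j$ is entire in $\sigma$ with rapid decay on horizontal lines. Setting $b_j(x,\sigma)=a_j(x,\sigma-i\gamma)=\phi(\sqrt[j]{x})\exp\bigl(-(\sigma-i\gamma)^2/(2j^2)\bigr)$, the family $\{b_j\}$ is bounded in $S^0(\R_+\times\R)$ and converges pointwise to the constant symbol $1$. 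Applying Proposition~\ref{MicrolocalCutoff} to $v=x^{-\gamma}u\in{\mathscr H}(\R_+;E_1)$ then yields $\opm(b_j)v\to v$ in ${\mathscr H}$, which by the intertwining translates to $\opm(a_j)u\to u$ in $x^{\gamma}{\mathscr H}(\R_+;E_1)$.

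The only nonroutine point is the justification of the intertwining identity $x^{-\gamma}\opm(a_j)x^{\gamma}=\opm(a_j(x,\sigma-i\gamma))$ on the various weighted spaces, which reduces to a contour shift using the holomorphy of $a_j$ and its Gaussian decay in $\sigma$; once this is in place, the two convergences combine to give that $\opm(a_j)u$ is a $C_c^{\infty}(\R_+;E_1)$-approximating sequence in the intersection norm.
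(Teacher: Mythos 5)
Your argument is correct and follows the paper's proof essentially verbatim: apply the mollifiers $\opm(a_j)$ from \eqref{basicmollifier}, note $\opm(a_j)u \in C_c^\infty(\R_+;E_1)$, obtain convergence in $x^\alpha L^2_b(\R_+;E_0)$ from the continuity criterion preceding Proposition~\ref{MicrolocalCutoff}, and obtain convergence in $x^\gamma{\mathscr H}(\R_+;E_1)$ by conjugating with $x^{\gamma}$ (the identity $\opm(a_j)u = x^\gamma \opm(a_j(x,\sigma-i\gamma))(x^{-\gamma}u)$) and applying Proposition~\ref{MicrolocalCutoff} to the shifted symbols, which are bounded in $S^0$ and converge pointwise to $1$. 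The only difference is that you supply slightly more detail on the contour shift and on why $\opm(a_j)u$ lands in $C_c^\infty(\R_+;E_1)$, both of which the paper leaves implicit.
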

\begin{proof}
Let $a_j(x,\sigma)$ be the sequence of mollifying symbols from \eqref{basicmollifier}. For every $u \in x^{\gamma}{\mathscr H}(\R_+;E_1) \cap x^{\alpha}L^2_b(\R_+;E_0)$ we have $\opm(a_j)u \to u$ as $j \to \infty$ in $x^{\alpha}L^2_b(\R_+;E_0)$, and $\opm(a_j)u = x^{\gamma}\opm(a_j(x,\sigma-i\gamma))(x^{-\gamma}u) \to u$ as $j \to \infty$ in $x^{\gamma}{\mathscr H}(\R_+;E_1)$ by Proposition~\ref{MicrolocalCutoff}.
\end{proof}

\begin{theorem}\label{MinimalDomainThm}
We have
$$
x^m{\mathscr H}(\R_+;E_1)\cap L^2_b(\R_+;E_0) \hookrightarrow \Dom_{\min},
$$
and $\Dom_{\min} = x^m{\mathscr H}(\R_+;E_1)\cap L^2_b(\R_+;E_0)$ if and only if $p(\sigma-im) : E_1 \to E_0$ is invertible for all $\sigma \in \R$.
\end{theorem}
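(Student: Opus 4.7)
\emph{Plan.} The approach rests on the Mellin shift identity $x^{-m}\opm(p(\sigma))x^m = \opm(p(\sigma - im))$, which on $C_c^{\infty}(\R_+;E_1)$ rewrites $Au = \opm(p(\sigma - im))(x^{-m}u)$ and effectively reduces the domain question for $A$ to the closure of $\opm(p(\sigma-im))$, already handled by Proposition~\ref{BaseSpaceUnbdd}, modulo the $x^{-m}$ weight.

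\emph{Forward inclusion.} Given $u \in x^m{\mathscr H}(\R_+;E_1)\cap L^2_b(\R_+;E_0)$, I set $w = x^{-m}u \in {\mathscr H}(\R_+;E_1)\cap x^{-m}L^2_b(\R_+;E_0)$. Corollary~\ref{densityintersectionspaces} (with $\gamma = 0$, $\alpha = -m$) provides $w_k \in C_c^{\infty}(\R_+;E_1)$ with $w_k\to w$ simultaneously in ${\mathscr H}$ and in $x^{-m}L^2_b$. Then $u_k := x^m w_k \in C_c^{\infty}(\R_+;E_1)$ satisfies $u_k \to u$ in $L^2_b$, and by Proposition~\ref{BaseSpaceUnbdd} (continuity of $\opm(p(\sigma - im)): {\mathscr H}\to L^2_b$) together with the shift identity, $Au_k = \opm(p(\sigma - im))w_k \to \opm(p(\sigma - im))w = Au$ in $L^2_b$, so $u \in \Dom_{\min}$.

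\emph{Reverse inclusion under invertibility.} If $p(\sigma - im)$ is invertible for all $\sigma \in \R$, Proposition~\ref{BaseSpaceProps}(b) permits redefining ${\mathscr H}$ with $\gamma = -m$, so that ${\mathscr H} = \opm(p(\sigma - im)^{-1})L^2_b$. For $u \in \Dom_{\min}$ take $u_k \in C_c^{\infty}(\R_+;E_1)$ with $u_k \to u$ and $Au_k \to Au$ in $L^2_b$, and set $w_k := x^{-m}u_k \in C_c^{\infty}(\R_+;E_1)$. Since $\opm(p(\sigma - im))w_k = Au_k$ is Cauchy in $L^2_b$, invertibility makes $w_k = \opm(p(\sigma - im)^{-1})(Au_k)$ Cauchy in ${\mathscr H}$; call its limit $w$. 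Testing against $\phi \in C_c^{\infty}(\R_+;E_0)$ (so $x^{-m}\phi \in L^2_b$) gives $\langle w,\phi\rangle = \lim\langle w_k,\phi\rangle = \lim\langle u_k,x^{-m}\phi\rangle = \langle x^{-m}u,\phi\rangle$, identifying $w = x^{-m}u$ distributionally. Hence $u = x^m w \in x^m{\mathscr H}$, and together with $u \in L^2_b$, $u \in x^m{\mathscr H}\cap L^2_b$.

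\emph{Necessity and main obstacle.} Suppose $p(\sigma_0 - im)$ is not invertible at some $\sigma_0 \in \R$. Assumption \eqref{Fredholm} together with $p(\sigma)^* = p(\sigma^{\star})$ from \eqref{Symmetry} forces the index of the Fredholm family $p$ to vanish on $\C$, so $\ker p(\sigma_0-im) \neq \{0\}$. I pick $e_0 \in E_1\setminus\{0\}$ with $p(\sigma_0-im)e_0 = 0$ and a cutoff $\omega \in C_c^{\infty}(\overline\R_+)$ with $\omega \equiv 1$ near $0$, and set $u(x) = \omega(x)\,x^{m+i\sigma_0}\,e_0$. Then $u \in L^2_b$ (integrand $\omega^2 x^{2m-1}$); Leibniz expansion of $\opm(p(\sigma - im)) = \sum_j a_j(xD_x - im)^j$ applied to $\omega x^{i\sigma_0}e_0$ gives a principal term $\omega\cdot p(\sigma_0-im)e_0\cdot x^{i\sigma_0} = 0$ plus derivative-of-$\omega$ terms supported compactly in $\R_+ \setminus \{0\}$, so $Au \in L^2_b$ and $u \in \Dom_{\max}$. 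Since $x^{-m}u = \omega x^{i\sigma_0}e_0$ fails to lie in $L^2_b(\R_+;E_1)\supset{\mathscr H}$ (the integral $\int \omega^2 |e_0|^2_{E_1}\,dx/x$ diverges near $0$), we have $u \notin x^m{\mathscr H}\cap L^2_b$. To place $u$ in $\Dom_{\min}$ I approximate by $u_k := \chi_k u$ with the slow logarithmic cutoff $\chi_k(x) = \chi(-\log x/L_k)$, where $\chi \in C^{\infty}(\R)$ equals $1$ for $s \le 0$ and $0$ for $s \ge 1$ and $L_k \to \infty$. The derivatives $(xD_x)^l\chi_k = (i/L_k)^l\chi^{(l)}(-\log x/L_k)$ are of size $O(L_k^{-l})$ on a support of Haar measure $L_k$, so each term in the Leibniz commutator $[\opm(p(\sigma-im)),\chi_k](\omega x^{i\sigma_0}e_0)$ has $L^2_b$ norm $O(L_k^{1/2-l})$ for $l \ge 1$. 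Combined with $\chi_k Au \to Au$ and $\chi_k u \to u$ by dominated convergence, this shows $Au_k \to Au$ and $u_k \to u$ in $L^2_b$, so $u \in \Dom_{\min}\setminus(x^m{\mathscr H}\cap L^2_b)$. The main obstacle is this last step: a naive cutoff $\chi(x/\epsilon_k)$ leaves Leibniz contributions of order $O(1)$ in the $A$-graph norm and fails. The logarithmic scale spreads the transition over a set of unbounded Haar measure, which is exactly what forces the commutator estimate to be $O(L_k^{1/2-l})$ rather than $O(1)$.
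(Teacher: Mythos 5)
Your forward inclusion and the "if" half of the equivalence are essentially the paper's argument, recast from the paper's norm-comparison phrasing into an explicit approximating sequence; both rest on Proposition~\ref{BaseSpaceUnbdd}, Proposition~\ref{BaseSpaceProps}(b), and Corollary~\ref{densityintersectionspaces}. The substantive divergence is in the necessity direction, where your route is genuinely different. The paper does not prove necessity within Section~\ref{MinimalDomain} at all: it defers to the proof of Theorem~\ref{DmaxThm}, where $\E_{\sigma_1}\subset\Dom_{\min}$ for $\Im(\sigma_1)=-m$ is extracted indirectly from the full description of $\Dom_{\max}$ together with the reflection symmetry of $\spec(\hat{\g})$ about $\Im(\sigma)=-\tfrac{m}{2}$ supplied by the Canonical Form Theorem (no real eigenvalues of $\hat{\g}$ forces no eigenvalues on $\Im(\sigma)=-m$). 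You instead place $u=\omega\,x^{m+i\sigma_0}e_0$ directly into $\Dom_{\min}$ via the slow logarithmic cutoff $\chi_k(x)=\chi(-\log x/L_k)$; the commutator bound $\|((xD_x)^\ell\chi_k)\cdot\tfrac{1}{\ell!}\opm(\partial_\sigma^\ell p(\sigma-im))v\|_{L^2_b}=O(L_k^{1/2-\ell})$ for $\ell\ge 1$ is correct (sup norm $O(L_k^{-\ell})$ over a transition set of Haar measure $L_k$, against a bounded, compactly supported $E_0$-valued function), and it is precisely this gain over a dyadic cutoff that makes the argument close. Your approach is more elementary and self-contained — no forward reference to Section~\ref{MaximalDomain}, no indefinite-inner-product machinery — and it is proportionate to what the necessity assertion actually needs, namely a single witness in $\Dom_{\min}\setminus\bigl(x^m{\mathscr H}(\R_+;E_1)\cap L^2_b(\R_+;E_0)\bigr)$. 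What you give up is the stronger structural fact the paper obtains en passant, that the whole finite-dimensional space $\E_{\sigma_1}$ (including log terms) lies in $\Dom_{\min}$ when $\Im(\sigma_1)=-m$, but that is not required here. One small remark: to conclude $\ker p(\sigma_0-im)\neq\{0\}$, the vanishing of $\mathrm{ind}\,p$ already follows from holomorphy in \eqref{Fredholm} together with the invertibility of $p(\sigma)$ for $|\Re(\sigma)|\gg 0$ noted after the standing assumptions; invoking \eqref{Symmetry} is an equally valid alternative.
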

\begin{proof}
The operator $A : C_c^{\infty}(\R_+;E_1) \subset L^2_b(\R_+;E_0) \to L^2_b(\R_+;E_0)$ extends to a bounded operator $A : x^m{\mathscr H}(\R_+;E_1) \to L^2_b(\R_+;E_0)$ via
\begin{equation}\label{AonCc}
Au = \opm(p(\sigma - im))(x^{-m}u)
\end{equation}
for $u \in x^m{\mathscr H}(\R_+;E_1)$ by Proposition~\ref{BaseSpaceUnbdd}. Consequently, writing $H = L^2_b(\R_+;E_0)$, we have
$$
\|u\|_H + \|Au\|_H \lesssim \|u\|_H + \|u\|_{x^m{\mathscr H}}
$$
for $u \in C_c^{\infty}(\R_+;E_1)$ which proves that $x^m{\mathscr H}(\R_+;E_1)\cap L^2_b(\R_+;E_0) \hookrightarrow \Dom_{\min}$ by Corollary~\ref{densityintersectionspaces}.

If $p(\sigma-im) : E_1 \to E_0$ is invertible for all $\sigma \in \R$, Proposition~\ref{BaseSpaceProps} and \eqref{AonCc} imply that $\|Au\|_{H}$ is an equivalent norm on $x^m{\mathscr H}(\R_+;E_1)$. Consequently, the graph norm $\|u\|_H + \|Au\|_H$ and the norm $\|u\|_H + \|u\|_{x^m{\mathscr H}}$ are equivalent on $C_c^{\infty}(\R_+;E_1)$, and therefore $\Dom_{\min} = x^m{\mathscr H}(\R_+;E_1)\cap L^2_b(\R_+;E_0)$ by the density of $C_c^{\infty}(\R_+;E_1)$ in both spaces.

Conversely, if there exists $\sigma_0 \in \C$ with $\Im(\sigma_0) = -m$ such that $p(\sigma_0) : E_1 \to E_0$ is not invertible, we will show in the proof of Theorem~\ref{DmaxThm} in Section~\ref{MaximalDomain} that $\Dom_{\min}$ contains elements of the form $u = \omega e_1x^{i\sigma_0}$, where $0 \neq e_1 \in E_1$ and $\omega \in C_c^{\infty}(\overline{\R}_+)$ is a function with $\omega \equiv 1$ near $x = 0$. Such functions $u$ are not contained in $x^mL^2_b(\R_+;E_0)$ and thus $u \notin x^m{\mathscr H}(\R_+;E_1)$, and so $\Dom_{\min} \neq x^m{\mathscr H}(\R_+;E_1)\cap L^2_b(\R_+;E_0)$.
\end{proof}


\section{The maximal domain}\label{MaximalDomain}

\noindent
Fix $\omega \in C_c^{\infty}(\overline{\R}_+)$ with $\omega \equiv 1$ near $x = 0$. For each $\sigma_0 \in \spec_b(A)$ let
\begin{equation}\label{Esigma0def}
\begin{aligned}
\E_{\sigma_0} = \{u = \omega\sum\limits_{j=0}^k e_j&\log^j(x)x^{i\sigma_0};\; k \in \N_0 \textup{ and } e_j \in E_1, \\
&\textup{and } p(\sigma)(Mu)(\sigma) \textup{ is holomorphic at $\sigma=\sigma_0$}\}.
\end{aligned}
\end{equation}
By analytic Fredholm theory this space is finite-dimensional. Theorem~\ref{DmaxThm} describes the structure of the maximal domain.

\begin{theorem}\label{DmaxThm}
We have
$$
\Dom_{\max} = \Dom_{\min} \oplus \bigoplus\limits_{\substack{\sigma_0 \in \spec_b(A) \\ -m < \Im(\sigma_0) < 0}}\E_{\sigma_0}.
$$
In particular, $\dim\hat{\E}_{\max} < \infty$. For each $\sigma_0 \in \spec_b(A)$ with $-m < \Im(\sigma_0) < 0$ the space $\hat{\E}_{\sigma_0} = \E_{\sigma_0} + \Dom_{\min}$ is the generalized eigenspace to the eigenvalue $\sigma_0$ of the generator $\hat{\g}$ of the induced action $\hat{\kappa}_{\varrho}$ from \eqref{dilation} on $\hat{\E}_{\max}$.

If $u_{\sigma_j} \in \E_{\sigma_j}$ for $\sigma_j \in \spec_b(A)$ with $-m < \Im(\sigma_j) < 0$ for $j=0,1$, then the adjoint pairing between these functions is given by
\begin{equation}\label{ResidueFormulaPairing}
[u_{\sigma_0},u_{\sigma_1}]_A = \res_{\sigma=\sigma_0}\langle p(\sigma)[Mu_{\sigma_0}](\sigma),[Mu_{\sigma_1}](\sigma^{\star})\rangle_{E_0},
\end{equation}
where $\sigma^{\star}=\overline{\sigma}-im$ is reflection about the line $\Im(\sigma)=-\frac{m}{2}$.
\end{theorem}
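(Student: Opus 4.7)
The strategy is to combine von Neumann's formula $\Dom_{\max} = \Dom_{\min}\oplus\ker(A_{\max}+i)\oplus\ker(A_{\max}-i)$ with Mellin analysis. The main technical step, and the principal obstacle, is to show that every $u\in\ker(A_{\max}\pm i)$ decays rapidly as $x\to\infty$. I would obtain this by constructing left parametrices $Q_\pm$ for $A_{\max}\pm i$ modulo smoothing remainders that produce rapid decay at infinity. Concretely, Lemma~\ref{BasicHolom} together with the hypoellipticity machinery of Appendix~\ref{PseudoCalculus} furnishes symbolic inverses of $p(\sigma)\pm ix^m$; quantizing these via $\opm$ yields right parametrices for $A_{\min}\mp i$, and passing to adjoints gives the desired left parametrices $Q_\pm$ for $A_{\max}\pm i$. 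The identity $u = Q_\pm(A_{\max}\pm i)u + R_\pm u = R_\pm u$ then forces $u$ to be smooth on $\R_+$ with values in $E_1$ and rapidly decreasing as $x\to\infty$.

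Granted this rapid decay, $Mu$ is holomorphic in the upper half-plane $\{\Im\sigma>0\}$ with $L^2$-boundary values on $\R$. The equation $(A_{\max}\pm i)u=0$ translates via the Mellin transform to $p(\tau)Mu(\tau) = \mp i\, Mu(\tau+im)$, which extends $Mu$ meromorphically to $\{\Im\tau>-m\}$ with poles only at indicial roots of $A$; the $L^2$-boundary behavior on $\Im\sigma=0$ and the condition $Au\in L^2_b$ exclude contributions from indicial roots on the lines $\Im\sigma=0$ and $\Im\sigma=-m$. Applying the inverse Mellin transform and shifting the contour from $\Im\sigma=0$ down to $\Im\sigma=-m$, the residue theorem decomposes $u = \sum_{\sigma_0}u_{\sigma_0} + u_{\min}$, where the sum is over indicial roots in the open strip $-m<\Im\sigma_0<0$, each $u_{\sigma_0}$ arises from the principal part of $Mu$ at $\sigma_0$ and has the form $\omega\sum_j e_j\log^j(x)x^{i\sigma_0}$ up to a smooth compactly-supported remainder, and the tail integral on $\Im\sigma=-m$ lies in $x^m{\mathscr H}(\R_+;E_1)\cap L^2_b\subseteq\Dom_{\min}$ by Theorem~\ref{MinimalDomainThm}. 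The equation $(A_{\max}\pm i)u=0$ restricted to the principal parts forces $p(\sigma)Mu_{\sigma_0}(\sigma)$ to be holomorphic at $\sigma_0$, placing each $u_{\sigma_0}\in\E_{\sigma_0}$ and yielding the direct sum decomposition.

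The identification of $\hat{\E}_{\sigma_0}$ with the generalized eigenspace of $\hat{\g}$ for $\sigma_0$ follows from the direct computation $(\g-\sigma_0)(\log^j(x)x^{i\sigma_0}) = -ij\log^{j-1}(x)x^{i\sigma_0}$; applying $(\g-\sigma_0)^{k+1}$ to $u_{\sigma_0} = \omega\sum_{j=0}^k e_j\log^j(x)x^{i\sigma_0}$ annihilates the principal ansatz, leaving only commutator terms involving derivatives of $\omega$ which are supported away from $x=0$ and hence lie in $\Dom_{\min}$. For the pairing formula, Parseval together with $M(Au)(\sigma) = p(\sigma-im)Mu(\sigma-im)$ and the symmetry $p(\sigma-im)^* = p(\sigma)$ for real $\sigma$ yield
\begin{align*}
\langle Au_{\sigma_0}, u_{\sigma_1}\rangle_{L^2_b} &= \frac{1}{2\pi}\int_{\Im\sigma=-m}\langle p(\sigma)Mu_{\sigma_0}(\sigma), Mu_{\sigma_1}(\sigma^\star)\rangle_{E_0}\,d\sigma, \\
\langle u_{\sigma_0}, Au_{\sigma_1}\rangle_{L^2_b} &= \frac{1}{2\pi}\int_{\Im\sigma=0}\langle p(\sigma)Mu_{\sigma_0}(\sigma), Mu_{\sigma_1}(\sigma^\star)\rangle_{E_0}\,d\sigma.
\end{align*}
Subtracting, multiplying by $1/i$, and closing the contour in the strip $-m<\Im\sigma<0$ gives a sum of residues there. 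Rewriting the integrand via $p(\sigma)^* = p(\sigma^\star)$ as $\langle Mu_{\sigma_0}(\sigma), p(\sigma^\star)Mu_{\sigma_1}(\sigma^\star)\rangle$ shows that the second factor is regular at every $\sigma_1^\star$, because $p(w)Mu_{\sigma_1}(w)$ is holomorphic at $w=\sigma_1$ by the defining condition of $\E_{\sigma_1}$. Thus the only nontrivial pole of the integrand in the strip is the pole of $Mu_{\sigma_0}$ at $\sigma_0$, and the residue there yields formula \eqref{ResidueFormulaPairing}.
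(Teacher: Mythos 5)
Your overall strategy (von Neumann formula, parametrix giving rapid decay, Mellin analysis for the asymptotics, Plancherel plus contour shift for the pairing) matches the paper, and several of the steps are carried out correctly: the generalized eigenspace identification via $(\g-\sigma_0)\bigl(\log^j(x)x^{i\sigma_0}\bigr)=-ij\log^{j-1}(x)x^{i\sigma_0}$ is right, and the derivation of \eqref{ResidueFormulaPairing}, including the argument that rewriting the integrand as $\langle Mu_{\sigma_0}(\sigma),\, p(\sigma^\star)Mu_{\sigma_1}(\sigma^\star)\rangle$ shows the only pole is at $\sigma_0$, is correct. However, there are two genuine gaps.

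First, your claim that ``the condition $Au\in L^2_b$ excludes contributions from indicial roots on $\Im\sigma=-m$'' is wrong. What $A_{\max}u\in L^2_b$ gives you is that $p(\sigma)Mu(\sigma)$ has $L^2$-boundary values on $\Im\sigma=-m$, and hence \emph{that} function has no poles there; but $Mu(\sigma)=p(\sigma)^{-1}\bigl[p(\sigma)Mu(\sigma)\bigr]$ can perfectly well acquire poles at indicial roots $\sigma_0$ with $\Im(\sigma_0)=-m$, since $p(\sigma)^{-1}$ is singular there. Indeed these poles genuinely occur: the paper's Theorem~\ref{MinimalDomainThm} asserts (deferring the proof to Theorem~\ref{DmaxThm}) that $\Dom_{\min}$ strictly contains $x^m{\mathscr H}(\R_+;E_1)\cap L^2_b(\R_+;E_0)$ exactly when such roots exist, with elements of the form $\omega e_1 x^{i\sigma_0}$, $\Im(\sigma_0)=-m$, which are \emph{not} in $x^mL^2_b$. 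So the contour cannot simply be pushed to $\Im\sigma=-m$; one shifts below, collects residues in $-m\le\Im\sigma_0<0$, and then must separately prove $\E_{\sigma_0}\subset\Dom_{\min}$ when $\Im(\sigma_0)=-m$. The paper does this by an abstract argument: once $\dim\hat\E_{\max}<\infty$ is known, the Canonical Form Theorem of Section~\ref{GreenAbstract} says the eigenvalues of $\hat\g$ are symmetric about $\Im(\sigma)=-\tfrac m2$; since there are no eigenvalues with $\Im\sigma=0$, there are none with $\Im\sigma=-m$, so $\hat\E_{\sigma_0}=\{0\}$ for $\Im(\sigma_0)=-m$. Your proposal needs this step (or some replacement for it) and does not contain it.

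Second, you never establish that the sum is \emph{direct}. Producing a decomposition $u=\sum u_{\sigma_0}+u_{\min}$ for each $u\in\ker(A_{\max}\pm i)$ only gives $\Dom_{\max}=\Dom_{\min}+\sum\E_{\sigma_0}$. To upgrade $+$ to $\oplus$ one must show $\E_{\sigma_0}\cap\Dom_{\min}=\{0\}$ and that the $\E_{\sigma_0}$ are independent modulo $\Dom_{\min}$. The paper obtains this from the pairing formula \eqref{ResidueFormulaPairing}: orthogonality of $\E_{\sigma_0}$ and $\E_{\sigma_1}$ for $\sigma_1\ne\sigma_0^\star$ is immediate, and nondegeneracy of $[\cdot,\cdot]_A:\E_{\sigma_0}\times\E_{\sigma_0^\star}\to\C$ is deduced from the model-pairing nondegeneracy of Theorem~\ref{Nondeg} applied to the block family $\P(\sigma)=\bigl(\begin{smallmatrix}0&p(\sigma_0+\sigma)\\ p(\sigma_0^\star+\sigma)&0\end{smallmatrix}\bigr)$. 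Since any $v\in\Dom_{\min}$ is $[\cdot,\cdot]_A$-orthogonal to everything, nondegeneracy forces the sum to be direct. Your argument stops short of this.
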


This means that every $u \in \Dom_{\max}$ has an asymptotic expansion of the form
$$
u \sim \sum\limits_{\substack{\sigma_0 \in \spec_b(A) \\ -m < \Im(\sigma_0) < 0}}\sum\limits_{j=0}^{k_{\sigma_0}}e_{\sigma_0,j}\log^j(x)x^{i\sigma_0} \quad \textup{as } x \to 0 \mod \Dom_{\min},
$$
and vanishing conditions placed upon these asymptotic terms are boundary conditions as $x \to 0$ that determine extensions. In particular, the abstract theory of the first part of this paper is applicable to the indicial operator \eqref{Aindicial}.

The proof of Theorem \ref{DmaxThm} requires some auxiliary results.

\begin{lemma}\label{DmaxAux1}
We have
$$
{\mathscr H}(\R_+;E_1) \cap \Dom_{\max} = \{u \in {\mathscr H}(\R_+;E_1);\; \opm(p)u \in x^mL^2_b(\R_+;E_0)\cap L^2_b(\R_+;E_0)\},
$$
and $A_{\max}u = x^{-m}\opm(p)u$ for $u \in {\mathscr H}(\R_+;E_1) \cap \Dom_{\max}$.
\end{lemma}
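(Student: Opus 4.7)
The plan is to reduce the characterization of ${\mathscr H}(\R_+;E_1)\cap\Dom_{\max}$ to a single pairing identity
\begin{equation*}
\langle Av,u\rangle_{L^2_b}=\langle v,x^{-m}\opm(p)u\rangle_{L^2_b}\quad(u\in{\mathscr H}(\R_+;E_1),\ v\in C_c^{\infty}(\R_+;E_1)),
\end{equation*}
after which the definition of $\Dom_{\max}$ does the rest. The first step is a Mellin-symbolic commutation on test functions, namely
\begin{equation*}
x^{-m}\opm(p)v=\opm(p(\sigma-im))(x^{-m}v)\quad\text{for }v\in C_c^{\infty}(\R_+;E_1).
\end{equation*}
This follows from the elementary identity $M(x^{-m}f)(\sigma)=Mf(\sigma-im)$ (verified directly by substituting $x^{-i\sigma-m}=x^{-i(\sigma-im)}$) together with a contour shift in the Mellin inversion integral. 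The contour shift is harmless because $Mv$ is entire with rapid decay on every horizontal strip and $p(\sigma)$ grows only polynomially along lines.

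The second step invokes Proposition~\ref{BaseSpaceUnbdd} with $\gamma=-m$: the closure of $\opm(p(\sigma-im))\colon C_c^{\infty}\subset L^2_b\to L^2_b$ is the bounded operator $\opm(p(\sigma-im))\colon{\mathscr H}(\R_+;E_1)\to L^2_b(\R_+;E_0)$, and its $L^2_b$-adjoint as an unbounded operator is $\opm(p(\sigma))\colon{\mathscr H}(\R_+;E_1)\to L^2_b(\R_+;E_0)$. Since $x^{-m}v\in C_c^{\infty}\subset{\mathscr H}(\R_+;E_1)$ for $v\in C_c^{\infty}$, one combines Step 1 with this adjoint identity (and the fact that the real-valued multiplier $x^{-m}$ may be moved across the $L^2_b$-pairing whenever both sides converge) to obtain
\begin{align*}
\langle Av,u\rangle_{L^2_b}&=\langle \opm(p(\sigma-im))(x^{-m}v),u\rangle_{L^2_b}\\
&=\langle x^{-m}v,\opm(p)u\rangle_{L^2_b}=\langle v,x^{-m}\opm(p)u\rangle_{L^2_b},
\end{align*}
where the middle equality uses that $u\in{\mathscr H}(\R_+;E_1)$, so $\opm(p)u$ is a bona fide element of $L^2_b(\R_+;E_0)$ by Proposition~\ref{BaseSpaceUnbdd}.

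For the third step, by definition $u\in\Dom_{\max}$ iff the antilinear functional $v\mapsto\langle Av,u\rangle_{L^2_b}$ on $C_c^{\infty}(\R_+;E_1)$ extends continuously to $L^2_b(\R_+;E_0)$, and in that case $A_{\max}u$ is the representing vector. The pairing identity above, together with the density of $C_c^{\infty}(\R_+;E_1)$ in $L^2_b(\R_+;E_0)$, shows that for $u\in{\mathscr H}(\R_+;E_1)$ this happens precisely when $x^{-m}\opm(p)u\in L^2_b(\R_+;E_0)$, i.e., when $\opm(p)u\in x^mL^2_b(\R_+;E_0)$, and that in this case $A_{\max}u=x^{-m}\opm(p)u$. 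The intersection with $L^2_b(\R_+;E_0)$ recorded in the statement of the lemma is automatic from $u\in{\mathscr H}(\R_+;E_1)$ but is included for emphasis.

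I expect the only non-routine point to be the justification of the contour shift in Step 1 and the careful bookkeeping of the weight exponent $\gamma=-m$ when invoking Proposition~\ref{BaseSpaceUnbdd}; once these are in place, the rest is a direct consequence of the definition of the maximal domain.
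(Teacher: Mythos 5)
Your proof is correct and follows essentially the same route as the paper: both establish the pairing identity $\langle Av,u\rangle_{L^2_b}=\langle v,x^{-m}\opm(p)u\rangle_{L^2_b}$ for $u\in{\mathscr H}(\R_+;E_1)$ and $v\in C_c^{\infty}(\R_+;E_1)$ and then read off the characterization of $\Dom_{\max}$ from the definition of the adjoint. The only (cosmetic) difference is that the paper computes the pairing identity directly via Plancherel and standing assumption (A-2), whereas you package the same calculation into an invocation of Proposition~\ref{BaseSpaceUnbdd} with $\gamma=-m$; this is a valid and slightly more modular phrasing of the same argument.
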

\begin{proof}
Let $u \in {\mathscr H}(\R_+;E_1)$ and $v \in C_c^{\infty}(\R_+;E_1)$ be arbitrary. Using Plancherel and our standing assumptions we get
\begin{align*}
\langle Av,u \rangle &= \langle \opm\bigl(p(\cdot-im)\bigr)(x^{-m}v),u \rangle \\
&= \frac{1}{2\pi}\int_{\R}\langle p(\sigma-im)[M(x^{-m}v)](\sigma),[Mu](\sigma)\rangle_{E_0}\,d\sigma \\
&= \frac{1}{2\pi}\int_{\R}\langle [M(x^{-m}v)](\sigma),p(\sigma)[Mu](\sigma)\rangle_{E_0}\,d\sigma \\
&= \langle x^{-m}v,\opm(p)u \rangle = \int_0^{\infty} \langle v(x),[x^{-m}\opm(p)u](x) \rangle_{E_0}\,\frac{dx}{x}.
\end{align*}
Consequently, if $x^{-m}\opm(p)u \in L^2_b(\R_+;E_0)$, then $u \in \Dom_{\max}$ and the last integral can be rewritten as the pairing $\langle v,A_{\max}u \rangle$ with $A_{\max}u = x^{-m}\opm(p)u$. Conversely, if $u \in \Dom_{\max}$, we have $\langle Av,u \rangle = \langle v,A_{\max}u \rangle$, and so
$$
\int_0^{\infty} \langle v(x),[x^{-m}\opm(p)u](x) \rangle_{E_0}\,\frac{dx}{x} = \int_0^{\infty} \langle v(x),A_{\max}u(x) \rangle_{E_0}\,\frac{dx}{x}
$$
for all $v \in C_c^{\infty}(\R_+;E_1)$. This shows $x^{-m}\opm(p)u = A_{\max}u \in L^2_{\textup{loc}}(\R_+;E_0)$, and thus $x^{-m}\opm(p)u = A_{\max}u \in L^2_b(\R_+;E_0)$.
\end{proof}

\begin{lemma}\label{DmaxAux2}
For every $\sigma_0 \in \spec_b(A)$ with $\Im(\sigma_0) < 0$ we have $\E_{\sigma_0} \subset {\mathscr H}(\R_+;E_1)\cap\Dom_{\max}$.
\end{lemma}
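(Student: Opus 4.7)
The plan is to reduce all the required memberships of $u \in \E_{\sigma_0}$ — namely $u \in L^2_b(\R_+;E_1)$, $u \in {\mathscr H}(\R_+;E_1)$, and $\opm(p)u \in L^2_b(\R_+;E_0) \cap x^mL^2_b(\R_+;E_0)$ (which by Lemma \ref{DmaxAux1} will give $u \in \Dom_{\max}$) — to Plancherel-type statements on the Mellin side. For $u = \omega\sum_{j=0}^k e_j\log^j(x)\,x^{i\sigma_0}$, differentiating $x^{i\sigma_0}$ in $\sigma_0$ identifies
$$
Mu(\sigma) = \sum_{j=0}^k e_j\, i^j\, (M\omega)^{(j)}(\sigma-\sigma_0)
$$
as an $E_1$-valued meromorphic function on $\C$ with at most a single pole of order $k+1$ at $\sigma_0$. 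Since $-\Im\sigma_0>0$, the bound $|u(x)|_{E_1} \lesssim \omega(x)|\log x|^k x^{-\Im\sigma_0}$ gives $u \in L^2_b(\R_+;E_1)$ immediately, and the remaining claims will all follow once I show that the three products $p(\sigma+i\gamma_0)Mu(\sigma)$, $p(\sigma)Mu(\sigma)$, and $p(\sigma-im)Mu(\sigma-im)$ lie in $L^2(\R;E_0)$ on the real line.

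The key technical step will be a uniform Schwartz-decay estimate on $M\omega$ along horizontal lines disjoint from $\Im\sigma=0$. I plan to produce this by integration by parts: setting $g(x) = x\omega'(x) \in C_c^{\infty}(\R_+)$ one gets $M\omega(\sigma) = Mg(\sigma)/(i\sigma)$, where $Mg$ is entire and rapidly decreasing on every vertical strip by Paley--Wiener, since $g$ is smooth and compactly supported in $(0,\infty)$. Leibniz then displays $(M\omega)^{(j)}(\sigma)$ as a finite sum of terms of the form $(Mg)^{(l)}(\sigma)\,\sigma^{-(j-l+1)}$, each of which is Schwartz in $\Re\sigma$ on any line $\Im\sigma = \gamma \neq 0$, uniformly for $\gamma$ in compact subsets of $\R\setminus\{0\}$. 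Consequently $Mu(\cdot+i\gamma) \in \S(\R;E_1)$ for every $\gamma \neq \Im\sigma_0$.

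Combined with the polynomial $\L(E_1,E_0)$-bounds on $p(\cdot+i\gamma)$ supplied by Lemma \ref{BasicHolom}, this estimate settles everything on the line $\Im\sigma=0$, which is disjoint from $\sigma_0$ because $\Im\sigma_0<0$: both $p(\sigma+i\gamma_0)Mu(\sigma)$ and $p(\sigma)Mu(\sigma)$ are Schwartz in $E_0$, so $u \in {\mathscr H}(\R_+;E_1)$ and $\opm(p)u \in L^2_b(\R_+;E_0)$. If $\Im\sigma_0 \neq -m$ the same reasoning applies verbatim on the line $\Im\sigma=-m$, giving $p(\sigma-im)Mu(\sigma-im) \in \S(\R;E_0)$, whence $\opm(p)u \in x^mL^2_b$ and Lemma \ref{DmaxAux1} finishes the case.

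The main obstacle I foresee is the borderline case $\Im\sigma_0 = -m$, in which $Mu$ has its pole precisely on the evaluation line so the decay estimate does not apply pointwise there. To handle this I will invoke the defining condition in \eqref{Esigma0def} that $p(\sigma)Mu(\sigma)$ is holomorphic at $\sigma_0$; combined with the holomorphy of $p$ and the fact that $\sigma_0$ is the only possible singularity of $Mu$, this forces $p(\sigma)Mu(\sigma)$ to be an entire $E_0$-valued function. Returning to the Leibniz expansion, for $|\Re\zeta-\Re\sigma_0|$ large on $\Im\zeta=-m$ the factors $\sigma^{-(j-l+1)}$ are harmless away from $\sigma=0$ and Schwartz decay of $Mu(\zeta)$ in $E_1$ persists, while on a compact neighborhood of $\sigma_0$ the entirety of $p\cdot Mu$ provides boundedness in $E_0$. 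Combining these two pieces places $p(\sigma-im)Mu(\sigma-im)$ in $L^2(\R;E_0)$, and a final application of Lemma \ref{DmaxAux1} yields $u \in {\mathscr H}(\R_+;E_1) \cap \Dom_{\max}$.
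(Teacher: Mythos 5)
Your proof is correct, but it takes a noticeably longer route than the paper. The paper's proof is essentially two observations: first, $\E_{\sigma_0}\subset H^{\infty}_b(\R_+;E_1)\subset{\mathscr H}(\R_+;E_1)$ (the first inclusion because $\Im\sigma_0<0$ forces decay of $\log^j(x)\,x^{i\sigma_0}$ near $x=0$ while $\omega$ provides compact support, the second by Proposition~\ref{BaseSpaceProps}(d)); second, $\opm(p)u\in C_c^{\infty}(\R_+;E_0)$. The latter is the crucial shortcut you missed: since $\opm(p)=p(xD_x)$ is a differential operator and $u$ is supported in $\{\omega\neq 0\}$, the result has compact support in $(0,\infty)$; and the defining holomorphy condition in \eqref{Esigma0def} says precisely that $p(xD_x)$ annihilates the log-polynomial factor $\sum_j e_j\log^j(x)x^{i\sigma_0}$, so $\opm(p)u\equiv 0$ near $x=0$ where $\omega\equiv 1$. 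From $C_c^{\infty}\subset x^mL^2_b\cap L^2_b$, Lemma~\ref{DmaxAux1} then finishes immediately, uniformly in $\sigma_0$. You instead proved all the needed $L^2$-memberships on the Mellin side through Paley--Wiener decay of $(M\omega)^{(j)}$ along horizontal lines plus Plancherel. That works, but it forces a separate argument on the line $\Im\sigma=-m$ in the borderline case $\Im\sigma_0=-m$ (where the pole of $Mu$ sits on the evaluation line), which you handle correctly by invoking entirety of $p\cdot Mu$. The paper's observation avoids that case distinction altogether because compactness of support of $\opm(p)u$ carries no dependence on where $\sigma_0$ lies.
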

\begin{proof}
We have $\E_{\sigma_0} \subset H^{\infty}_b(\R_+;E_1) \subset {\mathscr H}(\R_+;E_1)$, and for every $u \in \E_{\sigma_0}$ we have
$$
\opm(p)u \in C_c^{\infty}(\R_+;E_0) \subset x^mL^2_b(\R_+;E_0)\cap L^2_b(\R_+;E_0).
$$
The claim now follows from Lemma~\ref{DmaxAux1}.
\end{proof}

The following proposition is one of the main ingredients for the proof of Theorem~\ref{DmaxThm}. It relies on the pseudodifferential calculus in Appendix~\ref{PseudoCalculus}.

\begin{proposition}\label{kerAmaxpmi}
We have
$$
\ker(A_{\max} \pm i) \subset {\mathscr H}(\R_+;E_1) \cap \bigcap\limits_{\gamma \leq 0}x^{\gamma}H^{\infty}_b(\R_+;E_0).
$$
\end{proposition}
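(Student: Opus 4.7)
The plan is to construct a left parametrix for $A_{\max} \pm i$ using the Mellin pseudodifferential calculus of Appendix~\ref{PseudoCalculus}, with a remainder that is smoothing both in the Sobolev sense (yielding the regularity statement $u \in {\mathscr H}(\R_+;E_1)$) and in the sense of Schwartz decay in $x$ (yielding the decay statement $u \in \bigcap_{\gamma \leq 0} x^{\gamma} H^{\infty}_b$). Von Neumann's formula \eqref{vonNeumannFormula} then feeds the output of this proposition into the subsequent analysis of $\Dom_{\max}$.

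Let $u \in \ker(A_{\max} \pm i)$. Multiplying $(A_{\max} \pm i)u = 0$ by $x^m$ and noting that multiplication by $ix^m$ coincides with $\opm(ix^m)$, the eigenvalue equation reads $\opm(a_{\pm})u = 0$ in $L^2_{\mathrm{loc}}(\R_+;E_0)$, where $a_{\pm}(x,\sigma) = p(\sigma) \pm ix^m$. As already recorded in Section~\ref{IndicialOperators}, the standing assumptions \eqref{InverseBounds} and \eqref{InverseDeriv} imply that
$$
a_{\pm} \in S^{m\mu;\vec{\ell}}_O(\R\times\C;\L(E_1,E_0))
$$
is right-hypoelliptic of order $(m\mu,0)$, so the kernel cut-off procedure of Appendix~\ref{PseudoCalculus} produces a symbolic left parametrix
$$
q_{\pm}(x,\sigma) \in S^{0;\vec{\ell}}_O(\R\times\C;\L(E_0,E_1)) \cap S^{-m\mu;\vec{\ell}}_O(\R\times\C;\L(E_0))
$$
with remainder $r_{\pm} = q_{\pm} \# a_{\pm} - 1 \in S^{-\infty}_O(\R\times\C;\L(E_1))$.

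Applying $\opm(q_{\pm})$ to the equation yields the fixed-point identity $u = -\opm(r_{\pm})u$, and the conclusion reduces to extracting the two mapping properties of $\opm(r_{\pm})$ on $L^2_b(\R_+;E_0)$. The containment of $q_{\pm}$ in $S^{0;\vec{\ell}}_O(\R\times\C;\L(E_0,E_1))$, combined with the compositional argument already used in the proof of Proposition~\ref{MicrolocalCutoff} (to control $\opm(p(\sigma+i\gamma_0)) \cdot \opm(q_{\pm})$), shows that $\opm(q_{\pm}) : L^2_b(\R_+;E_0) \to {\mathscr H}(\R_+;E_1)$ is bounded; hence so is $\opm(r_{\pm}) = \opm(q_{\pm})\opm(a_{\pm}) - I$ acting on $u$, giving $u \in {\mathscr H}(\R_+;E_1)$. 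For the decay at infinity one uses that $r_{\pm} \in S^{-\infty}_O$ is Schwartz in the variable $x$ (uniformly with seminorm control in $\sigma$): in the language of Appendix~\ref{PseudoCalculus} the spatial variable $x$ plays the role of the anisotropic parameter $\lambda$ under the substitution $\lambda = x$, so symbol-level decay in $\lambda$ transfers to Schwartz decay in $x$ at infinity. This yields $\opm(r_{\pm}) : L^2_b(\R_+;E_0) \to x^{\gamma} H^{\infty}_b(\R_+;E_0)$ for every $\gamma \leq 0$, and thus $u \in \bigcap_{\gamma \leq 0} x^{\gamma} H^{\infty}_b(\R_+;E_0)$.

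The main obstacle, and the key technical content of the argument, is verifying that the parametrix remainder $r_{\pm}$ genuinely possesses Schwartz decay in $x$ as $x \to \infty$ rather than merely regularizing properties in $\sigma$. This is exactly where the anisotropic joint estimates \eqref{InverseBounds} and \eqref{InverseDeriv} on $(p(\sigma) \pm i\lambda^m)^{-1}$ in all directions of $(\lambda,\sigma)$ enter: they provide decay in the $\lambda$-direction which, after the substitution $\lambda = x$, becomes decay as $x \to \infty$. This transfer of decay in the spectral parameter to vanishing in the spatial variable is precisely the reason for developing the anisotropic calculus of Appendix~\ref{PseudoCalculus}, and it is the decisive mechanism that makes the proposition true; an isotropic calculus controlling only regularity in $\sigma$ would establish $u \in {\mathscr H}(\R_+;E_1)$ but not the asserted rapid vanishing at infinity.
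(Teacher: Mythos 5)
Your high-level intuition is correct, and the final discussion of why anisotropic estimates in $(\lambda,\sigma)$ convert, under $\lambda = x$, into Schwartz decay as $x\to\infty$ captures exactly the mechanism that drives the proof. However, the proposed implementation through a \emph{direct} left parametrix for $A_{\max}\pm i$ has several genuine gaps, and the paper deliberately takes a different route precisely to avoid them.

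First, you assert the existence of a symbolic left parametrix $q_{\pm}$ with $q_{\pm}\#a_{\pm}-1\in S^{-\infty}_O$. But Definition~\ref{hypoellipticity} and Theorem~\ref{ParametrixHypoelliptic} only yield \emph{right} parametrices from right-hypoellipticity: the conclusion of Theorem~\ref{ParametrixHypoelliptic} is $a\#b\sim 1$, not $b\#a\sim 1$. Right-hypoellipticity rests on estimates for $[\partial_\sigma^k p(\sigma)]\,[p(\sigma)\pm i\lambda^m]^{-1}$ (assumption \eqref{InverseDeriv}), i.e., the inverse sits on the right; a left parametrix would need control of $[p(\sigma)\pm i\lambda^m]^{-1}\,[\partial_\sigma^k p(\sigma)]$, which is a different hypothesis that the paper neither assumes nor derives. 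Indeed the appendix closes with the explicit remark that left-hypoellipticity is not used in this work. Second, even granted a left parametrix symbol, the identity $u=-\opm(r_{\pm})u$ requires the composition theorem (Theorem~\ref{CompositionCalculus}) applied to $u\in\Dom_{\max}$; that theorem, and the whole calculus $\Psi^{\mu;\vec\ell}_O$, is formulated as acting on $\dot C^\infty(\R_+)$, and extending the operator identity $\opm(q_{\pm})\opm(a_{\pm})=\opm(q_{\pm}\#a_{\pm})+(\textup{smoothing})$ from $\dot C^\infty$ to $\Dom_{\max}$ is nontrivial since a priori $u$ is only in $L^2_b(\R_+;E_0)$. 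Third, there is a type mismatch: $q_{\pm}\#a_{\pm}-1$ lies in $S^{-\infty}_O(\R\times\C;\L(E_1))$, so $\opm(r_{\pm})$ acts on $E_1$-valued functions, yet $u$ is only known to be $E_0$-valued --- to apply $\opm(r_{\pm})$ you would first need to know $u$ is $E_1$-valued, which is part of what is to be proved.

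The paper avoids all three issues by constructing a \emph{right} parametrix $Q=\opm(b)x^m$ for $A_{\min}\mp i$ acting on $\dot C^\infty(\R_+;E_0)$, where the calculus legitimately applies; using the symmetry identity $\|(A_{\min}-i)v\|^2=\|A_{\min}v\|^2+\|v\|^2$ to extend $Q$ continuously to $L^2_b(\R_+;E_0)\to\Dom_{\min}$; and then passing to functional-analytic adjoints to obtain $Q^*(A_{\max}+i)=1+G^*$ on all of $\Dom_{\max}$. The rapid decay of $u\in\ker(A_{\max}+i)$ then follows from the mapping properties of $G^*$. The remaining inclusion $u\in{\mathscr H}(\R_+;E_1)$ is not extracted from the parametrix remainder at all, but proved separately by a duality computation that shows $u$ belongs to the domain of the adjoint of the closed operator $\opm(p(\sigma-im))$, which equals ${\mathscr H}(\R_+;E_1)$ by Proposition~\ref{BaseSpaceUnbdd}. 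In short: the paper's right-parametrix-plus-adjoint strategy is not a cosmetic variation of your left-parametrix strategy; it is the device that makes the argument rigorous, and is flagged as such already in the introduction.
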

\begin{proof}
We consider $\ker(A_{\max} + i)$, the case of $\ker(A_{\max} - i)$ is analogous. Using the pseudodifferential calculus in Appendix~\ref{PseudoCalculus} we will show that there exists a continuous operator $Q : L^2_b(\R_+;E_0) \to \Dom_{\min} \hookrightarrow L^2_b(\R_+;E_0)$ such that
\begin{equation}\label{DminRightParam}
(A_{\min} - i)Q = 1 + G : L^2_b(\R_+;E_0) \to L^2_b(\R_+;E_0)
\end{equation}
with $G \in \Psi^{-\infty}_O(\R_+;\L(E_0))$, i.e.,
$$
G,\,G^* : x^{\alpha}H^s_b(\R_+;E_0) \to x^{\alpha'}H^{s'}_b(\R_+;E_0)
$$
for all $\alpha,s,s' \in \R$, and all $\alpha' \leq \alpha$, where the adjoints refer to the $L^2_b(\R_+;E_0)$-inner product. Passing to adjoints in \eqref{DminRightParam} then shows that
$$
Q^*(A_{\max} + i) = 1 + G^* : \Dom_{\max} \subset L^2_b(\R_+;E_0) \to L^2_b(\R_+;E_0).
$$
In particular, if $u \in \ker(A_{\max} + i)$, then $u = -G^*u \in \bigcap\limits_{\gamma \leq 0}x^{\gamma}H^{\infty}_b(\R_+;E_0)$ by the mapping properties of $G^*$. We proceed with the construction of $Q$.

Consider
$$
a(x,\sigma) = p(\sigma) - ix^m \in S^{m\mu;\vec{\ell}}_O(\R\times\C;\L(E_1,E_0)),
$$
where the anisotropy vector $\vec{\ell}$ is given by \eqref{ellchoices}. By our standing assumptions, $a(x,\sigma)$ is right-hypoelliptic of order $(m\mu,0)$ in the sense of Definition~\ref{hypoellipticity}. Thus there exists $b(x,\sigma) \in S^{0;\vec{\ell}}_O(\R\times\C;\L(E_0,E_1))$ with $a{\#}b \sim 1$, see Theorem~\ref{ParametrixHypoelliptic}. Now define
$$
Q = \opm(b(x,\sigma))x^m : \dot{C}^{\infty}(\R_+;E_0) \to \dot{C}^{\infty}(\R_+;E_1),
$$
where as in Appendix~\ref{PseudoCalculus} we denote by $\dot{C}^{\infty}$ the space of functions on $\R_+$ that vanish to infinite order at $x=0$ and are rapidly decreasing as $x \to \infty$.
Because $A_{\min} : \Dom_{\min} \subset L^2_b(\R_+;E_0) \to L^2_b(\R_+;E_0)$ is symmetric we have
$$
\|(A_{\min}-i)u\|_{L^2_b(\R_+;E_0)}^2 = \|A_{\min}u\|_{L^2_b(\R_+;E_0)}^2 + \|u\|_{L^2_b(\R_+;E_0)}^2, \quad u \in \Dom_{\min}.
$$
Now $Q : \dot{C}^{\infty}(\R_+;E_0) \to \dot{C}^{\infty}(\R_+;E_1)$ and $\dot{C}^{\infty}(\R_+;E_1) \hookrightarrow \Dom_{\min}$ with $A_{\min}u = x^{-m}\opm(p)u$ for $u \in \dot{C}^{\infty}(\R_+;E_1)$ by Theorem~\ref{MinimalDomainThm}. Thus
\begin{gather*}
(A_{\min}-i)Q = x^{-m}(\opm(a(x,\sigma))\opm(b(x,\sigma))x^m \\
= x^{-m}(1 + \tilde{G})x^m = 1 + G : \dot{C}^{\infty}(\R_+;E_0) \to \dot{C}^{\infty}(\R_+;E_0)
\end{gather*}
by Theorem~\ref{ParametrixHypoelliptic}, and because this operator extends to a continuous operator $L^2_b(\R_+;E_0) \to L^2_b(\R_+;E_0)$ we obtain that $Q : L^2_b(\R_+;E_0) \to \Dom_{\min}$ is bounded with \eqref{DminRightParam} as asserted.

It remains to prove that $\ker(A_{\max} + i) \subset {\mathscr H}(\R_+;E_1)$. Let $u \in \ker(A_{\max} + i)$ be arbitrary. Then
$$
x^mA_{\max}u = -ix^mu \in L^2_b(\R_+;E_0)
$$
by what we have already shown. For all $v \in C_c^{\infty}(\R_+;E_1)$ we have
$$
\langle v,x^mA_{\max}u \rangle = \langle x^mv,A_{\max}u \rangle = \langle A(x^mv),u \rangle = \langle \opm(p(\sigma-im))v,u \rangle.
$$
Consequently, $u$ is in the domain of the adjoint of the closure of $\opm(p(\sigma-im)) : C_c^{\infty}(\R_+;E_1) \subset L^2_b(\R_+;E_0) \to L^2_b(\R_+;E_0)$, and thus $u \in {\mathscr H}(\R_+;E_1)$ by Proposition~\ref{BaseSpaceUnbdd}. The proposition is proved.
\end{proof}

\begin{lemma}\label{KerAsymptotics}
Let $u \in \ker(A_{\max} \pm i)$. Then there exist $u_{\sigma_0} \in \E_{\sigma_0}$, $\sigma_0 \in \spec_b(A)$ with $-m \leq \Im(\sigma_0) < 0$, such that
$$
u - \sum\limits_{\substack{\sigma_0 \in \spec_b(A) \\ -m \leq \Im(\sigma_0) < 0}}u_{\sigma_0} \in \Dom_{\min}.
$$
\end{lemma}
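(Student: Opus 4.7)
The plan is to use the Mellin transform to convert the eigenvalue equation $A_{\max}u = \pm i u$ into a functional equation relating $Mu$ on horizontal lines separated by $im$, to extract the desired asymptotic expansion by shifting a Mellin inversion contour past the relevant indicial roots and collecting residues, and finally to identify the remainder as an element of $\Dom_{\min}$ via Theorem~\ref{MinimalDomainThm}.

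Treating $u \in \ker(A_{\max} - i)$ (the $+i$ case is analogous), Proposition~\ref{kerAmaxpmi} places $u \in \mathscr{H}(\R_+;E_1)\cap\bigcap_{\gamma \le 0}x^\gamma H^\infty_b(\R_+;E_0)$, and by Lemma~\ref{DmaxAux1} the equation $A_{\max}u = iu$ reads $\opm(p)u = ix^m u$. Taking Mellin transforms, using $M(x^m u)(\sigma) = Mu(\sigma+im)$, gives
\[
p(\sigma)Mu(\sigma) = i\,Mu(\sigma+im).
\]
Because $u \in L^2_b$ with rapid decay at infinity, $Mu$ is holomorphic on $\{\Im(\sigma)>0\}$ with $L^2$-decay along horizontal lines there, and in $L^2$-sense on $\Im(\sigma)=0$. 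Rewriting the displayed equation as $Mu(\sigma) = -i\,p(\sigma)^{-1}Mu(\sigma+im)$ and invoking Lemma~\ref{BasicHolom} (meromorphy of $p^{-1}$ with poles in $\spec_b(A)$, together with uniform symbol estimates on horizontal lines) extends $Mu$ meromorphically into $\{\Im(\sigma) > -m-\epsilon\}$ for any small $\epsilon > 0$; I choose $\epsilon$ so that $\spec_b(A) \cap \{-m - \epsilon \le \Im(\sigma) < -m\} = \emptyset$ and so that $\Im(\sigma) = -m-\epsilon$ avoids $\spec_b(A)$.

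Starting from $u(x) = \frac{1}{2\pi}\int_{\Im(\sigma)=0}x^{i\sigma}Mu(\sigma)\,d\sigma$, I shift the contour down to $\Im(\sigma) = -m-\epsilon$. Integrability on the vertical sides follows from the horizontal-line estimates, and only finitely many poles $\sigma_0 \in \spec_b(A)$ with $-m \le \Im(\sigma_0) < 0$ are crossed, yielding
\[
u(x) = \sum_{\substack{\sigma_0 \in \spec_b(A)\\-m \le \Im(\sigma_0) < 0}}\tilde u_{\sigma_0}(x) + v(x), \qquad v(x) = \frac{1}{2\pi}\int_{\Im(\sigma)=-m-\epsilon}x^{i\sigma}Mu(\sigma)\,d\sigma,
\]
with each $\tilde u_{\sigma_0}$ equal to $-i$ times the residue at $\sigma_0$. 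Taylor-expanding $x^{i\sigma}$ at $\sigma_0$ and combining with the Laurent expansion of $Mu$ there — whose principal-part coefficients lie in $E_1$ because $p(\sigma)Mu(\sigma)$ is holomorphic at $\sigma_0$ by the functional equation and analytic Fredholm theory from assumption \eqref{Fredholm} — identifies $\tilde u_{\sigma_0}(x) = \sum_{j} e_{\sigma_0,j}\,x^{i\sigma_0}\log^j(x)$ with $e_{\sigma_0,j} \in E_1$. Setting $u_{\sigma_0} = \omega\,\tilde u_{\sigma_0}$ produces an element of $\E_{\sigma_0}$ as defined in \eqref{Esigma0def}.

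Finally, I verify that $w := u - \sum u_{\sigma_0}$ lies in $\Dom_{\min}$ by showing $w \in x^m\mathscr{H}(\R_+;E_1)\cap L^2_b(\R_+;E_0)$ and appealing to Theorem~\ref{MinimalDomainThm}. Near $x=0$ one has $\omega\equiv 1$, so $w = v$; Plancherel applied to the defining contour integral places $v \in x^{m+\epsilon}L^2_b$ near zero. At infinity $\omega\equiv 0$, so $w = u$ decays faster than any polynomial. Hence $w \in L^2_b$. For the claim $x^{-m}w \in \mathscr{H}(\R_+;E_1)$ I use the characterization of $\mathscr{H}$ by the $\opm(p(\sigma+i\gamma_0))$ criterion: on the Mellin side, the substitution $\tau = \sigma+im$ in the defining integral for $v$ combined with the functional equation $Mu(\tau-im) = -i\,p(\tau-im)^{-1}Mu(\tau)$ expresses $x^{-m}v$ as a contour integral whose integrand is controlled by the $S^0(\R;\L(E_0,E_1))$-symbol bound on $p(\tau-im)^{-1}$ from Lemma~\ref{BasicHolom}; and the resulting $\opm(p(\sigma+i\gamma_0))(x^{-m}w)$ has Mellin symbol $p(\sigma+i\gamma_0)\,p(\sigma-im)^{-1}\cdot i\,Mu(\sigma)$ (up to correction terms vanishing rapidly at infinity), which lies in $L^2(\R;E_0)$ by the $S^0$-bound of Lemma~\ref{BasicHolom}(b) and the $L^2$-membership of $Mu$. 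The chief technical obstacle is the careful bookkeeping of the weighted-space behavior at indicial roots that lie exactly on $\Im(\sigma) = -m$: Theorem~\ref{MinimalDomainThm} does not yield equality $\Dom_{\min} = x^m\mathscr{H}\cap L^2_b$ in that case, and the overshoot by $\epsilon$ past the line $\Im(\sigma)=-m$ is precisely the maneuver that supplies the extra weight $x^{m+\epsilon}$ needed to place $v$ in the embedded subspace.
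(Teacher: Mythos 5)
Your overall strategy is the same as the paper's (Mellin analysis, meromorphic continuation of $Mu$ past $\Im(\sigma)=-m$ via the functional equation $p(\sigma)Mu(\sigma)=iMu(\sigma+im)$, extraction of principal parts at indicial roots, and placing the remainder $w$ in $x^m\mathscr{H}\cap L^2_b \hookrightarrow \Dom_{\min}$). But the final step, verifying $x^{-m}w\in\mathscr{H}$, has a genuine gap exactly in the delicate case you yourself flag — indicial roots on $\Im(\sigma)=-m$.

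The expression you write for the Mellin symbol of $\opm(p(\sigma+i\gamma_0))(x^{-m}w)$, namely $p(\sigma+i\gamma_0)p(\sigma-im)^{-1}\cdot iMu(\sigma)$ up to rapidly decreasing corrections, cannot be bounded "by the $S^0$-bound of Lemma~\ref{BasicHolom}(b)": when $p$ has an indicial root $\sigma_0$ with $\Im(\sigma_0)=-m$, the factor $p(\sigma-im)^{-1}$ has a pole at $\sigma=\sigma_0+im\in\R$, so it is not an $S^0$-symbol on $\Im(\sigma)=0$, and the symbol estimates of Lemma~\ref{BasicHolom} (which hold only for $|\sigma|\gg 0$ away from poles) simply do not apply. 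The poles are cancelled only \emph{after} subtracting the Mellin transforms of the $u_{\sigma_0}$ with $\Im(\sigma_0)=-m$; that cancellation is not a symbol-level fact, so your argument leaves the required $L^2$-bound for $p(\sigma+i\gamma_0)Mw(\sigma-im)$ unjustified. The paper avoids this by never working on the problematic line $\Im(\sigma)=-m$ directly: it shows $Mw$ and $pMw$ are holomorphic and $L^2\cap C_0$ throughout $\Im(\sigma)>-m-\varepsilon$, picks lines $\gamma_1\in(-m-\varepsilon,-m)$ and $\gamma_2\in(0,\cdot)$ along which $p$ is invertible, concludes $w\in x^{-\gamma_1}\mathscr{H}\cap x^{-\gamma_2}\mathscr{H}$ there, and then uses the interpolation identity $x^{-\gamma_1}\mathscr{H}\cap x^{-\gamma_2}\mathscr{H}=\bigcap_{-\gamma_2\le\gamma\le-\gamma_1}x^\gamma\mathscr{H}$ (which rests on Proposition~\ref{MicrolocalCutoff}) to land on the intermediate weight $\gamma=m$ and $\gamma=0$. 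That interpolation step is the missing ingredient in your write-up, and it cannot be replaced by a naive symbol bound.

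A smaller gap: your contour shift and Plancherel argument on $\Im(\sigma)=-m-\epsilon$ presuppose $L^2\cap C_0$-type decay of $Mu$ along horizontal lines down to that depth, but you assert this ("integrability on the vertical sides follows from the horizontal-line estimates") without establishing it. The paper obtains it by a bootstrap: since $Mu\in L^2(\R;E_0)$, the meromorphic extension $p(\sigma)^{-1}M(ix^mu)(\sigma)$ cannot have poles on $\R$, hence $Mu$ is in fact holomorphic in $\Im(\sigma)>-\varepsilon$; feeding this back gives $\opm(p)u\in\bigcap_{\alpha>0}x^{m+\varepsilon-\alpha}L^1_b$, and only then does $M(ix^mu)$ carry $L^2\cap C_0$ estimates down to $\Im(\sigma)>-m-\varepsilon$. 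You also do not mention why $Mu$ has no poles on $\Im(\sigma)=0$, which is needed both to start the bootstrap and to make your $\{-m\le\Im(\sigma_0)<0\}$ (strict upper bound) conclusion correct.
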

\begin{proof}
This follows via the standard argument to establish asymptotic expansions utilizing the Mellin transform. Without loss of generality we consider $u \in \ker(A_{\max}-i)$ in this proof. Then
$$
\opm(p)u = ix^mu \in \bigcap\limits_{\alpha \geq 0}x^{m-\alpha}L^2_b(\R_+;E_0) \subset \bigcap\limits_{\alpha > 0}x^{m-\alpha}L^1_b(\R_+;E_0)
$$
by Lemma~\ref{DmaxAux1} and Proposition~\ref{kerAmaxpmi}. In particular, the Mellin transform $M(ix^mu)(\sigma)$ extends to an analytic $E_0$-valued function in the half-plane $\Im(\sigma) > -m$, and the function $\R \ni \sigma \mapsto M(ix^mu)(\sigma+i\gamma)$ belongs to $L^2(\R_{\sigma};E_0)\cap C_0(\R_{\sigma};E_0)$ with continuous dependence on $\gamma > -m$. Now
$$
Mu(\sigma) =p(\sigma)^{-1}M(ix^mu)(\sigma),
$$
which is a priori analytic in $\Im(\sigma) > 0$ and via this identity extends meromorphically to $\Im(\sigma) > -m$ with possible locations of poles at points in $\spec_b(A)$, and because $Mu(\sigma) \in L^2(\R;E_0)$ this function cannot have poles on $\R$. Consequently, there exists $\varepsilon > 0$ such that $Mu(\sigma)$ is analytic in $\Im(\sigma) > -\varepsilon$ with $Mu(\sigma+i\gamma) \in L^2(\R_{\sigma};E_0)\cap C_0(\R_{\sigma};E_0)$ for $\gamma > -\varepsilon$, so $u \in \bigcap\limits_{\alpha > 0}x^{\varepsilon - \alpha}L^2_b(\R_+;E_0)$ and therefore
$$
\opm(p)u = ix^mu \in \bigcap\limits_{\alpha > 0}x^{m+\varepsilon-\alpha}L^2_b(\R_+;E_0) \subset \bigcap\limits_{\alpha > 0}x^{m+\varepsilon-\alpha}L^1_b(\R_+;E_0).
$$
This shows that $Mu(\sigma)$ extends meromorphically further to $\Im(\sigma) > -m - \varepsilon$, and by choosing $\varepsilon > 0$ small enough we can assume that $\spec_b(A)\cap\{\sigma \in \C;\; -m - \varepsilon < \Im(\sigma) < -m\} = \emptyset$, so all possible poles for $Mu(\sigma)$ in $\Im(\sigma) > -m-\varepsilon$ are located in $-m \leq \Im(\sigma) < 0$. Note also that $Mu(\sigma)$ is an $E_1$-valued meromorphic function in $\Im(\sigma) > -m - \varepsilon$. Consequently, there exist $u_{\sigma_0} \in \E_{\sigma_0}$, $\sigma_0 \in \spec_b(A)$ with $-m \leq \Im(\sigma_0) < 0$, such that both
$$
M\Bigl[u - \sum\limits_{\substack{\sigma_0 \in \spec_b(A) \\ -m \leq \Im(\sigma_0) < 0}}u_{\sigma_0}\Bigr](\sigma) \quad \textup{and} \quad
p(\sigma)M\Bigl[u - \sum\limits_{\substack{\sigma_0 \in \spec_b(A) \\ -m \leq \Im(\sigma_0) < 0}}u_{\sigma_0}\Bigr](\sigma)
$$
are holomorphic in $\Im(\sigma) > -m-\varepsilon$, and along every line $\Im(\sigma) = \gamma$ these functions are $L^2\cap C_0$ with values in $E_1$ and $E_0$, respectively, with continuous dependence on $\gamma > -m-\varepsilon$. Now choose $-m-\varepsilon < \gamma_1 < -m < 0 < \gamma_2$ such that $p(\sigma) : E_1 \to E_0$ is invertible along $\Im(\sigma) = \gamma_j$, $j=1,2$. From the above we then obtain that
\begin{align*}
u - \sum\limits_{\substack{\sigma_0 \in \spec_b(A) \\ -m \leq \Im(\sigma_0) < 0}}u_{\sigma_0} &\in x^{-\gamma_1}{\mathscr H}(\R_+;E_1)\cap x^{-\gamma_2}{\mathscr H}(\R_+;E_1) \\
&= \bigcap\limits_{-\gamma_2 \leq \gamma \leq -\gamma_1}x^{\gamma}{\mathscr H}(\R_+;E_1).
\end{align*}
Note that the last equality is true because of Proposition~\ref{MicrolocalCutoff}. Thus
$$
u - \sum\limits_{\substack{\sigma_0 \in \spec_b(A) \\ -m \leq \Im(\sigma_0) < 0}}u_{\sigma_0} \in x^m{\mathscr H}(\R_+;E_1)\cap L^2_b(\R_+E_0) \subset \Dom_{\min}
$$
by Theorem~\ref{MinimalDomainThm}. The lemma is proved.
\end{proof}

\begin{proof}[Proof of Theorem~\ref{DmaxThm}]
By von Neumann's formulas \eqref{vonNeumannFormula} and Lemmas~\ref{DmaxAux2} and \ref{KerAsymptotics} we have
$$
\Dom_{\max} = \Dom_{\min} + \sum\limits_{\substack{\sigma_0 \in \spec_b(A) \\ -m \leq \Im(\sigma_0) < 0}}\E_{\sigma_0}.
$$
In particular $\dim\hat{\E}_{\max} < \infty$ and $A$ has finite deficiency indices, so the abstract theory from the first part of the paper is applicable.

Let $\g = xD_x$ be the generator of the scaling action $\kappa_{\varrho}$ from \eqref{dilation}. We have
$$
\g : \E_{\sigma_0} + C_c^{\infty}(\R_+;E_1) \to \E_{\sigma_0} + C_c^{\infty}(\R_+;E_1)
$$
for every $\sigma_0 \in \spec_b(A)$ per the defining relation \eqref{Esigma0def}, and
$$
(\g - \sigma_0)^N : \E_{\sigma_0} + C_c^{\infty}(\R_+;E_1) \to C_c^{\infty}(\R_+;E_1)
$$
for $N$ large enough\footnote{Note that the action of $\g$ corresponds to multiplication by $\sigma$ on the Mellin transform side.}. This shows that $\hat{\E}_{\sigma_0} = \E_{\sigma_0} + \Dom_{\min} \subset \hat{\E}_{\max}$ is the generalized eigenspace associated with the eigenvalue $\sigma_0 \in \spec_b(A)$ for the generator $\hat{\g} : \hat{\E}_{\max} \to \hat{\E}_{\max}$. In particular, $\hat{\g}$ has no real eigenvalues, and by the Canonical Form Theorem from Section~\ref{GreenAbstract} then also has no eigenvalue with $\Im(\sigma) = -m$. This shows that $\hat{\E}_{\sigma_0} \subset \Dom_{\min}$ for every $\sigma_0 \in \spec_b(A)$ with $\Im(\sigma_0) = -m$, and we get
\begin{equation}\label{DmaxSumAux}
\Dom_{\max} = \Dom_{\min} + \sum\limits_{\substack{\sigma_0 \in \spec_b(A) \\ -m < \Im(\sigma_0) < 0}}\E_{\sigma_0}.
\end{equation}
We next show that this sum is direct. The argument is based on formula \eqref{ResidueFormulaPairing} for the adjoint pairing. To prove this formula we use Plancherel and write
\begingroup
\allowdisplaybreaks
\begin{align*}
\frac{1}{i}\langle A_{\max}u_{\sigma_0},u_{\sigma_1} \rangle &= \frac{1}{2\pi i}\int_{\R}\langle p(\sigma-im)[Mu_{\sigma_0}](\sigma-im),[Mu_{\sigma_1}](\sigma)\rangle_{E_0}\,d\sigma \\
&= \frac{1}{2\pi i}\int_{\Im(\sigma)=-m}\langle p(\sigma)[Mu_{\sigma_0}](\sigma),[Mu_{\sigma_1}](\sigma^{\star})\rangle_{E_0}\,d\sigma \\
&= \frac{1}{2\pi i}\int_{\Im(\sigma)=-m}\langle [Mu_{\sigma_0}](\sigma),p(\sigma^{\star})[Mu_{\sigma_1}](\sigma^{\star})\rangle_{E_0}\,d\sigma, \\
\frac{1}{i}\langle u_{\sigma_0},A_{\max}u_{\sigma_1} \rangle &= \frac{1}{2\pi i}\int_{\R}\langle [Mu_{\sigma_0}](\sigma),p(\sigma-im)[Mu_{\sigma_1}](\sigma-im)\rangle_{E_0}\,d\sigma \\
&= \frac{1}{2\pi i}\int_{\R}\langle [Mu_{\sigma_0}](\sigma),p(\sigma^{\star})[Mu_{\sigma_1}](\sigma^{\star})\rangle_{E_0}\,d\sigma.
\end{align*}
\endgroup
The function $\langle [Mu_{\sigma_0}](\sigma),p(\sigma^{\star})[Mu_{\sigma_1}](\sigma^{\star})\rangle_{E_0}$ is meromorphic on $\C$ with a possible pole only at $\sigma = \sigma_0$, and it is rapidly decreasing as $|\Re(\sigma)| \to \infty$ locally uniformly with respect to $\Im(\sigma)$. Consequently,
\begin{align*}
[u_{\sigma_0},u_{\sigma_1}]_A &= \frac{1}{i}\bigl[\langle A_{\max}u_{\sigma_0},u_{\sigma_1} \rangle - \langle u_{\sigma_0},A_{\max}u_{\sigma_1} \rangle\bigr] \\
&= \frac{1}{2\pi i}\oint_{C_{\varepsilon}(\sigma_0)} \langle [Mu_{\sigma_0}](\sigma),p(\sigma^{\star})[Mu_{\sigma_1}](\sigma^{\star})\rangle_{E_0}\,d\sigma \\
&= \frac{1}{2\pi i}\oint_{C_{\varepsilon}(\sigma_0)} \langle p(\sigma)[Mu_{\sigma_0}](\sigma),[Mu_{\sigma_1}](\sigma^{\star})\rangle_{E_0}\,d\sigma \\
&= \res_{\sigma=\sigma_0}\langle p(\sigma)[Mu_{\sigma_0}](\sigma),[Mu_{\sigma_1}](\sigma^{\star})\rangle_{E_0}
\end{align*}
after shifting the integration contour from the two lines $\R$ and $\Im(\sigma)=-m$ to a small positively oriented circle $C_{\varepsilon}(\sigma_0)$ centered at $\sigma_0$, showing \eqref{ResidueFormulaPairing}.  In particular, $[u_{\sigma_0},u_{\sigma_1}]_A = 0$ for $\sigma_1 \neq \sigma_0^{\star}$. In case $\sigma_1 = \sigma_0^{\star}$ consider
$$
\P(\sigma) = \begin{bmatrix} 0 & p(\sigma_0+\sigma) \\ p(\sigma_0^{\star} + \sigma) & 0 \end{bmatrix} : \begin{array}{c} E_1 \\ \oplus \\ E_1 \end{array} \to \begin{array}{c} E_0 \\ \oplus \\ E_0 \end{array}
$$
near $\sigma = 0$. By Theorem~\ref{Nondeg} the pairing on the space $\K(\P)$ of meromorphic germs at $\sigma=0$ that are annihilated by $\P(\sigma)$ modulo holomorphic germs discussed in Appendix~\ref{ModelPairing} is nondegenerate, but we have $\K(\P) \cong \E_{\sigma_0^{\star}} \oplus \E_{\sigma_0}$ and by what we have just shown the pairing on $\K(\P)$ under this isomorphism is expressed by
$$
\bigl[(u_{\sigma_0^{\star}},v_{\sigma_0}),(v_{\sigma_0^{\star}},u_{\sigma_0})\bigr] \mapsto [v_{\sigma_0},v_{\sigma_0^{\star}}]_A + [u_{\sigma_0^{\star}},u_{\sigma_0}]_A.
$$
Consequently, $[\cdot,\cdot]_A : \E_{\sigma_0} \times \E_{\sigma_0^{\star}} \to \C$ is nondegenerate. Using these pairings and their nondegeneracy for $\sigma_1 = \sigma_0^{\star}$ and orthogonality for $\sigma_1 \neq \sigma_0^{\star}$ then shows that the sum \eqref{DmaxSumAux} must be direct. The theorem is proved.
\end{proof}


\section{The signature of the adjoint pairing and the sign condition}\label{GreenIndicial}

\noindent
By Theorem~\ref{DmaxThm} the abstract theory from the first part of the paper applies to indicial operators \eqref{Aindicial}. In particular, the Canonical Form Theorem implies an algebraic Green formula for the pairing $[\cdot,\cdot]_{\hat{\E}_{\max}}$ as discussed in Section~\ref{GreenAbstract}. For indicial operators the Mellin transform and indicial family provide an additional analytic counterpart that is not available in the operator theoretical setting of the first part of this paper that we can harness to prove an analytic formula for the signature of the pairing $[\cdot,\cdot]_{\hat{\E}_{\max}}$ in terms of the spectral flow of the indicial family $p(\sigma) : E_1 \subset E_0 \to E_0$, $\Im(\sigma)=-\frac{m}{2}$. By our standing assumptions this is a family of selfadjoint unbounded Fredholm operators, invertible everywhere except at finitely many indicial roots. It therefore makes sense to consider the spectral flow across each indicial root separately. We refer to the survey by Lesch \cite{LeschSF} for general information about the spectral flow, see also \cite{BoossFurutani} and Appendix~\ref{ModelPairing}. For an indicial operator we therefore obtain both an algebraic formula for the signature of the adjoint pairing as in Theorem~\ref{SelfadjointExtensions} in terms of the invariants of the generator on generalized eigenspaces with $\Im(\sigma_0) = -\frac{m}{2}$, and an analytic formula in terms of the indicial family.

\begin{theorem}
The signature of $\bigl(\hat{\E}_{\max},[\cdot,\cdot]_{\hat{\E}_{\max}}\bigr)$ is given by
$$
\sig\bigl(\hat{\E}_{\max},[\cdot,\cdot]_{\hat{\E}_{\max}}\bigr) = \sum\limits_{\substack{\sigma_0 \in \spec_b(A) \\ \Im(\sigma_0) = -\frac{m}{2}}}\sum\limits_{\ell \; \textup{odd}}\bigl(m_+(\sigma_0,\ell) - m_-(\sigma_0,\ell)\bigr).
$$
For every $\sigma_0 \in \spec_b(A)$ with $\Im(\sigma_0) = -\frac{m}{2}$ we have
$$
\sum\limits_{\ell \; \textup{odd}}\bigl(m_+(\sigma_0,\ell) - m_-(\sigma_0,\ell)\bigr) = \SF_{\sigma=\sigma_0}\bigl[p(\sigma) : E_1 \subset E_0 \to E_0,\; \Im(\sigma)=-\tfrac{m}{2}\bigr].
$$
In particular,
$$
\sig\bigl(\hat{\E}_{\max},[\cdot,\cdot]_{\hat{\E}_{\max}}\bigr) = \SF\bigl[p(\sigma) : E_1 \subset E_0 \to E_0,\; \Im(\sigma)=-\tfrac{m}{2}\bigr].
$$
\end{theorem}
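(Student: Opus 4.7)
The first identity is precisely Theorem~\ref{SelfadjointExtensions}(1) and requires no additional argument, while the third identity is an immediate consequence of the first and second. Indeed, since $p(\sigma) : E_1 \subset E_0 \to E_0$ is selfadjoint Fredholm along $\Im(\sigma) = -\frac{m}{2}$ by \eqref{Fredholm} and \eqref{Symmetry}, and invertible outside the discrete set $\spec_b(A)$ (which has only finitely many points on any bounded segment of this line by analytic Fredholm theory and \eqref{InverseBounds}), the total spectral flow along the critical line decomposes as a finite sum of local spectral flows at the indicial roots $\sigma_0$ with $\Im(\sigma_0) = -\frac{m}{2}$, matching the outer sum in the first identity.

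The substantive content is therefore the middle identity, which I would establish locally at a single indicial root $\sigma_0$ on the critical line. My plan is to reduce $p(\sigma)$ near $\sigma_0$ to a finite-dimensional selfadjoint model via a Schur complement argument: writing $E_0 = V \oplus V^{\perp}$ with $V = \ker p(\sigma_0)$ finite-dimensional, and exploiting that the $V^{\perp}$-block of $p(\sigma)$ is invertible in a punctured neighborhood of $\sigma_0$, one obtains a holomorphic family $P(\sigma) : V \to V$, selfadjoint for $\sigma$ on the critical line, whose germ at $\sigma_0$ encodes simultaneously the local spectral flow of $p(\sigma)$ and, via Theorem~\ref{DmaxThm} together with the residue formula \eqref{ResidueFormulaPairing}, the canonical form of the triple $\bigl(\hat{\E}_{\sigma_0},[\cdot,\cdot]_{\hat{\E}_{\max}},\hat{\g}-\sigma_0\bigr)$. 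Both invariants to be compared are preserved under this reduction.

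It then remains to analyze the reduced family $P(\sigma)$. By analytic perturbation theory, the content of Appendix~\ref{AnalyticCrossingsFiniteDimensional}, $P(\sigma)$ can be brought into a Jordan-type block normal form whose block sizes and signs reproduce the sign characteristic of $\hat{\g}$ on $\hat{\E}_{\sigma_0}$ with respect to the adjoint pairing, as described in Proposition~\ref{SignCharacteristic}. Within each block, a direct model computation shows that an odd-sized block of sign $\varepsilon \in \{\pm 1\}$ contributes $\varepsilon$ both to the local spectral flow and to $\sum_{\ell\,\textup{odd}}\bigl(m_+(\sigma_0,\ell) - m_-(\sigma_0,\ell)\bigr)$, whereas an even-sized block contributes zero to both (the eigenvalue trajectories cross zero in cancelling pairs). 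Summing over blocks yields the middle identity.

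The main obstacle is the block-by-block verification in Appendices~\ref{ModelPairing} and \ref{AnalyticCrossingsFiniteDimensional}: one must simultaneously track the eigenvalue trajectories of $P(\sigma)$ along the critical line, to compute the spectral flow contribution, and the structure of the indefinite pairing on generalized kernels, to identify the sign characteristic. The residue formula \eqref{ResidueFormulaPairing} is the analytic bridge linking these two viewpoints, but the translation between the algebraic invariants of the nilpotent part of $P(\sigma)$ at $\sigma_0$ and the dynamical invariants of the eigenvalue crossing is delicate, and is the reason these details are isolated into the appendices rather than carried out inline.
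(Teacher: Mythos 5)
Your proposal takes essentially the same approach as the paper: the first and third identities are handled exactly as you describe, and the middle identity is established via the Schur-complement reduction to a finite-dimensional selfadjoint germ followed by a block-by-block normal-form computation, which is precisely what Theorems~\ref{Nondeg} and~\ref{CrossingsFlow} in the appendices accomplish and what the paper's short proof cites. The bridge you identify between the triple $\bigl(\hat{\E}_{\sigma_0},[\cdot,\cdot]_{\hat{\E}_{\max}},\hat{\g}-\sigma_0\bigr)$ and the germ-theoretic triple $\bigl(\K(\P),[\cdot,\cdot]_{\K(\P)},M_{\sigma}\bigr)$ is indeed the residue formula \eqref{ResidueFormulaPairing}, and the normal form the paper actually uses is the diagonal reduction of Proposition~\ref{reductionprojections} to $\sum_{\ell}(\pi_{F_{+,\ell}}-\pi_{F_{-,\ell}})\sigma^{\ell}$, which delivers the Jordan-block-with-sign bookkeeping you outline.
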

\begin{proof}
The algebraic statement follows from Theorem~\ref{SelfadjointExtensions}. The analytic formula for the signature in terms of the spectral flow is based on formula \eqref{ResidueFormulaPairing} and is discussed in the appendix; specifically, this is Theorem~\ref{Nondeg}. Note that the action of $\hat{\g}$ is transformed to multiplication by $\sigma$ on the Mellin transform side.
\end{proof}

If $\langle Au,u \rangle \geq 0$ for $u \in C_c^{\infty}(\R_+;E_1)$ we have seen in Sections~\ref{FriedrichsAbstract} and \ref{KreinAbstract} that the sign condition is needed to give satisfactory classifications of the Friedrichs and Krein extensions, see Definition~\ref{SignCondition}. For indicial operators this condition is satisfied.

\begin{theorem}\label{SignConditionIndicial}
Suppose $\langle Au,u \rangle \geq 0$ for $u \in C_c^{\infty}(\R_+;E_1)$. Then the invariants $m_{\pm}(\sigma_0,\ell)$ of $\bigl(\hat{\E}_{\max},[\cdot,\cdot]_{\hat{\E}_{\max}},\hat{\g}\bigr)$ for every $\sigma_0 \in \spec_b(A)$ with $\Im(\sigma_0) = -\frac{m}{2}$ satisfy $m_{\pm}(\sigma_0,\ell) = 0$ for $\ell$ odd, and $m_-(\sigma_0,\ell) = 0$ for $\ell$ even.
\end{theorem}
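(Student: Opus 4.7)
The plan is to reduce the claim to the elementary positivity statement that $p(\sigma) \geq 0$ as a selfadjoint operator on $E_0$ with domain $E_1$ for every $\sigma$ with $\Im\sigma = -\frac{m}{2}$, and then to read off the sign characteristic of $\hat{\g}$ at each indicial root from the normal form for $p(\sigma)$ developed in Appendix~\ref{AnalyticCrossingsFiniteDimensional}. The main obstacle is the last step, namely the precise identification of the analytic crossing data of $p(\sigma)$ at $\sigma_0$ with the algebraic sign characteristic; that identification is exactly what the appendix is built to provide.

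For the reduction, fix $\sigma_0 \in \C$ with $\Im\sigma_0 = -\frac{m}{2}$, $\phi \in C_c^{\infty}(\R)$, and $e \in E_1$, and put $v(x) = \phi(\log x)\,e\,x^{i\sigma_0}$; since $|x^{i\sigma_0}|^2 = x^m$ we have $v \in C_c^{\infty}(\R_+;E_1)$. In the logarithmic variable $t = \log x$, which intertwines $xD_x$ with $D_t$, $x^{-m}$ with $e^{-mt}$, and $\frac{dx}{x}$ with $dt$, one has the identity $p(D_t)(e^{i\sigma_0 t}\phi e) = e^{i\sigma_0 t} p(\sigma_0 + D_t)(\phi e)$ together with the cancellation $e^{-mt}|e^{i\sigma_0 t}|^2 = 1$, so that by Plancherel
\begin{equation*}
\langle Av,v\rangle_{L^2_b(\R_+;E_0)} \;=\; \int_\R\langle p(\sigma_0+D_t)(\phi e),\phi e\rangle_{E_0}\,dt \;=\; \frac{1}{2\pi}\int_\R|\hat{\phi}(\xi)|^2\,\langle p(\sigma_0+\xi)e,e\rangle_{E_0}\,d\xi.
\end{equation*}
Since $\{|\hat\phi|^2:\phi\in C_c^{\infty}(\R)\}$ is dense in the cone of nonnegative Schwartz densities on $\R$, the hypothesis $\langle Av,v\rangle\geq 0$ forces $\langle p(\sigma)e,e\rangle_{E_0}\geq 0$ for all $e \in E_1$ and all $\sigma$ on the critical line, which together with the selfadjointness from \eqref{Symmetry} gives $p(\sigma) \geq 0$ as an unbounded operator in $E_0$.

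For the second step, fix an indicial root $\sigma_0$ with $\Im\sigma_0 = -\frac{m}{2}$ and parameterize the critical line as $s\mapsto s - i\frac{m}{2}$. The resulting real-analytic family $\{p(s - i\frac{m}{2})\}_{s\in\R}$ consists of nonnegative selfadjoint Fredholm operators, so any eigenvalue branch $\lambda_j(s)$ passing through $0$ at $s = \Re\sigma_0$ is real-analytic and nonnegative near $\Re\sigma_0$, forcing an expansion $\lambda_j(s) = c_j(s-\Re\sigma_0)^{\ell_j} + O\bigl((s-\Re\sigma_0)^{\ell_j+1}\bigr)$ with $\ell_j$ even and $c_j > 0$. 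By the normal form for $p(\sigma)$ at $\sigma_0$ from Appendix~\ref{AnalyticCrossingsFiniteDimensional}, the Jordan block sizes for $\hat{\g}$ on $\hat{\E}_{\sigma_0}$ and the signs in the canonical form of $(\hat{\E}_{\sigma_0},[\cdot,\cdot]_{\hat{\E}_{\max}},\hat{\g})$ are obtained block-by-block from the orders $\ell_j$ and the signs $\sig(c_j)$ of the crossing branches. Since only even $\ell_j$ occur, each with associated sign $+1$, Proposition~\ref{SignCharacteristic} yields $m_{\pm}(\sigma_0,\ell)=0$ for odd $\ell$ and $m_-(\sigma_0,\ell)=0$ for even $\ell$, which is the sign condition at $\sigma_0$.
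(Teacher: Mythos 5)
Your first step — the change of variables $t=\log x$, Plancherel, and the density of $\{|\hat\phi|^2:\phi\in C_c^\infty\}$ in the nonnegative Schwartz cone to conclude $p(\sigma)\geq 0$ on the critical line — is correct and supplies the detail behind the equivalence the paper merely asserts. The global plan (reduce to $p\geq 0$ on $\Im\sigma=-m/2$, then read off the sign characteristic) is also the same as the paper's.

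The gap is in the second step. You write that "the Jordan block sizes for $\hat{\g}$ on $\hat{\E}_{\sigma_0}$ and the signs in the canonical form... are obtained block-by-block from the orders $\ell_j$ and the signs $\sig(c_j)$ of the crossing branches," and you attribute this correspondence to Appendix~\ref{AnalyticCrossingsFiniteDimensional}. But that appendix does not work with eigenvalue branches $\lambda_j(s)$ at all: what Proposition~\ref{reductionprojections} establishes is the congruence normal form $p(\sigma)\sim_s\sum_{\ell}(\pi_{F_{+,\ell}}-\pi_{F_{-,\ell}})\sigma^\ell$, and eigenvalue branches are not invariant under the holomorphic congruence $u^\star p u$. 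The identification of analytic eigenvalue-branch data with the sign characteristic of $(\K(p),[\cdot,\cdot]_{\K(p)},M_\sigma)$ is exactly the ``partial signatures'' result from \cite{EidamPiccione,FarberLevine,GiamboPiccionePortaluri}, and the paper explicitly states that its Appendices B and C are \emph{not} reliant on those references. So this link, which you label the main obstacle, is genuinely unproven as written. The paper's own proof sidesteps eigenvalue branches entirely: it goes through Theorem~\ref{Positivity} in Appendix~\ref{ModelPairing}, which computes $[\hat u,\hat u]_{\K(\P),\ell}=\lim_{\sigma\to 0,\ \sigma\ \textrm{real}}\sigma^\ell\langle\P(\sigma)\hat u(\sigma),\hat u(\sigma)\rangle_{E_0}$ directly as a residue, then uses the one-sided limits from $\sigma\to 0^\pm$ to get vanishing for odd $\ell$ and nonnegativity for even $\ell$ — a short, self-contained argument that gives the sign condition without any finite-dimensional reduction or normal form. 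Your argument could be repaired without eigenvalue branches by instead observing that congruence preserves positivity, so the normal form $\sum_{\ell}(\pi_{F_{+,\ell}}-\pi_{F_{-,\ell}})\sigma^\ell\geq 0$ for small real $\sigma$ forces $F_{\pm,\ell}=\{0\}$ for odd $\ell$ and $F_{-,\ell}=\{0\}$ for even $\ell$; but even then you would still need the reduction of the infinite-dimensional $\P(\sigma)$ to a finite-dimensional germ (the Schur complementation in the proof of Theorem~\ref{Nondeg}) and the identification of $(\hat{\E}_{\sigma_0},[\cdot,\cdot]_{\hat{\E}_{\max}},\hat{\g}-\sigma_0)$ with $(\K(\P),[\cdot,\cdot]_{\K(\P)},M_\sigma)$ coming from the residue formula \eqref{ResidueFormulaPairing}, neither of which you invoke. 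The route through Theorem~\ref{Positivity} is both shorter and free of these dependencies.
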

\begin{proof}
The assumption $\langle Au,u \rangle \geq 0$ for $u \in C_c^{\infty}(\R_+;E_1)$ is equivalent to $p(\sigma) \geq 0$ for $\Im(\sigma) = -\frac{m}{2}$. The theorem thus follows from Theorem~\ref{Positivity}.
\end{proof}


\begin{appendix}

\section{A pseudodifferential calculus}\label{PseudoCalculus}

\noindent
Let $\vec{\ell} = (\ell_1,\ell_2) \in \N^2$, and define
\begin{equation}\label{JapaneseBracket}
\langle x,\sigma \rangle_{\vec{\ell}} = (1 + x^{2\ell_2} + \sigma^{2\ell_1})^{\frac{1}{2\ell_1\ell_2}}
\end{equation}
 for $(x,\sigma) \in \R^2$. Note that Peetre's inequality
 $$
 \langle x+x',\sigma+\sigma' \rangle_{\vec{\ell}}^s \leq 2^{|s|}\langle x,\sigma \rangle_{\vec{\ell}}^{s}\,\langle x',\sigma' \rangle_{\vec{\ell}}^{|s|}
 $$
 holds, and there exist $c,C > 0$ such that
 $$
 c\langle x,\sigma \rangle^{\frac{1}{\ell_1+\ell_2}} \leq \langle x,\sigma \rangle_{\vec{\ell}} \leq C \langle x,\sigma \rangle^{\ell_1+\ell_2}.
 $$ 
 We consider anisotropic symbols $a_0(x,\sigma)$ taking valued in the bounded operators between Hilbert spaces that are based on \eqref{JapaneseBracket} such that
 \begin{equation}\label{BasicSymbEstimates}
\| D_x^{\alpha}\partial_{\sigma}^{\beta}a_0(x,\sigma)\| \leq C_{\alpha,\beta}\langle x,\sigma \rangle_{\vec{\ell}}^{\mu-\ell_1\alpha-\ell_2\beta}
 \end{equation}
 for $\alpha,\beta \in \N_0$. Write $S^{\mu;\vec{\ell}}(\R^2)$ for these symbol spaces; the Hilbert spaces are understood from the context and not included in the notation in this section. Note that the usual rules of symbol calculus are valid for these symbol classes. These symbols are such that $C^{\infty}$-functions that are anisotropic homogeneous of degree $\mu$ in the large in the sense that
 $$
 a_0(\varrho^{\ell_1}x,\varrho^{\ell_2}\sigma) = \varrho^{\mu}a_0(x,\sigma)
 $$
for $\varrho \geq 1$ and large $|x,\sigma|$ belong to $S^{\mu;\vec{\ell}}(\R^2)$. In particular, for what follows it will be relevant that the function $x \in S^{\ell_1;\vec{\ell}}(\R^2)$. We also note that $\langle x,\sigma \rangle_{\vec{\ell}}^{\mu} \in S^{\mu;\vec{\ell}}(\R^2)$.

The estimates \eqref{BasicSymbEstimates} for $a_0(x,\sigma)$ imply that
\begin{equation}\label{BasicSymbEstimatesMellin}
\|(xD_x)^{\alpha}\partial_{\sigma}^{\beta}a_0(x,\sigma)\| \lesssim \langle x,\sigma \rangle_{\vec{\ell}}^{\mu-\ell_2\beta} \lesssim \langle x \rangle^{\frac{\mu_+}{\ell_1}}\langle \sigma \rangle^{\frac{\mu}{\ell_2}-\beta},
\end{equation}
where $\mu_+ = \max\{\mu,0\}$. Consequently, restricting to $x > 0$, $\langle x \rangle^{-\frac{\mu_+}{\ell_1}}a_0(x,\sigma)$ is a standard global Mellin symbol of class $S^{\frac{\mu}{\ell_2}}(\R_+\times\R)$.

We will need to work with symbols that depend holomorphically on $\sigma \in \C$. By $S^{\mu;\vec{\ell}}_O(\R\times\C)$ we denote the symbol class of functions $a(x,\sigma) \in C^{\infty}(\R\times\C)$ taking values in the bounded operators between Hilbert spaces such that $a(x,\sigma)$ is holomorphic with respect to $\sigma \in \C$, and such that for every $\gamma \in \R$ the function $\R^2 \ni (x,\sigma) \mapsto a(x,\sigma + i\gamma)$ belongs to $S^{\mu;\vec{\ell}}(\R^2)$ with symbol estimates that are locally uniform with respect to $\gamma$, i.e., for every $\alpha,\beta \in \N_0$ and $R > 0$ there are constants $C_{\alpha,\beta,R} \geq 0$ such that
$$
\|D_{x}^{\alpha}\partial_{\sigma}^{\beta}a(x,\sigma+i\gamma)\| \leq C_{\alpha,\beta,R}\langle x,\sigma \rangle_{\vec{\ell}}^{\mu-\ell_1\alpha-\ell_2\beta}
$$
for all $(x,\sigma) \in \R^2$, and all $\gamma \in \R$ with $|\gamma| \leq R$.

The real symbols obtained by restriction of $a(x,\sigma)$ to different lines $\Im(\sigma) = \gamma_1$ and $\Im(\sigma) = \gamma_2$ are related by asymptotic expansion
\begin{equation}\label{Taylorlines}
a(x,\sigma + i\gamma_2) \sim \sum \limits_{k=0}^{\infty}\frac{(i\gamma_2-i\gamma_1)^k}{k!}\bigl(\partial^k_{\sigma}a\bigr)(x,\sigma + i\gamma_1) \in S^{\mu;\vec{\ell}}(\R^2).
\end{equation}
This is both an exact pointwise representation via Taylor expansion in view of analyticity, and an asymptotic expansion with respect to the filtration by order in the symbol classes. Taylor's formula shows that the symbol estimates for the remainder terms of this asymptotic expansion are locally uniform with respect to $(\gamma_1,\gamma_2) \in \R^2$.

Every symbol $a_0 \in S^{\mu;\vec{\ell}}(\R^2)$ has a representative $a \in S^{\mu;\vec{\ell}}_O(\R\times\C)$ modulo $S^{-\infty}(\R^2)$. This follows by employing the \emph{kernel cut-off construction} (this construction is frequently used in the symbol calculus in Schulze's theory \cite{SchuNH}): Let $\phi \in C_c^{\infty}(\R)$ with $\phi \equiv 1$ near $t = 0$, and $\F$, $\F^{-1}$ be the Fourier transform and its inverse on tempered distributions, respectively. Then $a(x,\sigma) = \F_{t\to\sigma}\phi(t)\F^{-1}_{\sigma\to t}a_0$. The idea of this construction is that the Schwartz kernel of the Kohn-Nirenberg quantized pseudodifferential operator with symbol $a_0(x,\sigma)$ is localized near the diagonal using $\phi$ (which also explains why $a-a_0$ belongs to $S^{-\infty}$), and analyticity of $a(x,\sigma)$ in $\sigma$ then follows from the Paley-Wiener Theorem. To analyze the kernel cut-off operator in more detail it is convenient to rewrite it as an oscillatory integral in the form
\begin{equation}\label{kernelcutoff}
a(x,\sigma+i\gamma) = \frac{1}{2\pi} \iint e^{-it\tau}e^{t\gamma}\phi(t)a_0(x,\sigma-\tau)\,dt d\tau \\
\end{equation}
for real $x$, $\sigma$, and $\gamma$. The standard regularization procedure applied to this integral reveals that $a(x,\sigma+i\gamma)$ depends smoothly on $(x,\sigma + i\gamma) \in \R\times\C$, that the Cauchy-Riemann equations hold with respect to $\sigma+i\gamma \in \C$, and from the symbol estimates for $a_0$ we obtain the resulting estimates for $a$. While kernel cut-off as an operation on symbols only acts on the variable $\sigma$, the representation \eqref{kernelcutoff} shows that if additional parameters are present for a symbol $a_0$ that satisfy suitable joint estimates with $\sigma$, such as the variable $x$ in our case, then these joint estimates are typically preserved for the resulting analytic symbol $a$ along lines parallel to the real axis. Finally, from
\begin{align*}
a_0(x,\sigma) - a(x,\sigma) &= \frac{1}{2\pi} \iint e^{-it\tau}(1-\phi(t))a_0(x,\sigma-\tau)\,dt d\tau \\
&= \frac{i^k}{2\pi} \iint e^{-it\tau}\frac{1-\phi(t)}{t^k}[\partial^k_{\sigma}a_0](x,\sigma-\tau)\,dt d\tau, \; k \in \N_0,
\end{align*}
we obtain that $a - a_0 \in S^{-\infty}(\R^2)$. Detailed technical proofs of these statements, albeit for a different symbol class, can be found in \cite[Section 3]{KrainerVolterra}.

Kernel cut-off allows performing real variable symbolic manipulations along one line $\Im(\sigma) = \gamma$, such as manipulations that utilize excision functions, and then pass to a holomorphic representative of the resulting symbol modulo $S^{-\infty}(\R^2)$. An example for this is a proof that asymptotic expansions exist in the analytic category: Given $\mu_j \to -\infty$ and $a_j \in S^{\mu_j;\vec{\ell}}_O(\R\times\C)$, there exists $a(x,\sigma) \in S^{\mu;\vec{\ell}}_O(\R\times\C)$, $\mu = \max\mu_j$, such that $a \sim \sum\limits_{j=1}^{\infty}a_j$ in the sense that for every $R > 0$ there exists $k_0 \in \N_0$ such that $a - \sum\limits_{j=1}^ka_j \in S^{-R;\vec{\ell}}_O(\R\times\C)$ for $k \geq k_0$. To see this first restrict all $a_j(x,\sigma)$ to $\R^2$. By the standard Borel argument for real symbols there exists $a_0(x,\sigma) \in S^{\mu;\vec{\ell}}(\R^2)$ with $a_0(x,\sigma) \sim \sum\limits_{j=1}^{\infty}a_j(x,\sigma)$. Now use the kernel cut-off construction and define $a \in S^{\mu;\vec{\ell}}_O(\R\times\C)$ via \eqref{kernelcutoff}. Since $a-a_0 \in S^{-\infty}(\R^2)$ we still have the asymptotic expansion $a(x,\sigma) \sim \sum\limits_{j=1}^{\infty}a_j(x,\sigma)$ as real symbols. Consequently,
$$
a(x,\sigma) - \sum\limits_{j=1}^ka_j \in S^{-R;\vec{\ell}}(\R^2)\cap S^{\mu;\vec{\ell}}_O(\R\times\C)
$$
for $k \geq k_0$. But $S^{-R;\vec{\ell}}(\R^2)\cap S^{\mu;\vec{\ell}}_O(\R\times\C) = S^{-R;\vec{\ell}}_O(\R\times\C)$ by \eqref{Taylorlines}, thus establishing the desired result.

The following is another closely related application of the kernel cut-off construction.

\begin{lemma}\label{AnalyticSmoothingApprox}
Let $a \in S^{\mu;\vec{\ell}}_O(\R\times\C)$. Then there exists a sequence $a_j \in S^{-\infty}_O(\R\times\C)$ such that $a_j \to a$ in $S^{\mu';\vec{\ell}}_O(\R\times\C)$ as $j \to \infty$ for every $\mu' > \mu$.
\end{lemma}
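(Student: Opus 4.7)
I would construct the approximants by multiplying $a$ with an entire anisotropic Gaussian adapted to the bracket $\langle\,\cdot\,,\,\cdot\,\rangle_{\vec\ell}$. Set
$$
a_j(x,\sigma) := \exp\!\left(-\,\frac{1+x^{2\ell_2}+\sigma^{2\ell_1}}{j^{2\ell_1\ell_2}}\right)\,a(x,\sigma), \qquad x\in\R,\;\sigma\in\C.
$$
The exponent is polynomial in $\sigma$, hence entire, so $a_j$ is holomorphic in $\sigma$; and on $\R^2$ the Gaussian factor equals $\exp\bigl(-(\langle x,\xi\rangle_{\vec\ell}/j)^{2\ell_1\ell_2}\bigr)$, a super-Gaussian localized to $\{\langle x,\xi\rangle_{\vec\ell}\lesssim j\}$.

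First I would verify $a_j \in S^{-\infty}_O(\R\times\C)$. The key elementary fact is the lower bound $\Re\bigl[(\xi+i\gamma)^{2\ell_1}\bigr] \geq \tfrac{1}{2}\xi^{2\ell_1}-C_R$, valid for all $|\gamma|\leq R$ with $C_R$ independent of $j$, obtained by isolating the leading term in the binomial expansion. This yields, locally uniformly in $\gamma$,
$$
\Bigl|\exp\!\Bigl(-\tfrac{1+x^{2\ell_2}+(\xi+i\gamma)^{2\ell_1}}{j^{2\ell_1\ell_2}}\Bigr)\Bigr| \;\lesssim_R\; \exp\!\Bigl(-c\,\tfrac{\langle x,\xi\rangle_{\vec\ell}^{2\ell_1\ell_2}}{j^{2\ell_1\ell_2}}\Bigr),
$$
so for each fixed $j$ the Gaussian factor beats any anisotropic polynomial. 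Combining with the polynomial bound $\|a(x,\xi+i\gamma)\|\lesssim\langle x,\xi\rangle_{\vec\ell}^\mu$ and applying the Leibniz rule (which produces additional polynomial factors through derivatives of the exponent and of $a$, all dominated by the Gaussian), every seminorm of $a_j(\,\cdot\,,\,\cdot\,+i\gamma)$ in each $S^{-N;\vec\ell}(\R^2)$ is finite and locally uniform in $\gamma$, which is precisely the $S^{-\infty}_O$ condition.

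For the convergence $a_j\to a$ in $S^{\mu';\vec\ell}_O(\R\times\C)$ with $\mu'>\mu$, set $\epsilon=\mu'-\mu>0$ and write $a_j-a=g_j\cdot a$ with $g_j := \exp\bigl(-(1+x^{2\ell_2}+\sigma^{2\ell_1})/j^{2\ell_1\ell_2}\bigr)-1$. By the anisotropic product rule it suffices to show $g_j\to 0$ in $S^{\epsilon;\vec\ell}_O(\R\times\C)$. A Fa\`a di Bruno computation shows that $\{g_j\}_{j\geq 1}$ is bounded in $S^{0;\vec\ell}_O$, since each derivative of the exponent carries a matching power of $1/j^{2\ell_1\ell_2}$ that balances the anisotropic weight of the differentiated polynomial. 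The convergence in seminorm is then established by splitting $\R^2 = \{\langle x,\xi\rangle_{\vec\ell}\leq K\}\cup\{\langle x,\xi\rangle_{\vec\ell}>K\}$: on the bounded region $g_j\to 0$ uniformly by Taylor expansion of the exponential, while on the complement the factor $\langle x,\xi\rangle_{\vec\ell}^{-\epsilon}\leq K^{-\epsilon}$ furnishes arbitrary smallness against the uniform bound on $g_j$; derivative seminorms are handled analogously.

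The principal technical obstacle is ensuring that all of these anisotropic estimates hold locally uniformly in $\gamma$, but this reduces to the elementary lower bound on $\Re[(\xi+i\gamma)^{2\ell_1}]$ recorded above, whose constants are independent of $j$.
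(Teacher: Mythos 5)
Your proof is correct, but it takes a genuinely different route from the paper's. The paper constructs the approximants by excising a compact set from $a$ on $\R^2$ (setting $b_j = \chi(x/j^{\ell_1},\sigma/j^{\ell_2})a$), then applies the kernel cut-off operator \eqref{kernelcutoff} to produce a holomorphic symbol $c_j$ with $b_j - c_j \in S^{-\infty}(\R^2)$, and finally defines $a_j = a - c_j$; the identification of $a_j$ as an element of $S^{-\infty}_O(\R\times\C)$ then rests on the Taylor-expansion fact \eqref{Taylorlines} that $S^{\mu;\vec{\ell}}_O(\R\times\C)\cap S^{-\infty}(\R^2) = S^{-\infty}_O(\R\times\C)$. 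You instead multiply $a$ directly by the entire super-Gaussian $\exp(-(1+x^{2\ell_2}+\sigma^{2\ell_1})/j^{2\ell_1\ell_2})$, so holomorphy in $\sigma$ is automatic and no passage through the kernel cut-off is required. Your estimates check out: the lower bound $\Re[(\xi+i\gamma)^{2\ell_1}] \geq \tfrac{1}{2}\xi^{2\ell_1} - C_R$ with $C_R$ independent of $j$, combined with the homogeneity that sends each $D_x^{\alpha_i}\Phi_j$ to $(\langle x,\xi\rangle_{\vec{\ell}}/j)^{2\ell_1\ell_2}\langle x,\xi\rangle_{\vec{\ell}}^{-\ell_1\alpha_i}$ (and likewise for $\partial_\sigma$), together with the uniform bound on $u^k e^{-cu}$, gives the claimed $j$-uniform boundedness of $\{g_j\}$ in $S^{0;\vec{\ell}}_O$, and the two-region splitting yields convergence to zero in every $S^{\epsilon;\vec{\ell}}_O$ seminorm. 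What the paper's approach buys is consistency with the rest of the appendix, which is built around the kernel cut-off operator and reuses its continuity properties with no new estimates; what yours buys is a self-contained, elementary argument that avoids the kernel cut-off machinery entirely, at the cost of a somewhat more involved anisotropic derivative bookkeeping.
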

\begin{proof}
Let $\chi \in C^{\infty}(\R^2)$ be a function with $\chi \equiv 0$ for $(|x|^{2\ell_2} + |\sigma|^{2\ell_1})^{\frac{1}{2\ell_1\ell_2}} \leq 1$ and $\chi \equiv 1$ for $(|x|^{2\ell_2} + |\sigma|^{2\ell_1})^{\frac{1}{2\ell_1\ell_2}} \geq 2$, and define
$$
b_j(x,\sigma) = \chi\bigl(\tfrac{x}{j^{\ell_1}},\tfrac{\sigma}{j^{\ell_2}}\bigr)a(x,\sigma) \in S^{\mu;\vec{\ell}}(\R^2), \; j \in \N.
$$
Then $b_j \to 0$ in $S^{\mu';\vec{\ell}}(\R^2)$ as $j \to \infty$ for every $\mu' > \mu$, and $a - b_j \in S^{-\infty}(\R^2)$. Now let $c_j \in S^{\mu;\vec{\ell}}_O(\R\times\C)$ be defined by applying the kernel cut-off operator \eqref{kernelcutoff} to $b_j$. Then $b_j - c_j \in S^{-\infty}(\R^2)$, and by the continuity of the kernel cut-off operator we have $c_j \to 0$ in $S^{\mu';\vec{\ell}}_O(\R\times\C)$ for $\mu' > \mu$. The lemma then follows with
$$
a_j = a - c_j \in S^{\mu;\vec{\ell}}_O(\R\times\C)\cap S^{-\infty}(\R^2) = S^{-\infty}_O(\R\times\C)
$$
and \eqref{Taylorlines}.
\end{proof}

Given $a \in S^{\mu;\vec{\ell}}_O(\R\times\C)$ we consider the Mellin pseudodifferential operator
$$
\opm(a) : \dot{C}^{\infty}(\R_+) \to \dot{C}^{\infty}(\R_+),
$$
where $\dot{C}^{\infty}$ is the space of (Hilbert space valued) smooth functions on $\R_+$ that vanish to infinite order at $x = 0$ and are Schwartz functions as $x \to \infty$. Analyticity of the symbol $a(x,\sigma)$ and the estimates \eqref{BasicSymbEstimatesMellin} along lines parallel to the real axis ensure that the action of $\opm(a)$ preserves this space. The map
$$
S^{\mu;\vec{\ell}}_O(\R\times\C) \times \dot{C}^{\infty}(\R_+) \ni (a,u) \mapsto \opm(a)u \in \dot{C}^{\infty}(\R_+)
$$
is continuous.

We assume in the sequel that the reader has some familiarity with standard (global) Mellin pseudodifferential calculus (see \cite{GilSchulzeSeiler,KrainerMellin,LewisParenti,SchuNH}). 

\begin{definition}
By $\Psi^{\mu;\vec{\ell}}_O$ we denote the operators of the form
$$
\opm(a) + G : \dot{C}^{\infty}(\R_+) \to \dot{C}^{\infty}(\R_+)
$$
with $a \in S^{\mu;\vec{\ell}}_O(\R\times\C)$, and the operator $G : \dot{C}^{\infty}(\R_+) \to \dot{C}^{\infty}(\R_+)$ and its formal adjoint $G^* : \dot{C}^{\infty}(\R_+) \to \dot{C}^{\infty}(\R_+)$ with respect to the $L^2_b$ and Hilbert space inner products in the target and range spaces extend to bounded operators
$$
G,\,G^* : x^{\alpha}H^s_b(\R_+) \to x^{\alpha'}H^{s'}_b(\R_+)
$$
for all $\alpha,s,s' \in \R$, and all $\alpha' \leq \alpha$.  Thus $G$ and $G^*$ are smoothing and produce Schwartz behavior as $x \to \infty$ while maintaining the same order of growth as $x \to 0$.
\end{definition}

\begin{lemma}\label{ImprovedMappingProps}
Let $a \in S^{\mu;\vec{\ell}}_O(\R\times\C)$, $\mu \leq 0$. For $j \in \N_0$ with $\mu+\ell_1j \leq 0$ we have
$$
\opm(a),\; \opm(a)^* : x^{\alpha}H^s_b(\R_+) \to x^{\alpha-j}H^{s-\frac{\mu+\ell_1 j}{\ell_2}}_b(\R_+)
$$
for all $s,\alpha \in \R$, where $\opm(a)^* : \dot{C}^{\infty}(\R_+) \to \dot{C}^{\infty}(\R_+)$ is the (formal) adjoint to $\opm(a) : \dot{C}^{\infty}(\R_+) \to \dot{C}^{\infty}(\R_+)$.
\end{lemma}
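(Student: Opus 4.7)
The plan is to prove the mapping property for $\opm(a)$ by reducing to the case $\alpha=0$ via conjugation with $x^{\alpha}$, then absorbing $x^{j}$ into the symbol (exploiting that $x\in S^{\ell_{1};\vec{\ell}}(\R^{2})$), and finally invoking standard $L^{2}_{b}$-boundedness of Mellin pseudodifferential operators of nonpositive order. For the adjoint, I would then argue by $L^{2}_{b}$-duality rather than redoing the symbolic analysis.

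\emph{Step 1 (conjugation).} Using $(Mx^{\alpha}u)(\sigma)=(Mu)(\sigma+i\alpha)$ and a contour shift in $\sigma$ from $\R$ to $\R-i\alpha$, which is justified by the analyticity of $a$ together with the locally uniform symbol estimates of the class $S^{\mu;\vec{\ell}}_{O}(\R\times\C)$ along horizontal lines (and the Schwartz behavior of $Mu$ for $u\in\dot{C}^{\infty}(\R_{+})$), I would verify the identity
\[
x^{-(\alpha-j)}\opm(a(x,\sigma))x^{\alpha}=x^{j}\opm(a(x,\sigma-i\alpha))
\]
on $\dot{C}^{\infty}(\R_{+})$. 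Since $a(\,\cdot\,,\,\cdot\,-i\alpha)\in S^{\mu;\vec{\ell}}_{O}(\R\times\C)$ with the same order, this reduces the desired mapping, after relabeling, to showing that $x^{j}\opm(a):H^{s}_{b}(\R_{+})\to H^{s-(\mu+\ell_{1}j)/\ell_{2}}_{b}(\R_{+})$ for all $s\in\R$.

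\emph{Step 2 (symbol absorption and standard boundedness).} Multiplication by $x^{j}$ commutes with the Mellin quantization on the left, so $x^{j}\opm(a)=\opm(x^{j}a)$. From $|x|\le\langle x,\sigma\rangle_{\vec{\ell}}^{\ell_{1}}$ and the corresponding derivative behavior one reads off $x^{j}\in S^{\ell_{1}j;\vec{\ell}}(\R^{2})$, hence $b(x,\sigma):=x^{j}a(x,\sigma)\in S^{\mu+\ell_{1}j;\vec{\ell}}(\R^{2})$ with order $\mu+\ell_{1}j\le 0$ by hypothesis. In particular $(\mu+\ell_{1}j)_{+}=0$, so \eqref{BasicSymbEstimatesMellin} applied to $b$ gives
\[
\|(xD_{x})^{k}\partial_{\sigma}^{\beta}b(x,\sigma)\|\lesssim\langle\sigma\rangle^{(\mu+\ell_{1}j)/\ell_{2}-\beta}
\]
uniformly in $x>0$. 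Thus $b$ is a standard global Mellin symbol of nonpositive order $(\mu+\ell_{1}j)/\ell_{2}$ with $x$-bounded coefficients, and the standard boundedness theorem for such Mellin pseudodifferential operators (combined with the usual commutator arguments against $(1\pm xD_{x})^{N}$ to reach arbitrary Sobolev indices) yields $\opm(b):H^{s}_{b}(\R_{+})\to H^{s-(\mu+\ell_{1}j)/\ell_{2}}_{b}(\R_{+})$ for every $s\in\R$, completing the claim for $\opm(a)$.

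\emph{Step 3 (the adjoint via duality).} The $L^{2}_{b}$-dual of $x^{\beta}H^{t}_{b}(\R_{+})$ is $x^{-\beta}H^{-t}_{b}(\R_{+})$, and $\dot{C}^{\infty}(\R_{+})$ is dense in all these spaces. Applying the mapping property for $\opm(a)$ just proved with indices $(\alpha,s)=(-\beta+j,\,-t+(\mu+\ell_{1}j)/\ell_{2})$ gives a continuous extension $\opm(a):x^{-\beta+j}H^{-t+(\mu+\ell_{1}j)/\ell_{2}}_{b}\to x^{-\beta}H^{-t}_{b}$, whose transpose under the $L^{2}_{b}$-pairing is a continuous map $x^{\beta}H^{t}_{b}\to x^{\beta-j}H^{t-(\mu+\ell_{1}j)/\ell_{2}}_{b}$. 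The density of $\dot{C}^{\infty}(\R_{+})$ identifies this transpose with the continuous extension of the formal $L^{2}_{b}$-adjoint $\opm(a)^{*}$, which is what the lemma asserts. The hard part of the whole argument will be the rigorous verification of Step 1: the contour shift is the one place where the analytic class $S^{\mu;\vec{\ell}}_{O}(\R\times\C)$ (as opposed to merely $S^{\mu;\vec{\ell}}(\R^{2})$) is essentially used, and it is what allows the weight to be absorbed without leaving the calculus. Everything else reduces to the algebraic observation $x\in S^{\ell_{1};\vec{\ell}}(\R^{2})$ together with standard Mellin pseudodifferential boundedness and a formal duality argument.
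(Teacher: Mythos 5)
Your argument is correct and follows essentially the same route as the paper's: absorbing the weight $x^j$ into the anisotropic symbol to land in the standard global Mellin class $S^{(\mu+\ell_1 j)/\ell_2}(\R_+\times\R)$ of nonpositive order, then invoking standard weighted $L^2_b$-boundedness. The only cosmetic difference is in the adjoint step, where you argue by $L^2_b$-duality while the paper writes $x^j[\opm(a)]^* = [\opm(a)x^j]^* = [x^j\opm(a(x,\sigma-ij))]^*$ and uses that $x^j a(x,\sigma-ij)$ is again a standard Mellin symbol of order $(\mu+\ell_1 j)/\ell_2$; these two formulations are equivalent.
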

\begin{proof}
By assumption on $\mu$ and $j$, $x^ja(x,\sigma)$ is a standard global Mellin symbol of class $S^{\frac{\mu+\ell_1 j}{\ell_2}}(\R_+\times\R)$, which shows the asserted mapping property for $\opm(a)$. Now
$$
x^j[\opm(a)]^* = [\opm(a)x^j]^* = [x^j\opm(a(x,\sigma-ij))]^* : \dot{C}^{\infty}(\R_+) \to \dot{C}^{\infty}(\R_+),
$$
where $x^ja(x,\sigma-ij) \in S^{\frac{\mu+\ell_1 j}{\ell_2}}(\R_+\times\R)$. Consequently,
$$
x^j[\opm(a)]^* : x^{\alpha}H^s_b(\R_+) \to x^{\alpha}H^{s-\frac{\mu+\ell_1 j}{\ell_2}}_b(\R_+),
$$
and the lemma is proved.
\end{proof}

\begin{theorem}\label{AdjointsCalculus}
Let $\opm(a) + G \in \Psi^{\mu;\vec{\ell}}_O$. Then the formal adjoint of this operator with respect to the $L^2_b$ inner product belongs to the class $\Psi^{\mu;\vec{\ell}}_O$, where more precisely
$$
\bigl(\opm(a) + G\bigr)^* = \opm(a^{\sharp}) + G' : \dot{C}^{\infty}(\R_+) \to \dot{C}^{\infty}(\R_+),
$$
and $a^{\sharp} \in S^{\mu;\vec{\ell}}_O(\R\times\C)$ has the asymptotic expansion
\begin{equation}\label{AdjointAsymptotics}
a^{\sharp}(x,\sigma) \sim \sum\limits_{k=0}^{\infty}\frac{1}{k!}(xD_x)^k\partial_{\sigma}^k[a(x,\overline{\sigma})^*].
\end{equation}
\end{theorem}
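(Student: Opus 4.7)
The plan is to split $(\opm(a)+G)^*=\opm(a)^*+G^*$. Because the defining condition for a remainder in $\Psi^{\mu;\vec{\ell}}_O$ is symmetric in $G$ and $G^*$, the term $G^*$ already qualifies as a remainder, so the substance of the theorem is to show that
$$
\opm(a)^*=\opm(a^{\sharp})+G_0
$$
with some $a^{\sharp}\in S^{\mu;\vec{\ell}}_O(\R\times\C)$ satisfying \eqref{AdjointAsymptotics} and some $G_0$ in the remainder class.

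To construct $a^{\sharp}$, first observe that $\sigma\mapsto a(x,\overline{\sigma})^*$ is holomorphic in $\sigma$, since the composition of the two antilinear operations $\sigma\mapsto\overline{\sigma}$ and $T\mapsto T^*$ is holomorphic, and it belongs to $S^{\mu;\vec{\ell}}_O(\R\times\C)$ because adjoining is a norm isometry. Each summand $\tfrac{1}{k!}(xD_x)^k\partial_\sigma^k[a(x,\overline{\sigma})^*]$ then lies in $S^{\mu-\ell_2 k;\vec{\ell}}_O(\R\times\C)$ because $\partial_\sigma$ reduces order by $\ell_2$ while $xD_x$ preserves order ($x\in S^{\ell_1;\vec{\ell}}$ balances the drop caused by $\partial_x$). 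The analytic Borel summation procedure discussed in conjunction with kernel cut-off then yields the desired $a^{\sharp}\in S^{\mu;\vec{\ell}}_O(\R\times\C)$ realizing \eqref{AdjointAsymptotics}.

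To handle $\opm(a)^*$, I would compute its Schwartz kernel explicitly: using $\overline{(x/x')^{i\sigma}}=(x/x')^{-i\sigma}$ for real $\sigma$, a direct calculation shows that $\opm(a)^*$ is the compound-quantized Mellin operator with symbol $c(x',\sigma)=a(x',\sigma)^*=a(x',\overline{\sigma})^*$ on the real line of integration. Converting this compound symbol into left-quantized form by the standard Mellin calculus gives, for every $N\in\N$,
$$
\opm(a)^*=\opm(a^{\sharp}_N)+R_N,\qquad a^{\sharp}_N=\sum_{k=0}^{N-1}\tfrac{1}{k!}(xD_x)^k\partial_\sigma^k[a(x,\overline{\sigma})^*].
$$
Since $c(x',\sigma)=a(x',\overline{\sigma})^*$ is holomorphic in $\sigma$, the Taylor-type remainder symbol inherits this holomorphy together with anisotropic estimates of order $\mu-\ell_2 N$ locally uniform in $\Im(\sigma)$, so $R_N=\opm(\tilde r_N)$ with $\tilde r_N\in S^{\mu-\ell_2 N;\vec{\ell}}_O(\R\times\C)$ (possibly after a further application of kernel cut-off to stay inside the analytic class). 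Combined with $a^{\sharp}-a^{\sharp}_N\in S^{\mu-\ell_2 N;\vec{\ell}}_O$ by construction, this shows that $\opm(a)^*-\opm(a^{\sharp})$ is a Mellin pseudodifferential operator whose symbol has arbitrarily low order in the analytic class, hence lies in $S^{-\infty}_O(\R\times\C)$. Lemma~\ref{ImprovedMappingProps}, applied with $\mu\to-\infty$ so that its hypothesis $\mu+\ell_1 j\leq 0$ becomes vacuous for every $j\in\N_0$, then guarantees that this operator and its formal $L^2_b$-adjoint map $x^\alpha H^s_b\to x^{\alpha-j}H^\infty_b$ for all $\alpha,s\in\R$ and every $j\geq 0$, placing them in the remainder class.

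The principal technical obstacle is ensuring that the Taylor-type remainder in the compound-to-left reduction really lies in the \emph{holomorphic} class $S^{\mu-\ell_2 N;\vec{\ell}}_O$, not merely in the real class $S^{\mu-\ell_2 N;\vec{\ell}}(\R^2)$; this is what converts low symbol order into Schwartz decay at infinity via Lemma~\ref{ImprovedMappingProps}, which is essential for the remainder condition defining $\Psi^{\mu;\vec{\ell}}_O$. It can be verified by inspecting the oscillatory integral representation of the remainder and using that holomorphy of $c(x',\sigma)=a(x',\overline{\sigma})^*$ in $\sigma$ permits horizontal contour shifts with uniform symbol estimates on strips $|\Im(\sigma)|\leq R$, so that the passage from real to analytic symbols is available throughout the reduction.
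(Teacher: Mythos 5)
Your overall strategy — reducing the compound-quantized adjoint to left-quantized form, realizing the asymptotic sum $a^{\sharp}$ via the analytic Borel procedure rooted in kernel cut-off, and using Lemma~\ref{ImprovedMappingProps} to trade low symbol order for weighted Sobolev gains — is close in spirit to the paper's. But there are two substantive gaps that the paper's proof handles differently, and they are precisely where you flag uncertainty.

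First, you treat the oscillatory-integral formulas for $\opm(a)^*$ and the Taylor remainder as given for general $a \in S^{\mu;\vec{\ell}}_O(\R\times\C)$. For $\mu > 0$ these integrals do not converge absolutely and must be regularized. The paper deals with this by first working with $a \in S^{-\infty}_O(\R\times\C)$ where everything converges, establishing \emph{continuity} of the maps $a \mapsto x^j r_N$ in the appropriate topologies, and then extending to general $a$ via the approximating sequence supplied by Lemma~\ref{AnalyticSmoothingApprox} and a weak-limit argument on the level of operators acting on $C_c^{\infty}(\R_+)$. Your proposal skips this approximation step entirely; without it, the formula $\opm(a)^* = \opm(a^{\sharp}_N) + R_N$ is only formal.

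Second, and more importantly, you assert that the Taylor remainder lies in $S^{\mu-\ell_2 N;\vec{\ell}}_O(\R\times\C)$, i.e.\ satisfies the \emph{joint anisotropic} estimates in $(x,\sigma)$ with analyticity. The paper never proves this, and it is genuinely nontrivial: the oscillatory integral evaluates the symbol at the multiplicative shift $(xy,\sigma+\theta\eta)$, and the contribution from $y$ near $1/x$ samples the symbol near $xy = O(1)$ even when $x$ is large, so the required polynomial decay of $r_N$ as $x\to\infty$ does not fall out of naive estimates. What the paper actually establishes is the weaker assertion that $x^j r_N(x,\sigma)$ and $x^j r_N(x,\sigma - ij)$ belong to the (non-anisotropic, global) Mellin class $S^{\mu_0}(\R_+\times\R)$ for $j \leq j_0$ and $N \geq N_0(\mu',j_0,\mu_0)$, and this is obtained by the essential trick of shifting the $\eta$-contour to $\eta + ij$: the shift produces the factor $(xy)^j$ inside the integrand, which is absorbed into the symbol as $x^j(-xD_x)^N\partial_\sigma^N a$ evaluated at $xy$ — and it is precisely this $(xy)^j$ vanishing as $xy \to 0$ that supplies the polynomial decay of $r_N$ at $x=\infty$. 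The mapping properties of $\opm(r_N)$ and $\opm(r_N)^*$ then follow without ever passing through the anisotropic class for $r_N$. Your proposal's appeal to "a further application of kernel cut-off to stay inside the analytic class" is a red herring: if the regularized integral converges and the integrand is analytic in $\sigma$, then $r_N$ is already analytic, and kernel cut-off would change $r_N$ modulo $S^{-\infty}(\R^2)$ rather than establish the estimates you need. You correctly identify the technical obstacle in your last paragraph, but you do not resolve it; the paper sidesteps it by replacing the symbol-class membership claim with the contour-shift estimate for $x^j r_N$.
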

\begin{proof}
Let first $a \in S^{-\infty}_O(\R\times\C)$. In particular, $a \in S^{-\infty}(\R_+\times\R)$ and is analytic, and from the standard global Mellin pseudodifferential calculus we have an exact representation of the formal adjoint $\opm(a)^* = \opm(b) : \dot{C}^{\infty}(\R_+) \to \dot{C}^{\infty}(\R_+)$, where for every $N \in \N_0$
\begin{align*}
b(x,\sigma) &= \frac{1}{2\pi}\iint y^{-i\eta}a(xy,\overline{\sigma}+\eta)^*\,\frac{dy}{y}d\eta \\
&= \sum\limits_{k=0}^{N-1}\frac{1}{k!}(xD_x)^k\partial_{\sigma}^k[a(x,\overline{\sigma})^*] + r_N(x,\sigma) \\
&= b_N(x,\sigma) + r_N(x,\sigma)
\end{align*}
with $$
r_N(x,\sigma) = \int_0^1\frac{(1-\theta)^{N-1}}{2\pi(N-1)!}\iint y^{-i\eta}[(-xD_x)^{N}\partial_{\sigma}^Na](xy,\overline{\sigma}+\theta\eta)^*\,\frac{dy}{y}d\eta d\theta.
$$
For $j \in \N_0$ we take advantage of the analyticity of the symbols to shift the integration into the complex $\eta$-plane and write
\begin{align*}
x^jr_N&(x,\sigma) = \int_0^1\frac{(1-\theta)^{N-1}}{2\pi(N-1)!}\iint y^{-i\eta}x^j[(-xD_x)^{N}\partial_{\sigma}^Na](xy,\overline{\sigma}+\theta\eta)^*\,\frac{dy}{y}d\eta d\theta \\
&= \int_0^1\frac{(1-\theta)^{N-1}}{2\pi(N-1)!}\iint y^{-i\eta}(xy)^j[(-xD_x)^{N}\partial_{\sigma}^Na](xy,\overline{\sigma}+\theta(\eta+ij))^*\,\frac{dy}{y}d\eta d\theta \\
&= \int_0^1\frac{(1-\theta)^{N-1}}{2\pi(N-1)!}\iint y^{-i\eta}[x^j(-xD_x)^{N}\partial_{\sigma}^Na](xy,\overline{\sigma}+\theta(\eta+ij))^*\,\frac{dy}{y}d\eta d\theta.
\end{align*}
From these formulas we see that for any given $\mu' \in \R$, $j_0 \in \N_0$, and $\mu_0 < 0$ there exists $N_0 \in \N$ such that for $N \geq N_0$ and $0 \leq j \leq j_0$ the map $S^{-\infty}_O(\R\times\C) \ni a(x,\sigma) \mapsto x^jr_N(x,\sigma)$ extends to a continuous map $S^{\mu';\vec{\ell}}_O(\R\times\C) \to S^{\mu_0}(\R_+\times\R)$, and the same is true for the map $a \mapsto x^jr_N(x,\sigma-ij)$. For these values of the parameters we then have
\begin{gather*}
\opm(r_N) : x^{\alpha}H^s_b(\R_+) \to x^{\alpha-j}H^{s-\mu_0}_b(\R_+), \\
\opm(r_N)^* = x^{-j}\opm(x^jr_N(x,\sigma-ij))^* : x^{\alpha}H^s_b(\R_+) \to x^{\alpha-j}H^{s-\mu_0}_b(\R_+)
\end{gather*}
for all $\alpha,s \in \R$.

Now let $a \in S^{\mu;\vec{\ell}}_O(\R\times\C)$. By Lemma~\ref{AnalyticSmoothingApprox} there exists a sequence $a_{\nu} \in S^{-\infty}_O(\R\times\C)$ such that $a_{\nu} \to a$ in $S^{\mu';\vec{\ell}}_O(\R\times\C)$ as $\nu \to \infty$ for $\mu' > \mu$. Then
\begin{gather*}
\langle \opm(a)u,v \rangle_{L^2_b} \longleftarrow \langle \opm(a_{\nu})u,v \rangle_{L^2_b} = \langle u,\opm(a_{\nu})^*v \rangle_{L^2_b} \\
= \langle u, (\opm(b_{N,\nu}) + \opm(r_{N,\nu}))v \rangle_{L^2_b} \longrightarrow \langle u, (\opm(b_{N}) + \opm(r_{N}))v \rangle_{L^2_b}
\end{gather*}
for $u,v \in C_c^{\infty}(\R_+)$ and $N \in \N$ large enough, where the extra $\nu$-parameter indicates that we use the previous formulas for $a_{\nu}$, while its absence means that the formulas are applied to the symbol $a(x,\sigma)$. In particular, $\opm(a)^* = \opm(b_N) + \opm(r_N)$ for $N$ large enough with the above mapping properties for $\opm(r_N)$. Consequently, if $a^{\sharp} \in S^{\mu;\vec{\ell}}_O(\R\times\C)$ has the asymptotic expansion \eqref{AdjointAsymptotics}, then $G_0 = \opm(a)^* - \opm(a^{\sharp})$ satisfies
$$
G_0,\,G_0^* : x^{\alpha}H^s_b(\R_+) \to x^{\alpha'}H^{s'}_b(\R_+)
$$
for all $\alpha,s,s' \in \R$, and all $\alpha' \leq \alpha$ by Lemma~\ref{ImprovedMappingProps} and the mapping properties of $\opm(r_N)$ (as $N \to \infty$). The theorem is proved.
\end{proof}

\begin{theorem}\label{CompositionCalculus}
Let $\opm(a_j) + G_j \in \Psi^{\mu_j;\vec{\ell}}_O$, $j = 1,2$. Then
$$
(\opm(a_1)+G_1)\circ (\opm(a_2) + G_2) = \opm(a_1{\#}a_2) + G : \dot{C}^{\infty}(\R_+) \to \dot{C}^{\infty}(\R_+)
$$
with $\opm(a_1{\#}a_2) + G \in \Psi^{\mu_1+\mu_2;\vec{\ell}}_O$, and $a_1{\#}a_2 \in S^{\mu_1+\mu_2;\vec{\ell}}_O(\R\times\C)$ has the asymptotic expansion
\begin{equation}\label{LeibnizAsymptotics}
(a_1{\#}a_2)(x,\sigma) \sim \sum\limits_{k=0}^{\infty}\frac{1}{k!}[\partial_{\sigma}^ka_1](x,\sigma)[(xD_x)^ka_2](x,\sigma).
\end{equation}
\end{theorem}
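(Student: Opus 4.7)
The plan is to parallel the strategy used for the adjoint theorem, Theorem~\ref{AdjointsCalculus}: first handle the cross terms involving the remainders $G_j$ via mapping properties, then establish the composition formula for the Mellin pseudodifferential parts by first treating smoothing symbols via an exact oscillatory integral representation and passing to general symbols through the approximation Lemma~\ref{AnalyticSmoothingApprox}. The Leibniz-type formula \eqref{LeibnizAsymptotics} arises from a Taylor expansion of $a_2(xy,\sigma)$ in $t = \log y$ combined with Fourier inversion in the dual variable $\eta$.

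First I would reduce to the case $G_1 = G_2 = 0$. The product $G_1 G_2$ is smoothing and produces rapid decay as $x\to\infty$ directly from the definition of $\Psi^{-\infty}_O$. The cross term $\opm(a_1)G_2$ lies in $\Psi^{-\infty}_O$ because $G_2$ maps into $\dot{C}^\infty(\R_+)$-type spaces while $\opm(a_1)$ and $\opm(a_1)^*$ preserve weighted Sobolev scales by Lemma~\ref{ImprovedMappingProps}; the mirror case $G_1\opm(a_2)$ is then handled by taking the $L^2_b$-adjoint and invoking Theorem~\ref{AdjointsCalculus} to reduce it to the previous one.

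For the main term, I would first treat symbols $a_j \in S^{-\infty}_O(\R\times\C)$ (in particular standard global Mellin symbols), where the classical Mellin composition formula gives $\opm(a_1)\opm(a_2) = \opm(c)$ exactly with
\begin{equation*}
c(x,\sigma) = \frac{1}{2\pi}\iint y^{-i\eta}\,a_1(x,\sigma+\eta)\,a_2(xy,\sigma)\,\frac{dy}{y}\,d\eta.
\end{equation*}
Setting $t = \log y$ and Taylor-expanding $a_2(xe^t,\sigma)$ at $t=0$, using $\frac{d^k}{dt^k}a_2(xe^t,\sigma)\bigl|_{t=0} = (x\partial_x)^k a_2(x,\sigma) = i^k(xD_x)^k a_2(x,\sigma)$, and then integrating by parts in $\eta$ to convert the factor $t^k$ into $(-i)^k\partial_\eta^k$ acting on $a_1(x,\sigma+\eta)$, produces exactly the $k$-th term $\frac{1}{k!}[\partial_\sigma^k a_1][(xD_x)^k a_2]$ after the Fourier-inversion-delta collapses $\eta$ to $0$. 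The Taylor remainder
\begin{equation*}
r_N(x,\sigma) = \int_0^1\frac{(1-\theta)^{N-1}}{(N-1)!}\cdot\frac{i^N}{2\pi}\iint e^{-i\eta t}\,t^N\,a_1(x,\sigma+\eta)\,[(xD_x)^N a_2](xe^{\theta t},\sigma)\,dt\,d\eta\,d\theta
\end{equation*}
then needs symbol estimates of order $\mu_1+\mu_2-N\ell_2$, and this is where the analyticity of $a_1$ in $\sigma$ is decisive: by shifting the $\eta$-contour from $\R$ to $\R-ij$ for $j \in \N_0$ (which is permissible since $a_1 \in S^{\mu_1;\vec{\ell}}_O(\R\times\C)$) one gains a factor $e^{jt}$, i.e.\ a power of $y^j \sim x^j(xy)^{-j}\cdot(xy)^j$; combined with Peetre's inequality for $\langle\cdot,\cdot\rangle_{\vec{\ell}}$ and the assumed symbol estimates this yields the required order reduction along every horizontal line in $\C_\sigma$.

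Once $r_N$ is understood this way, I would pass to general $a_j \in S^{\mu_j;\vec{\ell}}_O$ by approximating $a_{j,\nu} \to a_j$ from Lemma~\ref{AnalyticSmoothingApprox}, using the uniform symbol estimates on $r_{N,\nu}$ in the approximating parameter to take weak limits $\langle \opm(a_1)\opm(a_2)u,v\rangle = \langle \opm(b_N)u,v\rangle + \langle \opm(r_N)u,v\rangle$ exactly as in the proof of Theorem~\ref{AdjointsCalculus}. A Borel-type construction using the kernel cut-off then produces $a_1\#a_2 \in S^{\mu_1+\mu_2;\vec{\ell}}_O(\R\times\C)$ with the asymptotic expansion \eqref{LeibnizAsymptotics}, and the difference $\opm(a_1)\opm(a_2) - \opm(a_1\#a_2)$ inherits the $\Psi^{-\infty}_O$-type smoothing properties of $r_N$ (including on the adjoint side, via Theorem~\ref{AdjointsCalculus}). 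I expect the main obstacle to be the uniformity of the contour-shift estimates for $r_N$ with respect to $\gamma = \Im(\sigma)$ varying in compact sets, which is needed both to keep $r_N$ inside $S^{\bullet;\vec{\ell}}_O(\R\times\C)$ and to make the approximation argument converge; this is the same technical point that underlies Theorem~\ref{AdjointsCalculus}, and is handled by keeping careful track of how constants in Peetre's inequality depend on the shift.
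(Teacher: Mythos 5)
Your proposal tracks the paper's proof in overall shape: reduce to $G_1 = G_2 = 0$, use the exact Mellin composition formula for smoothing symbols, Taylor-expand to isolate the Leibniz terms, control the remainder $r_N$ via analyticity of $a_1$ and a contour shift in $\eta$, pass to general symbols through Lemma~\ref{AnalyticSmoothingApprox} and weak limits, and close with kernel cut-off. But one step as written does not go through, and a second is missing an ingredient the paper uses in two separate places.

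For the cross term $\opm(a_1)G_2$ you appeal to Lemma~\ref{ImprovedMappingProps}, but that lemma is stated only for $\mu\leq 0$ and gives nothing for $\opm(a_1)$ when $\mu_1>0$. The paper instead decomposes $a_j = \omega a_j + (1-\omega)a_j =: a_{j,0} + a_{j,\infty}$ with a compactly supported cut-off $\omega$ near $x=0$ and observes via \eqref{BasicSymbEstimatesMellin} that $a_{j,0}$ and $x^{-K}a_{j,\infty}$ (for $K$ large) are standard global Mellin symbols of class $S^{\mu_j/\ell_2}(\R_+\times\R)$; this yields boundedness of $\opm(a_{j,0})$ on each $x^\alpha H^s_b$ and a gain of $x^K$ for $\opm(a_{j,\infty})$, which combined with the smoothing and decay of $G_j$ gives the required mapping properties. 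Your adjoint reduction via Theorem~\ref{AdjointsCalculus} to handle $G_1\opm(a_2)$ is precisely what the paper does.

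The same decomposition $a_2 = a_{2,0}+a_{2,\infty}$ also enters the estimate of the remainder $r_N = r_{N,0}+r_{N,\infty}$: the paper treats $r_{N,0}$ without any contour shift (as $a_{2,0}$ is already a standard Mellin symbol), and reserves the $\eta$-contour shift for $r_{N,\infty}$, where the gained power of $y$ is used to turn $x^{-K}$ at $x$ into $(xy)^{-K}$ at $xy$ so that $x^{-K}a_{2,\infty}$ becomes a standard Mellin symbol. Your direct contour-shift argument identifies the right mechanism but does not cleanly separate these two regimes; also, your bookkeeping $y^j \sim x^j(xy)^{-j}(xy)^j$ collapses to $x^j$ rather than $y^j$, a small symptom of this missing separation.
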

\begin{proof}
The composition $G_1G_2 \in \Psi^{-\infty}_O$ in view of the defining mapping properties of the $G_j$. We next prove that the compositions $\opm(a_1)G_2$ and $G_1\opm(a_2)$ belong to $\Psi^{-\infty}_O$.  Let $\omega \in C_c^{\infty}(\R)$ with $\omega \equiv 1$ near $x=0$, and decompose
$$
a_j(x,\sigma) = \omega a_j(x,\sigma) + (1-\omega) a_j(x,\sigma) = a_{j,0}(x,\sigma) + a_{j,\infty}(x,\sigma).
$$
The symbols $a_{j,0}(x,\sigma)$ and $a_{j,\infty}(x,\sigma)$ no longer satisfy the joint symbol estimates \eqref{BasicSymbEstimates} in $(x,\sigma)$, but we have
$$
a_{j,0}(x,\sigma),\, x^{-K}a_{j,\infty}(x,\sigma) \in S^{\frac{\mu_j}{\ell_2}}(\R_+\times\R)
$$
for $K$ large enough by \eqref{BasicSymbEstimatesMellin}. Combined with the analyticity in $\sigma$ this shows that
\begin{gather*}
\opm(a_{j,0}) : x^{\alpha}H^s_b(\R_+) \to x^{\alpha}H^{s-\frac{\mu_j}{\ell_2}}_b(\R_+), \\
\opm(a_{j,\infty}) : x^{\alpha}H^s_b(\R_+) \to x^{\alpha+K}H^{s-\frac{\mu_j}{\ell_2}}_b(\R_+).
\end{gather*}
Now $G_1\opm(a_{2,0}) : x^{\alpha}H^s_b(\R_+) \to x^{\alpha'}H^{s'}_b(\R_+)$ for all $s,s',\alpha \in \R$ and $\alpha' \leq \alpha$ by the mapping properties of $G_1$. Likewise, $G_1\opm(a_{2,\infty}) : x^{\alpha}H^s_b(\R_+) \to x^{\alpha'}H^{s'}_b(\R_+)$ for all $s,s',\alpha \in \R$ and $\alpha' \leq \alpha+K$, and so $G_1\opm(a_2) : x^{\alpha}H^s_b(\R_+) \to x^{\alpha'}H^{s'}_b(\R_+)$ for all $s,s',\alpha \in \R$ and $\alpha' \leq \alpha$.

We have $G_2 : x^{\alpha}H^s_b(\R_+) \to x^{\alpha'}H^{s'+\frac{\mu_1}{\ell_2}}_b(\R_+)$, and so $\opm(a_{1,0})G_2 : x^{\alpha}H^s_b(\R_+) \to x^{\alpha'}H^{s'}_b(\R_+)$ for all $s,s',\alpha \in \R$ and $\alpha' \leq \alpha$. We likewise have $G_2 : x^{\alpha}H^s_b(\R_+) \to x^{\alpha'-K}H^{s'+\frac{\mu_1}{\ell_2}}_b(\R_+)$, and thus $\opm(a_{1,\infty})G_2 : x^{\alpha}H^s_b(\R_+) \to x^{\alpha'}H^{s'}_b(\R_+)$, which shows that $\opm(a_1)G_2 : x^{\alpha}H^s_b(\R_+) \to x^{\alpha'}H^{s'}_b(\R_+)$ for all $s,s',\alpha \in \R$ and $\alpha' \leq \alpha$. For the formal adjoints we have
$$
[G_1\opm(a_2)]^* = \opm(a_2)^*G_1^* \textup{ and } [\opm(a_1)G_2]^* = G_2^*\opm(a_1)^*,
$$
and because $\opm(a_j)^* \in \Psi^{\mu_j;\vec{\ell}}_O$ by Theorem~\ref{AdjointsCalculus} we finally obtain with the above that $\opm(a_1)G_2,\;G_1\opm(a_2) \in \Psi^{-\infty}_O$.

It remains to consider the composition $\opm(a_1)\opm(a_2)$. Let first $a_j \in S^{-\infty}_O(\R\times\C)$. Then $\opm(a_1)\opm(a_2) = \opm(c)$ by the standard Mellin pseudodifferential calculus, where for every $N \in \N_0$
\begin{align*}
c(x,\sigma) &= \frac{1}{2\pi}\iint y^{-i\eta}a_1(x,\sigma+\eta)a_2(xy,\sigma)\,\frac{dy}{y}d\eta \\
&= \sum\limits_{k=0}^{N-1}\frac{1}{k!}[\partial_{\sigma}^ka_1](x,\sigma)[(xD_x)^ka_2](x,\sigma) + r_N(x,\sigma) \\
&= c_N(x,\sigma) + r_N(x,\sigma)
\end{align*}
with
$$
r_N(x,\sigma) = \int_0^1\!\frac{(1-\theta)^{N-1}}{2\pi(N-1)!}\iint y^{-i\eta}[\partial_{\sigma}^Na_1](x,\sigma+\theta\eta)[(xD_{x})^Na_2](xy,\sigma)\,\frac{dy}{y}d\eta d\theta.
$$
Into this formula for $r_N(x,\sigma)$ we substitute the splitting $a_2 = a_{2,0} + a_{2,\infty}$ from before and obtain $r_N(x,\sigma) = r_{N,0}(x,\sigma) + r_{N,\infty}(x,\sigma)$ which we process separately. Let $j \in \N_0$ be arbitrary. Then $x^jr_{N,0}(x,\sigma)$ is given by
$$
\int_0^1\!\frac{(1-\theta)^{N-1}}{2\pi(N-1)!}\iint y^{-i\eta}[x^j\partial_{\sigma}^Na_1](x,\sigma+\theta\eta)[(xD_{x})^Na_{2,0}](xy,\sigma)\,\frac{dy}{y}d\eta d\theta,
$$
and from this formula we see that for any given $\mu_1',\mu_2' \in \R$, $j_0 \in \N_0$, and $\mu_0 < 0$ there exists $N_0 \in \N$ such that for $N \geq N_0$ and $0 \leq j \leq j_0$ the map
$$
S^{-\infty}_O(\R\times\C) \times S^{-\infty}_O(\R\times\C) \ni (a_1,a_2) \mapsto x^jr_{N,0}(x,\sigma)
$$
extends to a continuous map
$$
S^{\mu_1';\vec{\ell}}_O(\R\times\C) \times S^{\mu_2';\vec{\ell}}_O(\R\times\C) \to S^{\mu_0}(\R_+\times\R),
$$
and the same is true for the map $(a_1,a_2) \mapsto x^jr_{N,0}(x,\sigma-ij)$. For these values of the parameters we then have
\begin{gather*}
\opm(r_{N,0}) : x^{\alpha}H^s_b(\R_+) \to x^{\alpha-j}H^{s-\mu_0}_b(\R_+), \\
\opm(r_{N,0})^* = x^{-j}\opm(x^jr_{N,0}(x,\sigma-ij))^* : x^{\alpha}H^s_b(\R_+) \to x^{\alpha-j}H^{s-\mu_0}_b(\R_+)
\end{gather*}
for all $\alpha,s \in \R$ just as in the proof of Theorem~\ref{AdjointsCalculus}.

To analyze $x^jr_{N,\infty}(x,\sigma)$ we take advantage of analyticity and shift the integration in $\eta$ into the complex plane and obtain
\begin{align*}
x^j&r_{N,\infty}(x,\sigma) = \int_0^1\!\frac{(1-\theta)^{N-1}}{2\pi(N-1)!} \cdot \\
&\iint y^{-i\eta}[x^{j+K}\partial_{\sigma}^Na_1](x,\sigma+\theta(\eta-iK))[x^{-K}(xD_{x})^Na_{2,\infty}](xy,\sigma)\,\frac{dy}{y}d\eta d\theta.
\end{align*}
Consequently, given $\mu_1',\mu_2' \in \R$, $j_0 \in \N_0$, and $\mu_0 < 0$, we first choose $K \in \N$ large enough such that $x^{-K}(1-\omega)S^{\mu_2';\vec{\ell}}_O(\R\times\C) \hookrightarrow S^{\frac{\mu_2'}{\ell_2}}(\R_+\times\R)$, and can then find $N_0 \in \N$ such that for $N \geq N_0$ and $0 \leq j \leq j_0$ the map $(a_1,a_2) \mapsto x^jr_{N,\infty}(x,\sigma)$ is continuous in
$$
S^{\mu_1';\vec{\ell}}_O(\R\times\C) \times S^{\mu_2';\vec{\ell}}_O(\R\times\C) \to S^{\mu_0}(\R_+\times\R),
$$
and the same property holds for the map $(a_1,a_2) \mapsto x^jr_{N,\infty}(x,\sigma-ij)$. Thus $\opm(r_{N,\infty})$ and $\opm(r_{N,\infty})^*$ have the same mapping properties as previously stated for $\opm(r_{N,0})$ and $\opm(r_{N,0})^*$ for $N$ large enough, and therefore also $\opm(r_N)$ and $\opm(r_N)^*$ have these properties.

Now let $a_j \in S^{\mu_j;\vec{\ell}}_O(\R\times\C)$. By Lemma~\ref{AnalyticSmoothingApprox} there exist sequences $a_{j,\nu} \in S^{-\infty}_O(\R\times\C)$ such that $a_{j,\nu} \to a_j$ in $S^{\mu_j';\vec{\ell}}_O(\R\times\C)$ as $\nu \to \infty$ for $\mu_j' > \mu_j$. For every $u \in \dot{C}^{\infty}(\R_+)$ we then have for $N \in \N$ large enough
\begin{gather*}
\opm(a_1)\opm(a_2)u \longleftarrow \opm(a_{1,\nu})\opm(a_{2,\nu})u \\
= \opm(c_{N,\nu})u + \opm(r_{N,\nu})u \longrightarrow \opm(c_N)u + \opm(r_N)u,
\end{gather*}
where the extra $\nu$-parameter indicates that we use the previous formulas for $a_{j,\nu}$, while its absence means that the formulas are applied to the symbols $a_j(x,\sigma)$. In particular, $\opm(a_1)\opm(a_2) = \opm(c_N) + \opm(r_N)$ for $N$ large enough, and $\opm(r_N)$ has the mapping properties previously shown. Consequently, if $a_1{\#}a_2 \in S^{\mu_1+\mu_2;\vec{\ell}}_O(\R\times\C)$ has the asymptotic expansion \eqref{LeibnizAsymptotics}, then $G_0 = \opm(a_1)\opm(a_2) - \opm(a_1{\#}a_2)$ satisfies
$$
G_0,\,G_0^* : x^{\alpha}H^s_b(\R_+) \to x^{\alpha'}H^{s'}_b(\R_+)
$$
for all $\alpha,s,s' \in \R$, and all $\alpha' \leq \alpha$ by Lemma~\ref{ImprovedMappingProps} and the mapping properties of $\opm(r_N)$ as $N \to \infty$. The theorem is proved.
\end{proof}

\begin{definition}\label{hypoellipticity}
A symbol $a(x,\sigma) \in S^{\mu;\vec{\ell}}(\R^2)$ is \emph{right-hypoelliptic} of order $(\mu,\mu')$ if $a(x,\sigma)$ is invertible for sufficiently large $|x,\sigma| \gg 0$, and the inverse satisfies
\begin{align}
\|a^{-1}(x,\sigma)\| &\lesssim \langle x,\sigma \rangle_{\vec{\ell}}^{-\mu'}, \label{EstimateInverse} \\
\intertext{and for every $\alpha,\beta \in \N_0$ we have}
\|[D_x^{\alpha}\partial_{\sigma}^{\beta}a](x,\sigma)[a(x,\sigma)]^{-1}\| &\lesssim \langle x,\sigma \rangle_{\vec{\ell}}^{-\ell_1\alpha - \ell_2\beta}. \label{DerivativesInverse}
\end{align}
We call $a(x,\sigma) \in S^{\mu;\vec{\ell}}_O(\R\times\C)$ right-hypoelliptic of order $(\mu,\mu')$ if its restriction to $\R^2$ is right-hypoelliptic of order $(\mu,\mu')$.
\end{definition}

Suppose $a(x,\sigma) \in S^{\mu;\vec{\ell}}_O(\R\times\C)$ is right-hypoelliptic of order $(\mu,\mu')$. Let $\chi \in C^{\infty}(\R^2)$ be an excision function such that $\chi \equiv 0$ near $(0,0)$ and $\chi \equiv 1$ for large $|x,\sigma|$ so that $\chi(x,\sigma)a(x,\sigma)^{-1}$ is defined on $\R^2$. The estimates \eqref{EstimateInverse} and \eqref{DerivativesInverse} show that $\chi(x,\sigma)a(x,\sigma)^{-1} \in S^{-\mu';\vec{\ell}}(\R^2)$. We then apply the kernel cut-off operator \eqref{kernelcutoff} to this symbol to obtain a holomorphic symbol $q(x,\sigma) \in S^{-\mu';\vec{\ell}}_O(\R\times\C)$ which has the following properties:

\begin{lemma}\label{SymbolInverseProps}
\begin{enumerate}[(a)]
\item We have $a(x,\sigma)q(x,\sigma) - 1,\, q(x,\sigma)a(x,\sigma) - 1 \in S^{-\infty}_O(\R\times\C)$.
\item For $\gamma_j \in \R$, $j=1,2$, we have
$$
[D_x^{\alpha}\partial_{\sigma}^\beta a](x,\sigma+i\gamma_2)[D_x^{\alpha'}\partial_{\sigma}^{\beta'}q](x,\sigma+i\gamma_1) \in S^{-\ell_1(\alpha+\alpha')-\ell_2(\beta+\beta');\vec{\ell}}_O(\R\times\C)
$$
for all $\alpha,\alpha',\beta,\beta' \in \N_0$.
\item $a(x,\sigma + i\gamma) \in S^{\mu;\vec{\ell}}(\R^2)$ is right-hypoelliptic of order $(\mu,\mu')$ for every $\gamma \in \R$.
\end{enumerate}
\end{lemma}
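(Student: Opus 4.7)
The plan is to establish (a), (c), and (b) in that order, since (c) underwrites the reduction used in (b) and (a) supplies the parametrix identity exploited there. For (a), the kernel cut-off construction gives $q(x,\sigma) - \chi(x,\sigma)a(x,\sigma)^{-1} \in S^{-\infty}(\R^2)$ along the real line. Because $\chi$ is scalar we have $a \cdot \chi a^{-1} = \chi = \chi a^{-1} \cdot a$ pointwise, and $\chi-1$ is compactly supported, hence in $S^{-\infty}(\R^2)$. The ideal property $a \cdot S^{-\infty}(\R^2),\,S^{-\infty}(\R^2)\cdot a \subset S^{-\infty}(\R^2)$ then yields $aq-1,\,qa-1\in S^{-\infty}(\R^2)$. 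Since both expressions are holomorphic in $\sigma$, the identification $S^{-R;\vec{\ell}}(\R^2)\cap S^{\mu;\vec{\ell}}_O(\R\times\C) = S^{-R;\vec{\ell}}_O(\R\times\C)$ (noted in the excerpt via \eqref{Taylorlines}) promotes both to $S^{-\infty}_O(\R\times\C)$.

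For (c), Taylor expansion \eqref{Taylorlines} gives $a(x,\sigma+i\gamma) = \sum_{k=0}^{N-1}\tfrac{(i\gamma)^k}{k!}\partial_\sigma^k a(x,\sigma) + r_N$ with $r_N \in S^{\mu-\ell_2 N;\vec{\ell}}(\R^2)$ and symbol estimates locally uniform in $\gamma$. Multiplying on the right by $a(x,\sigma)^{-1}$ for $|x,\sigma|\gg 0$ and invoking \eqref{DerivativesInverse} (with $\alpha=0,\beta=k$) for $1\leq k\leq N-1$ together with the bound $\|r_N a^{-1}\|\lesssim \langle x,\sigma\rangle_{\vec{\ell}}^{\mu-\ell_2 N-\mu'}$ from \eqref{EstimateInverse} produces $a(x,\sigma+i\gamma)a(x,\sigma)^{-1} = 1 + T(x,\sigma,\gamma)$ with $\|T\|\to 0$ as $|x,\sigma|\to\infty$ for $N$ chosen large enough, locally uniformly in $\gamma$. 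A Neumann series then gives $a(x,\sigma+i\gamma)^{-1} = a(x,\sigma)^{-1}(1+T)^{-1}$ for $|x,\sigma|$ large, whence \eqref{EstimateInverse} for the shifted symbol. Applying the same Taylor decomposition to $D_x^\alpha\partial_\sigma^\beta a(x,\sigma+i\gamma)$ and multiplying by $a(x,\sigma+i\gamma)^{-1}$ on the right reduces the shifted version of \eqref{DerivativesInverse} to the pointwise bound
\[
\|[\partial_\sigma^k D_x^\alpha \partial_\sigma^\beta a](x,\sigma)\,a(x,\sigma)^{-1}\|\lesssim \langle x,\sigma\rangle_{\vec{\ell}}^{-\ell_1\alpha-\ell_2(\beta+k)}
\]
supplied by \eqref{DerivativesInverse} applied to the unshifted $a$.

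For (b), I reduce to $\gamma_1 = \gamma_2 = 0$ in two steps. First, Taylor expansion of $[D^{J'}q](x,\sigma+i\gamma_1)$ around $\sigma+i\gamma_2$ in powers of $i(\gamma_1-\gamma_2)$ converts the shift mismatch into further $\sigma$-derivatives of $q$ plus a remainder which, by taking the expansion length $N\geq (\mu-\mu')/\ell_2$, has order below $-\ell_1(\alpha+\alpha')-\ell_2(\beta+\beta')$; this lets me assume $\gamma_1=\gamma_2=\gamma$. Next, (a) gives $a(x,\sigma+i\gamma)q(x,\sigma+i\gamma)-1\in S^{-\infty}_O(\R\times\C)$ and by (c) the shifted symbol is itself right-hypoelliptic of order $(\mu,\mu')$, so the unshifted argument below applies verbatim to the shifted setting and lets me take $\gamma=0$. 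In this base case, decompose $q = \chi a^{-1} + s$ with $s\in S^{-\infty}(\R^2)$: the $s$-contribution is immediately $S^{-\infty}$, while for $(D^J a)(D^{J'}(\chi a^{-1}))$ derivatives landing on $\chi$ produce compactly supported terms; the principal contribution is expanded via the iterated identity $\partial a^{-1} = -a^{-1}(\partial a)a^{-1}$ into a sum of products $a^{-1}(D^{K_1}a)a^{-1}\cdots (D^{K_m}a)a^{-1}$ with $\sum_i K_i = J'$. Grouping the leading $a^{-1}$ with the outer $(D^J a)$ as $[(D^J a)\,a^{-1}]$ and each subsequent $(D^{K_i}a)$ with the $a^{-1}$ immediately to its right gives a product of $m+1$ bracketed factors, each bounded by \eqref{DerivativesInverse}; the resulting exponents of $\langle x,\sigma\rangle_{\vec{\ell}}$ add to $-\ell_1(\alpha+\alpha')-\ell_2(\beta+\beta')$. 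The same pairing scheme, applied iteratively after each further Leibniz differentiation in $(x,\sigma)$, upgrades this pointwise bound to the full symbol estimate. The principal obstacle is this combinatorial bookkeeping: the hypoellipticity estimate \eqref{DerivativesInverse} is one-sided (derivative of $a$ on the left, $a^{-1}$ on the right), so every expansion by Leibniz or Taylor must be organized so that every inner $a^{-1}$ is preceded by a derivative of $a$ before being estimated.
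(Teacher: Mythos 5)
Your proposal is correct but reverses the logical order the paper uses: the paper proves (a), then (b), and then derives (c) as a corollary of (a) and (b), observing that the estimates \eqref{EstimateInverse} and \eqref{DerivativesInverse} hold with $q(\cdot,\cdot+i\gamma)$ in place of $a(\cdot,\cdot+i\gamma)^{-1}$ as a consequence of (b), and that this transfers to $a(\cdot,\cdot+i\gamma)^{-1}$ since the two differ by a rapidly decreasing function on $\R^2$. You instead establish (c) directly via Taylor expansion \eqref{Taylorlines} plus a Neumann series, and then feed (c) into (b) to reduce $\gamma_1=\gamma_2=\gamma$ to $\gamma=0$. The paper avoids that extra reduction: once the base case $\gamma_1=\gamma_2=0$ is shown to lie in $S^{-\ell_1(\alpha+\alpha')-\ell_2(\beta+\beta');\vec{\ell}}_O(\R\times\C)$, the estimates on every line $\Im\sigma=\gamma$ are supplied automatically by the definition of the holomorphic class, so a translation argument handles $\gamma_1=\gamma_2$ without appealing to (c). Your treatment of $\gamma_1\neq\gamma_2$ (Taylor-expanding the $q$-factor about $\sigma+i\gamma_2$) is a mirror image of the paper's (which expands the $a$-factor about $\sigma+i\gamma_1$); both work. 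Where your write-up genuinely adds value is the base case of (b): the paper asserts $[D_x^{\alpha}\partial_{\sigma}^\beta a]\bigl(D_x^{\alpha'}\partial_{\sigma}^{\beta'}(\chi a^{-1})\bigr)\in S^{-\ell_1(\alpha+\alpha')-\ell_2(\beta+\beta');\vec{\ell}}(\R^2)$ ``by \eqref{DerivativesInverse}'' with no elaboration, whereas you spell out the iterated identity $\partial a^{-1}=-a^{-1}(\partial a)a^{-1}$ and the bracketing scheme that pairs every inner $a^{-1}$ with the derivative of $a$ to its left --- exactly the combinatorics needed to convert the one-sided bound \eqref{DerivativesInverse} into the claimed symbol membership. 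One point to be aware of: your ``verbatim'' reduction of $\gamma_1=\gamma_2=\gamma$ to $\gamma=0$ tacitly uses $q(x,\sigma+i\gamma)-\chi a(x,\sigma+i\gamma)^{-1}\in S^{-\infty}(\R^2)$; this does hold, as a consequence of (a) together with the symbol estimates for $\chi a(x,\sigma+i\gamma)^{-1}$ furnished by (c), but it is not automatic from the kernel cut-off construction, which was carried out only on the real line, and deserves a sentence of its own.
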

\begin{proof}
We have $q(x,\sigma) - \chi(x,\sigma)a(x,\sigma)^{-1} \in S^{-\infty}(\R^2)$, and so
$$
a(x,\sigma)q(x,\sigma) - 1 \in S^{(\mu-\mu')_+;\vec{\ell}}_O(\R\times\C)\cap S^{-\infty}(\R^2) = S^{-\infty}_O(\R\times\C)
$$
by \eqref{Taylorlines}; here $(\mu-\mu')_+ = \max\{\mu-\mu',0\}$. For the same reason we also have $q(x,\sigma)a(x,\sigma) - 1 \in S^{-\infty}_O(\R\times\C)$, proving (a).

To prove (b) we first consider $\gamma_1=\gamma_2=0$. By \eqref{DerivativesInverse} we have
$$
[D_x^{\alpha}\partial_{\sigma}^\beta a](x,\sigma)[D_x^{\alpha'}\partial_{\sigma}^{\beta'}(\chi a^{-1})](x,\sigma) \in S^{-\ell_1(\alpha+\alpha')-\ell_2(\beta+\beta');\vec{\ell}}(\R^2),
$$
and because $q - \chi a^{-1} \in S^{-\infty}(\R^2)$ we can replace $\chi a^{-1}$ by $q$. Taylor expansion \eqref{Taylorlines} then shows that
$$
[D_x^{\alpha}\partial_{\sigma}^\beta a](x,\sigma)[D_x^{\alpha'}\partial_{\sigma}^{\beta'}q](x,\sigma) \in S^{-\ell_1(\alpha+\alpha')-\ell_2(\beta+\beta');\vec{\ell}}_O(\R\times\C),
$$
and translation proves (b) for general $\gamma_1=\gamma_2$. For $\gamma_1 \neq \gamma_2$ we first use \eqref{Taylorlines} and get
$$
[D_x^{\alpha}\partial_{\sigma}^\beta a](x,\sigma+i\gamma_2) \sim \sum \limits_{k=0}^{\infty}\frac{(i\gamma_2-i\gamma_1)^k}{k!}\bigl(D_x^{\alpha}\partial_{\sigma}^{\beta+k}a\bigr)(x,\sigma + i\gamma_1).
$$
Now multiply from the right by $[D_x^{\alpha'}\partial_{\sigma}^{\beta'}q](x,\sigma+i\gamma_1)$. The resulting asymptotic expansion then shows that (b) also holds in the case $\gamma_1 \neq \gamma_2$.

Finally, (c) follows from (a) and (b). By (a), $a(x,\sigma+i\gamma)$ is invertible for sufficiently large $|x,\sigma| \gg 0$, $(x,\sigma) \in \R^2$, and the inverse differs from $q(x,\sigma+i\gamma)$ by a rapidly decreasing function in $(x,\sigma)$. Both estimates \eqref{EstimateInverse} and \eqref{DerivativesInverse} hold for $q(x,\sigma+i\gamma)$ instead of $a(x,\sigma+i\gamma)^{-1}$ and are stable with respect to rapidly decreasing perturbations. The lemma is proved.
\end{proof}

\begin{theorem}\label{ParametrixHypoelliptic}
Let $\opm(a) + G \in \Psi^{\mu;\vec{\ell}}_O$, and suppose that $a \in S^{\mu;\vec{\ell}}_O(\R\times\C)$ is right-hypoelliptic of order $(\mu,\mu')$. Then there exists $\opm(b) \in \Psi^{-\mu';\vec{\ell}}_O$ such that
$$
\bigl(\opm(a) + G\bigr)\circ \opm(b) = 1 + \tilde{G} : \dot{C}^{\infty}(\R_+) \to \dot{C}^{\infty}(\R_+)
$$
with $\tilde{G} \in \Psi^{-\infty}_O$.
\end{theorem}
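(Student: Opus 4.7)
The plan is to build $b$ by the classical symbolic Neumann series argument, starting from the initial symbolic parametrix $q(x,\sigma)$ furnished by Lemma~\ref{SymbolInverseProps} and correcting it by iteration inside the calculus, and then to absorb the smoothing remainder $G$ via Theorem~\ref{CompositionCalculus}.

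First I would take $q\in S^{-\mu';\vec{\ell}}_O(\R\times\C)$ from Lemma~\ref{SymbolInverseProps} and compute the Leibniz product $a\#q$ using the asymptotic expansion \eqref{LeibnizAsymptotics} in Theorem~\ref{CompositionCalculus}. The $k=0$ term is the pointwise product $aq$, and by Lemma~\ref{SymbolInverseProps}(a) we have $aq-1\in S^{-\infty}_O(\R\times\C)$. For $k\geq 1$, writing $(xD_x)^k$ as a finite linear combination of the operators $x^jD_x^j$ with $1\leq j\leq k$, each term in the expansion takes the form $x^j\bigl[\partial_\sigma^k a\bigr]\bigl[D_x^j q\bigr]$. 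Lemma~\ref{SymbolInverseProps}(b) places the product $[\partial_\sigma^k a][D_x^j q]$ in $S^{-\ell_1 j-\ell_2 k;\vec{\ell}}_O$, and multiplying by $x^j\in S^{\ell_1 j;\vec{\ell}}_O$ reabsorbs the $x^j$ factor and leaves an element of $S^{-\ell_2 k;\vec{\ell}}_O$. Summing in $k\geq 1$ asymptotically (which is possible by the existence of asymptotic sums in $S^{\,\cdot\,;\vec{\ell}}_O(\R\times\C)$ via the kernel cut-off construction), I obtain that
$$
r:=1-a\#q\in S^{-\ell_2;\vec{\ell}}_O(\R\times\C).
$$

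Next I would run the Neumann series inside the calculus: since iterated Leibniz products lower order by the additivity of the order, Theorem~\ref{CompositionCalculus} gives $r^{\#N}\in S^{-\ell_2 N;\vec{\ell}}_O$ for every $N\geq 1$, so there exists $s\in S^{-\ell_2;\vec{\ell}}_O(\R\times\C)$ with $s\sim\sum_{N\geq 1}r^{\#N}$. By construction $s-r-r\#s\in S^{-\infty}_O$, and then
$$
b:=q\#(1+s)\in S^{-\mu';\vec{\ell}}_O(\R\times\C)
$$
satisfies $a\#b=(1-r)\#(1+s)=1+(s-r-r\#s)\equiv 1\mod S^{-\infty}_O(\R\times\C)$.

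Finally, Theorem~\ref{CompositionCalculus} yields $(\opm(a)+G)\circ\opm(b)=\opm(a\#b)+G'$ with $G'\in\Psi^{-\infty}_O$; and writing $a\#b=1+r_\infty$ with $r_\infty\in S^{-\infty}_O$, the operator $\opm(r_\infty)$ belongs to $\Psi^{-\infty}_O$ by Lemma~\ref{ImprovedMappingProps} applied with $\mu=-N$ and $j$ arbitrary (together with its analogue for the adjoint, which exists by Theorem~\ref{AdjointsCalculus}). Setting $\tilde G=\opm(r_\infty)+G'\in\Psi^{-\infty}_O$ completes the proof. The only delicate point, and the step I expect to be the main obstacle to present cleanly, is the order-counting at $k\geq 1$ in the Leibniz expansion: without the refined estimate in Lemma~\ref{SymbolInverseProps}(b) the product $[\partial_\sigma^k a][(xD_x)^k q]$ would only sit in $S^{\mu-\mu'-\ell_2 k;\vec{\ell}}_O$, which is insufficient when $\mu>\mu'$; the hypoellipticity of $a$ is used precisely to gain the missing $\mu-\mu'$ orders and make the Neumann iteration converge asymptotically.
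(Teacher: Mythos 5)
Your proposal is correct and follows essentially the same route as the paper's proof: start from $q$ in Lemma~\ref{SymbolInverseProps}, verify $a\#q-1\in S^{-\ell_2;\vec\ell}_O$ using part~(b) of that lemma, run a formal Neumann series to obtain $b\sim q\#(1+s)$ with $a\#b\sim 1$, and finish with Theorem~\ref{CompositionCalculus}. You spell out the $k\geq 1$ order-counting (expanding $(xD_x)^k$ into $x^jD_x^j$ and reabsorbing the $x^j$ factor) and the step $\opm(r_\infty)\in\Psi^{-\infty}_O$ in more detail than the paper does; both are correct and are exactly what the paper leaves to the reader via its citation of Lemma~\ref{SymbolInverseProps}.
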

\begin{proof}
With the symbol $q(x,\sigma) \in S^{-\mu';\vec{\ell}}_O(\R\times\C)$ we have
$$
(a{\#}q)(x,\sigma) \sim \sum\limits_{k=0}^{\infty}\frac{1}{k!}[\partial_{\sigma}^ka](x,\sigma)[(xD_x)^kq](x,\sigma) = 1 + r(x,\sigma)
$$
with $r(x,\sigma) \in S^{-\ell_2;\vec{\ell}}_O(\R\times\C)$ by Lemma~\ref{SymbolInverseProps}. Now apply the formal Neumann series argument and get $r'(x,\sigma) \in S^{-\ell_2;\vec{\ell}}_O(\R\times\C)$ such that $(1+r){\#}(1+r') \sim 1$. Let $b(x,\sigma) \in S^{-\mu';\vec{\ell}}_O(\R\times\C)$ with $b(x,\sigma) \sim q{\#}(1+r')$. Then $a{\#}b \sim 1$, and Theorem~\ref{CompositionCalculus} implies the assertion.
\end{proof}

Analogous considerations apply to left-hypoellipticity, but left-hypoellipticity is not used in this work.


\section{An indefinite space arising in analytic Fredholm theory}\label{ModelPairing}

\begin{definition}[\protect{\cite[Section~4.5]{GohbergLancasterRodman}}]
Let $\bigl(V,[\cdot,\cdot]_V\bigr)$ and $\bigr(W,[\cdot,\cdot]_W\bigr)$ be finite-di\-men\-sio\-nal $\C$-vector spaces equipped with Hermitian sesquilinear forms $[\cdot,\cdot]_V$ and $[\cdot,\cdot]_W$, respectively. Let further $g \in \End(V)$ and $h \in \End(W)$. Then the triples $\bigl(V,[\cdot,\cdot]_V,g\bigr)$ and $\bigl(W,[\cdot,\cdot]_W,h\bigr)$ are unitarily equivalent if there exists an isomorphism $T : V \to W$ such that both
$$
[Tu,Tv]_W = [u,v]_V \; \forall u,v \in V \textup{ and } h = TgT^{-1}
$$
hold.
\end{definition}

The Canonical Form Theorem \cite[Theorem~5.1.1]{GohbergLancasterRodman} classifies triples $\bigl(V,[\cdot,\cdot]_V,g\bigr)$ for $g \in \End(V)$ that are selfadjoint with respect to nondegenerate $[\cdot,\cdot]_V$ up to unitary equivalence. The generalized eigenspaces $\E_{\sigma_0} \subset V$ of $g$ associated with real eigenvalues $\sigma_0 \in \R$ are particularly interesting. By the Canonical Form Theorem we can localize to $\bigl(\E_{\sigma_0},[\cdot,\cdot]_V,g-\sigma_0\bigr)$ to further study these spaces. In this appendix and Appendix~\ref{AnalyticCrossingsFiniteDimensional} we discuss an indefinite space from analytic Fredholm theory that arises as unitarily equivalent to $\bigl(\E_{\sigma_0},[\cdot,\cdot]_V,g-\sigma_0\bigr)$ for indicial operators. We refer to \cite{EidamPiccione,FarberLevine,GiamboPiccionePortaluri,GilMendoza} for related investigations and results. Information about the general theory of Fredholm operator pencils and applications to differential equations can be found in \cite{KozlovMazya}.

\bigskip

Suppose $E_0$ and $E_1$ are separable complex Hilbert spaces such that $E_1 \hookrightarrow E_0$ is densely and continuously embedded, and let
$$
\P : B_{\varepsilon}(0) \to \L(E_1,E_0)
$$
be a holomorphic operator function defined on the open disk $B_{\varepsilon}(0) \subset \C$ for some $\varepsilon > 0$. We assume that $\P(\sigma)$ is Fredholm for all $\sigma \in B_{\varepsilon}(0)$ and invertible for all $\sigma \neq 0$. We consider each operator $\P(\sigma) : E_1 \subset E_0 \to E_0$ an unbounded operator acting in $E_0$ with domain $E_1$. Under the stated assumptions, $\P(\sigma)$ is closed and densely defined. We assume that
$$
\P(\overline{\sigma})^* = \P(\sigma) : E_1 \subset E_0 \to E_0
$$
holds. In particular, $\P(\sigma)$ is selfadjoint for real $\sigma$. We denote the set of germs of such operator functions $\P(\sigma)$ by $\Germs(E_1,E_0)$.

With every $\P(\sigma) \in \Germs(E_1,E_0)$ we associate its spectral flow across $\sigma = 0$ as follows: Pick $\varepsilon_0, \delta_0 > 0$ small enough such that $\P(\sigma) - \lambda : E_1 \to E_0$ is Fredholm for $-\delta_0 \leq \sigma \leq \delta_0$ and $-\varepsilon_0 \leq \lambda \leq \varepsilon_0$, and such that $\P(\sigma) \pm \varepsilon_0 : E_1 \to E_0$ is invertible for $-\delta_0 \leq \sigma \leq \delta_0$. Then
$$
\SF_{\sigma=0}[\P(\sigma) : E_1 \subset E_0 \to E_0] = \lim\limits_{\sigma \to 0^+}\Bigl(\tr\bigl[\Pi_{[0,\varepsilon_0)}(\P(\sigma))\bigr] - \tr\bigl[\Pi_{[0,\varepsilon_0)}(\P(-\sigma))\bigr]\Bigr),
$$
where $\Pi_{[0,\varepsilon_0)}$ in each case is the spectral projection onto the spectral subspace of the operator pertaining to the part of the spectrum contained in $[0,\varepsilon_0)$. We refer to the survey by Lesch \cite{LeschSF} for details on the spectral flow.

For any Fr{\'e}chet space $F$ let $\Mero_0(F)$ denote the space of meromorphic germs of $F$-valued functions at $0$, and let $\Hol_0(F)$ denote the holomorphic germs. Then $\bigl(\Mero/\Hol\bigr)_0(F):= \Mero_0(F)/\Hol_0(F)$ can be identified with the space of principal parts of Laurent expansions at $0$ of $F$-valued meromorphic functions. The operator function $\P(\sigma) \in \Germs(E_1,E_0)$ induces a map
\begin{gather*}
\P : \bigl(\Mero/\Hol\bigr)_0(E_1) \to \bigl(\Mero/\Hol\bigr)_0(E_0), \\
\P\bigl[\hat{u}(\sigma) + \Hol_0(E_1)\bigr] = \P(\sigma)\hat{u}(\sigma) + \Hol_0(E_0)
\end{gather*}
for $\hat{u}(\sigma) + \Hol_0(E_1) \in \bigl(\Mero/\Hol\bigr)_0(E_1)$. Define
$$
\K(\P) = \{\hat{u}(\sigma) + \Hol_0(E_1) \in \bigl(\Mero/\Hol\bigr)_0(E_1);\; \P(\sigma)\hat{u}(\sigma) \in \Hol_0(E_0)\}.
$$
Then $\K(\P) = \ker\bigl[\P : \bigl(\Mero/\Hol\bigr)_0(E_1) \to \bigl(\Mero/\Hol\bigr)_0(E_0)\bigr]$, and by analytic Fredholm theory we have $\dim \K(\P) < \infty$. We define a pairing
\begin{gather*}
[\cdot,\cdot]_{\K(\P)} : \K(\P) \times \K(\P) \to \C \\
\intertext{via}
[\hat{u},\hat{v}]_{\K(\P)} = \frac{1}{2\pi i}\oint_C \langle \P(\sigma)\hat{u}(\sigma),\hat{v}(\overline{\sigma})\rangle_{E_0}\,d\sigma = \res\limits_{\sigma=0}\langle \P(\sigma)\hat{u}(\sigma),\hat{v}(\overline{\sigma})\rangle_{E_0},
\end{gather*}
where $C$ is a positively oriented circle of sufficiently small radius centered at the origin. It is easy to see that $[\cdot,\cdot]_{\K(\P)}$ is well-defined (i.e. independent of representatives $\hat{u}$ and $\hat{v}$ modulo holomorphic germs), and that it furnishes a Hermitian sesquilinear form on $\K(\P)$. We then consider the triple $\bigl(\K(\P),[\cdot,\cdot]_{\K(\P)},M_{\sigma}\bigr)$, where
$$
M_{\sigma} : \K(\P) \to \K(\P), \quad \hat{u}(\sigma) + \Hol_0(E_1) \mapsto \sigma\hat{u}(\sigma) + \Hol_0(E_1).
$$
Note that $M_{\sigma}$ is selfadjoint with respect to $[\cdot,\cdot]_{\K(\P)}$.

For $\ell \in \N$ we also consider the Hermitian sesquilinear forms
\begin{equation*}
\begin{gathered}{}
[\cdot,\cdot]_{\K(\P),\ell} : \ker M_{\sigma}^{\ell}\times\ker M_{\sigma}^{\ell} \to \C, \\
[\hat{u},\hat{v}]_{\K(\P),\ell} = [M_{\sigma}^{\ell-1}\hat{u},\hat{v}]_{\K(\P)}.
\end{gathered}
\end{equation*}
Let $(m_0(\ell),m_+(\ell),m_-(\ell))$ be the invariants of $[\cdot,\cdot]_{\K(\P),\ell}$. The numbers $m_+(\ell)$ and $m_-(\ell)$ yield the sign characteristic for $M_{\sigma}$ associated to Jordan blocks of size $\ell\times\ell$ in the Canonical Form Theorem \cite[Theorem~5.1.1]{GohbergLancasterRodman} for $\bigl(\K(\P),[\cdot,\cdot]_{\K(\P)},M_{\sigma}\bigr)$, see \cite[Theorem~5.8.1]{GohbergLancasterRodman} and Proposition~\ref{SignCharacteristic}. The main theorem regarding this triple is the following.

\begin{theorem}\label{Nondeg}
$\bigl(\K(\P),[\cdot,\cdot]_{\K(\P)}\bigr)$ is nondegenerate, and
$$
\sig\bigl(\K(\P),[\cdot,\cdot]_{\K(\P)}\bigr) = \sum\limits_{\ell \; \textup{odd}}\bigl(m_+(\ell)-m_-(\ell)\bigr) = \SF_{\sigma=0}\bigl[\P(\sigma) : E_1 \subset E_0 \to E_0\bigr],
$$
where $m_{\pm}(\ell)$ are the invariants associated with the triple $\bigl(\K(\P),[\cdot,\cdot]_{\K(\P)},M_{\sigma}\bigr)$.
\end{theorem}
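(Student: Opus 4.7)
The plan is to establish the three assertions in sequence: nondegeneracy of the pairing, the algebraic signature formula, and identification with the spectral flow. I would first verify that $M_\sigma$ is selfadjoint with respect to $[\cdot,\cdot]_{\K(\P)}$. This is a direct residue computation: using the fact that $(M_\sigma \hat v)(\overline\sigma) = \overline\sigma\, \hat v(\overline\sigma)$ together with the conjugate-linearity of the inner product in its second slot yields $[M_\sigma \hat u, \hat v]_{\K(\P)} = \res_{\sigma=0}\, \sigma\, \langle \P(\sigma)\hat u(\sigma), \hat v(\overline\sigma)\rangle_{E_0} = [\hat u, M_\sigma \hat v]_{\K(\P)}$. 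For nondegeneracy, the plan is to invoke analytic Fredholm theory: $\P(\sigma)^{-1} \in \Mero_0(\L(E_0, E_1))$, and its polar structure at $0$ encodes a natural dual basis of $\K(\P)$. Assuming $\hat u \in \K(\P)$ pairs trivially with all of $\K(\P)$, I would represent arbitrary $\hat v \in \K(\P)$ via $\P^{-1}$ applied to holomorphic test germs, then use the symmetry $\P(\overline\sigma)^* = \P(\sigma)$ and Laurent-coefficient extraction to force the principal part of $\hat u$ to vanish term by term.

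Once nondegeneracy and the selfadjointness of $M_\sigma$ are in hand, the triple $\bigl(\K(\P), [\cdot,\cdot]_{\K(\P)}, M_\sigma\bigr)$ satisfies the hypotheses of the Canonical Form Theorem \cite[Theorem~5.1.1]{GohbergLancasterRodman}. Because $M_\sigma$ has $0$ as its only eigenvalue, all Jordan blocks are attached there, and Proposition~\ref{SignCharacteristic} identifies $(m_+(\ell), m_-(\ell))$ as the sign characteristic for blocks of size $\ell \times \ell$. The algebraic signature formula $\sig\bigl(\K(\P),[\cdot,\cdot]_{\K(\P)}\bigr) = \sum_{\ell \text{ odd}} (m_+(\ell) - m_-(\ell))$ then follows exactly as in the proof of Theorem~\ref{SelfadjointExtensions}(1): even Jordan blocks admit a Lagrangian half (spanned by the first half of the Jordan basis) and contribute zero, while each $(2k+1)\times(2k+1)$ block with sign $\pm 1$ contributes $\pm 1$ to the signature in view of the spectrum of the SIP matrix $(\delta_{\mu,2k+2-\nu})$.

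The most substantive step is identifying this quantity with the spectral flow, which I would carry out via the analytic reduction developed in Appendix~\ref{AnalyticCrossingsFiniteDimensional}. The strategy is to reduce $\P(\sigma)$ near $\sigma = 0$ to a direct sum of elementary selfadjoint model pencils by an analytic selfadjoint change of bases. Concretely, I would construct holomorphic families of invertible operators $F(\sigma) : E_1 \to E_1$ and $E(\sigma) : E_0 \to E_0$ satisfying $E(\overline\sigma)^* = F(\sigma)^{-1}$, so that $\widetilde \P(\sigma) = E(\sigma)\P(\sigma)F(\sigma)$ retains $\widetilde \P(\overline\sigma)^* = \widetilde \P(\sigma)$ and decomposes as a piece invertible at $\sigma = 0$ (contributing nothing to either side) plus finitely many model blocks: rank-one pencils $\sigma^\ell \epsilon$ with $\epsilon \in \{\pm 1\}$ and $2\times 2$ antidiagonal pieces with entries $\sigma^\ell$. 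On each such block both the spectral flow contribution and the algebraic signature contribution are immediate by inspection and agree; additivity over the decomposition then yields the theorem. The hard part will be executing this analytic selfadjoint reduction in the unbounded Hilbert-space setting while simultaneously tracking the effect on $\K(\P)$ and its residue pairing; this is accomplished by splitting off the closed-range complement on which $\P(0)$ is invertible and invoking analytic perturbation theory for finite-dimensional selfadjoint pencils.
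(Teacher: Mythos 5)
Your proposal follows essentially the same route as the paper's proof: a Schur-complement block-triangular congruence that splits off the invertible part of $\P(0)$ and reduces everything to the finite-dimensional germ on $\ker\P(0)$, followed by a further reduction to elementary selfadjoint model blocks on which both sides are computed by inspection. Two small points are worth flagging, and one point is a genuine improvement.

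First, the genuine improvement: your direct argument for nondegeneracy is cleaner than the paper's, which obtains it only as a byproduct of the full reduction machinery in Theorem~\ref{CrossingsFlow}. As you indicate, every class in $\K(\P)$ has a representative $\P(\sigma)^{-1}w(\sigma)$ with $w \in \Hol_0(E_0)$, and for such a representative one checks, using $\P(\overline\sigma)^{-1} = [\P(\sigma)^{-1}]^*$, that
$$
[\hat{u}, \P^{-1}w]_{\K(\P)} = \res_{\sigma = 0}\,\langle \hat{u}(\sigma), w(\overline\sigma) \rangle_{E_0}.
$$
If the left side vanishes for all holomorphic $w$, choosing $w(\sigma) = \sigma^k e$ for $k \geq 0$, $e \in E_0$, kills the Laurent coefficients of $\hat{u}$ one by one, so $\hat{u} \in \Hol_0(E_1)$ and thus $\hat{u} = 0$ in $\K(\P)$. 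This is a nice shortcut.

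Second, the stated compatibility condition $E(\overline\sigma)^* = F(\sigma)^{-1}$ is incorrect: with that condition $\widetilde\P(\sigma) = E(\sigma)\P(\sigma)F(\sigma)$ would generally fail to satisfy $\widetilde\P^\star = \widetilde\P$, so neither the spectral flow nor the Hermitian pairing on $\K(\widetilde\P)$ would be defined. What is needed is the congruence relation $E(\sigma) = F(\overline\sigma)^*$ (equivalently $E(\overline\sigma)^* = F(\sigma)$), i.e.\ $\widetilde\P = F^\star\P F$ with $F^\star(\sigma) = F(\overline\sigma)^*$. This is what the paper uses (see $\U_t(\overline\sigma)^*\P(\sigma)\U_t(\sigma)$ in its proof), and what Proposition~\ref{InvarianceCrossingSpace} requires. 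Since the rest of your text shows you are aiming for this, it is most likely a slip, but it is the kind of slip that would derail a literal execution of the plan.

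Third, the $2\times 2$ antidiagonal model pieces are not needed. Because $\P^\star = \P$ and $M_\sigma$ is nilpotent with sole eigenvalue $0$, the finite-dimensional reduction (Proposition~\ref{reductionprojections}) lands exactly in an orthogonal direct sum of the rank-one pencils $\pm\sigma^\ell$; there is no residual antidiagonal block. (Any would-be antidiagonal $\begin{bmatrix} 0 & \sigma^\ell \\ \sigma^\ell & 0 \end{bmatrix}$ is constant-congruent to $\sigma^\ell \oplus (-\sigma^\ell)$ anyway, so it adds nothing to the list of models.) Restricting attention to the $\pm\sigma^\ell$ pieces gives the cleaner version of the final computation, which is what Theorem~\ref{CrossingsFlow} carries out explicitly.
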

\begin{proof}
We will prove the theorem by reducing it to the finite-dimensional case, considered separately in Appendix~\ref{AnalyticCrossingsFiniteDimensional}. To this end, let $N = \ker(\P(0))$ and $R = \ran(\P(0))$. We have the orthogonal decomposition $E_0 = N \oplus R$. Let $\pi_N=\pi_N^* = \pi_N^2 \in \L(E_0)$ be the orthogonal projection onto $N$, and  let $\pi_R = 1-\pi_N$. Because $\dim N < \infty$ both $\pi_N,\pi_R \in \L(E_1)$ by the Closed Graph Theorem, and we get $E_1 = N \oplus [R\cap E_1]$. We decompose
$$
\P(\sigma) = \begin{bmatrix} \P_{11}(\sigma) & \P_{12}(\sigma) \\ \P_{21}(\sigma) & \P_{22}(\sigma) \end{bmatrix} : \begin{array}{c} N \\ \oplus \\ R\cap E_1 \end{array} \to \begin{array}{c} N \\ \oplus \\ R \end{array},
$$
and note that $\P_{22}(\sigma)$ is invertible for $|\sigma| < \varepsilon$ for $\varepsilon > 0$ small enough.

Now, for $0 \leq t \leq 1$, define $\U_t(\sigma) : E_j \to E_j$ via
$$
\U_t(\sigma) = \begin{bmatrix} 1 & 0 \\ -t\P_{22}(\sigma)^{-1}\P_{21}(\sigma) & 1 \end{bmatrix} :
\begin{array}{c} N \\ \oplus \\ R\cap E_j\end{array} \to \begin{array}{c} N \\ \oplus \\ R\cap E_j \end{array}, \quad j=1,2.
$$
Then $\U_t(\sigma) : B_{\varepsilon}(0) \to \L(E_j)$ is holomorphic and invertible, and the same holds for
$$
\U_t(\overline{\sigma})^* = \begin{bmatrix} 1 & -t\P_{12}(\sigma)\P_{22}(\sigma)^{-1} \\ 0 & 1 \end{bmatrix} :
\begin{array}{c} N \\ \oplus \\ R\cap E_j \end{array} \to \begin{array}{c} N \\ \oplus \\ R\cap E_j \end{array},
$$
where the adjoint refers to the base Hilbert space $E_0$. Then
$$
\P_t(\sigma) = \U_t(\overline{\sigma})^*\P(\sigma)\U_t(\sigma) \in \Germs(E_1,E_0)
$$
is a homotopy within $\Germs(E_1,E_0)$ between $\P_0(\sigma) = \P(\sigma)$ and
$$
\P_1(\sigma) = \begin{bmatrix} \P_{11}(\sigma) - \P_{12}(\sigma)\P_{22}(\sigma)^{-1}\P_{21}(\sigma) & 0 \\ 0 & \P_{22}(\sigma) \end{bmatrix}.
$$
By the homotopy invariance of the spectral flow we have
$$
\SF_{\sigma=0}[\P_1(\sigma) : E_1 \subset E_0 \to E_0] = \SF_{\sigma=0}[\P(\sigma) : E_1 \subset E_0 \to E_0].
$$
Moreover, the map
$$
\hat{u}(\sigma) + \Hol_0(E_1) \longmapsto \U_1(\sigma)\hat{u}(\sigma) + \Hol_0(E_1)
$$
furnishes a unitary equivalence
$$
\bigl(\K(\P_1),[\cdot,\cdot]_{\K(\P_1)},M_{\sigma}\bigr) \cong \bigl(\K(\P),[\cdot,\cdot]_{\K(\P)},M_{\sigma}\bigr).
$$
It therefore suffices to prove the theorem for $\P_1(\sigma)$ instead of $\P(\sigma)$. Define
$$
p(\sigma):= \P_{11}(\sigma) - \P_{12}(\sigma)\P_{22}(\sigma)^{-1}\P_{21}(\sigma) \in \Germs(N),
$$
so $\P_1(\sigma) = \begin{bmatrix} p(\sigma) & 0 \\ 0 & \P_{22}(\sigma) \end{bmatrix}$. In view of the invertibility of $\P_{22}(\sigma)$ we have both
\begin{align*}
\SF_{\sigma=0}[\P_1(\sigma) : E_1 \subset E_0 \to E_0] &= \SF_{\sigma=0}[p(\sigma) : N \to N] \\
\intertext{and}
\bigl(\K(\P_1),[\cdot,\cdot]_{\K(\P_1)},M_{\sigma}\bigr) &\cong \bigl(\K(p),[\cdot,\cdot]_{\K(p)},M_{\sigma}\bigr),
\end{align*}
where the latter unitary equivalence is induced by projection onto $N$. The theorem is therefore reduced to considering the finite-dimensional case for $p(\sigma) \in \Germs(N)$, and an application of Theorem~\ref{CrossingsFlow} thus finishes the proof.
\end{proof}

We conclude with the following theorem about semibounded operators.

\begin{theorem}\label{Positivity}
Suppose $\P(\sigma) \in \Germs(E_1,E_0)$ satisfies $\P(\sigma) \geq 0$ for $\sigma$ real. Then the following hold:
\begin{enumerate}
\item $m_+(\ell) = m_-(\ell) = 0$ for $\ell$ odd. The canonical form for $M_{\sigma}$ does not contain any Jordan blocks of odd sizes, and $\sig\bigl(\K(\P),[\cdot,\cdot]_{\K(\P)}\bigr) = 0$.
\item The sign characteristic for the triple $\bigl(\K(\P),[\cdot,\cdot]_{\K(\P)},M_{\sigma}\bigr)$ does not contain any negative terms, i.e., we also have $m_-(\ell) = 0$ for $\ell$ even. 
\item There exists a unique Lagrangian subspace of $\K(\P)$ that is invariant under $M_{\sigma}$, denoted by $\K(\P)_{\frac{1}{2}}$.  Specifically, if
$$
\K(\P) = \bigoplus\limits_{j=1}^N U_j
$$
according to the Canonical Form Theorem with mutually $[\cdot,\cdot]_{\K(\P)}$-ortho\-gonal direct summands, and each $U_j$ is associated to a single Jordan block of $M_{\sigma}$ of size $(2n_j)\times(2n_j)$, then
$$
\K(\P)_{\frac{1}{2}} = \bigoplus\limits_{j=1}^N \Bigl[U_j \cap \ker M_{\sigma}^{n_j}\Bigr].
$$
\end{enumerate}
\end{theorem}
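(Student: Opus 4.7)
The plan is to analyze, for each $\ell \in \N$ and each $\hat{u} \in \K(\P)$ with $\hat u \in \ker M_\sigma^\ell$, the scalar function
$$
\phi(\sigma) = \langle \P(\sigma)\hat{u}(\sigma),\hat{u}(\overline\sigma)\rangle_{E_0}.
$$
Writing the principal part $\hat{u}(\sigma) = \sum_{j=1}^{\ell} u_j\sigma^{-j}$ modulo $\Hol_0(E_1)$ and expanding $\P(\sigma) = \sum_{n\geq 0}p_n\sigma^n$, conjugate linearity in the second slot together with the identity $\overline{\overline\sigma^{-k}} = \sigma^{-k}$ yields
$$
\phi(\sigma) = \sum_{j,k\geq 1,\, n\geq 0}\langle p_n u_j, u_k\rangle_{E_0}\,\sigma^{n-j-k}.
$$
First I would verify that $\phi$ is a genuine meromorphic germ in $\sigma$ with pole of order at most $\ell$ at the origin, the bound coming from the fact that $\P(\sigma)\hat{u}(\sigma) \in \Hol_0(E_0)$. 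I would then observe that the symmetry $\P(\overline\sigma)^* = \P(\sigma)$ forces $p_n^* = p_n$ for every $n$, so that the Laurent coefficients of $\phi$ are real. For real $\sigma$ one has $\hat{u}(\overline\sigma) = \hat{u}(\sigma)$, so $\phi(\sigma) = \langle \P(\sigma)\hat{u}(\sigma),\hat{u}(\sigma)\rangle_{E_0} \geq 0$ by the positivity hypothesis.

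The next step is an elementary asymptotic analysis of a real Laurent series $\phi(\sigma) = \sum_{n\geq -N}a_n\sigma^n$ that is nonnegative for real $\sigma \neq 0$ near $0$: comparing the dominant behavior as $\sigma \to 0^\pm$ forces the actual pole order $N$, if positive, to be even with $a_{-N} > 0$. This is the crux. Combined with
$$
[M_\sigma^{\ell-1}\hat{u},\hat{u}]_{\K(\P),\ell} = \res\limits_{\sigma=0}\sigma^{\ell-1}\phi(\sigma) = a_{-\ell},
$$
I would conclude as follows. For odd $\ell$ the pole order, being bounded by $\ell$ and forced to be even, is at most $\ell - 1$, so $a_{-\ell} = 0$; the Hermitian form $[\cdot,\cdot]_{\K(\P),\ell}$ vanishes on the diagonal, and therefore vanishes identically by polarization, giving $m_+(\ell) = m_-(\ell) = 0$. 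For even $\ell$ the pole order is either strictly less than $\ell$ (so $a_{-\ell} = 0$) or equal to $\ell$ (so $a_{-\ell} > 0$); in either case $a_{-\ell} \geq 0$, the form $[\cdot,\cdot]_{\K(\P),\ell}$ is positive semidefinite, and $m_-(\ell) = 0$. This settles parts (1) and (2).

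For part (3), the vanishing statements just established are precisely the sign condition of Definition~\ref{SignCondition} applied to the triple $\bigl(\K(\P),[\cdot,\cdot]_{\K(\P)},M_\sigma\bigr)$ at the eigenvalue $0$ of $M_\sigma$, so \cite[Theorem~5.12.4]{GohbergLancasterRodman} provides a unique $M_\sigma$-invariant Lagrangian subspace, exactly as used in Section~\ref{FriedrichsAbstract}. The explicit formula $\K(\P)_{\frac{1}{2}} = \bigoplus_{j} \bigl[U_j \cap \ker M_\sigma^{n_j}\bigr]$ then follows directly from the SIP form of the pairing on each Jordan block $U_j$ of size $2n_j$: the span of the first $n_j$ basis vectors in the Jordan chain is $M_\sigma$-stable, is annihilated by the SIP pairing (since its anti-diagonal pairing indices all exceed $n_j$), and summing over $j$ produces the required dimension $\tfrac{1}{2}\dim\K(\P)$.

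The main obstacle is the bookkeeping needed to establish that $\phi$ is genuinely meromorphic in $\sigma$ with real Laurent coefficients, rather than some mixed holomorphic and antiholomorphic object; once this is in place, the parity-and-sign asymptotic analysis is essentially immediate. Notably, this pairing-level approach works directly in the infinite-dimensional setting and requires neither the Schur-complement reduction used in the proof of Theorem~\ref{Nondeg} nor any Rellich--Kato diagonalization of $\P(\sigma)$.
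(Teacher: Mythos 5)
Your argument is correct and follows essentially the same approach as the paper's: both identify $[\hat u,\hat u]_{\K(\P),\ell}$ with the coefficient of $\sigma^{-\ell}$ in the real-valued, nonnegative function $\sigma\mapsto\langle\P(\sigma)\hat u(\sigma),\hat u(\sigma)\rangle_{E_0}$ for real $\sigma$, and then exploit the parity of $\ell$ together with the positivity hypothesis to conclude that the residue vanishes ($\ell$ odd) or is nonnegative ($\ell$ even), with part (3) in both treatments deduced from \cite[Theorem~5.12.4]{GohbergLancasterRodman} once the sign condition is in place. The paper phrases the key step more economically as the one-sided limit $\lim_{\sigma\to 0^\pm,\,\sigma\text{ real}}\sigma^\ell\langle\P(\sigma)\hat u(\sigma),\hat u(\sigma)\rangle_{E_0}$, bypassing the explicit Laurent-series bookkeeping and the auxiliary parity lemma you isolate, but the two arguments are in substance identical.
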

\begin{proof}
Let $\ell \in \N$, and let $\hat{u}(\sigma) + \Hol_0(E_1) \in \ker M_{\sigma}^{\ell}$ be arbitrary. Then
$$
[\hat{u},\hat{u}]_{\K(\P),\ell} = [\hat{u},M_{\sigma}^{\ell-1}\hat{u}]_{\K(\P)} = \res\limits_{\sigma=0}\langle \P(\sigma)\hat{u}(\sigma),\overline{\sigma}^{\ell-1}\hat{u}(\overline{\sigma})\rangle_{E_0}.
$$
We have $\P(\sigma)\hat{u}(\sigma),\, \sigma^{\ell}\hat{u}(\sigma) \in \Hol_0(E_0)$, and consequently
$$
\res\limits_{\sigma=0}\langle \P(\sigma)\hat{u}(\sigma),\overline{\sigma}^{\ell-1}\hat{u}(\overline{\sigma})\rangle_{E_0} = \lim\limits_{\substack{\sigma \to 0 \\ \sigma\, \textup{real}}} \sigma^{\ell}\langle \P(\sigma)\hat{u}(\sigma),\hat{u}(\sigma)\rangle_{E_0}.
$$
Note that $\langle \P(\sigma)\hat{u}(\sigma),\hat{u}(\sigma)\rangle_{E_0} \geq 0$ for real $\sigma \neq 0$ by assumption. For odd $\ell$ we thus get
\begin{align*}
\res\limits_{\sigma=0}\langle \P(\sigma)\hat{u}(\sigma),\overline{\sigma}^{\ell-1}\hat{u}(\overline{\sigma})\rangle_{E_0} &= \lim\limits_{\sigma \to 0^+} \sigma^{\ell}\langle \P(\sigma)\hat{u}(\sigma),\hat{u}(\sigma)\rangle_{E_0} \geq 0, \\
\res\limits_{\sigma=0}\langle \P(\sigma)\hat{u}(\sigma),\overline{\sigma}^{\ell-1}\hat{u}(\overline{\sigma})\rangle_{E_0} &= \lim\limits_{\sigma \to 0^-} \sigma^{\ell}\langle \P(\sigma)\hat{u}(\sigma),\hat{u}(\sigma)\rangle_{E_0} \leq 0,
\end{align*}
and so $[\hat{u},\hat{u}]_{\K(\P),\ell} = 0$, while for even $\ell$ we have
$$
\res\limits_{\sigma=0}\langle \P(\sigma)\hat{u}(\sigma),\overline{\sigma}^{\ell-1}\hat{u}(\overline{\sigma})\rangle_{E_0} = \lim\limits_{\substack{\sigma \to 0 \\ \sigma\, \textup{real}}} \sigma^{\ell}\langle \P(\sigma)\hat{u}(\sigma),\hat{u}(\sigma)\rangle_{E_0} \geq 0,
$$
and so $[\hat{u},\hat{u}]_{\K(\P),\ell} \geq 0$. This proves the first two assertions of the theorem. The third assertion now follows from \cite[Theorem~5.12.4]{GohbergLancasterRodman}.
\end{proof}


\section{Analytic crossings and spectral flow in finite-dimensional spaces}\label{AnalyticCrossingsFiniteDimensional}

\noindent
Let $F$ be a complex finite-dimensional Hilbert space. For a holomorphic operator function $p(\sigma) : B_{\varepsilon}(0) \to \L(F)$, where $\varepsilon > 0$ is sufficiently small, we define its adjoint via $p^{\star}(\sigma):= [p(\overline{\sigma})]^* : B_{\varepsilon}(0) \to \L(F)$ and note that it depends holomorphically on $\sigma$. By $\Germs(F)$ we denote the collection of all germs of holomorphic $\L(F)$-valued functions $p(\sigma)$ defined near $\sigma=0$ such that for some sufficiently small $\varepsilon > 0$ the following two properties hold:
\begin{itemize}
\item $p(\sigma) : B_{\varepsilon}(0) \to \L(F)$ is holomorphic and invertible for $\sigma \neq 0$.
\item $p$ is selfadjoint in the sense that $p^{\star} = p$ as operator functions on $B_{\varepsilon}(0)$.
\end{itemize}
In other words, for real $-\varepsilon < \sigma < \varepsilon$, the operator $p(\sigma) : F \to F$ is selfadjoint and invertible for $\sigma \neq 0$. Thus crossings of negative to positive eigenvalues (or vice versa) may occur at $\sigma = 0$ only, while $p(\sigma)$ is analytic near $\sigma = 0$. The spectral flow of $p(\sigma)$ across $\sigma = 0$ is
$$
\SF_{\sigma=0}[p(\sigma) : F \to F] = \lim\limits_{\sigma \to 0^+}\Bigl(\tr[\Pi_+(p(\sigma))] - \tr[\Pi_+(p(-\sigma))]\Bigr),
$$
where $\Pi_+$ in each case denotes the spectral projection onto the span of the eigenspa\-ces associated with positive eigenvalues of the selfadjoint operator $p(\sigma)$ or $p(-\sigma)$, respectively.

As in Appendix~\ref{ModelPairing}, every $p(\sigma) \in \Germs(F)$ induces a map
\begin{gather*}
p : \bigl(\Mero/\Hol\bigr)_0(F) \to \bigl(\Mero/\Hol\bigr)_0(F), \\
p\bigl[\hat{v}(\sigma) + \Hol_0(F)\bigr] = p(\sigma)\hat{v}(\sigma) + \Hol_0(F)
\end{gather*}
for $\hat{v}(\sigma) + \Hol_0(F) \in \bigl(\Mero/\Hol\bigr)_0(F)$. Define
\begin{align*}
\K(p) &= \ker\bigl[p : \bigl(\Mero/\Hol\bigr)_0(F) \to \bigl(\Mero/\Hol\bigr)_0(F)\bigr] \\
&= \{\hat{v}(\sigma) + \Hol_0(F) \in \bigl(\Mero/\Hol\bigr)_0(F);\; p(\sigma)\hat{v}(\sigma) \in \Hol_0(F)\}.
\end{align*}
Note that $\dim\K(p) < \infty$. We define a pairing $[\cdot,\cdot]_{\K(p)} : \K(p) \times \K(p) \to \C$ via
$$
[\hat{v},\hat{w}]_{\K(p)} = \frac{1}{2\pi i}\oint_C \langle p(\sigma)\hat{v}(\sigma),\hat{w}(\overline{\sigma})\rangle_{F}\,d\sigma = \res\limits_{\sigma=0}\langle p(\sigma)\hat{v}(\sigma),\hat{w}(\overline{\sigma})\rangle_{F},
$$
where $C$ is any positively oriented circle centered at $\sigma = 0$ of sufficiently small radius. This is a Hermitian sesquilinear form on $\K(p)$. We thus associate with every $p(\sigma) \in \Germs(F)$ the triple $\bigl(\K(p),[\cdot,\cdot]_{\K(p)},M_{\sigma}\bigr)$, where
\begin{gather*}
M_{\sigma} : \K(p) \to \K(p),  \\
M_{\sigma} : \hat{v}(\sigma) + \Hol_0(F) \mapsto \sigma\hat{v}(\sigma) + \Hol_0(F).
\end{gather*}
Note that $M_{\sigma} \in \L(\K(p))$ is nilpotent and selfadjoint with respect to $[\cdot,\cdot]_{\K(p)}$.

\begin{definition}
For $p,q \in \Germs(F)$ we write $p \sim_s q$ if there exists $\varepsilon > 0$ and an \emph{invertible} holomorphic operator function $u(\sigma) : B_{\varepsilon}(0) \to \L(F)$ such that $u^{\star}qu = p$ on $B_{\varepsilon}(0)$, i.e., $[u(\overline{\sigma})]^*q(\sigma)u(\sigma) = p(\sigma) : F \to F$ for $\sigma \in B_{\varepsilon}(0)$.
\end{definition}

Note that $\sim_s$ is an equivalence relation on the set $\Germs(F)$.

\begin{proposition}\label{InvarianceCrossingSpace}
For $p,q \in \Germs(F)$ with $p \sim_s q$ the triples $\bigl(\K(p),[\cdot,\cdot]_{\K(p)},M_{\sigma}\bigr)$ and $\bigl(\K(q),[\cdot,\cdot]_{\K(q)},M_{\sigma}\bigr)$ are unitarily equivalent. More precisely, if $u^{\star}qu = p$ with the holomorphic and invertible function $u(\sigma) : B_{\varepsilon}(0) \to \L(F)$, then
$$
T : \K(p) \to \K(q),\quad \hat{v}(\sigma) + \Hol_0(F) \mapsto u(\sigma)\hat{v}(\sigma) + \Hol_0(F)
$$
furnishes a unitary equivalence between these indefinite inner product spaces. Moreover, we have
$$
\SF_{\sigma=0}[p(\sigma) : F \to F] = \SF_{\sigma=0}[q(\sigma) : F \to F].
$$
\end{proposition}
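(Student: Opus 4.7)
The plan is to verify directly the three conditions defining unitary equivalence for the map $T$, and then to derive the spectral flow equality from Sylvester's law of inertia applied pointwise on the real axis.

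First I would check the well-definedness and bijectivity of $T$. Since $u$ is invertible holomorphic on a disk around $0$, both $u^{-1}$ and $u^{\star}$ are holomorphic and invertible there, and from $u^{\star}qu = p$ we obtain $q = (u^{-1})^{\star}p\,u^{-1}$. If $\hat v + \Hol_0(F) \in \K(p)$ then $q(\sigma)u(\sigma)\hat v(\sigma) = [u^{\star}(\sigma)]^{-1}p(\sigma)\hat v(\sigma) \in \Hol_0(F)$, so $T$ maps into $\K(q)$; the analogous computation with $u$ replaced by $u^{-1}$ provides the inverse $T^{-1}\colon \hat w \mapsto u^{-1}\hat w$. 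Independence of representatives modulo $\Hol_0(F)$ is immediate from holomorphy of $u$ and $u^{-1}$. The intertwining $TM_{\sigma} = M_{\sigma}T$ is tautological since multiplication by $u(\sigma)$ commutes with multiplication by $\sigma$. For the pairing I would compute
\begin{align*}
[T\hat v,T\hat w]_{\K(q)} &= \res_{\sigma=0}\langle q(\sigma)u(\sigma)\hat v(\sigma),\,u(\overline\sigma)\hat w(\overline\sigma)\rangle_F \\
&= \res_{\sigma=0}\langle u(\overline\sigma)^{*}q(\sigma)u(\sigma)\hat v(\sigma),\,\hat w(\overline\sigma)\rangle_F = [\hat v,\hat w]_{\K(p)},
\end{align*}
where the last equality is just the restatement $(u^{\star}qu)(\sigma) = p(\sigma)$ of the hypothesis.

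For the spectral flow, I would restrict the defining identity to real $\sigma$, where $\overline\sigma = \sigma$ gives $u(\sigma)^{*}q(\sigma)u(\sigma) = p(\sigma)$. The invertible operator $u(\sigma) \in \L(F)$ thus witnesses a congruence of selfadjoint operators on the finite-dimensional space $F$, and Sylvester's law of inertia forces $p(\sigma)$ and $q(\sigma)$ to have equal numbers of positive and of negative eigenvalues for each real $\sigma$ in a neighborhood of $0$. Choosing the spectral threshold $\varepsilon_0 > 0$ small enough that the "non-crossing" eigenvalues of both families remain bounded away from $[0,\varepsilon_0)$ uniformly in small $\sigma$, the cutoff traces $\tr[\Pi_{[0,\varepsilon_0)}(p(\pm\sigma))]$ and $\tr[\Pi_{[0,\varepsilon_0)}(q(\pm\sigma))]$ differ only by contributions of the non-crossing eigenvalues, which are equal by Sylvester. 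Taking the difference at $+\sigma$ and $-\sigma$ and letting $\sigma \to 0^{+}$ therefore yields $\SF_{\sigma=0}[p] = \SF_{\sigma=0}[q]$.

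The only real subtlety is this last step: Sylvester's law guarantees equality of the global inertia counts, while the spectral flow is defined through the local cutoff projections $\Pi_{[0,\varepsilon_0)}$. Bridging the two requires the standard observation that for $p \in \Germs(F)$ the spectrum near $\sigma = 0$ splits into a "crossing" cluster that collapses to $0$ at $\sigma = 0$ and a "non-crossing" remainder uniformly bounded away from $0$, with the spectral flow determined solely by the crossing cluster. Since the non-crossing parts of $p$ and $q$ have matching inertia and do not traverse $0$, their contributions cancel in the $\pm\sigma$ difference, and the equality of spectral flows follows.
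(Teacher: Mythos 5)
Your verification of the unitary equivalence part is a spelled-out version of what the paper treats as immediate from the definitions; the pairing computation and the intertwining with $M_{\sigma}$ match. For the spectral flow, your key tool (Sylvester's law of inertia applied pointwise on the real axis to the congruence $u(\sigma)^{*}q(\sigma)u(\sigma) = p(\sigma)$) is exactly the paper's key tool. The difference is that you work with the cutoff projections $\Pi_{[0,\varepsilon_0)}$ from the general definition in Appendix~B, which forces you to argue separately that the crossing and non-crossing clusters can be disentangled. In the finite-dimensional setting of Appendix~C, however, the paper defines the spectral flow via the full positive spectral projection $\Pi_+$, so that $\tr[\Pi_+(p(\sigma))]$ \emph{is} the Sylvester invariant $m_+$ of $p(\sigma)$ and the equality $\tr[\Pi_+(p(\pm\sigma))] = \tr[\Pi_+(q(\pm\sigma))]$ drops out of Sylvester with no further clustering argument. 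Your bridging step is correct (it amounts to noting that the eigenvalue count $\geq \varepsilon_0$ is locally constant in $\sigma$ and cancels in the $\pm\sigma$ difference), but it is superfluous here; adopting the paper's finite-dimensional definition of $\SF$ makes the whole second half of your argument collapse to two lines, which is the advantage the paper's formulation buys.
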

\begin{proof}
The proof of the first statement follows immediately from the definitions. Note that $T^{-1}$ is given by multiplication by $u(\sigma)^{-1}$, which exists and is holomorphic near $\sigma = 0$. Finally, $M_{\sigma}$ commutes with multiplication by holomorphic operator functions, so $T$ is a unitary equivalence of the triples.

Regarding the spectral flow, we note that if $a = a^* \in \L(F)$ is selfadjoint, then $[v,w]_a:= \langle av,w \rangle_F$ is a Hermitian sesquilinear form on $F$. If $(m_0,m_+,m_-)$ are its invariants, then $m_+ = \tr\bigl(\Pi_+(a)\bigr)$. If $b = u^{*}au$ with an invertible $u$, then $u : \bigl(F,[\cdot,\cdot]_b\bigr) \to \bigl(F,[\cdot,\cdot]_a\bigr)$ is a unitary equivalence of these indefinite inner product spaces. Consequently, $[\cdot,\cdot]_a$ and $[\cdot,\cdot]_b$ have the same invariants, i.e., $m_+ =  \tr\bigl(\Pi_+(a)\bigr) =  \tr\bigl(\Pi_+(b)\bigr)$, which implies the invariance of the spectral flow as claimed.
\end{proof}

The following key lemma is based on several results from analytic perturbation theory (see \cite{Kato,Simon1}).

\begin{lemma}\label{BasicLemma}
Suppose $p \in \Germs(F)$ such that $p(0)$ is invertible. Then there exists an orthogonal projection $\pi = \pi^2 = \pi^* \in \L(F)$ such that $p(\sigma) \sim_s \pi - (1-\pi)$.
\end{lemma}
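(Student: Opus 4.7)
The plan is to reduce to the case $p(0) = s := \pi - (1-\pi)$ and then carry out a holomorphic block LDU factorization adapted to the orthogonal splitting induced by $\pi$.

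For the reduction, let $\pi$ be the orthogonal spectral projection of $p(0)$ onto its positive subspace, set $s = \pi - (1-\pi)$, and let $a = |p(0)|^{1/2}$; then $a$ is selfadjoint, positive and invertible, and $p(0) = asa$. Conjugating, $\tilde{p}(\sigma) := a^{-1} p(\sigma) a^{-1}$ still lies in $\Germs(F)$ (since $a^* = a$) and satisfies $\tilde{p}(0) = s$. Any invertible holomorphic $\tilde{u}$ with $\tilde{u}^{\star} s \tilde{u} = \tilde{p}$ then yields $u := \tilde{u} a$ with $u^{\star} s u = p$, so one may assume $p(0) = s$.

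With respect to $F = \pi F \oplus (1-\pi) F$, write
$$
p(\sigma) = \begin{pmatrix} A(\sigma) & B(\sigma) \\ B^{\star}(\sigma) & D(\sigma) \end{pmatrix};
$$
the identities $A^{\star} = A$, $D^{\star} = D$ and the identification of the lower-left block as $B^{\star}$ all follow from $p^{\star} = p$, and $A(0) = I$, $D(0) = -I$, $B(0) = 0$. For $\sigma$ near $0$, $A(\sigma)$ remains invertible, and the classical block Schur-complement identity gives $p = L D' L^{\star}$ with
$$
L = \begin{pmatrix} I & 0 \\ B^{\star} A^{-1} & I \end{pmatrix}, \qquad D' = \begin{pmatrix} A & 0 \\ 0 & D - B^{\star} A^{-1} B \end{pmatrix};
$$
that $L^{\star}$ is precisely the classical upper-triangular factor in the LDU decomposition follows from $(A^{-1})^{\star} = A^{-1}$ and $B^{\star\star} = B$.

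Because $A(0) = I$, $D(0) = -I$, and $B(0) = 0$, for $\sigma$ sufficiently small both $A(\sigma)$ and $-[D - B^{\star} A^{-1} B](\sigma)$ have spectrum contained in a small neighborhood of $1$. The holomorphic functional calculus then produces $E(\sigma) := \sqrt{A(\sigma)}$ and $\tilde{E}(\sigma) := \sqrt{-(D - B^{\star} A^{-1} B)(\sigma)}$, holomorphic in $\sigma$ and, by choosing the Riesz contour symmetric under $z \mapsto \overline{z}$ and using $\sqrt{\overline{z}} = \overline{\sqrt{z}}$ for the principal branch, satisfying $E^{\star} = E$ and $\tilde{E}^{\star} = \tilde{E}$. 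Setting $S = \begin{pmatrix} E & 0 \\ 0 & \tilde{E} \end{pmatrix}$ gives $D' = S^{\star} s S$, and hence $p = (SL^{\star})^{\star} s (SL^{\star})$ with $\tilde{u} := SL^{\star}$ invertible and holomorphic near $\sigma = 0$. The principal obstacle lies in verifying the two identities $E^{\star} = E$ and $\tilde{E}^{\star} = \tilde{E}$, which require a careful contour argument linking complex conjugation with the $\star$-operation; the remainder of the argument reduces to direct block-matrix algebra.
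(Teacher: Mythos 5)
Your proof is correct, and it follows a genuinely different route from the paper's. The paper works directly with a holomorphic family of Riesz projections $\pi(\sigma)$ onto the positive spectral subspace of $p(\sigma)$, produces a holomorphic $w(\sigma)$ (essentially $|p(\sigma)|^{-1/2}$ split along $\pi(\sigma)$ and $1-\pi(\sigma)$) so that $w^{\star}pw = \pi(\sigma) - (1-\pi(\sigma))$, and then applies the Sz.-Nagy transformation function $u(\sigma)$ to conjugate $\pi(\sigma)$ back to the fixed projection $\pi(0)$. You instead fix the projection at the outset by conjugating $p$ with $|p(0)|^{1/2}$ to normalize $p(0) = \pi - (1-\pi)$, and then reduce to block-diagonal form via the holomorphic Schur complement factorization $p = L\,D'\,L^{\star}$, extracting square roots of the diagonal blocks by functional calculus. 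Your approach buys you the advantage of bypassing the Sz.-Nagy transformation formula entirely; the tradeoff is that you must verify the $\star$-triangular structure ($L^{\star}$ is the correct upper factor, the $(2,1)$-block really is $B^{\star}$, and the Schur complement is $\star$-selfadjoint), all of which you do correctly. The "principal obstacle" you flag, $E^{\star} = E$ and $\tilde E^{\star} = \tilde E$, does close as you sketch: with a conjugation-symmetric Riesz contour and the identity $\overline{\sqrt{z}} = \sqrt{\overline z}$ for the principal branch, the substitution $w = \overline z$ reverses orientation and yields $E(\overline\sigma)^{*} = E(\sigma)$. The paper faces exactly the same issue in verifying $w^{\star} = w$, where it is also dispatched by a brief functional-calculus remark.
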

\begin{proof}
Choose $\varepsilon > 0$ and $\varrho \gg 0$ such that with
$$
\spec_{\pm}(p(\sigma)):= \spec(p(\sigma))\cap B_{\varrho-\varepsilon}(\pm\varrho)
$$
we have $\spec(p(\sigma)) = \spec_+(p(\sigma))\cup\spec_-(p(\sigma))$ for $|\sigma| < \varepsilon$. Define
$$
\pi(\sigma) = \frac{1}{2\pi i}\oint_{\partial B_{\varrho-\varepsilon}(\varrho)}(\lambda - p(\sigma))^{-1}\,d\lambda \in \L(F).
$$
Then $\pi(\sigma) = \pi^*(\sigma) = \pi(\sigma)^2$ is the Riesz projection onto the generalized eigenspaces associated with eigenvalues of $p(\sigma)$ that have positive real part for $|\sigma| < \varepsilon$. In particular, for $\sigma$ real, $\pi(\sigma)$ is an orthogonal projection. Note that $\pi(\sigma)$ is holomorphic in $\sigma$ in view of the Dunford integral representation formula. We'll see momentarily that the $\pi$ in the statement of the lemma is going to be $\pi:= \pi(0)$, but first define $w(\sigma)$ via
$$
\frac{1}{2\pi i}\oint_{\partial B_{\varrho-\varepsilon}(\varrho)}\lambda^{-\frac{1}{2}}(\lambda - p(\sigma))^{-1}\,d\lambda + \frac{1}{2\pi i}\oint_{\partial B_{\varrho-\varepsilon}(-\varrho)}(-\lambda)^{-\frac{1}{2}}(\lambda - p(\sigma))^{-1}\,d\lambda.
$$
Holomorphic functional calculus implies that $w(\sigma) = w^{\star}(\sigma)$ is invertible with $w^{\star}pw = \pi(\sigma) - (1-\pi(\sigma))$, so $p(\sigma) \sim_s \pi(\sigma) - (1-\pi(\sigma))$. Now, making $\varepsilon > 0$ smaller if necessary, we may further assume that $\|\pi(\sigma) - \pi(0)\|_{\L(F)} < 1$ for all $|\sigma| < \varepsilon$. Let then
\begin{align*}
u(\sigma) &= \bigl[\pi(\sigma)\pi(0) + [1-\pi(\sigma)][1-\pi(0)]\bigr]\bigl[1-[\pi(\sigma)-\pi(0)]^2\bigr]^{-\frac{1}{2}}, \\
u^{\star}(\sigma) &= \bigl[1-[\pi(\sigma)-\pi(0)]^2\bigr]^{-\frac{1}{2}}\bigl[\pi(0)\pi(\sigma) + [1-\pi(0)][1-\pi(\sigma)]\bigr].
\end{align*}
We have $u^{\star}(\sigma) = u(\sigma)^{-1}$, and $u^{\star}(\sigma)\pi(\sigma)u(\sigma) = \pi(0)$. Thus $\pi(\sigma) - (1-\pi(\sigma)) \sim_s \pi(0) - (1-\pi(0))$. In conclusion,
$$
p(\sigma) \sim_s \pi(\sigma) - (1-\pi(\sigma)) \sim_s \pi(0) - (1-\pi(0)).
$$
\end{proof}

\begin{proposition}\label{reductionprojections}
Let $p \in \Germs(F)$ be arbitrary. Then there exists $N \in \N_0$ and an orthogonal decomposition
$$
F = \bigoplus\limits_{\ell=0}^N\bigl(F_{+,\ell}\oplus F_{-,\ell}\bigr)
$$
such that
$$
p(\sigma) \sim_s \sum\limits_{\ell=0}^{N}\bigl(\pi_{F_{+,\ell}} - \pi_{F_{-,\ell}}\bigr)\sigma^{\ell},
$$
where $\pi_{F_{\pm},\ell}$ is the orthogonal projection onto $F_{\pm,\ell} \subset F$. We note that some of the spaces $F_{\pm,\ell}$ may be $\{0\}$.
\end{proposition}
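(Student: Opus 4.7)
The plan is to prove the proposition by induction on $\dim F$, using Lemma~\ref{BasicLemma} as the essential reduction and a Schur complement argument modeled on the one in the proof of Theorem~\ref{Nondeg}. For the induction to go through, it is convenient to weaken the hypothesis slightly: I will allow $p(\sigma)$ to be any holomorphic germ on $F$ with $p^{\star} = p$ that is invertible for small $\sigma \neq 0$, without requiring $p(0)$ itself to be invertible. This larger class is closed under direct sums and under multiplication/division by nonvanishing powers of $\sigma$, contains $\Germs(F)$, and the relation $\sim_s$ on it respects direct sums and commutes with scalar factors $\sigma^k$ (if $q_1 \sim_s q_2$ via a holomorphic invertible $u$, then $\sigma^k q_1 \sim_s \sigma^k q_2$ via the same $u$).

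For the inductive step, expand $p(\sigma) = \sum_{n \geq 0} a_n \sigma^n$ near $\sigma = 0$; the condition $p^{\star} = p$ forces each $a_n$ to be selfadjoint. Since $p \not\equiv 0$ there is a smallest $k$ with $a_k \neq 0$, and I write $p(\sigma) = \sigma^k q(\sigma)$ so that $q(0) = a_k = a_k^*$, with $q$ selfadjoint, holomorphic, and invertible for small $\sigma \neq 0$. Set $R = \ran(a_k)$ and $N = R^{\perp} = \ker(a_k)$, and block-decompose
$$
q(\sigma) = \begin{bmatrix} q_{NN}(\sigma) & q_{NR}(\sigma) \\ q_{RN}(\sigma) & q_{RR}(\sigma) \end{bmatrix}
$$
relative to $F = N \oplus R$. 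Because $q_{RR}(0) = a_k\big|_R$ is invertible, the Schur complement conjugation by the holomorphic invertible $u(\sigma) = \begin{bmatrix} 1 & 0 \\ -q_{RR}(\sigma)^{-1} q_{RN}(\sigma) & 1 \end{bmatrix}$ used in the proof of Theorem~\ref{Nondeg} gives $u^{\star} q u = \tilde{q}_{NN} \oplus q_{RR}$, where $\tilde{q}_{NN} = q_{NN} - q_{NR} q_{RR}^{-1} q_{RN}$.

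Since $q_{NN}(0) = 0$, the Schur complement satisfies $\tilde{q}_{NN}(0) = 0$, so $\tilde{q}_{NN}(\sigma) = \sigma \tilde{q}^{(1)}(\sigma)$ for a holomorphic selfadjoint $\tilde{q}^{(1)}$ on $N$ that is invertible for small $\sigma \neq 0$. On $R$, Lemma~\ref{BasicLemma} applied to $q_{RR} \in \Germs(R)$ produces an orthogonal splitting $R = R_+ \oplus R_-$ with $q_{RR} \sim_s \pi_{R_+} - \pi_{R_-}$. Combining these and multiplying by the scalar $\sigma^k$,
$$
p(\sigma) = \sigma^k q(\sigma) \sim_s \sigma^{k+1} \tilde{q}^{(1)}(\sigma) \; \oplus \; \sigma^k \bigl(\pi_{R_+} - \pi_{R_-}\bigr).
$$
Because $a_k \neq 0$ we have $R \neq 0$ and therefore $\dim N < \dim F$, so the inductive hypothesis applied to $\tilde{q}^{(1)}$ yields an orthogonal decomposition $N = \bigoplus_{j \geq 0}(N_{+,j} \oplus N_{-,j})$ with $\tilde{q}^{(1)} \sim_s \sum_{j} \sigma^{j}(\pi_{N_{+,j}} - \pi_{N_{-,j}})$. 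Setting $F_{\pm,k} = R_{\pm}$, $F_{\pm,k+1+j} = N_{\pm,j}$, and all remaining $F_{\pm,\ell} = \{0\}$, and taking $N$ in the statement to be the largest index appearing, assembles the desired canonical form.

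The main obstacle is really a conceptual one: it is tempting to apply Lemma~\ref{BasicLemma} directly to $q(\sigma)$, but $q(0) = a_k$ is singular whenever $N \neq \{0\}$, so the lemma is unavailable. The Schur complement reduction is what isolates the nonsingular block $q_{RR}$ (to which the lemma does apply) from the degenerate part $\tilde{q}_{NN}$, which then carries an extra factor of $\sigma$ and feeds into the induction. The remaining technical verifications — that $\tilde{q}^{(1)}$ really lies in the enlarged class (selfadjoint, holomorphic, invertible for $\sigma \neq 0$), that multiplication by $\sigma^k$ and direct sums respect $\sim_s$, and that the block decomposition is orthogonal at each step — are all routine.
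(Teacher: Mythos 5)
Your proof is correct and, in outline, is the same as the paper's: induction on $\dim F$, extraction of the leading power of $\sigma$, a Schur complement conjugation (identical in form to the one in the proof of Theorem~\ref{Nondeg}) that separates a nondegenerate block from a smaller degenerate block, Lemma~\ref{BasicLemma} applied to the former, and the inductive hypothesis applied to the latter. Two small remarks. There is no need to enlarge the class: the definition of $\Germs(F)$ in Appendix~\ref{AnalyticCrossingsFiniteDimensional} already does not require $p(0)$ to be invertible (only invertibility for $\sigma \neq 0$ and $p^{\star}=p$), so the ``weakened hypothesis'' you introduce is exactly $\Germs(F)$, which is where the paper runs the induction as well. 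Your further observation that the Schur complement $\tilde{q}_{NN}$ vanishes at $\sigma = 0$ --- so an extra factor of $\sigma$ can be pulled out before the inductive call --- is a correct and pleasant refinement (it uses $q_{NR}(0) = q_{RN}(0) = 0$, which holds because $a_k$ is selfadjoint with $R = N^{\perp}$ invariant), but it is not needed for the induction to close; the paper simply feeds $p_{11} - p_{12}p_{22}^{-1}p_{21} \in \Germs(F_0)$ into the inductive hypothesis directly, after first reducing to $p(0) \neq 0$ and then splitting into the cases $p(0)$ invertible or not.
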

\begin{proof}
We prove this proposition by induction with respect to $\dim F$.

If $F = \C$ we can write $p(\sigma) = \sigma^N\tilde{p}(\sigma)$ for some $N \in \N_0$, where $\tilde{p}(\sigma) \in \Germs(\C)$ with $\tilde{p}(0) \neq 0$. By Lemma~\ref{BasicLemma} we have $\tilde{p}(\sigma) \sim_s \pm 1$, and so $p(\sigma) \sim_s \pm\sigma^N$, thus proving the result if $\dim F = 1$.

We now assume that the proposition is valid for spaces $F$ up to dimension $k$ for some $k \in \N$. Let then $F$ be $(k+1)$-dimensional. We can write $p(\sigma) = \sigma^{\nu}\tilde{p}(\sigma)$, where $\nu \in \N_0$ and $\tilde{p}(\sigma) \in \Germs(F)$ with $\tilde{p}(0) \neq 0$, and proving the claim for $\tilde{p}(\sigma)$ implies that it holds for $p(\sigma)$ as well. Thus we assume without loss of generality that $\nu = 0$ and $p(\sigma) = \tilde{p}(\sigma)$ in the sequel.

If $p(0)$ is invertible, Lemma~\ref{BasicLemma} implies that $p(\sigma) \sim_s \pi - (1-\pi)$ for some orthogonal projection $\pi$, thus proving the assertion in this case. So suppose now that $p(0)$ is not invertible. Let $F_0 = \ker p(0)$. Then $\{0\} \subsetneq F_0 \subsetneq F$, so $1 \leq \dim F_0 \leq k$. We decompose $p(\sigma)$ as
$$
p(\sigma) = \begin{bmatrix} p_{11}(\sigma) & p_{12}(\sigma) \\ p_{21}(\sigma) & p_{22}(\sigma)\end{bmatrix} : \begin{array}{c} F_0 \\ \oplus \\ F_0^{\perp} \end{array} \to \begin{array}{c} F_0 \\ \oplus \\ F_0^{\perp} \end{array}.
$$
Note that $p_{22}(\sigma) \in \Germs(F_0^{\perp})$ is invertible for $|\sigma|$ small enough. Define
$$
u(\sigma) = \begin{bmatrix} 1 & 0 \\ -p_{22}(\sigma)^{-1}p_{21}(\sigma) & 1 \end{bmatrix} : \begin{array}{c} F_0 \\ \oplus \\ F_0^{\perp} \end{array} \to \begin{array}{c} F_0 \\ \oplus \\ F_0^{\perp} \end{array}.
$$
Then $u(\sigma)$ is invertible, and
$$
u^{\star}(\sigma) = \begin{bmatrix} 1 & -p_{12}(\sigma)p_{22}(\sigma)^{-1} \\ 0 & 1 \end{bmatrix} : \begin{array}{c} F_0 \\ \oplus \\ F_0^{\perp} \end{array} \to \begin{array}{c} F_0 \\ \oplus \\ F_0^{\perp} \end{array}.
$$
We have
$$
u^{\star}(\sigma)p(\sigma)u(\sigma) =
\begin{bmatrix} p_{11}(\sigma) - p_{12}(\sigma)p_{22}(\sigma)^{-1}p_{21}(\sigma) & 0 \\ 0 & p_{22}(\sigma) \end{bmatrix},
$$
where $q = p_{11} - p_{12}p_{22}^{-1}p_{21} \in \Germs(F_0)$. Consequently, the proposition holds for $p$ if it holds for both $q \in \Germs(F_0)$ and $p_{22} \in \Germs(F_0^{\perp})$. But the inductive hypothesis is applicable to $q$, and Lemma~\ref{BasicLemma} applies to $p_{22}$, thus proving the proposition for $p$. This completes the induction and finishes the proof.
\end{proof}

\begin{theorem}\label{CrossingsFlow}
Let $p \in \Germs(F)$ be arbitrary. Then $[\cdot,\cdot]_{\K(p)} : \K(p) \times \K(p) \to \C$ is nondegenerate, and
$$
\sig\bigl(\K(p),[\cdot,\cdot]_{\K(p)}\bigr) = \sum\limits_{\ell \; \textup{odd}}\bigl(m_+(\ell)-m_-(\ell)\bigr) = \SF_{\sigma=0}\bigl[p(\sigma) : F \to F\bigr],
$$
where $m_{\pm}(\ell)$ are the invariants associated with the triple $\bigl(\K(p),[\cdot,\cdot]_{\K(p)},M_{\sigma}\bigr)$.
\end{theorem}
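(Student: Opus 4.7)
\medskip

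The plan is to leverage Proposition~\ref{reductionprojections} together with the invariance properties in Proposition~\ref{InvarianceCrossingSpace} to reduce to a collection of explicit diagonal model cases. Specifically, Proposition~\ref{reductionprojections} gives an orthogonal decomposition $F = \bigoplus_{\ell=0}^{N}(F_{+,\ell}\oplus F_{-,\ell})$ such that $p \sim_s \bigoplus_{\ell=0}^{N}\bigl(\pi_{F_{+,\ell}} - \pi_{F_{-,\ell}}\bigr)\sigma^{\ell}$. By Proposition~\ref{InvarianceCrossingSpace} the triple $\bigl(\K(p),[\cdot,\cdot]_{\K(p)},M_\sigma\bigr)$ and the spectral flow $\SF_{\sigma=0}[p(\sigma):F\to F]$ are unchanged when $p$ is replaced by an $\sim_s$-equivalent germ, so I work with the diagonal form. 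Both $\K(\cdot)$ and the spectral flow are additive under direct sums of holomorphic families acting on orthogonal subspaces (for $\K$ because the defining condition $p\hat v\in\Hol_0$ decouples blockwise and the pairing is block-diagonal; for $\SF$ by definition), so it suffices to prove the theorem for each summand $p_\ell^{\pm}(\sigma) = \pm\sigma^{\ell}\cdot\id_{V}$ acting on a finite-dimensional Hilbert space $V$.

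For the building block $p(\sigma) = \epsilon\sigma^{\ell}\id_V$ with $\epsilon\in\{+1,-1\}$ the space $\K(p)$ consists of all principal parts $\hat v(\sigma) = \sum_{k=1}^{\ell} v_k \sigma^{-k}$ with $v_k\in V$, and an orthonormal basis $\{e_j\}$ of $V$ furnishes a basis $\{e_j\sigma^{-k}\;:\;1\le k\le \ell,\;1\le j\le\dim V\}$ of $\K(p)$. A direct computation of the residue yields
$$
[e_j\sigma^{-a},e_k\sigma^{-b}]_{\K(p)} = \res_{\sigma=0}\epsilon\sigma^{\ell-a-b}\langle e_j,e_k\rangle_F = \epsilon\,\delta_{jk}\delta_{a+b,\ell+1},
$$
so the Gram matrix of $[\cdot,\cdot]_{\K(p)}$ is $\epsilon$ times $\dim V$ copies of the $\ell\times\ell$ SIP matrix. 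This matrix has determinant $\pm 1$, so $[\cdot,\cdot]_{\K(p)}$ is nondegenerate on each block (and hence on all of $\K(p)$ after direct summation). Its signature is $0$ when $\ell$ is even (the SIP matrix has equally many $+1$ and $-1$ eigenvalues) and equals $\epsilon\dim V$ when $\ell$ is odd (the SIP matrix has one extra eigenvalue of the appropriate sign).

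On the analytic side, the spectral flow of $\sigma\mapsto \epsilon\sigma^{\ell}\id_V$ across $\sigma=0$ vanishes when $\ell$ is even, since $\epsilon\sigma^{\ell}$ does not change sign; while for odd $\ell$ the number of positive eigenvalues jumps by $\epsilon\dim V$ as $\sigma$ crosses $0$ from left to right. Thus in every case the signature of $\bigl(\K(p),[\cdot,\cdot]_{\K(p)}\bigr)$ coincides with $\SF_{\sigma=0}[p(\sigma):V\to V]$. The identification of the signature with $\sum_{\ell\text{ odd}}(m_+(\ell)-m_-(\ell))$ follows from Proposition~\ref{SignCharacteristic} (equivalently, the Canonical Form Theorem computation carried out in the proof of Theorem~\ref{SelfadjointExtensions}), applied to the selfadjoint nilpotent operator $M_\sigma$ on the nondegenerate space $\bigl(\K(p),[\cdot,\cdot]_{\K(p)}\bigr)$. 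Reassembling the blocks via Proposition~\ref{reductionprojections} completes the proof.

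The conceptually nontrivial input is Proposition~\ref{reductionprojections}, which is already established. Beyond that, the main technical point to verify carefully is additivity of the triple $\bigl(\K(\cdot),[\cdot,\cdot]_{\K(\cdot)},M_\sigma\bigr)$ and of the spectral flow under block diagonal decompositions; once that bookkeeping is in place the residue computation above is the entire content.
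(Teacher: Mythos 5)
Your proposal is correct and follows essentially the same route as the paper: reduce via Propositions~\ref{InvarianceCrossingSpace} and \ref{reductionprojections} to the diagonal model cases, compute the Gram matrix as a block of SIP matrices, read off the signature, and compare to the spectral flow and the $m_\pm(\ell)$ invariants. If anything your writeup is a touch more careful than the paper's, since you keep the $\pm$ signs $\epsilon$ explicit and state the additivity of the triple and of the spectral flow under orthogonal block decompositions (which the paper tacitly uses when reducing to $p(\sigma)=\sigma^N\id$).
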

\begin{proof}
Propositions \ref{InvarianceCrossingSpace} and \ref{reductionprojections} imply that it suffices to prove the theorem for the special case $p(\sigma) = \sigma^N\id : F \to F$, where $N \in \N_0$. In this situation
$$
\K(p) = \Bigl\{\sum\limits_{j=1}^{N} f_j\sigma^{-j} + \Hol_0(F);\; f_j \in F\Bigr\},
$$
and for $\hat{v}(\sigma) = \sum\limits_{j=1}^{N} f_j\sigma^{-j}$ and $\hat{w}(\sigma) = \sum\limits_{k=1}^{N} g_k\sigma^{-k}$ we get
$$
[\hat{v},\hat{w}]_{\K(p)} = \sum\limits_{k=1}^N \langle f_{N+1-k},g_k \rangle_F = \biggl\langle J\begin{bmatrix}f_1 \\ \vdots \\ f_N \end{bmatrix},\begin{bmatrix} g_1 \\ \vdots \\ g_N \end{bmatrix} \biggr\rangle_{F^N},
$$
where $J : F^N \to F^N$ is given by $J\begin{bmatrix}f_1 \\ \vdots \\ f_N \end{bmatrix} = \begin{bmatrix}f_N \\ \vdots \\ f_1 \end{bmatrix}$. We have $J^2 = \id$, so $J$ has eigenvalues $\pm 1$, proving the nondegeneracy of the pairing. The dimensions of the eigenspaces of $J$ depend on the parity of $N$. Direct computation shows that for $N$ even we have 
$\dim\ker(J-\id) = \dim\ker(J+\id) = \frac{N}{2}\dim F$, thus proving in this case that
$$
\sig\bigl(\K(p),[\cdot,\cdot]_{\K(p)}\bigr) = 0 = \SF_{\sigma=0}\bigl[p(\sigma) : F \to F\bigr].
$$
For $N$ odd we have $\dim\ker(J-\id) = \frac{N+1}{2}\dim F$ and $\dim\ker(J+\id) = \frac{N-1}{2}\dim F$, and thus
$$
\sig\bigl(\K(p),[\cdot,\cdot]_{\K(p)}\bigr) = \dim F = \SF_{\sigma=0}\bigl[p(\sigma) : F \to F\bigr].
$$
Finally, we note that if $\hat{v}(\sigma)$ and $\hat{w}(\sigma)$ above belong to $\ker[M_{\sigma}^{\ell} : \K(p) \to \K(p)]$, then
$$
[M_{\sigma}^{\ell-1}\hat{v},\hat{w}]_{\K(p)} = \begin{cases}
0 & \ell \neq N, \\
\langle f_N,g_N \rangle_F & \ell = N.
\end{cases}
$$
Thus $m_+(\ell) = m_-(\ell) = 0$ for $\ell \neq N$, while $m_+(N) = \dim F$ and $m_-(N) = 0$. The theorem is proved.
\end{proof}

\end{appendix}


\end{document}